\newtheorem{proposition}{Proposition}[section]
\newtheorem{lemma}[proposition]{Lemma}
\newtheorem{corollary}[proposition]{Corollary}
\newtheorem{theorem}[proposition]{Theorem}
\theoremstyle{definition}
\newtheorem{definition}[proposition]{Definition}
\newtheorem{example}[proposition]{Example}
\newtheorem{examples}[proposition]{Examples}
\theoremstyle{remark}
\newtheorem{remark}[proposition]{Remark}
\newtheorem{remarks}[proposition]{Remarks}\newcommand{\thlabel}[1]{\label{th:#1}}
\newcommand{\thref}[1]{Theorem~\ref{th:#1}}
\newcommand{\selabel}[1]{\label{se:#1}}
\newcommand{\seref}[1]{Section~\ref{se:#1}}
\newcommand{\sselabel}[1]{\label{sse:#1}}
\newcommand{\sseref}[1]{Subsection~\ref{sse:#1}}
\newcommand{\lelabel}[1]{\label{le:#1}}
\newcommand{\leref}[1]{Lemma~\ref{le:#1}}
\newcommand{\prlabel}[1]{\label{pr:#1}}
\newcommand{\prref}[1]{Proposition~\ref{pr:#1}}
\newcommand{\colabel}[1]{\label{co:#1}}
\newcommand{\coref}[1]{Corollary~\ref{co:#1}}
\newcommand{\exlabel}[1]{\label{ex:#1}}
\newcommand{\exref}[1]{Example~\ref{ex:#1}}
\newcommand{\delabel}[1]{\label{de:#1}}
\newcommand{\deref}[1]{Definition~\ref{de:#1}}
\newcommand{\eqlabel}[1]{\label{eq:#1}}
\newcommand{\equref}[1]{(\ref{eq:#1})}
\def\ra{\rightarrow}
\def\cd{\cdot}
\def\Id{{\rm Id}}
\def\mfg{\mf {g}}
\newcommand{\alp}{\sigma}
\newcommand{\be}{\psi}
\def\ot{\otimes}
\def\va{\varepsilon}
\def\un{\underline}
\def\mf{\mathfrak}
\def\mfq{\mf {q}}
\def\le{\langle}
\def\ri{\rangle}
\def\l{\lambda}
\def\r{\rho}
\def\tie{\mathrel>\joinrel\mathrel\triangleleft}
\def\va{\varepsilon}
\def\v{\varphi}
\def\ra{\rightarrow}
\def\a{\alpha}
\def\b{\beta}
\def\d{\delta}
\def\ov{\overline}
\def\cal{\mathcal}
\def\un{\underline}
\def\equal#1{\smash{\mathop{=}\limits^{#1}}}
 \newcommand{\gbeg}[2]{
   \unitlength=1pt
   \grrow = #2
   \grcolumn = 0
   \grcalca = #1
   \grcalcb = #2
   \multiply \grcalca by \factor
   \grwidth = \grcalca
   \multiply \grcalcb by \factor
   \begin{minipage}{\grcalca pt}
   \begin{picture}(\grcalca,\grcalcb)
   \advance \grcalcb by -\factor
   \put(0, \grcalcb){\line(1,0){\grwidth}} }
 \newcommand{\gend}{
   \put(0, \factor){\line(1,0){\grwidth}}
   \end{picture}
   {\vskip2.5ex}
   \end{minipage} }
 \newcommand{\gnl}{
   \advance \grrow by -1
   \grcolumn = 0}
 \newcommand{\gvac}[1]{       
   \advance \grcolumn by #1} 
 \newcommand{\gcl}[1]{
   \grcalca = \grcolumn
   \multiply \grcalca by \factor
   \advance \grcalca by \hfactor
   \grcalcb = \grrow
   \multiply \grcalcb by \factor
   \grcalcc = #1
   \multiply \grcalcc by \factor
   \put(\grcalca,\grcalcb) {\line(0,-1){\grcalcc}} 
   \advance \grcolumn by 1}
 \newcommand{\gcn}[4]{
   \grcalca = \grcolumn
   \multiply \grcalca by \factor
   \grcalci = #3
   \multiply \grcalci by \hfactor
   \advance \grcalca by \grcalci
   \grcalcb = \grcolumn
   \multiply \grcalcb by \factor 
   \grcalci = #3
   \advance \grcalci by #4
   \multiply \grcalci by \qfactor
   \advance \grcalcb by \grcalci
   \grcalcc = \grcolumn
   \multiply \grcalcc by \factor
   \grcalci = #4
   \multiply \grcalci by \hfactor
   \advance \grcalcc by \grcalci
   \grcalcd = \grrow
   \multiply \grcalcd by \factor 
   \grcalce = \grrow
   \multiply \grcalce by \factor 
   \grcalci = #2
   \multiply \grcalci by \tfactor
   \advance \grcalce by -\grcalci
   \grcalcf = \grrow
   \multiply \grcalcf by \factor 
   \grcalci = #2
   \multiply \grcalci by \hfactor
   \advance \grcalcf by -\grcalci
   \grcalcg = \grrow
   \multiply \grcalcg by \factor 
   \grcalci = #2
   \multiply \grcalci by \tfactor
   \multiply \grcalci by 2
   \advance \grcalcg by -\grcalci
   \grcalch = \grrow
   \advance \grcalch by -#2
   \multiply \grcalch by \factor 
   \qbezier(\grcalca,\grcalcd)(\grcalca,\grcalce)(\grcalcb,\grcalcf) 
   \qbezier(\grcalcb,\grcalcf)(\grcalcc,\grcalcg)(\grcalcc,\grcalch) 
   \advance \grcolumn by #1}
 \newcommand{\gnot}[1]{
   \grcalca = \grcolumn
   \multiply \grcalca by \factor
   \advance \grcalca by \hfactor
   \grcalcb = \grrow
   \multiply \grcalcb by \factor
   \advance \grcalcb by -\hfactor
   \put(\grcalca,\grcalcb) {\makebox(0,0){$\scriptstyle #1$}} }
 \newcommand{\got}[2]{
   \grcalca = \grcolumn
   \multiply \grcalca by \factor
   \grcalcc = #1
   \multiply \grcalcc by \hfactor
   \advance \grcalca by \grcalcc
   \grcalcb = \grrow
   \multiply \grcalcb by \factor
   \advance \grcalcb by -\tfactor
   \advance \grcalcb by -\tfactor
   \put(\grcalca,\grcalcb){\makebox(0,0)[b]{$#2$}}
   \advance \grcolumn by #1}
 \newcommand{\gob}[2]{
   \grcalca = \grcolumn
   \multiply \grcalca by \factor
   \grcalcc = #1
   \multiply \grcalcc by \hfactor
   \advance \grcalca by \grcalcc
   \put(\grcalca,0){\makebox(0,0)[b]{$#2$}}
   \advance \grcolumn by #1}
 \newcommand{\gmu}{  
   \grcalca = \grcolumn
   \advance \grcalca by 1
   \multiply \grcalca by \factor
   \grcalcb = \grrow
   \multiply \grcalcb by \factor
   \grcalcc = \factor
   \advance \grcalcc by \hfactor
   \put(\grcalca,\grcalcb){\oval(\factor,\grcalcc)[b]}
   \advance \grcalcb by -\hfactor
   \advance \grcalcb by -\qfactor
   \put(\grcalca,\grcalcb) {\line(0,-1){\qfactor}} 
   \advance \grcolumn by 2}
 \newcommand{\gcmu}{   
   \grcalca = \grcolumn
   \advance \grcalca by 1
   \multiply \grcalca by \factor
   \grcalcb = \grrow
   \advance \grcalcb by -1
   \multiply \grcalcb by \factor
   \grcalcc = \factor
   \advance \grcalcc by \hfactor
   \put(\grcalca,\grcalcb){\oval(\factor,\grcalcc)[t]}
   \advance \grcalcb by \factor
   \put(\grcalca,\grcalcb) {\line(0,-1){\qfactor}} 
   \advance \grcolumn by 2}
 \newcommand{\glm}{
   \grcalca = \grcolumn
   \multiply \grcalca by \factor
   \advance \grcalca by \hfactor
   \grcalcb = \grcalca
   \advance \grcalcb by \factor
   \grcalcc = \grrow
   \multiply \grcalcc by \factor
   \grcalcd = \grcalcc
   \advance \grcalcd by -\tfactor
   \grcalce = \grcalcd
   \advance \grcalce by -\tfactor
   \put(\grcalca, \grcalcc){\line(0,-1){\tfactor}}
   \put(\grcalca, \grcalcd){\line(1,0){\factor}}
   \put(\grcalca, \grcalcd){\line(3,-1){\factor}}
   \put(\grcalcb, \grcalcc){\line(0,-1){\factor}}
   \advance \grcolumn by 2}
 \newcommand{\grm}{
   \grcalcb = \grcolumn
   \multiply \grcalcb by \factor
   \advance \grcalcb by \hfactor
   \grcalca = \grcalcb
   \advance \grcalca by \factor
   \grcalcc = \grrow
   \multiply \grcalcc by \factor
   \grcalcd = \grcalcc
   \advance \grcalcd by -\tfactor
   \grcalce = \grcalcd
   \advance \grcalce by -\tfactor
   \put(\grcalca, \grcalcc){\line(0,-1){\tfactor}}
   \put(\grcalca, \grcalcd){\line(-1,0){\factor}}
   \put(\grcalca, \grcalcd){\line(-3,-1){\factor}}
   \put(\grcalcb, \grcalcc){\line(0,-1){\factor}}
   \advance \grcolumn by 2}
 \newcommand{\glcm}{
   \grcalca = \grcolumn
   \multiply \grcalca by \factor
   \advance \grcalca by \hfactor
   \grcalcb = \grcalca
   \advance \grcalcb by \factor
   \grcalcc = \grrow
   \advance \grcalcc by -1
   \multiply \grcalcc by \factor
   \grcalcd = \grcalcc
   \advance \grcalcd by \tfactor
   \grcalce = \grcalcd
   \advance \grcalce by \tfactor
   \put(\grcalca, \grcalcc){\line(0,1){\tfactor}}
   \put(\grcalca, \grcalcd){\line(1,0){\factor}}
   \put(\grcalca, \grcalcd){\line(3,1){\factor}}
   \put(\grcalcb, \grcalcc){\line(0,1){\factor}}
   \advance \grcolumn by 2}
 \newcommand{\grcm}{
   \grcalcb = \grcolumn
   \multiply \grcalcb by \factor
   \advance \grcalcb by \hfactor
   \grcalca = \grcalcb
   \advance \grcalca by \factor
   \grcalcc = \grrow
   \advance \grcalcc by -1
   \multiply \grcalcc by \factor
   \grcalcd = \grcalcc
   \advance \grcalcd by \tfactor
   \grcalce = \grcalcd
   \advance \grcalce by \tfactor
   \put(\grcalca, \grcalcc){\line(0,1){\tfactor}}
   \put(\grcalca, \grcalcd){\line(-1,0){\factor}}
   \put(\grcalca, \grcalcd){\line(-3,1){\factor}}
   \put(\grcalcb, \grcalcc){\line(0,1){\factor}}
   \advance \grcolumn by 2}
 \newcommand{\gwmu}[1]{    
   \grcalca = \grcolumn
   \multiply \grcalca by \factor
   \grcalcd = \hfactor
   \multiply \grcalcd by #1
   \advance \grcalca by \grcalcd
   \grcalcb = \grrow
   \multiply \grcalcb by \factor
   \grcalcc = \factor
   \advance \grcalcc by \hfactor
   \grcalcd = #1
   \advance \grcalcd by -1
   \multiply \grcalcd by \factor
   \put(\grcalca,\grcalcb){\oval(\grcalcd,\grcalcc)[b]}
   \advance \grcalcb by -\hfactor
   \advance \grcalcb by -\qfactor
   \put(\grcalca,\grcalcb) {\line(0,-1){\qfactor}} 
   \advance \grcolumn by #1}
 \newcommand{\gwcm}[1]{   
   \grcalca = \grcolumn
   \multiply \grcalca by \factor
   \grcalcd = \hfactor
   \multiply \grcalcd by #1
   \advance \grcalca by \grcalcd
   \grcalcb = \grrow
   \advance \grcalcb by -1
   \multiply \grcalcb by \factor
   \grcalcc = \factor
   \advance \grcalcc by \hfactor
   \grcalcd = #1
   \advance \grcalcd by -1
   \multiply \grcalcd by \factor
   \put(\grcalca,\grcalcb){\oval(\grcalcd,\grcalcc)[t]}
   \advance \grcalcb by \factor
   \put(\grcalca,\grcalcb) {\line(0,-1){\qfactor}} 
   \advance \grcolumn by #1}
 \newcommand{\gwmuc}[1]{    
   \grcalca = \grcolumn
   \multiply \grcalca by \factor
   \advance \grcalca by \hfactor
   \grcalcb = \grrow
   \multiply \grcalcb by \factor
   \grcalcc = #1
   \advance \grcalcc by -1
   \multiply \grcalcc by \factor
   \put(\grcalca,\grcalcb){\line(1,0){\grcalcc}}
   \advance \grcalca by -\hfactor
   \grcalcd = \hfactor
   \multiply \grcalcd by #1
   \advance \grcalca by \grcalcd
   \grcalcc = \factor
   \advance \grcalcc by \hfactor
   \grcalcd = #1
   \advance \grcalcd by -1
   \multiply \grcalcd by \factor
   \put(\grcalca,\grcalcb){\oval(\grcalcd,\grcalcc)[b]}
   \advance \grcalcb by -\hfactor
   \advance \grcalcb by -\qfactor
   \put(\grcalca,\grcalcb) {\line(0,-1){\qfactor}} 
   \advance \grcolumn by #1}
 \newcommand{\gwcmc}[1]{   
   \grcalca = \grcolumn
   \multiply \grcalca by \factor
   \advance \grcalca by \hfactor
   \grcalcb = \grrow
   \multiply \grcalcb by \factor
   \advance \grcalcb by -\factor
   \grcalcc = #1
   \advance \grcalcc by -1
   \multiply \grcalcc by \factor
   \put(\grcalca,\grcalcb){\line(1,0){\grcalcc}}
   \grcalcd = #1
   \advance \grcalcd by -1
   \multiply \grcalcd by \hfactor
   \advance \grcalca by \grcalcd
   \grcalcc = \factor
   \advance \grcalcc by \hfactor
   \grcalcd = #1
   \advance \grcalcd by -1
   \multiply \grcalcd by \factor
   \put(\grcalca,\grcalcb){\oval(\grcalcd,\grcalcc)[t]}
   \advance \grcalcb by \factor
   \put(\grcalca,\grcalcb) {\line(0,-1){\qfactor}} 
   \advance \grcolumn by #1}
 \newcommand{\gev}{  
   \grcalca = \grcolumn
   \advance \grcalca by 1
   \multiply \grcalca by \factor
   \grcalcb = \grrow
   \multiply \grcalcb by \factor
   \grcalcc = \factor
   \advance \grcalcc by \hfactor
   \put(\grcalca,\grcalcb){\oval(\factor,\grcalcc)[b]}
   \advance \grcolumn by 2}
 \newcommand{\gdb}{   
   \grcalca = \grcolumn
   \advance \grcalca by 1
   \multiply \grcalca by \factor
   \grcalcb = \grrow
   \advance \grcalcb by -1
   \multiply \grcalcb by \factor
   \grcalcc = \factor
   \advance \grcalcc by \hfactor
   \put(\grcalca,\grcalcb){\oval(\factor,\grcalcc)[t]}
   \advance \grcolumn by 2}
 \newcommand{\gwev}[1]{    
   \grcalca = \grcolumn
   \multiply \grcalca by \factor
   \grcalcd = \hfactor
   \multiply \grcalcd by #1
   \advance \grcalca by \grcalcd
   \grcalcb = \grrow
   \multiply \grcalcb by \factor
   \grcalcc = \factor
   \advance \grcalcc by \hfactor
   \grcalcd = #1
   \advance \grcalcd by -1
   \multiply \grcalcd by \factor
   \put(\grcalca,\grcalcb){\oval(\grcalcd,\grcalcc)[b]}
   \advance \grcolumn by #1}
 \newcommand{\gwdb}[1]{   
   \grcalca = \grcolumn
   \multiply \grcalca by \factor
   \grcalcd = \hfactor
   \multiply \grcalcd by #1
   \advance \grcalca by \grcalcd
   \grcalcb = \grrow
   \advance \grcalcb by -1
   \multiply \grcalcb by \factor
   \grcalcc = \factor
   \advance \grcalcc by \hfactor
   \grcalcd = #1
   \advance \grcalcd by -1
   \multiply \grcalcd by \factor
   \put(\grcalca,\grcalcb){\oval(\grcalcd,\grcalcc)[t]}
   \advance \grcolumn by #1}
 \newcommand{\gbr}{
   \grcalca = \grcolumn
   \multiply \grcalca by \factor
   \advance \grcalca by \hfactor
   \grcalcb = \grcalca
   \advance \grcalcb by \hfactor
   \grcalcc = \grcalca
   \advance \grcalcc by \factor
   \grcalcd = \grrow
   \multiply \grcalcd by \factor
   \grcalce = \grcalcd
   \advance \grcalce by -\tfactor
   \grcalcf = \grcalcd
   \advance \grcalcf by -\hfactor
   \grcalcg = \grcalce
   \advance \grcalcg by -\tfactor
   \grcalch = \grcalcd
   \advance \grcalch by -\factor
   \qbezier(\grcalca,\grcalcd)(\grcalca,\grcalce)(\grcalcb,\grcalcf) 
   \qbezier(\grcalcb,\grcalcf)(\grcalcc,\grcalcg)(\grcalcc,\grcalch) 
   \advance \grcalcf by -\dfactor
   \advance \grcalcb by -\sfactor
   \qbezier(\grcalca,\grcalch)(\grcalca,\grcalcg)(\grcalcb,\grcalcf) 
   \advance \grcalcf by \sfactor
   \advance \grcalcb by \tfactor
   \qbezier(\grcalcc,\grcalcd)(\grcalcc,\grcalce)(\grcalcb,\grcalcf) 
   \advance \grcolumn by 2}
 \newcommand{\gibr}{
   \grcalca = \grcolumn
   \multiply \grcalca by \factor
   \advance \grcalca by \hfactor
   \grcalcb = \grcalca
   \advance \grcalcb by \hfactor
   \grcalcc = \grcalca
   \advance \grcalcc by \factor
   \grcalcd = \grrow
   \multiply \grcalcd by \factor
   \grcalce = \grcalcd
   \advance \grcalce by -\tfactor
   \grcalcf = \grcalcd
   \advance \grcalcf by -\hfactor
   \grcalcg = \grcalce
   \advance \grcalcg by -\tfactor
   \grcalch = \grcalcd
   \advance \grcalch by -\factor
   \qbezier(\grcalcc,\grcalcd)(\grcalcc,\grcalce)(\grcalcb,\grcalcf) 
   \qbezier(\grcalcb,\grcalcf)(\grcalca,\grcalcg)(\grcalca,\grcalch) 
   \advance \grcalcf by -\dfactor
   \advance \grcalcb by \sfactor
   \qbezier(\grcalcc,\grcalch)(\grcalcc,\grcalcg)(\grcalcb,\grcalcf) 
   \advance \grcalcf by \sfactor
   \advance \grcalcb by -\tfactor
   \qbezier(\grcalca,\grcalcd)(\grcalca,\grcalce)(\grcalcb,\grcalcf) 
   \advance \grcolumn by 2}
\newcommand{\gsy}{
   \grcalca = \grcolumn
   \multiply \grcalca by \factor
   \advance \grcalca by \hfactor
   \grcalcb = \grcalca
   \advance \grcalcb by \hfactor
   \grcalcc = \grcalca
   \advance \grcalcc by \factor
   \grcalcd = \grrow
   \multiply \grcalcd by \factor
   \grcalce = \grcalcd
   \advance \grcalce by -\tfactor
   \grcalcf = \grcalcd
   \advance \grcalcf by -\hfactor
   \grcalcg = \grcalce
   \advance \grcalcg by -\tfactor
   \grcalch = \grcalcd
   \advance \grcalch by -\factor
   \qbezier(\grcalcc,\grcalcd)(\grcalcc,\grcalce)(\grcalcb,\grcalcf) 
   \qbezier(\grcalcb,\grcalcf)(\grcalca,\grcalcg)(\grcalca,\grcalch) 
   \advance \grcalcf by -\dfactor
   \advance \grcalcb by \sfactor
   \qbezier(\grcalcc,\grcalch)(\grcalcc,\grcalcg)(\grcalcb,\grcalcf) 
   \qbezier(\grcalca,\grcalcd)(\grcalca,\grcalce)(\grcalcb,\grcalcf) 
   \advance \grcolumn by 2}
 \newcommand{\gbrc}{
   \grcalca = \grcolumn
   \multiply \grcalca by \factor
   \advance \grcalca by \hfactor
   \grcalcb = \grcalca
   \advance \grcalcb by \hfactor
   \grcalcc = \grcalca
   \advance \grcalcc by \factor
   \grcalcd = \grrow
   \multiply \grcalcd by \factor
   \grcalce = \grcalcd
   \advance \grcalce by -\tfactor
   \grcalcf = \grcalcd
   \advance \grcalcf by -\hfactor
   \grcalcg = \grcalce
   \advance \grcalcg by -\tfactor
   \grcalch = \grcalcd
   \advance \grcalch by -\factor
   \put(\grcalcb,\grcalcf){\circle{\hfactor}}
   \qbezier(\grcalca,\grcalcd)(\grcalca,\grcalce)(\grcalcb,\grcalcf) 
   \qbezier(\grcalcb,\grcalcf)(\grcalcc,\grcalcg)(\grcalcc,\grcalch) 
   \advance \grcalcf by -\dfactor
   \advance \grcalcb by -\sfactor
   \qbezier(\grcalca,\grcalch)(\grcalca,\grcalcg)(\grcalcb,\grcalcf) 
   \advance \grcalcf by \sfactor
   \advance \grcalcb by \tfactor
   \qbezier(\grcalcc,\grcalcd)(\grcalcc,\grcalce)(\grcalcb,\grcalcf) 
   \advance \grcolumn by 2}
 \newcommand{\gibrc}{
   \grcalca = \grcolumn
   \multiply \grcalca by \factor
   \advance \grcalca by \hfactor
   \grcalcb = \grcalca
   \advance \grcalcb by \hfactor
   \grcalcc = \grcalca
   \advance \grcalcc by \factor
   \grcalcd = \grrow
   \multiply \grcalcd by \factor
   \grcalce = \grcalcd
   \advance \grcalce by -\tfactor
   \grcalcf = \grcalcd
   \advance \grcalcf by -\hfactor
   \grcalcg = \grcalce
   \advance \grcalcg by -\tfactor
   \grcalch = \grcalcd
   \advance \grcalch by -\factor
   \put(\grcalcb,\grcalcf){\circle{\hfactor}}
   \qbezier(\grcalcc,\grcalcd)(\grcalcc,\grcalce)(\grcalcb,\grcalcf) 
   \qbezier(\grcalcb,\grcalcf)(\grcalca,\grcalcg)(\grcalca,\grcalch) 
   \advance \grcalcf by -\dfactor
   \advance \grcalcb by \sfactor
   \qbezier(\grcalcc,\grcalch)(\grcalcc,\grcalcg)(\grcalcb,\grcalcf) 
   \advance \grcalcf by \sfactor
   \advance \grcalcb by -\tfactor
   \qbezier(\grcalca,\grcalcd)(\grcalca,\grcalce)(\grcalcb,\grcalcf) 
   \advance \grcolumn by 2}
 \newcommand{\gu}[1]{
   \grcalca = \grcolumn
   \multiply \grcalca by \factor
   \grcalcd = \hfactor
   \multiply \grcalcd by #1
   \advance \grcalca by \grcalcd
   \grcalcb = \grrow
   \advance \grcalcb by -1
   \multiply \grcalcb by \factor
   \put(\grcalca,\grcalcb) {\line(0,1){\hfactor}} 
   \advance \grcalcb by \hfactor
   \put(\grcalca,\grcalcb) {\circle*{3}}
   \advance \grcolumn by #1}
 \newcommand{\gcu}[1]{
   \grcalca = \grcolumn
   \multiply \grcalca by \factor
   \grcalcd = \hfactor
   \multiply \grcalcd by #1
   \advance \grcalca by \grcalcd
   \grcalcb = \grrow
   \multiply \grcalcb by \factor
   \put(\grcalca,\grcalcb) {\line(0,-1){\hfactor}} 
   \advance \grcalcb by -\hfactor
   \put(\grcalca,\grcalcb) {\circle*{3}}
   \advance \grcolumn by #1}
 \newcommand{\gmp}[1]{
   \grcalca = \grcolumn
   \multiply \grcalca by \factor
   \advance \grcalca by \hfactor
   \grcalcb = \grrow
   \multiply \grcalcb by \factor
   \put(\grcalca,\grcalcb) {\line(0,-1){\dfactor}} 
   \advance \grcalcb by -\factor
   \put(\grcalca,\grcalcb) {\line(0,1){\dfactor}} 
   \advance \grcalcb by \hfactor
   \grcalcc = \factor
   \advance \grcalcc by -\qfactor
   \put(\grcalca,\grcalcb) {\circle{\grcalcc}}
   \put(\grcalca,\grcalcb) {\makebox(0,0){$\scriptstyle #1$}}
   \advance \grcolumn by 1}
 \newcommand{\gbmp}[1]{
   \grcalca = \grcolumn
   \multiply \grcalca by \factor
   \advance \grcalca by \hfactor
   \grcalcb = \grrow
   \multiply \grcalcb by \factor
   \put(\grcalca,\grcalcb) {\line(0,-1){\dfactor}} 
   \advance \grcalcb by -\factor
   \put(\grcalca,\grcalcb) {\line(0,1){\dfactor}} 
   \advance \grcalca by -\hfactor
   \advance \grcalca by \dfactor
   \advance \grcalcb by \dfactor
   \grcalcc = \factor
   \advance \grcalcc by -\sfactor
   \put(\grcalca,\grcalcb) {\framebox(\grcalcc,\grcalcc){$\scriptstyle #1$}}
   \advance \grcolumn by 1}
 \newcommand{\gbmpt}[1]{
   \grcalca = \grcolumn
   \multiply \grcalca by \factor
   \advance \grcalca by \hfactor
   \grcalcb = \grrow
   \multiply \grcalcb by \factor
   \put(\grcalca,\grcalcb) {\line(0,-1){\dfactor}} 
   \advance \grcalcb by -\factor
   \advance \grcalca by -\hfactor
   \advance \grcalca by \dfactor
   \advance \grcalcb by \dfactor
   \grcalcc = \factor
   \advance \grcalcc by -\sfactor
   \put(\grcalca,\grcalcb) {\framebox(\grcalcc,\grcalcc){$\scriptstyle #1$}}
   \advance \grcolumn by 1}
 \newcommand{\gbmpb}[1]{
   \grcalca = \grcolumn
   \multiply \grcalca by \factor
   \advance \grcalca by \hfactor
   \grcalcb = \grrow
   \multiply \grcalcb by \factor
   \advance \grcalcb by -\factor
   \put(\grcalca,\grcalcb) {\line(0,1){\dfactor}} 
   \advance \grcalca by -\hfactor
   \advance \grcalca by \dfactor
   \advance \grcalcb by \dfactor
   \grcalcc = \factor
   \advance \grcalcc by -\sfactor
   \put(\grcalca,\grcalcb) {\framebox(\grcalcc,\grcalcc){$\scriptstyle #1$}}
   \advance \grcolumn by 1}
 \newcommand{\gbmpn}[1]{
   \grcalca = \grcolumn
   \multiply \grcalca by \factor
   \advance \grcalca by \hfactor
   \grcalcb = \grrow
   \multiply \grcalcb by \factor
   \advance \grcalcb by -\factor
   \advance \grcalca by -\hfactor
   \advance \grcalca by \dfactor
   \advance \grcalcb by \dfactor
   \grcalcc = \factor
   \advance \grcalcc by -\sfactor
   \put(\grcalca,\grcalcb) {\framebox(\grcalcc,\grcalcc){$\scriptstyle #1$}}
   \advance \grcolumn by 1}
 \newcommand{\glmptb}{    
   \grcalca = \grcolumn
   \multiply \grcalca by \factor
   \advance \grcalca by \hfactor
   \grcalcb = \grrow
   \multiply \grcalcb by \factor
   \put(\grcalca,\grcalcb) {\line(0,-1){\dfactor}} 
   \advance \grcalcb by -\factor
   \put(\grcalca,\grcalcb) {\line(0,1){\dfactor}} 
   \advance \grcalca by -\hfactor
   \advance \grcalca by \dfactor
   \advance \grcalcb by \dfactor
   \put(\grcalca,\grcalcb) {\line(1,0){\factor}} 
   \advance \grcalcb by \factor
   \advance \grcalcb by -\sfactor
   \put(\grcalca,\grcalcb) {\line(1,0){\factor}} 
   \grcalcc = \factor
   \advance \grcalcc by -\sfactor
   \put(\grcalca,\grcalcb) {\line(0,-1){\grcalcc}} 
   \advance \grcolumn by 1}
 \newcommand{\glmpt}{    
   \grcalca = \grcolumn
   \multiply \grcalca by \factor
   \advance \grcalca by \hfactor
   \grcalcb = \grrow
   \multiply \grcalcb by \factor
   \put(\grcalca,\grcalcb) {\line(0,-1){\dfactor}} 
   \advance \grcalca by -\hfactor
   \advance \grcalca by \dfactor
   \advance \grcalcb by -\dfactor
   \put(\grcalca,\grcalcb) {\line(1,0){\factor}} 
   \advance \grcalcb by -\factor
   \advance \grcalcb by \sfactor
   \put(\grcalca,\grcalcb) {\line(1,0){\factor}} 
   \grcalcc = \factor
   \advance \grcalcc by -\sfactor
   \put(\grcalca,\grcalcb) {\line(0,1){\grcalcc}} 
   \advance \grcolumn by 1}
 \newcommand{\glmpb}{    
   \grcalca = \grcolumn
   \multiply \grcalca by \factor
   \advance \grcalca by \hfactor
   \grcalcb = \grrow
   \multiply \grcalcb by \factor
   \advance \grcalcb by -\factor
   \put(\grcalca,\grcalcb) {\line(0,1){\dfactor}} 
   \advance \grcalca by -\hfactor
   \advance \grcalca by \dfactor
   \advance \grcalcb by \dfactor
   \put(\grcalca,\grcalcb) {\line(1,0){\factor}} 
   \advance \grcalcb by \factor
   \advance \grcalcb by -\sfactor
   \put(\grcalca,\grcalcb) {\line(1,0){\factor}} 
   \grcalcc = \factor
   \advance \grcalcc by -\sfactor
   \put(\grcalca,\grcalcb) {\line(0,-1){\grcalcc}} 
   \advance \grcolumn by 1}
 \newcommand{\glmp}{    
   \grcalca = \grcolumn
   \multiply \grcalca by \factor
   \advance \grcalca by \dfactor
   \grcalcb = \grrow
   \multiply \grcalcb by \factor
   \advance \grcalcb by -\dfactor
   \put(\grcalca,\grcalcb) {\line(1,0){\factor}} 
   \advance \grcalcb by -\factor
   \advance \grcalcb by \sfactor
   \put(\grcalca,\grcalcb) {\line(1,0){\factor}} 
   \grcalcc = \factor
   \advance \grcalcc by -\sfactor
   \put(\grcalca,\grcalcb) {\line(0,1){\grcalcc}} 
   \advance \grcolumn by 1}
 \newcommand{\gcmptb}{    
   \grcalca = \grcolumn
   \multiply \grcalca by \factor
   \advance \grcalca by \hfactor
   \grcalcb = \grrow
   \multiply \grcalcb by \factor
   \put(\grcalca,\grcalcb) {\line(0,-1){\dfactor}} 
   \advance \grcalcb by -\factor
   \put(\grcalca,\grcalcb) {\line(0,1){\dfactor}} 
   \advance \grcalca by -\hfactor
   \advance \grcalcb by \dfactor
   \put(\grcalca,\grcalcb) {\line(1,0){\factor}} 
   \advance \grcalcb by \factor
   \advance \grcalcb by -\sfactor
   \put(\grcalca,\grcalcb) {\line(1,0){\factor}} 
   \advance \grcolumn by 1}
\newcommand{\gmpcu}[1]{
   \grcalca = \grcolumn
   \multiply \grcalca by \factor
   \advance \grcalca by \hfactor
   \grcalcb = \grrow
   \multiply \grcalcb by \factor
   \put(\grcalca,\grcalcb) {\line(0,-1){\dfactor}} 
   \advance \grcalcb by -\factor
   \advance \grcalcb by \hfactor
   \grcalcc = \factor
   \advance \grcalcc by -\qfactor
   \put(\grcalca,\grcalcb) {\circle{\grcalcc}}
   \put(\grcalca,\grcalcb) {\makebox(0,0){$\scriptstyle #1$}}
   \advance \grcolumn by 1}
\newcommand{\gmpu}[1]{
   \grcalca = \grcolumn
   \multiply \grcalca by \factor
   \advance \grcalca by \hfactor
   \grcalcb = \grrow
   \multiply \grcalcb by \factor
   \advance \grcalcb by -\factor
   \put(\grcalca,\grcalcb) {\line(0,1){\dfactor}} 
   \advance \grcalcb by \hfactor
   \grcalcc = \factor
   \advance \grcalcc by -\qfactor
   \put(\grcalca,\grcalcb) {\circle{\grcalcc}}
   \put(\grcalca,\grcalcb) {\makebox(0,0){$\scriptstyle #1$}}
   \advance \grcolumn by 1}      
 \newcommand{\gcmpt}{    
   \grcalca = \grcolumn
   \multiply \grcalca by \factor
   \advance \grcalca by \hfactor
   \grcalcb = \grrow
   \multiply \grcalcb by \factor
   \put(\grcalca,\grcalcb) {\line(0,-1){\dfactor}} 
   \advance \grcalcb by -\factor
   \advance \grcalca by -\hfactor
   \advance \grcalcb by \dfactor
   \put(\grcalca,\grcalcb) {\line(1,0){\factor}} 
   \advance \grcalcb by \factor
   \advance \grcalcb by -\sfactor
   \put(\grcalca,\grcalcb) {\line(1,0){\factor}} 
   \advance \grcolumn by 1}
 \newcommand{\gcmpb}{    
   \grcalca = \grcolumn
   \multiply \grcalca by \factor
   \advance \grcalca by \hfactor
   \grcalcb = \grrow
   \multiply \grcalcb by \factor
   \advance \grcalcb by -\factor
   \put(\grcalca,\grcalcb) {\line(0,1){\dfactor}} 
   \advance \grcalca by -\hfactor
   \advance \grcalcb by \dfactor
   \put(\grcalca,\grcalcb) {\line(1,0){\factor}} 
   \advance \grcalcb by \factor
   \advance \grcalcb by -\sfactor
   \put(\grcalca,\grcalcb) {\line(1,0){\factor}} 
   \advance \grcolumn by 1}
 \newcommand{\gcmp}{    
   \grcalca = \grcolumn
   \multiply \grcalca by \factor
   \grcalcb = \grrow
   \multiply \grcalcb by \factor
   \advance \grcalcb by -\factor
   \advance \grcalcb by \dfactor
   \put(\grcalca,\grcalcb) {\line(1,0){\factor}} 
   \advance \grcalcb by \factor
   \advance \grcalcb by -\sfactor
   \put(\grcalca,\grcalcb) {\line(1,0){\factor}} 
   \advance \grcolumn by 1}
 \newcommand{\grmptb}{    
   \grcalca = \grcolumn
   \multiply \grcalca by \factor
   \advance \grcalca by \hfactor
   \grcalcb = \grrow
   \multiply \grcalcb by \factor
   \put(\grcalca,\grcalcb) {\line(0,-1){\dfactor}} 
   \advance \grcalcb by -\factor
   \put(\grcalca,\grcalcb) {\line(0,1){\dfactor}} 
   \advance \grcalca by \hfactor
   \advance \grcalca by -\dfactor
   \advance \grcalcb by \dfactor
   \put(\grcalca,\grcalcb) {\line(-1,0){\factor}} 
   \advance \grcalcb by \factor
   \advance \grcalcb by -\sfactor
   \put(\grcalca,\grcalcb) {\line(-1,0){\factor}} 
   \grcalcc = \factor
   \advance \grcalcc by -\sfactor
   \put(\grcalca,\grcalcb) {\line(0,-1){\grcalcc}} 
   \advance \grcolumn by 1}
 \newcommand{\grmpt}{    
   \grcalca = \grcolumn
   \multiply \grcalca by \factor
   \advance \grcalca by \hfactor
   \grcalcb = \grrow
   \multiply \grcalcb by \factor
   \put(\grcalca,\grcalcb) {\line(0,-1){\dfactor}} 
   \advance \grcalca by \hfactor
   \advance \grcalca by -\dfactor
   \advance \grcalcb by -\dfactor
   \put(\grcalca,\grcalcb) {\line(-1,0){\factor}} 
   \advance \grcalcb by -\factor
   \advance \grcalcb by \sfactor
   \put(\grcalca,\grcalcb) {\line(-1,0){\factor}} 
   \grcalcc = \factor
   \advance \grcalcc by -\sfactor
   \put(\grcalca,\grcalcb) {\line(0,1){\grcalcc}} 
   \advance \grcolumn by 1}
 \newcommand{\grmpb}{    
   \grcalca = \grcolumn
   \multiply \grcalca by \factor
   \advance \grcalca by \hfactor
   \grcalcb = \grrow
   \multiply \grcalcb by \factor
   \advance \grcalcb by -\factor
   \put(\grcalca,\grcalcb) {\line(0,1){\dfactor}} 
   \advance \grcalca by \hfactor
   \advance \grcalca by -\dfactor
   \advance \grcalcb by \dfactor
   \put(\grcalca,\grcalcb) {\line(-1,0){\factor}} 
   \advance \grcalcb by \factor
   \advance \grcalcb by -\sfactor
   \put(\grcalca,\grcalcb) {\line(-1,0){\factor}} 
   \grcalcc = \factor
   \advance \grcalcc by -\sfactor
   \put(\grcalca,\grcalcb) {\line(0,-1){\grcalcc}} 
   \advance \grcolumn by 1}
 \newcommand{\grmp}{    
   \grcalca = \grcolumn
   \multiply \grcalca by \factor
   \advance \grcalca by \factor
   \advance \grcalca by -\dfactor
   \grcalcb = \grrow
   \multiply \grcalcb by \factor
   \advance \grcalcb by -\dfactor
   \put(\grcalca,\grcalcb) {\line(-1,0){\factor}} 
   \advance \grcalcb by -\factor
   \advance \grcalcb by \sfactor
   \put(\grcalca,\grcalcb) {\line(-1,0){\factor}} 
   \grcalcc = \factor
   \advance \grcalcc by -\sfactor
   \put(\grcalca,\grcalcb) {\line(0,1){\grcalcc}} 
   \advance \grcolumn by 1}
 \newcommand{\gwmuh}[3]{    
   \grcalca = \grcolumn
   \multiply \grcalca by \factor
   \grcalcb = #2
   \advance \grcalcb by #3
   \multiply \grcalcb by \qfactor
   \advance \grcalca by \grcalcb
   \grcalcb = \grrow
   \multiply \grcalcb by \factor
   \grcalcc = #3
   \advance \grcalcc by -#2
   \multiply \grcalcc by \hfactor
   \grcalcd = \factor
   \advance \grcalcd by \hfactor
   \put(\grcalca,\grcalcb){\oval(\grcalcc,\grcalcd)[b]}
   \grcalca = \grcolumn
   \multiply \grcalca by \factor
   \grcalcc = #1
   \multiply \grcalcc by \hfactor
   \advance \grcalca by \grcalcc
   \advance \grcalcb by -\hfactor
   \advance \grcalcb by -\qfactor
   \put(\grcalca,\grcalcb) {\line(0,-1){\qfactor}} 
   \advance \grcolumn by #1}
 \newcommand{\gwcmh}[3]{   
   \grcalca = \grcolumn
   \multiply \grcalca by \factor
   \grcalcb = #2
   \advance \grcalcb by #3
   \multiply \grcalcb by \qfactor
   \advance \grcalca by \grcalcb
   \grcalcb = \grrow
   \advance \grcalcb by -1
   \multiply \grcalcb by \factor
   \grcalcc = #3
   \advance \grcalcc by -#2
   \multiply \grcalcc by \hfactor
   \grcalcd = \factor
   \advance \grcalcd by \hfactor
   \put(\grcalca,\grcalcb){\oval(\grcalcc,\grcalcd)[t]}
   \grcalca = \grcolumn
   \multiply \grcalca by \factor
   \grcalcc = #1
   \multiply \grcalcc by \hfactor
   \advance \grcalca by \grcalcc
   \advance \grcalcb by \factor
   \put(\grcalca,\grcalcb) {\line(0,-1){\qfactor}} 
   \advance \grcolumn by #1}
 \newcommand{\gsbox}[1]{
   \grcalca = \grcolumn
   \multiply \grcalca by \factor
   \grcalcb = \grrow
   \multiply \grcalcb by \factor
   \advance \grcalcb by -\factor
   \grcalcc = #1
   \multiply \grcalcc by \factor
   \grcalcd = \factor
   \put(\grcalca,\grcalcb){\framebox(\grcalcc,\grcalcd){}}}
\begin{document}
\title[Biproducts of rank 2]
{Biproduct quasi-Hopf algebras of rank $2$}
\author{D. Bulacu}
\address{Faculty of Mathematics and Informatics, University
of Bucharest, Str. Academiei 14, RO-010014 Bucharest 1, Romania}
\email{daniel.bulacu@fmi.unibuc.ro}
\author{M. Misurati}
\address{University of Ferrara, Department of Mathematics, Via Machiavelli
35, Ferrara, I-44121, Italy}
\email{matteo.misurati@unife.it}
\thanks{
The first author was supported by the grant PCE47-2022 of UEFISCDI,
project PN-III-P4-PCE-2021-0282.} 

\begin{abstract}
Inspired by the work of Radford, for $H$ an arbitrary quasi-Hopf algebra we describe all the Hopf algebras of dimension 
$2$ within the braided category of left Yetter-Drinfeld modules over $H$ and determine the biproduct quasi-Hopf algebras defined by them. 
Classes of such biproduct quasi-Hopf algebras are obtained by taking $H$ as the Hopf algebra of functions on a group $G$, 
endowed with the quasi-Hopf algebra structure provided by a non-trivial 
$3$-cocycle on $G$ (especially when $G$ is a finite cyclic group or the double dihedral group), or as being a quasi-Hopf algebra with 
radical of codimension two. In this way we uncover new classes  of basic quasi-Hopf algebras of even dimension, as well as new classes of tensor categories.         
\end{abstract}
\maketitle
\section*{Introduction}\selabel{intro}
For a pointed Hopf algebra $H$, the associated graded algebra ${\rm gr}(H)$ of $H$ is a Hopf algebra with projection, and so it is determined by a braided 
Hopf algebra $R$ within a category of Yetter-Drinfeld modules defined by a Hopf group algebra. Note that, the Yetter-Drinfeld categories were introduced 
by Yetter in \cite{y} under the name of crossings bimodules, and that their actual name is due to Radford and Towber \cite{rt}. The lifting method consists 
of studying $R$, then transfer the properties we found to ${\rm gr}(H)$ and finally lift them to $H$. This method has been proved to be an extremely powerful 
tool in classifying pointed Hopf algebras; see the survey papers \cite{a, ai}.

The lifting method was used for the first time in the setting of quasi-Hopf algebras by Etingof and Gelaki in \cite{eg3,eg5,eg4}. 
Quasi-Hopf algebras were introduced 
by Drinfeld \cite{dri87, dri}; for short, $H$ is an algebra for which its category of representations ${}_H{\cal M}$ is monoidal in such a way that the 
forgetful functor from ${}_H{\cal M}$ to the category of $k$-vector spaces is monoidal. This fact was exploited by Majid in \cite{m1} in order to introduce 
the Yetter-Drinfeld module category over a quasi-Hopf algebra $H$, ${}_H^H{\cal YD}$, as the categorical center of ${}_H{\cal M}$. Afterwards, in \cite{bn}, it is 
proved that to a Hopf algebra $R$ in ${}_H^H{\cal YD}$ we can associate a quasi-Hopf algebra $R\times H$, called the biproduct of $R$ and $H$. 
As in the quasi-Hopf case we cannot deal with the coradical filtration, we must consider the dual situation (this idea was exploited for the first time in \cite{eg3}); namely, to consider the filtration of an algebra by the powers of the Jacobson radical instead of the coradical filtration. 
In the case when $H$ is a basic quasi-Hopf algebra 
(the dual notion of pointed in the Hopf case) the Jacobson radical $J$ of $H$ is a quasi-Hopf ideal of $H$ and the graded algebra ${\rm gr}(H)$ is a biproduct 
quasi-Hopf algebra of $R$ and $H_0:=\frac{H}{J}$. We must mention that $H_0$ is a group Hopf algebra of a certain group $G$, seen as a quasi-Hopf algebra 
via a non-trivial $3$-cocycle on $G$, and that $R$ is a braided Hopf algebra in ${}_{H_0}^{H_0}{\cal YD}$. Therefore, we have a lifting method in the quasi-Hopf case, too. Apart from the papers mentioned above, it has been also used in \cite{an,bm}. 

This article started from the desire to have a quasi-Hopf analogue for $H_4$, the famous Sweedler $4$-dimensional Hopf algebra. 
It is well known that $H_4$ is a biproduct between a $2$-dimensional braided Hopf algebra $R$ and $k[C_2]$, the group Hopf algebra of $k$ and the cyclic group of order two $C_2$. Thus, in order to have a quasi-Hopf analogue for $H_4$, we must replace $k[C_2]$ by a quasi-Hopf algebra of dimension two, 
that is by $k_\phi[C_2]$: there is only one non-trivial 
$3$-cocycle $\phi$ on $C_2$, provided that $k$ has characteristic different from $2$, and the unique quasi-Hopf algebra of dimension 
$2$ is $k_\phi[C_2]$, the group Hopf algebra $k[C_2]$ considered as a quasi-Hopf algebra via the reassociator determined by $\phi$. 
Resuming, the quasi-Hopf analogue of $H_4$ can be obtained by looking at the $2$-dimensional Hopf algebras within 
${}_{k_\phi[C_2]}^{k_\phi[C_2]}{\cal YD}$. Inspired by the work of Radford \cite{Rbke}, we do something more general: we classify the $2$-dimensional 
Hopf algebras $R$ within the category of Yetter-Drinfeld modules ${}_H^H{\cal YD}$ over an arbitrary quasi-Hopf algebra $H$. More precisely, 
in Section \ref{2dH} we show that any such Hopf algebra $R$ is isomorphic to either the group Hopf algebra $k[C_2]$, with trivial action and coaction, or to a 
certain braided Hopf algebra $\tilde{B}_{\alpha, y}$, having the $H$-action and $H$-coaction determined by an algebra morphism $\alpha$, and 
respectively by an element $y$ of $H$; of course, $\alpha$ and $y$ must satisfy some relations. Coming back to our initial aim, we deduce that   
the quasi-Hopf analogue of $H_4$ is actually a twist deformation of $H_4$. Looking at the classification of certain tensor finite categories, we realized   
that our result follows as well from \cite[Corollary 3.6]{eg3}; furthermore, the quoted result contributes to the classification of quasi-Hopf algebras $H$ 
of dimension $4$: $H$ 
is either twist equivalent to $H_4$ or is a group Hopf algebra of $k$ and a group $G$ of order $4$, seen as quasi-Hopf algebra via a non-trivial $3$-cocycle on $G$. 
In \seref{class4dim}, we present a slightly different proof for the fact that ${\rm gr}(A)$ of a nonsemisimple quasi-Hopf algebra $A$ of dimension $4$ is always 
isomorphic to $H_4$, as a Hopf algebra; the technique we used is similar to the one in \cite{bm} (where we completed the classification of the quasi-Hopf algebras  of dimension $6$), and substitutes the categorical arguments used in \cite{eg3}. Then, we uncover when two twisted versions of 
$H_4$ are isomorphic as quasi-Hopf algebras; surprisingly, and different from the Hopf algebra case, our criterion gives rise to an infinite class of non-semisimple 
quasi-Hopf algebras of dimension $4$ which are not pairwise isomorphic (but, of course, all of them are twist equivalent to $H_4$). \seref{class4dim} ends with the 
description of the all 12 semisimple quasi-Hopf algebras of dimension $4$ which are not pairwise twist equivalent.  
 
Starting with \seref{funalgexp}, we move to applications. We first consider the group case, that is when $H$ is $k^G_\omega$, the algebra of functions on a finite 
group $G$ regarded as a quasi-Hopf algebra with the help of a normalized $3$-cocycle $\omega$ on $G$. By generalizing to the non-abelian case a result 
from \cite{hlyy}, we associate to $\omega$ and an element ${\mf g}$ in the center of $G$ a $2$-cocycle $\flat_{\mf g}\omega$ on $G$. Then we 
show that the non-trivial (this means, different from $k[C_2]$) $2$-dimensional braided Hopf algebras in 
${}_{k^G_\omega}^{k^G_\omega}{\cal YD}$ are determined by couples $(\mf{g}, \rho)$ 
consisting of an element $\mfg$ in the center of $G$ and a map $\rho: G\ra k^*$ obeying $\rho(e)=1$, $\rho(\mfg)=-1$ and $\flat_\mfg\omega=\partial\rho$ (the latest 
condition says that $\flat_{\mf g}\omega$ must by a coboundary $2$-cocycle); see \prref{fun chara}. We explicitly describe these pairs $(\mf{g}, \rho)$ 
in the case when $G$ is a finite abelian group (\prref{chara abel}). Towards this end, we use the concrete description of the representative 
$3$-cocycles on $G$ given in \cite{hlyy}, and see that the finding of $(\mf{g}, \rho)$ reduces to the solving of a system of linear congruences or, equivalently, 
to the solving of a homogeneous linear system $XA=0$, where $A$ is a certain matrix. The system $XA=0$ can be solved quickly if we know the 
Schmidt decomposition of $A$, for instance when $G$ is the direct product of a 3 cyclic groups. In general, we indicate a way for the solving of the system 
$AX=0$ and we exemplify it for various finite abelian groups. To argue the complexity of the problem we are dealing with, note that in the case when $G$ is a 
product of two cyclic groups we reduce our problem to the finding of the rational points of a conic section. We can do this by reducing the conic to its standard form, and then facing with combinations between Diophantine equations and Pell-type equations; note that, plenty of parameters are involved here. 
An "ideal" case that can occur is the one in which the conic section is defined by a quadratic form, as in this case the Hasse-Minkowski theorem applies (we refer to \cite{HCohen} for the quoted theorem). We illustrate all these aspects in \seref{Appendix}, where a particular case is considered. We also present an example 
in the non-abelian case; namely, when $G$ is the double dihedral group $\bar D_n$, a group that realizes the group of quaternions in the case when $n=2$. Last but not least, in \seref{noncommnoncocommexp} we specialize our results to the case when $H$ is one of the quasi-Hopf algebras with radical of codimension $2$, previously  classified in \cite{eg3}.
\section{Preliminaries}
\setcounter{equation}{0}
\subsection{Quasi-bialgebras and quasi-Hopf algebras.}
We work over a field $k$. All algebras, linear
spaces, etc. will be over $k$; unadorned $\ot $ means $\ot_k$.
Following Drinfeld \cite{dri}, a quasi-bialgebra is
a quadruple $(H, \Delta , \va , \Phi )$ where $H$ is
an associative algebra with unit,
$\Phi$ is an invertible element in $H\ot H\ot H$, and
$\Delta :\ H\ra H\ot H$ and $\va :\ H\ra k$ are algebra
homomorphisms obeying the identities
\begin{eqnarray*}
&&(\Id_H \ot \Delta )(\Delta (h))=
\Phi (\Delta \ot \Id_H)(\Delta (h))\Phi ^{-1},\\
&&(\Id_H \ot \va )(\Delta (h))=h~~,~~
(\va \ot \Id_H)(\Delta (h))=h,
\end{eqnarray*}
for all $h\in H$, where
$\Phi$ is a normalized $3$-cocycle, in the sense that $(\Id \ot \va \ot \Id_H)(\Phi)=1\ot 1$ and 
\begin{eqnarray}
&&(1\ot \Phi)(\Id_H\ot \Delta \ot \Id_H)
(\Phi)(\Phi \ot 1)\nonumber\\
&&\hspace*{1.5cm}
=(\Id_H\ot \Id_H \ot \Delta )(\Phi )
(\Delta \ot \Id_H \ot \Id_H)(\Phi).\eqlabel{q3}
\end{eqnarray}
The map $\Delta$ is called the coproduct or the
comultiplication, $\va $ is the counit, and $\Phi $ is the
reassociator. As for Hopf algebras, we denote $\Delta (h)=h_1\ot h_2$,
but since $\Delta $ is only quasi-coassociative we adopt the
further convention (summation understood):
\[
(\Delta \ot \Id_H)(\Delta (h))=h_{11}\ot h_{12}\ot h_2~~,~~
(\Id_H\ot \Delta)(\Delta (h))=h_1\ot h_{21}\ot h_{22},
\]
for all $h\in H$. We will denote the tensor components of $\Phi$
by capital letters, and the ones of $\Phi^{-1}$ by lower case letters, namely
\begin{eqnarray*}
&&\Phi=X^1\ot X^2\ot X^3=T^1\ot T^2\ot T^3=
V^1\ot V^2\ot V^3=\cdots\\
&&\Phi^{-1}=x^1\ot x^2\ot x^3=t^1\ot t^2\ot t^3=
v^1\ot v^2\ot v^3=\cdots
\end{eqnarray*}
$H$ is called a quasi-Hopf
algebra if, moreover, there exists an
anti-morphism $S$ of the algebra
$H$ and elements $\a , \b \in
H$ such that, for all $h\in H$, we
have:
\begin{eqnarray}
&&
S(h_1)\a h_2=\va(h)\a
~~{\rm and}~~
h_1\b S(h_2)=\va (h)\b,\eqlabel{q5}\\ 
&&X^1\b S(X^2)\a X^3=1
~~{\rm and}~~
S(x^1)\a x^2\b S(x^3)=1.\eqlabel{q6}
\end{eqnarray}

Our definition of a quasi-Hopf algebra is different from the
one given by Drinfeld \cite{dri}, in the sense that we do not
require the antipode to be bijective. In the case where $H$ is finite dimensional
or quasi-triangular, bijectivity of the antipode follows from the other axioms,
see \cite{bc1} and \cite{bn3}, so the two definitions are equivalent. Anyway, the bijectivity 
of the antipode $S$ will be implicitly understood in the case when $S^{-1}$, the inverse 
of $S$, appears is formulas or computations.

Let $H$ be a quasi-bialgebra and $F\in H\otimes H$ an invertible element such that $\va(F^1)F^2=\va(F^2)F^1=1$, where 
$F=F^1\ot F^2$ is the formal notation for the tensor components of $F$; a similar notation we adopt for $F^{-1}$, $F^{-1}=G^1\ot G^2$. Note that 
$F$ is called a twist or gauge transformation for $H$. Let 
\begin{eqnarray*}
&&\Delta_F: H\ra H\ot H,~~\Delta_F(h)=F\Delta(h)F^{-1},\\
&&\Phi_F=(1_H\ot F)(\Id_H\ot \Delta)(F)\Phi (\Delta \ot \Id_H)(F^{-1})(F^{-1}\ot 1_H).
\end{eqnarray*}
Then $H_F:=(H, \Delta_F, \va , \Phi_F)$ is a quasi-bialgebra as well. Furthermore, if $H$ is a quasi-Hopf algebra then so is 
$H_F$ with $S_F=S$, $\a_F=S(G^1)\a G^2$ and $\b_F=F^1\b S(F^2)$.  

Two quasi-bialgebras (resp. quasi-Hopf algebras) $H$, $H'$ are twist equivalent if there is a twist 
$F\in H'\ot H'$ such that $H$, $H'_F$ are isomorphic as quasi-bialgebras (resp. quasi-Hopf algebras); see \cite[Definition 3.6]{bcpvo}  
for the definition of a quasi-bialgebra (resp. quasi-Hopf algebra) isomorphism.

Following \cite{hnRMP}, we define
\begin{eqnarray}
p_R&=&p^1\ot p^2=x^1\ot x^2\b S(x^3),\eqlabel{pr}\\
q_R&=&q^1\ot q^2=X^1\ot S^{-1}(\a X^3)X^2.\eqlabel{qr}
\end{eqnarray}
For all $h\in H$, we then have
\begin{eqnarray}
\Delta (h_1)p_R(1\ot S(h_2))&=&p_R(h\ot 1),\eqlabel{qr1}\\
(1\ot S^{-1}(h_2))q_R\Delta (h_1)&=&(h\ot 1)q_R.\eqlabel{qr1a}
\end{eqnarray}
Furthermore, the following relations hold: 
\begin{eqnarray}
(1\ot S^{-1}(p^2))q_R\Delta (p^1)&=&1\ot 1,\eqlabel{pqra}\\
\Delta (q^1)p_R(1\ot S(q^2))&=&1\ot 1.\eqlabel{pqr}
\end{eqnarray}

\subsection{Yetter-Drinfeld module categories and braided Hopf algebras}
The category of left Yetter-Drinfeld modules over a quasi-bialgebra $H$, denoted by ${}_H^H{\cal YD}$, was introduced by Majid in \cite{m1}. 
The objects of ${}_H^H{\cal YD}$ are left $H$-modules $M$ on which $H$ coacts from the left (we denote 
by $\lambda_M:\ M\to H\ot M,~~\lambda_M(m)=m_{[-1]}\ot m_{[0]}$ the left $H$-coaction on $M$) such that $\va(m_{[-1]})m_{[0]}=m$ and, 
for all $m\in M$,  
\begin{eqnarray}
&&\hspace*{-1.5cm}
X^1m_{[-1]}\ot (X^2\cd m_{[0]})_{[-1]}X^3
\ot (X^2\cd m_{[0]})_{[0]}\nonumber\\
&&\hspace*{1cm}=X^1(Y^1\cd m)_{[-1]_1}Y^2\ot X^2(Y^1\cd m)_{[-1]_2}Y^3
\ot X^3\cd (Y^1\cd m)_{[0]}.\label{y1}
\end{eqnarray}
It is compatible with the left $H$-module structure on $M$, in the sense that   
\begin{equation}\label{y3}
h_1m_{[-1]}\ot h_2\cd m_{[0]}=(h_1\cd m)_{[-1]}h_2\ot (h_1\cd m)_{[0]},~\forall~h\in H,~m\in M.
\end{equation}
When $H$ is a quasi-Hopf algebra with bijective antipode, ${}_H^H{\cal YD}$ is braided. 
The monoidal structure on ${}_H^H{\cal YD}$ is such that the forgetful functor ${}_H^H{\cal YD}\ra {}_H{\cal M}$ is strong monoidal. 
The coaction on the tensor product $M\ot N$ of two Yetter-Drinfeld modules $M$, $N$ is given, for all $m\in M$ and $n\in N$, by 
\begin{equation}\eqlabel{y4}
m\ot n\mapsto X^1(x^1Y^1\cdot m)_{[-1]} x^2(Y^2\cd n)_{[-1]}Y^3
\ot X^2\cd (x^1Y^1\cdot m)_{[0]}\ot X^3x^3\cdot (Y^2\cdot n)_{[0]}.
\end{equation}

In what follows, we call an algebra, coalgebra etc. in ${}_H^H{\cal YD}$ a Yetter-Drinfeld algebra, coalgebra etc. ($YD$-algebra, coalgebra etc. for short). 
Record that a YD-algebra is a Yetter-Drinfeld module $B$ equipped with a unital multiplication $\un{m}_B: B\ot B\ni b\ot b'\mapsto bb'\in B$, $1_B\in B$ is our   
notation for the unit of $B$, such that 
\begin{eqnarray}
&&h\cdot (\mf{b}\mf{b}')=(h_1\cdot \mf{b})(h_2\cdot \mf{b}')~,~h\cdot 1_B=\va(h)1_B,\label{modalg1}\\
&&(\mf{b}\mf{b}')\mf{b}{''}=(X^1\cdot \mf{b})[(X^2\cdot \mf{b}')(X^3\cdot \mf{b}{''})],\label{modalg2}\\
&&(\mf{b}\mf{b}')_{[-1]}\ot (\mf{b}\mf{b}')_{[0]}\nonumber\\
&&\hspace{2cm}
=X^1(x^1Y^1\cdot \mf{b})_{[-1]} x^2(Y^2\cd \mf{b}')_{[-1]}Y^3
\ot (X^2\cd (x^1Y^1\cdot \mf{b})_{[0]})(X^3x^3\cdot (Y^2\cdot \mf{b}')_{[0]}),\label{modalg3}
\end{eqnarray}
for all $h\in H$ and $\mf{b}, \mf{b}', \mf{b}{''}\in B$. Note that (\ref{modalg1}) expresses the $H$-linearity of the multiplication and unit morphisms of $B$, 
(\ref{modalg2}) is just the associativity of $\un{m}_B$ in ${}_H{\cal M}$ and ${}_H^H{\cal YD}$ and (\ref{modalg3}) is the $H$-colinearity of $\un{m}_B$ 
in ${}_H^H{\cal YD}$; see \equref{y4}. Also, (\ref{modalg1}, \ref{modalg2}) are the required condition on $B$ to be an algebra in ${}_H{\cal M}$ or, equivalently, 
an $H$-module algebra.  

Likewise, a YD-coalgebra is a Yetter-Drinfeld module $B$ endowed with a comultiplication 
$\un{\Delta}_B: B\ni b\mapsto b_{\un{1}}\ot b_{\un{2}}\in B\ot B$ and counit $\un{\va}_B: B\ra k$ such that, for all $b\in B$,
\begin{eqnarray}
&&\un{\Delta}_B(h\cdot \mf{b})=h_1\cdot \mf{b}_{\un{1}}\ot h_2\cdot \mf{b}_{\un{2}},~~\un{\va}_B(h\cdot \mf{b})=\va(h)\un{\va}_B(\mf{b}),~~
\un{\va}_B(\mf{b}_{\un{1}})\mf{b}_{\un{2}}=\mf{b}=\un{\va}_B(\mf{b}_{\un{2}})\mf{b}_{\un{1}},\eqlabel{ydc1}\\
&&X^1\cdot \mf{b}_{\un{1}\un{1}}\ot X^2\cdot \mf{b}_{\un{1}\un{2}}\ot X^3\cdot \mf{b}_{\un{2}}=
\mf{b}_{\un{1}}\ot \mf{b}_{\un{2}\un{1}}\ot \mf{b}_{\un{2}\un{2}},\eqlabel{ydc2}\\
&&\mf{b}_{[-1]}\ot \mf{b}_{[0]_{\un{1}}}\ot \mf{b}_{[0]_{\un{2}}}=X^1(x^1Y^1\cdot \mf{b}_{\un{1}})_{[-1]}x^2(Y^2\cdot \mf{b}_{\un{2}})_{[-1]}Y^3\nonumber\\
&&\hspace{2cm}\ot 
X^2\cdot (x^1Y^1\cdot \mf{b}_{\un{1}})_{[0]}\ot X^3x^3\cdot (Y^2\cdot \mf{b}_{\un{2}})_{[0]},~~
\un{\va}_B(\mf{b}_{[0]})\mf{b}_{[-1]}=\un{\va}_B(\mf{b})1.\eqlabel{ydc3}
\end{eqnarray} 
Observe that \equref{ydc1} expresses the fact that $\un{\Delta}_B$, $\un{\va}_B$ are left $H$-linear and $\un{\va}_B$ is counit for $\un{\Delta}_B$; otherwise stated, 
$B$ is a left $H$-module coalgebra, this means a coalgebra in ${}_H{\cal M}$. The relation \equref{ydc2} is the coassociativity of $\un{\Delta}_B$ in 
${}_H{\cal M}$ or, equivalently, in ${}_H^H{\cal YD}$ (since the forgetful functor from ${}_H^H{\cal YD}$ to ${}_H{\cal M}$ is strong monoidal), 
while \equref{ydc3} expresses the left $H$-colinearity of $\un{\Delta}_B$, $\un{\va}_B$ in ${}_H^H{\cal YD}$; see \equref{y4}. Also, we denoted 
$(\un{\Delta}_B\ot \Id_B)(\un{\Delta}_B(\mf{b})):=\mf{b}_{\un{1}\un{1}}\ot \mf{b}_{\un{1}\un{2}}\ot \mf{b}_{\un{2}}$ and 
$(\Id_B\ot \un{\Delta}_B)(\un{\Delta}_B(\mf{b})):=\mf{b}_{\un{1}}\ot \mf{b}_{\un{2}\un{1}}\ot \mf{b}_{\un{2}\un{2}}$. 

A bialgebra in ${}_H^H{\cal YD}$ is a YD-algebra $B$ that is at the same time a YD-coalgebra such that the comultiplication $\un{\Delta}_B$ and the counit 
$\un{\va}_B$ are algebra morphisms in ${}_H^H{\cal YD}$; $B\ot B$ is considered as an algebra in ${}_H^H{\cal YD}$ via the tensor product algebra structure. 
Explicitly, for all $b, b'\in B$, 
\begin{eqnarray}
&&~\un{\Delta}_B(1_{B})=1_B\ot 1_B~,~\un{\va}_B(1_B)=1_k~,~\un{\va}_B(\mf{b}\mf{b}')=\un{\va}_B(\mf{b})\un{\va}_B(\mf{b}'),\label{moltcon1}\\
&&~\un{\Delta}_B(\mf{b}\mf{b}')=(y^1X^1\hspace{-1mm}\cdot \mf{b}_{\un{1}})(y^2Y^1(x^1X^2\hspace{-1mm}\cdot \mf{b}_{\un{2}})_{[-1]}x^2X_1^3\hspace{-1mm}\cdot 
\mf{b}'_{\un{1}})\ot (y_1^3Y^2\cdot (x^1X^2\hspace{-1mm}\cdot \mf{b}_{\un{2}})_{[0]})(y_2^3Y^3x^3X_2^3\hspace{-1mm}\cdot \mf{b}'_{\un{2}}).\label{moltcon2}
\end{eqnarray}

A Hopf algebra in ${}_H^H{\cal YD}$ is a YD-bialgebra $B$ for which there exists a morphism $\un{S}_B: B\ra B$ in ${}_H^H{\cal YD}$, called antipode, such that 
$\un{S}_B(b_{\un{1}})b_{\un{2}}=\un{\va}_B(b)1_B=b_{\un{1}}\un{S}(b_{\un{2}})$, for all $b\in B$.
\subsection{Biproduct quasi-Hopf algebras}
Owing to \cite{db, panfred}, to a YD-algebra $B$ one can associate a $k$-algebra $B\# H$, called the smash product of $B$ and $H$ (it is suffices for  
$B$ to be an $H$-module algebra only). As a vector space $B\# H$ equals $B\ot H$, and its multiplication is defined by 
\[
(\mf{b}\# h)(\mf{b}'\# h')=(x^1\cdot \mf{b})(x^2h_1\cdot \mf{b}')\# x^3h_2h',
\]
for all $\mf{b}, \mf{b}'\in B$ and $h, h'\in H$; the above multiplication is unital with unit $1_B\times 1$.

Similarly, owing to \cite{db}, to a $YD$-coalgebra $B$ one can associate an $H$-bimodule coalgebra; this means a coalgebra within the category of $H$-bimodules 
${}_H{\cal M}_H$, endowed with the monoidal structure coming from its identification to the category of left representations over 
the tensor product quasi-Hopf algebra $H\ot H^{\rm op}$ ($H^{\rm op}$ is the opposite quasi-Hopf algebra associated to $H$). This coalgebra 
structure, denoted by $B\tie H$ and called the smash product coalgebra of $B$ and $H$, is built on the $k$-vector space $B\ot H$ as follows 
(we write $\mf{b}\tie h$ in place of $\mf{b}\ot h$ to distinguish this coalgebra structure of $B\ot H$):
\begin{equation}\eqlabel{comultsmashprodcoalg}
\un{\Delta}(\mf{b}\tie h)=y^1X^1\cdot \mf{b}_{\un{1}}\tie y^2Y^1(x^1X^2\cdot \mf{b}_{\un{2}})_{[-1]}x^2X^3_1h_1\ot 
y^3_1Y^2\cdot (x^1X^2\cdot \mf{b}_{\un{2}})_{[0]}\tie y^3_2Y^3x^3X^3_2h_2
\end{equation}
and $\un{\va}(\mf{b}\tie h)=\un{\va}_B(\mf{b})\va(h)$, for all $\mf{b}\in B$, $h\in H$.

When $B$ is a YD-bialgebra, $B\# H$ and $B\tie H$ determine a quasi-bialgebra structure on $B\ot H$ with reassociator $1_B\ot X^1\ot 1_B\ot X^2\ot 1_B\ot X^3$,  
denoted in what follows by $B\times H$ and called the biproduct quasi-bialgebra of $B$ and $H$. Furthermore, 
$B\times H$ is a quasi-Hopf algebra with antipode 
\begin{equation}\eqlabel{antipbipr}
{\cal S}(\mf{b}\times h)=(1_B\times S(X^1x^1_1\mf{b}_{[-1]}h)\a)(X^2x^1_2\cdot \un{S}_B(\mf{b}_{[0]})\times X^3x^2\b S(x^3))
\end{equation}
and distinguished elements $1_B\times \a$ and $1_B\times \b$, provided that $B$ is a YD-Hopf algebra with antipode $\un{S}_B$; we refer to \cite{bn} for more details.   

According to \cite{bn}, biproduct quasi-Hopf algebras characterize the quasi-Hopf algebras with a projection: if there exist quasi-Hopf algebra morphisms 
$
\xymatrix{
H \ar[r]<2pt>^i &\ar[l]<2pt>^{\pi} A
}
$
such that $\pi i=\Id_H$ then there exists a Hopf algebra $B$ in ${}_H^H{\cal YD}$ such that $A$ is isomorphic to $B\times H$ as a quasi-Hopf algebra. 
 \section{Two-dimensional Hopf algebras within Yetter-Drinfeld module categories}\label{2dH}
\setcounter{equation}{0}
Throughout this section $k$ is a field of characteristic different from $2$ and $H$ is a quasi-Hopf algebra with bijective antipode. 
Inspired by the Hopf case \cite{Rbke}, we characterize the $2$-dimensional Hopf algebras $B$ in ${}_H^H{\cal YD}$. To this end, 
we start by describing the YD-algebras of dimension $2$ in ${}_H^H{\cal YD}$. 

Let $B$ be a $2$-dimensional vector space with basis $\{1, n\}$ that admits a left $H$-module algebra structure. Since $n^2+rn+s1=0$ for some 
$r, s\in k$, as in \cite{Rbke} one can assume that $n^2=\omega1$ for a certain $\omega\in k$ (we replace $n$ with $m:=n+\frac{r}{2}1$; then 
$\{1, m\}$ is a basis of $B$ and $m^2=\omega 1$, where $\omega=\frac{r^2}{4}-s$). If $\cdot: H\ot B\ra B$ provides the left $H$-module structure on $B$, 
it follows that $\cdot$ is determined by two $k$-linear maps $\alp, \be: H\ra k$, this means elements in the dual space $H^*$ of $H$, in the sense that, 
for all $h\in H$, $h\cdot 1=\va(h)1$ and  
\begin{equation}\label{qexpro}
h\cdot n = \be(h)1 + \alp(h)n. 
\end{equation}   

We endow $H^*$ with (possible non-associative) multiplication given by the convolution product: $(\alp\be)(h)=\alp(h_1)\be(h_2)$, for all 
$\alp, \be\in H^*$; it is unital with unit $\va$, the counit of $H$. 

\begin{lemma}\lelabel{malgstrB}
Giving an $H$-module algebra structure on a $2$-dimensional vector space $B$ with basis $\{1, n\}$ as above ($n^2=\omega 1$ for a certain $\omega\in k$) 
is equivalent to giving $\alp, \be\in H^*$ such that $\alp$ is a $k$-algebra map and the following relations are satisfied (for all $h, h'\in H$):
\begin{eqnarray}
&&\be (hh')=\va(h)\be(h')+\be(h)\alp(h')~,~\be \alp =- \alp\be~,~\be^2=\omega(\va-\alp^2),\eqlabel{ma1B}\\
&&\be(X^1)\be(X^2)\be(X^3)+\omega[\be(X^1)\alp(X^2X^3) + \alp(X^1X^2)\be(X^3) + \alp(X^1X^3)\be(X^2)]=0,\label{ma2B}\\
&&\omega[1-\alp(X^1X^2X^3)]=\be(X^1)\be(X^2)\alp(X^3) +  \alp(X^1)\be(X^2)\be(X^3) + \be(X^1)\alp(X^2)\be(X^3).\label{ma3B}
\end{eqnarray}
\end{lemma}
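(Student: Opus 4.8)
The plan is to translate each of the axioms defining an $H$-module algebra structure on $B$ into the stated conditions on $\alp$ and $\be$, using the parametrization \eqref{qexpro} together with $n^2=\omega 1$. Concretely, an $H$-module algebra structure on $B$ amounts to: (i) $B$ being a left $H$-module, (ii) the multiplication being $H$-linear in the sense of \eqref{modalg1}, and (iii) the associativity constraint \eqref{modalg2}. I will process these in that order.

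First I would unwind the module axiom. Since $h\cdot 1=\va(h)1$ is forced and $h\cdot n=\be(h)1+\alp(h)n$, the condition $(hh')\cdot n=h\cdot(h'\cdot n)$ becomes, after comparing coefficients of $1$ and $n$, the two identities $\be(hh')=\va(h)\be(h')+\be(h)\alp(h')$ and $\alp(hh')=\alp(h)\alp(h')$; together with $1_H\cdot n=n$, which gives $\alp(1_H)=1$, $\be(1_H)=0$, this says exactly that $\alp$ is a $k$-algebra map and that $\be$ satisfies the first relation in \eqref{ma1B}. Next I would impose \eqref{modalg1} for the product $n\cdot n=\omega 1$: computing $h\cdot(nn)=\va(h)\omega 1$ on one side and $(h_1\cdot n)(h_2\cdot n)$ on the other, using bilinearity and $n^2=\omega 1$, $n\cdot 1=1\cdot n=n$, one gets $\be(h_1)\be(h_2)+\omega\alp(h_1)\alp(h_2)=\omega\va(h)$ in the coefficient of $1$ and $\be(h_1)\alp(h_2)+\alp(h_1)\be(h_2)=0$ in the coefficient of $n$. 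The latter is $\be\alp+\alp\be=0$, i.e. $\be\alp=-\alp\be$; the former rearranges to $\be^2=\omega(\va-\alp^2)$. So the remaining two relations in \eqref{ma1B} are precisely the $H$-linearity of the multiplication on the ``hard'' generator. One should check that $H$-linearity on the products $1\cdot 1$, $1\cdot n$, $n\cdot 1$ gives nothing new (it reduces to the module axiom).

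The associativity constraint \eqref{modalg2} is where the reassociator $\Phi=X^1\ot X^2\ot X^3$ enters, and this is the step I expect to be the main bookkeeping obstacle. I would evaluate $(\mf b\mf b')\mf b''=(X^1\cdot\mf b)[(X^2\cdot\mf b')(X^3\cdot\mf b'')]$ on basis triples. For any triple containing at least one $1$, quasi-coassociativity of $\va$ and the normalization $(\Id\ot\va\ot\Id)(\Phi)=1\ot1$ collapse the identity to ordinary associativity in $B$, which already holds since $B=k[n]/(n^2-\omega)$ is associative; so only the triple $(n,n,n)$ gives genuinely new constraints. For that triple I would expand $(X^1\cdot n)[(X^2\cdot n)(X^3\cdot n)]$ using \eqref{qexpro}, multiply out in the basis $\{1,n\}$ using $n^2=\omega 1$, and do the same for the left-hand side $[(n)(n)]\cdot n$ — but note the left side is evaluated before reassociating, so it equals $(nn)n = (\omega 1)n = \omega n$, and the coefficient comparison must be done carefully since \eqref{modalg2} is the statement that the two bracketings agree. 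Collecting the coefficient of $1$ in the expansion of the right-hand side gives $\be(X^1)\be(X^2)\be(X^3)+\omega[\be(X^1)\alp(X^2X^3)+\alp(X^1X^2)\be(X^3)+\alp(X^1X^3)\be(X^2)]$, which must vanish (since the left-hand side $\omega n$ has zero constant term); this is \eqref{ma2B}. Collecting the coefficient of $n$ on the right-hand side gives $\alp(X^1X^2X^3)\cdot(\text{something})$ plus the three mixed $\be\be\alp$ terms, and equating with the coefficient $\omega$ of $n$ on the left yields \eqref{ma3B} after using $\alp(X^1)\alp(X^2)\alp(X^3)=\alp(X^1X^2X^3)$ and the $3$-cocycle normalization; I would simplify the $\omega^2$-terms using $\be^2=\omega(\va-\alp^2)$ from \eqref{ma1B} so that only the displayed form survives. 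Finally, for the converse direction I would observe that every manipulation above is reversible: given $\alp,\be\in H^*$ satisfying \eqref{ma1B}--\eqref{ma3B}, define $\cdot$ by \eqref{qexpro} and $h\cdot 1=\va(h)1$; the first relation of \eqref{ma1B} plus $\alp$ being an algebra map gives the $H$-module axiom, the remaining relations of \eqref{ma1B} give \eqref{modalg1} on the generator, and \eqref{ma2B}, \eqref{ma3B} give \eqref{modalg2} on $(n,n,n)$ while the other triples are automatic — so $B$ becomes an $H$-module algebra.

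The genuinely delicate point is the last expansion: keeping track of which tensor leg of $\Phi$ multiplies which copy of $n$, and correctly using the convolution-product notation $\alp(X^1X^2)=\alp(X^1_{?})\cdots$ versus applying $\alp$ to the already-multiplied element $X^1X^2\in H$ (here $\alp$ is an algebra map so $\alp(X^1X^2)=\alp(X^1)\alp(X^2)$, but $\be$ is \emph{not}, so the groupings genuinely matter for the $\be$-terms). I would therefore be explicit that in \eqref{ma2B} and \eqref{ma3B} the arguments $X^iX^j$ denote the honest product in $H$, and double-check the three-term symmetrization by also expanding with the alternative copy $T^1\ot T^2\ot T^3$ of $\Phi$ if needed. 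Everything else is routine coefficient comparison.
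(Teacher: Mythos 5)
Your proof is correct and follows essentially the same route as the paper: the module axiom gives that $\alp$ is an algebra map and $\be$ an $(\va,\alp)$-derivation, the $H$-linearity of the multiplication tested on $n\ot n$ gives $\be\alp=-\alp\be$ and $\be^2=\omega(\va-\alp^2)$, and the associativity constraint reduces to the triple $(n,n,n)$, where comparing the coefficients of $1$ and $n$ in $\omega n=[\be(X^1)1+\alp(X^1)n][\be(X^2)1+\alp(X^2)n][\be(X^3)1+\alp(X^3)n]$ yields \equref{ma2B} and (\ref{ma3B}). One tiny remark: no $\omega^2$-terms actually arise in that expansion (the only $\omega$ comes from the single occurrence of $n^2$ in each bracketing), so the anticipated simplification via $\be^2=\omega(\va-\alp^2)$ is unnecessary --- the two displayed identities drop out directly from the coefficient comparison.
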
 
\begin{proof}
Since $h\cdot (h'\cdot n)=hh'\cdot n$, for all $h, h'\in H$, and $\{1, n\}$ is a basis of $B$ it follows that $\alp$ is an algebra map and 
$\be$ is an $(\va, \alp)$-derivation (this means, the first equality in \equref{ma1B} holds). These two conditions on $(\alp, \be)$ characterize the 
$H$-module structures on $B$. 

We see now when $B$ is, moreover, an $H$-module algebra. Looking at the associativity of the multiplication $\un{m}_B$ of $B$, 
one can show that (\ref{modalg2}) holds if and only if it holds for $\mf{b}=\mf{b}'=\mf{b}{''}=n$; the latter is equivalent to 
\[
\omega n=[\be(X^1)1+\alp(X^1)n][\be(X^2)1+\alp(X^2)n][\be(X^3)1 + \alp(X^3)n],
\] 
and a simple computation ensures us that this happens just when (\ref{ma2B}, \ref{ma3B}) are satisfied. 

Finally, similar to the computation performed in \cite[Proposition 1]{Rbke}, the condition (\ref{modalg1}) is satisfied if and only if 
it is verified, for all $h\in H$, by $\mf{b}=\mf{b}'=n$, if and only if $\be\alp+\alp\be=0$ and $\be^2+\omega\alp^2=\omega\va$.
\end{proof}

We move now to the coalgebra case. From now on, assume that $B$ is a $2$-dimensional $H$-module algebra as in \leref{malgstrB}.

\begin{lemma}\lelabel{algcoalgstrB}
Let $B$ be as in the above and assume that $B$ admits a left $H$-module coalgebra structure given by $(\un{\Delta}_B, \un{\va}_B)$ 
such that $\un{\Delta}_B(1)=1\ot 1$ and $\un{\va}_B(1)=1_k$. Then $\be=0$, $\omega(\alp^2-\va)=0$, $\omega[1-\alp(X^1X^2X^3)]=0$, 
and the coalgebra structure of $B$ is completely determined by two 
elements $b, b'\in B$ obeying the following conditions:
\begin{eqnarray}
&&\un{\Delta}_B(b)=b\ot 1 + b'\ot b,~\un{\Delta}_B(b')=b'\ot b',~\un{\va}_B(b)=0,~\un{\va}_B(b')=1_k,\\
&&h\cdot b=\alp(h)b,~h\cdot b'=\va(h)b',~\forall~h\in H.
\end{eqnarray}   
In addition, $n=b+\un{\va}_B(n)b'$ and $\un{\Delta}_B(n)=b\ot 1+ b'\ot n$.
\end{lemma}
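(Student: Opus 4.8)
The plan is to use the existence of the counit to pin down the functional $\be$, then invoke the quasi-Hopf axioms to force $\be=0$, and finally to read the coalgebra data off the (co)unit conditions. First, since $\un{\va}_B(1)=1_k$ the counit is non-zero; set $c:=\un{\va}_B(n)\in k$. Applying the $H$-linearity of $\un{\va}_B$ (the identity $\un{\va}_B(h\cdot\mf{b})=\va(h)\un{\va}_B(\mf{b})$ of \equref{ydc1}) to $\mf{b}=n$, together with $h\cdot n=\be(h)1+\alp(h)n$, gives $\be(h)+c\,\alp(h)=c\,\va(h)$; that is, $\be=c(\va-\alp)$ in $H^*$.

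The key extra ingredient is that a character $\alp$ on a quasi-Hopf algebra is invertible in the convolution algebra $H^*$: applying the algebra map $\alp$ to the second identity of \equref{q6} yields $\alp(\a)\alp(\b)\neq0$, and then applying $\alp$ to \equref{q5} shows that $\alp\circ S$ is a two-sided convolution inverse of $\alp$. Now feed $\be=c(\va-\alp)$ into the relation $\be\alp=-\alp\be$ of \equref{ma1B}; since $\be\alp=\alp\be=c(\alp-\alp^2)=c\,\alp(\va-\alp)$, this reads $2c\,\alp(\va-\alp)=0$, and cancelling the invertible $\alp$ and dividing by $2$ (this is where $\mathrm{char}(k)\neq2$ is used) gives $c(\va-\alp)=0$, i.e. $\be=0$. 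Substituting $\be=0$ into the last relation of \equref{ma1B} gives $\omega(\alp^2-\va)=0$, into (\ref{ma3B}) gives $\omega[1-\alp(X^1X^2X^3)]=0$, and (\ref{ma2B}) becomes $0=0$.

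For the coalgebra, take $b':=1$ and $b:=n-c\,1$. Then $\{b',b\}$ is a basis of $B$, $n=b+\un{\va}_B(n)b'$, $\un{\va}_B(b')=1_k$ and $\un{\va}_B(b)=0$; since $\be=0$ we have $h\cdot n=\alp(h)n$, and using $\be=0$ once more in the form $c\,\alp=c\,\va$ one obtains $h\cdot b'=\va(h)b'$ and $h\cdot b=\alp(h)b$, while $\un{\Delta}_B(b')=\un{\Delta}_B(1)=1\ot1=b'\ot b'$. Expanding $\un{\Delta}_B(b)$ in the basis $\{b',b\}\ot\{b',b\}$, the two counit axioms $(\un{\va}_B\ot\Id)\un{\Delta}_B(b)=b=(\Id\ot\un{\va}_B)\un{\Delta}_B(b)$ annihilate the $b'\ot b'$-coefficient and force the $b'\ot b$- and $b\ot b'$-coefficients to be $1$, so $\un{\Delta}_B(b)=b\ot1+b'\ot b+\lambda\,b\ot b$; comparing the $b\ot b$-components of the $H$-linearity identity $\un{\Delta}_B(h\cdot b)=h_1\cdot b_{\un{1}}\ot h_2\cdot b_{\un{2}}$ gives $\lambda\,\alp=\lambda\,\alp^2$, hence $\lambda\,\alp(\va-\alp)=0$ and, cancelling $\alp$, $\lambda(\va-\alp)=0$, so $\lambda=0$. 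Finally $\un{\Delta}_B(n)=\un{\Delta}_B(b)+c\,\un{\Delta}_B(b')=b\ot1+b'\ot b+c\,b'\ot b'=b\ot1+b'\ot n$.

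The conceptual heart, and where I expect the real work, is the combination of $\be=c(\va-\alp)$ with the convolution-invertibility of the character $\alp$: these two facts force $\be=0$, after which the whole statement is routine bookkeeping with the counit, the normalization of $\Phi$ and $H$-linearity. The one point needing a little extra care is the degenerate case $\alp=\va$: there the $H$-action on $B$ is trivial and the identity $\lambda(\va-\alp)=0$ carries no information, so the possibility $\lambda\neq0$ must be excluded separately — this is precisely the situation in which $B$ reduces to the group Hopf algebra $k[C_2]$, which is dealt with on its own in this section.
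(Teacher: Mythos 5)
Your opening move is genuinely nicer than the paper's: reading $\be=\un{\va}_B(n)(\va-\alp)$ directly off the $H$-linearity of the counit applied to $n$ is a real shortcut (the paper only reaches this identity after a lengthy $p_R,q_R$ computation establishing $h\cdot b'=\va(h)b'$ first). However, there is a genuine gap in the coalgebra part. The elements $b,b'$ in the statement are not free to choose: they are the (unique) elements with $\un{\Delta}_B(n)=b\ot 1+b'\ot n$, i.e.\ $b'$ carries the $(-)\ot n$ part of $\un{\Delta}_B(n)$, including a possible $n\ot n$ component. By setting $b':=1$ and $b:=n-\un{\va}_B(n)1$ you have silently assumed that this $n\ot n$ component vanishes, and your own argument shows this cannot be forced: the coefficient $\lambda$ you introduce is exactly that component, and you only kill it when $\alp\neq\va$. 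The case $\alp=\va$, $\lambda\neq 0$ is not a degenerate case that can be ``dealt with on its own'' later --- it is precisely the case $b'\neq 1$ which \thref{2dimbraidedHA} (Case 2, leading to $k[C_2]$, where $b'$ is a nontrivial grouplike) needs the present lemma for; deferring it makes the argument circular. In that case you must still prove $\un{\Delta}_B(b')=b'\ot b'$ and $\un{\Delta}_B(b)=b\ot 1+b'\ot b$ for the actual $b,b'$, and this requires the coassociativity axiom \equref{ydc2} (applied to $\mf{b}=n$), which your proof never invokes; counit and $H$-linearity alone do not suffice once $b'\notin k1$.

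A second, repairable, point: you twice ``cancel the invertible $\alp$'' in $H^*$, but for a quasi-bialgebra the convolution product on $H^*$ is not associative, so convolution-invertibility of $\alp$ does not by itself license cancellation. The step can be saved: $c(\alp-\alp^2)=0$ with $c\neq 0$ gives $\alp=\alp*\alp$, and then $\va=\alp*(\alp\circ S)=(\alp*\alp)*(\alp\circ S)=\alp*\bigl(\alp*(\alp\circ S)\bigr)=\alp$, where the reassociation is legitimate because $\alp$, $\alp\circ S$ are algebra maps into the commutative ring $k$, so $(\alp\ot\alp\ot(\alp\circ S))(\Phi)\cdot(\alp\ot\alp\ot(\alp\circ S))(\Phi^{-1})=1$ and conjugation by $\Phi$ is invisible. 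You should say this explicitly (and note $\alp(\a),\alp(\b)\neq 0$ from \equref{q6}, as you do). With that repair your route to $\be=0$, $\omega(\alp^2-\va)=0$ and $\omega[1-\alp(X^1X^2X^3)]=0$ is correct and shorter than the paper's; what remains missing is the treatment of the coalgebra structure for general $b'$.
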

\begin{proof}
It is similar to the one given in \cite[Corollary 1]{Rbke}, only that this time it is more complicate to show that $\be=0$. 

We write $\un{\Delta}_B(n)=b\ot 1 + b'\ot n$, for some $b, b'\in B$. From the counit properties, we get that $\un{\va}_B(b)=0$, 
$\un{\va}_B(b')=1_k$ and $n=b+\un{\va}_B(n)b'$. Now, the coassociativity of $\un{\Delta}_B$ is fulfilled if and only if 
\equref{ydc2} is verified for $\mf{b}=n$, if and only if 
\begin{eqnarray*}
&&\un{\Delta}_B(b)=b\ot 1 + b'\ot b + \be(x^3)x^1\cdot b'\ot x^2\cdot b'~\mbox{and}\\
&&\un{\Delta}_B(b')=\alp(x^3)x^1\cdot b'\ot x^2\cdot b'.
\end{eqnarray*}  

Likewise, \equref{ydc1} holds if and only if it holds for $\mf{b}=n$ or, equivalently, if 
\begin{equation}\eqlabel{intermform1}
\be(h)1+\alp(h)b=h\cdot b+\be(h_2)h_1\cdot b'~\mbox{and}~\alp(h)b'=\alp(h_2)h_1\cdot b',
\end{equation}
for all $h\in H$. We claim that the two conditions above are equivalent to the fact that $h\cdot b'=\va(h)b'$, $h\cdot b=\alp(h)b$, 
for all $h\in H$, and $\be=0$. Indeed, for $q_R$ as in \equref{qr}, we compute that, for all $h\in H$, 
\begin{eqnarray*}
\alp(S^{-1}(h_2))\alp(q^2h_{12})q^1h_{11}\cdot b'&=&
\alp(S^{-1}(h_2)q^2h_{12})q^1h_{11}\cdot b'\\
&\equal{\equref{qr1a}}&\alp(q^2)hq^1\cdot b'.
\end{eqnarray*}
On the other hand, by using the fact that $\alp$ is an algebra morphism and the second relation in \equref{intermform1} we get that 
\begin{eqnarray*}
\alp(S^{-1}(h_2))\alp(q^2h_{12})q^1h_{11}\cdot b'&=&\alp(S^{-1}(h_2))\alp(q^2)\alp(h_1)q^1\cdot b'\\
&=&\alp(S^{-1}(S(h_1)\a h_2))\alp(S^{-1}(X^3)X^2)X^1\cdot b'\\
&=&\va(h)\alp(q^2)q^1\cdot b',
\end{eqnarray*}
for all $h\in H$. Thus, we have shown that $\alp(q^2)hq^1\cdot b'=\va(h)\alp(q^2)q^1\cdot b'$, for all $h\in H$. 
Consequently, 
\begin{eqnarray*}
\alp(S^{-1}(\a))h\cdot b'&=&\alp(S^{-1}(x^3)x^2)\alp(q^2)hx^1q^1\cdot b'\\
&=&\va(hx^1)\alp(S^{-1}(x^3)x^2)\alp(q^2)q^1\cdot b'=\va(h)\alp(q^2)q^1\cdot b',
\end{eqnarray*}
for all $h\in H$. Together with \equref{pqra} and the second equality in \equref{intermform1} this implies that, for all $h\in H$,  
\begin{eqnarray*}
\va(h)b'&=&\va(h)\alp(S^{-1}(p^2)q^2p^1_2)q^1p^1_1\cdot b'\\
&=&\va(h)\alp(S^{-1}(p^2)p^1)\alp(q^2)q^1\cdot b'\\
&=&\alp(S^{-1}(\a p^2)p^1)h\cdot b'=h\cdot b',
\end{eqnarray*}
as stated. Now, by considering $b=n-\un{\va}_B(n)b'$ in \equref{intermform1} we get that 
$\be(h_2)h_1\cdot b'-\un{\va}_B(n)h\cdot b'=-\un{\va}_B(n)\alp(h)b'$, for all $h\in H$, and so 
$(\be(h)-\va(h)\un{\va}_B(n))b'=-\un{\va}_B(n)\alp(h)b'$, for all $h\in H$. As $\un{\va}_B(b')=1_k$, the last equality 
is equivalent to $\be=\un{\va}_B(n)(\va-\alp)$; in particular, $\alp$ and $\be$ commute and anti-commute and the same time. 
Since we work over a field of characteristic different from $2$, it follows that $\be\alp=\alp\be=0$ in $H^*$. Thus, with the help of 
\equref{pqr} we see that 
\begin{eqnarray*}
\be(h)&=&\be(q^1_1p^1h)\alp(q^1_2p^2S(q^2))\\
&\equal{\equref{qr1}}&
\be((q^1h_1)_1p^1)\alp((q^1h_1)_2p^2S(q^2h_2))\\
&=&\be(p^1)\alp(q^1h_1p^2S(q^2h_2))+\alp(p^1)\be((q^1h_1)_1)\alp((q^1h_1)_2)\alp(p^2S(q^2h_2))\\
&=&\va(h)\be(x^1)\alp(q^1\b S(q^2))\alp(x^2S(x^3))=\va(h)\kappa,
\end{eqnarray*}   
where in the third equality we used that $\be$ is an $(\va, \alp)$-derivation; here $\kappa:=\be(x^1)\alp(x^2S(x^3))$. Then, 
the $(\va, \alp)$-derivation condition on $\be$ reads as $\alp(h')\kappa=0$, for all $h'\in H$, and therefore $\be=0$ and 
$h\cdot b=\alp(h)b$, for all $h\in H$. The remaining details are left to the reader.  
\end{proof}

Thus, a $2$-dimensional $H$-module algebra and coalgebra $B$ is determined by an algebra morphism $\alp\in H^*$, a scalar 
$\omega$ and two elements $b, b'\in B$. Next we see when $B$ is a (co)algebra in ${}_H^H{\cal YD}$; we denote by $\r: B\ni \mf{b}\mapsto 
\mf{b}_{[-1]}\ot \mf{b}_{[0]}\in H\ot B$ a left coaction of 
$H$ on $B$ and assume that $\r(1)=1\ot 1$. 

\begin{lemma}\lelabel{coalgYDstrB}
For $B$ as above, the left Yetter-Drinfeld module structures on $B$ are determined by two elements $u, v\in H$ satisfying $\va(u)=0$, 
$\va(v)=1$ and 
\begin{eqnarray}
&&\Delta(u)=\alp(x^1)ux^2\ot x^3 + \alp(x^1X^2)X^1vx^2\ot uX^3x^3,~\Delta(v)=\alp(x^3X^2y^1)x^1X^1vy^2\ot x^2vX^3y^3,\eqlabel{coasscoactonB}\\
&&hu=\alp(h_1)uh_2~\mbox{and}~\alp(h_2)h_1v=\alp(h_1)vh_2,~\forall~h\in H.\eqlabel{YDmodcondB}
\end{eqnarray} 

Furthermore, 

$\bullet$ $B$ is a coalgebra in ${}_H^H{\cal YD}$ if and only if $u=\un{\va}_B(n)(1-v)$ and 
\begin{eqnarray}
&&u\ot 1 + v\ot b=b_{[-1]}\ot b_{[0]} + b'_{[-1]}u\ot b'_{[0]},\label{YDcoalgB1}\\
&&v\ot b'=\alp(X^3)X^1b'_{[-1]}v\ot X^2\cdot b'_{[0]};\label{YDcoalgB2}
\end{eqnarray}

$\bullet$ $B$ is an algebra in ${}_H^H{\cal YD}$ if and only if 
\begin{eqnarray}
&&u[1+\alp(x^1x^3)x^2]=0,\label{YDalgB1}\\
&&u^2=\omega[\alp(x^1x^3X^2X^3)X^1vx^2v-\alp(y^1y^2)y^3].\label{YDalgB2}
\end{eqnarray}
\end{lemma}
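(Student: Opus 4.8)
The plan is to translate the defining Yetter--Drinfeld conditions one at a time into the stated relations, using that $\{1,n\}$ is a $k$-basis of $B$ and that, by \leref{malgstrB} and \leref{algcoalgstrB}, the $H$-action on $B$ is entirely explicit: $h\cd 1=\va(h)1$, $h\cd n=\alp(h)n$ (recall $\be=0$), $h\cd b=\alp(h)b$ and $h\cd b'=\va(h)b'$. At each step, the passage from an element of $H\ot B$ (or of $H\ot B\ot B$) to a relation in $H$ is obtained by comparing the coefficients of $1$ and of $n$ in the $B$-tensorands.

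I would first pin down the coaction. Since $\{1,n\}$ is a basis and $\r(1)=1\ot 1$, there are unique $u,v\in H$ with $\r(n)=u\ot 1+v\ot n$, and the counit axiom $\va(n_{[-1]})n_{[0]}=n$ forces $\va(u)=0$ and $\va(v)=1$. The two remaining YD-module axioms (\ref{y1}) and (\ref{y3}) are $k$-linear in the argument and, by normalization of $\Phi$, hold automatically on $1$, so it is enough to impose them on $n$. Substituting $h\cd n=\alp(h)n$ and $\r(n)=u\ot 1+v\ot n$ into (\ref{y3}) and separating the $\ot 1$- and $\ot n$-parts produces exactly $hu=\alp(h_1)uh_2$ and $\alp(h_2)h_1v=\alp(h_1)vh_2$, i.e.\ (\ref{eq:YDmodcondB}); doing the same in (\ref{y1})---evaluating all actions through $\alp$ and $\va$, re-applying $\r$, and again splitting off the last tensorand---rewrites that axiom as the two identities for $\Delta(u)$ and $\Delta(v)$ in (\ref{eq:coasscoactonB}). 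Every step is reversible, so this yields the announced bijection between YD-module structures with $\r(1)=1\ot 1$ and couples $(u,v)$ with $\va(u)=0$, $\va(v)=1$ satisfying (\ref{eq:coasscoactonB}) and (\ref{eq:YDmodcondB}).

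For the statement that $B$ is a coalgebra in ${}_H^H{\cal YD}$ I would impose (\ref{eq:ydc3}); by $k$-linearity it suffices to check it on $\mf b=n$, where $\un{\Delta}_B(n)=b\ot 1+b'\ot n$. Its $\un{\va}_B$-colinearity part on $n$ reads $\un{\va}_B(1)u+\un{\va}_B(n)v=\un{\va}_B(n)1$, that is $u=\un{\va}_B(n)(1-v)$. For the $\un{\Delta}_B$-colinearity part one expands the right-hand side of (\ref{eq:ydc3}) for $\mf b=n$ using $h\cd b=\alp(h)b$, $h\cd b'=\va(h)b'$, $h\cd n=\alp(h)n$, $\r(n)=u\ot 1+v\ot n$ and the counit identities for $\Phi$ and $\Phi^{-1}$: the $b\ot 1$-summand collapses to $b_{[-1]}\ot b_{[0]}\ot 1$, the $b'\ot n$-summand splits into $b'_{[-1]}u\ot b'_{[0]}\ot 1$ and $\alp(X^3)X^1b'_{[-1]}v\ot X^2\cd b'_{[0]}\ot n$, while the left-hand side equals $(u\ot 1+v\ot b)\ot 1+(v\ot b')\ot n$. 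Comparing the coefficients of $\ot 1$ and of $\ot n$ in the last slot gives precisely (\ref{YDcoalgB1}) and (\ref{YDcoalgB2}), and these conversely return (\ref{eq:ydc3}).

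Finally, for the statement that $B$ is an algebra in ${}_H^H{\cal YD}$ I would impose (\ref{modalg3}); it is $k$-bilinear in $(\mf b,\mf b')$ and, thanks to the normalization of $\Phi$ and the YD-module structure already in place, holds automatically whenever one argument is $1$, so only $\mf b=\mf b'=n$ remains. There $nn=\omega1$, so the left-hand side of (\ref{modalg3}) is $\omega(1\ot 1)$, while the right-hand side breaks into four summands according to whether $(x^1Y^1\cd n)_{[0]}$ and $(Y^2\cd n)_{[0]}$ are each taken to be $1$ or $n$ in $\r(n)=u\ot 1+v\ot n$; in the second tensorand the products $1\cd 1$ and $n\cd n=\omega1$ form the $1$-component and $1\cd n$, $n\cd 1$ the $n$-component. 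Setting the $n$-coefficient equal to $0$ gives $u[1+\alp(x^1x^3)x^2]=0$, i.e.\ (\ref{YDalgB1}), and setting the $1$-coefficient equal to $\omega$ gives, after the usual reassociator simplifications (in particular $\Phi\Phi^{-1}=1$), $u^2=\omega[\alp(x^1x^3X^2X^3)X^1vx^2v-\alp(y^1y^2)y^3]$, i.e.\ (\ref{YDalgB2}). I expect the only genuine difficulty to be bookkeeping: (\ref{y1}), (\ref{eq:ydc3}) and (\ref{modalg3}) each carry several nested copies of $\Phi$ and $\Phi^{-1}$, and reducing them to the compact right-hand sides above takes repeated use of the counit identities for $\Phi$, $\Phi^{-1}$ and of $\Phi\Phi^{-1}=1$; the conceptual content is only the case analysis described above.
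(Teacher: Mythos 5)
Your treatment of the first two bullets is sound and is essentially the paper's own argument: the coaction is forced to have the form $\r(n)=u\ot 1+v\ot n$, the axioms (\ref{y1}) and (\ref{y3}) are linear and automatic on $1$, and imposing them on $n$ and comparing the coefficients of $1$ and $n$ gives \equref{coasscoactonB} and \equref{YDmodcondB}; likewise the $\un{\va}_B$- and $\un{\Delta}_B$-colinearity of \equref{ydc3} evaluated at $n$ give $u=\un{\va}_B(n)(1-v)$ and (\ref{YDcoalgB1})--(\ref{YDcoalgB2}).

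The algebra-in-${}_H^H{\cal YD}$ bullet, however, contains a genuine gap. Evaluating (\ref{modalg3}) at $\mf{b}=\mf{b}'=n$ and separating the $\ot 1$ and $\ot n$ components does \emph{not} yield (\ref{YDalgB1}) and (\ref{YDalgB2}) after ``the usual reassociator simplifications''. What the coefficient comparison actually gives is (\ref{YDalgB1a})--(\ref{YDalgB2a}), i.e. $\alp(x^1x^3Y^1Y^2)ux^2vY^3+\alp(Y^1Y^2)vuY^3=0$ and $\alp(x^1x^3Y^1Y^2)ux^2uY^3+\omega\,\alp(x^1x^3Y^1Y^2X^2X^3)X^1vx^2vY^3=\omega 1$. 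Two discrepancies separate these from the stated conditions: the overall right factor $\alp(Y^1Y^2)Y^3=(\alp\ot\alp\ot\Id)(\Phi)$, which is harmless since it is invertible ($\alp$ being an algebra map), and --- the real issue --- the occurrence of $v$ in the $\ot n$-coefficient ($ux^2v$ and $vu$ rather than $u$ alone). Cancelling that $v$ on the right requires knowing that $v$ has a right inverse, and this is not a formal consequence of $\Phi\Phi^{-1}=1$ or the counit identities: the paper proves it by a separate computation with $q_R$ and $p_R$, using \equref{qr1a}, \equref{pqra}, \equref{YDmodcondB}, the formula for $\Delta(v)$ in \equref{coasscoactonB} and \equref{q6}, and this right-invertibility is then reused in Case 2 of the proof of \thref{2dimbraidedHA}. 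Your write-up skips this step entirely, so as it stands the equivalence of the raw conditions with (\ref{YDalgB1})--(\ref{YDalgB2}) is unjustified; you need to supply the right-invertibility of $v$ (or an equivalent cancellation argument) to close the proof.
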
  
\begin{proof}
It is clear that $\r$ is defined by $\r(n)$, which can be written as $\r(n)=u\ot 1 + v\ot n$, for some $u, v\in H$. As  
$\r$ is counital, we have that $\va(u)=0$ and $\va(v)=1$. Also, the coassociativity of $\r$ in (\ref{y1}), which reduces to the case when 
$\mf{b}=n$, is equivalent to 
\begin{eqnarray*}
&&u\ot 1\ot 1 + \alp(X^2)X^1v\ot uX^3\ot 1 + \alp(X^2)X^1v\ot vX^3\ot n\\
&&\hspace{1.5cm}
=\alp(Y^1)\Delta(u)(Y^2\ot Y^3)\ot 1 +\alp(X^3Y^1)(X^1\ot X^2)\Delta(v)(Y^2\ot Y^3)\ot n.
\end{eqnarray*} 
Since $\{1, n\}$ is a basis of $B$, the above relation is equivalent to the two conditions in \equref{coasscoactonB}. 
In a similar manner one can prove that (\ref{y3}) reduces in our case to the two conditions in \equref{YDmodcondB}. In fact, it is suffices 
for (\ref{y3}) to be verified for $b=n$, meaning that $hu\ot 1 + \alp(h_2)h_1v\ot n=\alp(h_1)uh_2\ot 1 + \alp(h_2)vh_2\ot n$, for all $h\in H$. 
Clearly, the last condition holds if and only if the two conditions in \equref{YDmodcondB} are satisfied. We check now when $B$ is a coalgebra in ${}_H^H{\cal YD}$.

From the third equality in \equref{ydc1} we get that $\un{\va}_B(1)u+\un{\va}_B(n)v=\un{\va}_B(n)1$, which is equivalent to $u=\un{\va}_B(n)(1-v)$; in particular 
$u$ and $v$ commute. Similarly, \equref{ydc3} holds for any $\mf{b}\in B$ if and only if it holds for $n$; by using the fact that $\un{\Delta}_B(n)=b\ot 1+ b'\ot n$, 
one can see easily that \equref{ydc3} holds for $\mf{b}=n$ if and only if 
\[
u\ot 1\ot 1 + v\ot b\ot 1 + v\ot b'\ot n=b_{[-1]}\ot b_{[0]}\ot 1 + b'_{[-1]}u\ot b'_{[0]}\ot 1 + \alp(X^3)X^1b'_{[-1]}v\ot X^2\cdot b'_{[0]}\ot n.
\]
By using again the fact $\{1, n\}$ is a basis for $B$, it is immediate that the above equality is equivalent to the equalities (\ref{YDcoalgB1}, \ref{YDcoalgB2}).  

The condition that $B$ is an algebra in ${}_H^H{\cal YD}$ reduces to the validity of (\ref{modalg3}) for our context. As in the coalgebra case, it can be easily verified that in our case (\ref{modalg3}) holds if and only if 
\begin{eqnarray}
&&\alp(x^1x^3Y^1Y^2)ux^2vY^3 +\alp(Y^1Y^2)vuY^3=0,\label{YDalgB1a}\\
&&\alp(x^1x^3Y^1Y^2)ux^2uY^3 +\omega \alp(x^1x^3Y^1Y^2X^2X^3)X^1vx^2vY^3=\omega 1.\label{YDalgB2a}
\end{eqnarray}
As in the Hopf case, see the prof of \cite[Corollary 1]{Rbke}, 
we show that $v$ has a right inverse, implying that (\ref{YDalgB1a}, \ref{YDalgB2a}) and (\ref{YDalgB1}, \ref{YDalgB2}) are equivalent. 

Recall that $u$ and $v$ commute. For $h\in H$, we have that 
\begin{eqnarray*}
hv&\equal{\equref{pqra}}&\alp(S^{-1}(p^2)q^2p^1_2)hg^1p^1_1v\\
&\equal{\equref{qr1a}}&\alp(S^{-1}(h_2p^2)q^2(h_1p^1)_2)q^1(h_1p^1)_1v\\
&\equal{\equref{YDmodcondB}}&\alp(S^{-1}(h_2p^2)q^2)\alp(h_{11}p^1_1)q^1vh_{12}p^1_2=\alp(q^2)q^1v\un{h},
\end{eqnarray*}
where, for $h\in H$, we denote $\un{h}:=\alp(S^{-1}(h_2p^2))\alp(h_{11}p^1_1)h_{12}p^1_2$. Now, from the formula of 
$\Delta(v)$ in \equref{coasscoactonB}, since $\va(v)=1$,  we deduce that 
\[
1=q^1\b S(q^2)=q^1v_1\b S(q^2v_2)=\alp(y^1X^2x^3)q^1x^1X^1vy^2\b S(q^2x^2vX^3y^3)=\alp(Q^2)Q^1v\hbar,
\]
where $Q^1\ot Q^2$ is a second copy of $q_R$ and $\hbar:=\alp(y^1X^2x^3)\un{q^1x^1X^1}y^2\b S(q^2x^2vX^3y^3)$. So 
$\alp(Q^2)Q^1v$ has a right inverse. Since $\alp$ is an algebra morphism, \equref{q6} implies that $\alp(Q^2)Q^1$ is invertible in 
$H$ with inverse given by $\alp(S^{-1}(x^3)x^2)\alp(S^{-1}(\a))^{-1}x^1$, where 
$\alp(S^{-1}(\a))^{-1}=\alp(S^{-1}(S(x^1)x^2\b S(x^3))=\alp(S^{-1}(X^1\b S(X^2)X^3))$. Hence $v$ has a right inverse, as stated.  
\end{proof}

A last preliminary result that we need is the following.

\begin{proposition}\prlabel{casebprimeone}
Let $B$ be a $2$-dimensional bialgebra in ${}_H^H{\cal YD}$ with basis $\{1, n\}$, determined by $\alp\in H^*$, $\omega\in k$, $u, v\in H$ and 
$b, b'\in B$ as in the above. Then 
\begin{equation}\eqlabel{braidedbialgB}
b^2+\alp(u)b'b+ \omega b'^2=\omega 1~\mbox{and}~\alp(v)b'b+bb'=0.
\end{equation} 
Consequently, if $b'=1$ then $u=0$.
\end{proposition}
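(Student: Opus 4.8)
The plan is to use the one bialgebra compatibility not yet exploited in this section, namely that $\un{\Delta}_B$ is multiplicative, i.e. relation (\ref{moltcon2}), and to evaluate it on the product $n\cd n=n^2=\omega 1$. Since $\un{\Delta}_B(1)=1\ot 1$ is assumed, the left-hand side of (\ref{moltcon2}) taken at $\mf b=\mf b'=n$ equals $\omega(1\ot 1)$. For the right-hand side I substitute $\un{\Delta}_B(n)=b\ot 1+b'\ot n$, so that both $\mf b_{\un 1}\ot \mf b_{\un 2}$ and $\mf b'_{\un 1}\ot \mf b'_{\un 2}$ range over the summands $b\ot 1$ and $b'\ot n$, and I insert the structure data already at hand: $h\cd 1=\va(h)1$, $h\cd b=\alp(h)b$, $h\cd b'=\va(h)b'$, $h\cd n=\alp(h)n$ (here $\be=0$, by \leref{algcoalgstrB}), the coaction $\r(1)=1\ot 1$, $\r(n)=u\ot 1+v\ot n$ with $\va(u)=0$ and $\va(v)=1$ (\leref{coalgYDstrB}), and the multiplication $1\cd n=n\cd 1=n$, $n^2=\omega 1$ on $B$.

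Running through the four choices of $(\mf b_{\un 1},\mf b_{\un 2},\mf b'_{\un 1},\mf b'_{\un 2})$ and splitting the two of them in which $\mf b_{\un 2}=n$ according to $\r(n)=u\ot 1+v\ot n$ yields six contributions; the one arising from $(b',n,b',n)$ together with the $u$-part of $\r(n)$ vanishes because it carries a factor $\va(u)=0$. Among the remaining five, those from $(b,1,b,1)$, from $(b',n,b,1)$ with the $u$-part, and from $(b',n,b',n)$ with the $v$-part have second tensor leg equal to $1$ and contribute $b^2$, $\alp(u)\,b'b$ and $\omega\,b'^2$, whereas those from $(b,1,b',n)$ and from $(b',n,b,1)$ with the $v$-part have second leg $n$ and contribute $bb'$ and $\alp(v)\,b'b$; the factors $\alp(u)$, $\alp(v)$ appear precisely because in these terms the corresponding component of $\r(n)$ sits inside an argument of $\alp$. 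The scalar prefactor of each contribution, built from the tensorands of the several copies of $\Phi$ and $\Phi^{-1}$ occurring in (\ref{moltcon2}) (with some of those legs split by a comultiplication), collapses to $1$: one repeatedly uses that $\Phi$, hence $\Phi^{-1}$, is normalized, that $\alp$ is a unital algebra map, and the counit axioms to absorb the comultiplication of a split tensorand (e.g. $\alp(X^3_1)\va(X^3_2)=\alp(X^3)$). This cocycle bookkeeping is the only delicate point; the rest is linear algebra in the $2$-dimensional space $B$. Comparing the result with $\omega(1\ot 1)=\omega 1\ot 1+0\ot n$ and using that $\{1,n\}$ is a basis of $B$ gives exactly the two identities of \equref{braidedbialgB}.

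For the last assertion, assume $b'=1$. If $b=0$ then $n=b+\un{\va}_B(n)b'=\un{\va}_B(n)1\in k1$, which contradicts the fact that $\{1,n\}$ is a basis of $B$; hence $b\neq 0$. The second identity of \equref{braidedbialgB} now reads $(1+\alp(v))b=0$, so $\alp(v)=-1$; in particular $\alp(v)\neq 1=\va(v)$, that is $\alp\neq\va$ in $H^*$. On the other hand, acting with $h\in H$ on $n=b+\un{\va}_B(n)b'$ and comparing with $h\cd n=\alp(h)n$ forces $\un{\va}_B(n)\bigl(\va(h)-\alp(h)\bigr)b'=0$ for all $h\in H$; since $b'\neq 0$ this says $\un{\va}_B(n)(\va-\alp)=0$ in $H^*$, and as $\alp\neq\va$ we conclude $\un{\va}_B(n)=0$. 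Finally, \leref{coalgYDstrB} gives $u=\un{\va}_B(n)(1-v)=0$, as claimed.
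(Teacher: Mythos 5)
Your derivation of \equref{braidedbialgB} is exactly the paper's: expand the multiplicativity condition (\ref{moltcon2}) at $\mf b=\mf b'=n$, obtain six contributions of which the $(b',n,b',n)$-with-$u$ term dies on $\va(u)=0$, collapse all reassociator scalars to $1$ by normalization and counit gymnastics, and compare the coefficients of $-\ot 1$ and $-\ot n$ against $\omega\,1\ot 1$; the resulting identity $b^2\ot 1+bb'\ot n+\alp(u)b'b\ot 1+\alp(v)b'b\ot n+\omega b'^2\ot 1=\omega\,1\ot 1$ is the one displayed in the paper's proof. Where you genuinely diverge is the final claim. The paper argues by contradiction: assuming $u\ne 0$ it deduces $\un{\va}_B(n)\ne0$, hence $\omega\ne0$ and $\alp(u)\ne0$, extracts $\alp^2=\va$ from \equref{ma1B}, substitutes $v=1-2\alp(u)^{-1}u$ into the second compatibility of \equref{YDmodcondB}, and applies $\alp$ to force $\alp=\va$, contradicting $0=\va(u)=\alp(u)$. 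You instead read $\alp(v)=-1$ directly off the second identity of \equref{braidedbialgB} (using $b\ne0$), conclude $\alp\ne\va$ since $\va(v)=1$ and ${\rm char}\,k\ne2$, and combine this with $\un{\va}_B(n)(\va-\alp)=0$ — which is just the relation $\be=\un{\va}_B(n)(\va-\alp)=0$ already secured in Lemma~\ref{le:algcoalgstrB}, and which you legitimately re-derive from $h\cdot b=\alp(h)b$, $h\cdot b'=\va(h)b'$ — to get $\un{\va}_B(n)=0$ and hence $u=\un{\va}_B(n)(1-v)=0$. This is correct and noticeably more economical: it bypasses the Yetter--Drinfeld module condition and the contradiction setup entirely, at the price of nothing, and it has the side benefit of making explicit that $b'=1$ forces $\alp(v)=-1$, which is precisely the fact exploited in Case 1 of Theorem~\ref{th:2dimbraidedHA}.
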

\begin{proof}
We have studied all the conditions required for $B$ to be an algebra and a coalgebra in ${}_H^H{\cal YD}$. The only one left in order to have $B$ a bialgebra in 
${}_H^H{\cal YD}$ is (\ref{moltcon2}), which holds in our case if and only if it holds for $\mf{b}=\mf{b}'=n$. By using the relations obtained so far, 
one can show that $\un{\Delta}_B$ is multiplicative if and only if 
\[
b^2\ot 1 + bb'\ot n+ \alp(u)b'b\ot 1+ \alp(v)b'b\ot n + \omega b'^2\ot 1=\omega 1\ot 1.
\]
It is clear that the above relation is equivalent to the relations in \equref{braidedbialgB}, and this completes the description of $B$ as a braided bialgebra.   

Assume now $b'=1$, so $n=b+\un{\va}_B(n)1$. As $\{1, n\}$ is a basis in $B$, we cannot have $b=0$, and therefore $\{1, b\}$ is another basis for $B$. 
In addition, the first equality in \equref{braidedbialgB} yields $b^2=-\alp(u)b$; thus $n^2=\omega 1$ leads us to 
$\alp(u)=2\un{\va}_B(n)$ and $\omega=\un{\va}_B(n)^2$. 

By way of contradiction, assume that $u\not=0$. Then $u=\un{\va}_B(n)(1-v)$ implies $\un{\va}_B(n)\not=0$, and so $\omega\not=0$ and 
$\alp(u)\not=0$. Hence \equref{ma1B} entails to $\alp^2=\va$. It follows that $v=1-2\alp(u)^{-1}u$, and 
the second equality in \equref{YDmodcondB} reads as 
\[
\alp(h_2)h_1(1-2\alp(u)^{-1}u)=\alp(h_1)(1-2\alp(u)^{-1}u)h_2,
\]
for all $h\in H$, which, owing to the first equality in \equref{YDmodcondB}, it is equivalent to 
\[
\alp(h_2)h_1 - \alp(h_1)h_2=2\alp(u)^{-1}(\alp(h_2)h_1 - h)u,
\]
for all $h\in H$. By applying $\alp$ to the both sides of the above equality, since $\alp^2=\va$, we get that $2\alp(u)^{-1}(\va-\alp)=0$, hence 
$\alp=\va$. But this yields $0=\va(u)=\alp(u)\not=0$, a contradiction. 
\end{proof}

We are now able to describe the $2$-dimensional braided Hopf algebras $B$ within ${}_H^H{\cal YD}$, provided that $H$ is a quasi-Hopf algebra 
with bijective antipode. To simplify the presentation, we introduce the following family of braided Hopf algebras.

\begin{definition}\delabel{Balpv}
Let $H$ be a quasi-Hopf algebra and $(\alp, v)$ a couple consisting of an algebra map $\alp: H\ra k$ and an element $v\in H$ verifying 
$\alp(v)=-1$, $\va(v)=1$, and the second formulas in \equref{coasscoactonB} and \equref{YDmodcondB}. Then $B_{\alp, v}$ is the  
the $2$-dimensional vector space with basis $\{1, n\}$ endowed with the following braided Hopf algebra structure:

$\bullet$ $B_{\alp, v}$ belongs to ${}_H^H{\cal YD}$ via the $H$-action determined by $h\cdot 1=\va(h)1$, $h\cdot n=\alp(h)n$, for all 
$h\in H$, and the $H$-coaction given by $1\mapsto 1\ot 1$, $n\mapsto v\ot n$;

$\bullet$ as an algebra, $B_{\alp, v}$ is generated by $n$ with relation $n^2=0$, and $B_{\alp, v}$ is unital with unit $1$;

$\bullet$ the coalgebra structure of $B_{\alp, v}$ is defined by $\un{\Delta}(1)=1\ot 1$, $\un{\Delta}(n)=n\ot 1+ 1\ot n$ and $\un{\va}(1)=1$, 
$\un{\va}(n)=0$;

$\bullet$ the braided antipode $\un{S}$ is determined by $\un{S}(1)=1$, $\un{S}(n)=-n$.  
\end{definition}

The fact that $B_{\alp, v}$ is indeed a braided Hopf algebra will follow from the proof of the next theorem. Also, denote by $C_2$ the cyclic group of 
order $2$ and let $k[C_2]$ be the group Hopf algebra of $C_2$ over $k$. It is easy to see that the trivial $H$-(co)module 
structure (given by the counit, respectively by the unit, of $H$) turns $k[C_2]$ into an object of ${}_H^H{\cal YD}$ and, moreover, into a braided 
Hopf algebra in ${}_H^H{\cal YD}$ (the braided Hopf structure equals the one considered in the Hopf case); in what follows, we will refer to this braided Hopf 
algebra structure of $k[C_2]$ as being the trivial one.   

The next result can be viewed as the quasi-Hopf version of \cite[Theorem 1]{Rbke}.  

\begin{theorem}\thlabel{2dimbraidedHA}
Let $H$ be a quasi-Hopf algebra with bijective antipode over a field $k$ of characteristic different from $2$. If $B$ is a braided Hopf algebra 
in ${}_H^H{\cal YD}$ then $B$ is isomorphic to a braided Hopf algebra of type $B_{\alp, v}$ or to the group Hopf algebra $k[C_2]$ considered 
in ${}_H^H{\cal YD}$ via the trivial structure.  
\end{theorem}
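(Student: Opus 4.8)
The plan is to feed \leref{malgstrB}, \leref{algcoalgstrB}, \leref{coalgYDstrB} and \prref{casebprimeone} into a classification of the data $(\alp,\omega,u,v,b,b')$ attached to $B$ — here $\be=0$ by \leref{algcoalgstrB}, since $\un{\Delta}_B(1)=1\ot1$ and $\un{\va}_B(1)=1_k$ for any bialgebra — and then to split according to the position of the group-like $b'$. First, $b'$ is group-like in $B$ with trivial $H$-action, hence invertible with inverse $\un{S}_B(b')$, so $b'\neq0$; since $\dim_kB=2$ and $\un{\va}_B(b')=1_k$, either $b'=1$ or $\{1,b'\}$ is a $k$-basis of $B$.

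Case $b'=1$. By \prref{casebprimeone}, $u=0$; the computation in its proof gives $\alp(u)=2\un{\va}_B(n)$, whence $\un{\va}_B(n)=0$, $\omega=\un{\va}_B(n)^2=0$, so $b=n$, $n^2=0$, $\un{\Delta}_B(n)=n\ot1+1\ot n$, $\un{\va}_B(n)=0$, $h\cd n=\alp(h)n$, $\r(n)=v\ot n$; the second identity in \equref{braidedbialgB} forces $\alp(v)=-1$, and $\va(v)=1$. The remaining constraints on $v$ are precisely the second formulas in \equref{coasscoactonB} and \equref{YDmodcondB}, i.e. the hypotheses of \deref{Balpv}, so matching structures on the basis $\{1,n\}$ (with $\un{S}(n)=-n$, forced since $n$ is primitive) identifies $B$ with $B_{\alp,v}$. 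Reading these equivalences backwards also verifies that $B_{\alp,v}$ is a braided Hopf algebra for every admissible $(\alp,v)$, the point promised after \deref{Balpv}.

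Case $\{1,b'\}$ a basis. Writing $b'=\lambda 1+\kappa n$ with $\kappa\neq0$ and comparing $h\cd b'=\va(h)b'$ with $h\cd n=\alp(h)n$ (recall $\be=0$) forces $\alp=\va$, so the $H$-action on $B$ is trivial; hence the braiding of ${}_H^H{\cal YD}$ restricts to the flip on $B\ot B$ and $B$ is an ordinary $2$-dimensional Hopf algebra. Since it contains the group-like $b'\neq1$, $B=k[G(B)]\cong k[C_2]$, with $b'$ the non-trivial group-like. It remains to trivialize the $H$-coaction: with the action trivial, \equref{ydc3} makes $\r:B\ra H\ot B$ a coalgebra map, so $\r(b')=v_0\ot b'$ with $v_0$ group-like in $H$ and $\va(v_0)=1$; expanding this back into $\{1,n\}$ and imposing $u=\un{\va}_B(n)(1-v)$ from \leref{coalgYDstrB} yields $v_0=1$. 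Thus $\r$ is trivial and $B\cong k[C_2]$ with the trivial structure.

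The real work should be the trivialization of the coaction in the second case: one must exclude the a priori ``twisted'' copies of $k[C_2]$ carrying a nontrivial central group-like coaction $v_0$ with $v_0^2=1$, and it is exactly the identity $u=\un{\va}_B(n)(1-v)$ — once $b'$ is identified with the group-like of $k[C_2]$ — that eliminates them. A subsidiary nuisance throughout is the bookkeeping of the counit normalizations $(\va\ot\Id\ot\Id)(\Phi)=(\Id\ot\Id\ot\va)(\Phi)=1\ot1$ used to collapse the many $\Phi^{\pm1}$-factors once the action is trivial.
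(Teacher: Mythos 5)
Your proof follows the paper's route almost verbatim: the same reduction to the data $(\alp,\omega,u,v,b,b')$ via \leref{malgstrB}--\leref{coalgYDstrB} and \prref{casebprimeone}, and the same dichotomy on $b'$. The case $b'=1$ is handled exactly as in the paper and is correct; your shortcut in the second case of observing that the trivial action makes the braiding the flip and then quoting the classification of ordinary $2$-dimensional Hopf algebras is a legitimate replacement for the paper's hands-on verification that $b'^2=1$.

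There is, however, a genuine gap in the second case, at exactly the step you yourself call ``the real work''. Condition \equref{ydc3}, even after the action is trivialized and the $\Phi$-factors collapse, does \emph{not} say that $\r:B\ra H\ot B$ is a coalgebra map into the tensor product coalgebra; it says that $B$ is an $H$-comodule coalgebra, i.e.
\begin{equation*}
\mf{b}_{[-1]}\ot \un{\Delta}_B(\mf{b}_{[0]})=(\mf{b}_{\un{1}})_{[-1]}(\mf{b}_{\un{2}})_{[-1]}\ot (\mf{b}_{\un{1}})_{[0]}\ot(\mf{b}_{\un{2}})_{[0]},
\end{equation*}
with the two $H$-legs \emph{multiplied} in $H$. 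Writing $\r(b')=v_0\ot 1+v_1\ot b'$, applying this to the group-like $b'$ only yields $v_0^2=v_0$, $v_1^2=v_1$ and $v_0v_1=v_1v_0=0$; it does not force $v_0=0$, so the form $\r(b')=v_0\ot b'$ with ``$v_0$ group-like'' is not justified as stated. (If $\r$ really were a coalgebra map into the tensor coalgebra $H\ot B$, one would get $v_0\ot v_1=0$ in $H\ot H$, hence $v_0=0$; but that is not the condition you actually have.) Consequently the identity $u=\un{\va}_B(n)(1-v)$ by itself does not eliminate the twisted copies: it only identifies the $\ot 1$ component of $\r(b')$ as $1-v$. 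The missing ingredient is the right-invertibility of $v$, proved in \leref{coalgYDstrB} and used by the paper at precisely this point: from \equref{YDcoalgB2} with $\alp=\va$ and trivial action one gets $b'_{[-1]}v\ot b'_{[0]}=v\ot b'$, i.e.\ $(1-v)v=0$ and $v_1v=v$, and cancelling the right inverse of $v$ gives $v_0=0$, $v_1=1$ and finally $v=1$. With that one line added your Case~2 closes; without it, a nonzero idempotent $v_0$ annihilating $v$ on the left is not excluded.
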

\begin{proof}
As before, we consider $B$ determined by $\alp\in H^*$, $\omega\in k$, $u, v\in H$ and $b, b'\in B$ satisfying the above mentioned relations. Parallel to 
the Hopf case, we distinguish two cases.

{\bf Case 1}. If $b'=1$, by \prref{casebprimeone} we get $u=0$; clearly, $b\not=0$. Thus the second equality in \equref{braidedbialgB} is equivalent to $\alp(v)=-1$; consequently, $v\not=1$ and so   
$u=\un{\va}_B(n)(1-v)$ is equivalent to $\un{\va}_B(n)=0$. Therefore, $n=b$ and $\omega 1=n^2=b^2=-\alp(u)b=0$; see the last part of the proof of 
\prref{casebprimeone}. We conclude that $\omega=0$ and $n^2=0$. Now, a simple inspection tells us that the braided Hopf algebra $B$ is nothing but 
$B_{\alp, v}$ presented in \deref{Balpv}; the details are left to the reader. In particular, this shows that $B_{\alp, v}$ is indeed a Hopf algebra in 
${}_H^H{\cal YD}$, as claimed.   

{\bf Case 2}. Assume that $b'\not=1$. Since $\un{\va}_B(b')=1$, it follows that $\{1, b'\}$ is a basis of $B$ for which $h\cdot 1=\va(h)1$ and 
$h\cdot b'=\va(h)b'$, for all $h\in H$. We deduce from here that $h\cdot \mf{b}=\va(h)\mf{b}$, for all $\mf{b}\in B$. But $h\cdot n=\alp(h)n$, for all 
$h\in H$, and therefore $\alp=\va$ in this case. We obtain from (\ref{YDalgB1}) that $u=0$, and from (\ref{YDcoalgB2}) that 
$b'_{[-1]}v\ot b'_{[0]}=v\ot b'$. We know from the proof of \leref{coalgYDstrB} that, in general, $v$ is right invertible, hence $b'_{[-1]}\ot b'_{[0]}=1\ot b'$. 
Keeping in mind that $\{1, b'\}$ is a basis of $B$ and the fact that $\r(1)=1\ot 1$, it follows that $\r(\mf{b})=1\ot \mf{b}$, for all $\mf{b}\in B$. 
By taking $\mf{b}=n$, with the help of $\r(n)=u\ot 1 + v\ot n=v\ot n$, we conclude that $v=1$. Summing up, $u=0$, $v=1$ and $\alp=\va$; again, a simple inspection guarantees us that $B$ is in this case the group Hopf algebra $k[C_2]$, equipped with the trivial braided Hopf algebra structure. For this, 
note that 
\[
(\un{\Delta}_B(n))^2=(b\ot 1 + b'\ot n)^2=b^2\ot 1 + b'^2\ot n^2=(b^2\ot \omega b'^2)\ot 1\equal{\equref{braidedbialgB}}\omega 1\ot 1=\un{\Delta}_B(n^2),
\]
since the second relation in \equref{braidedbialgB} reduces to $b'b=-bb'$; but $\{1, b'\}$ is a basis for $B$, so $B$ is a commutative algebra, and therefore 
$bb'=b'b=0$. The above equality tells us that $\un{\Delta}_B$ is multiplicative in the usual way, hence 
$\un{\Delta}_B(b'^2)=(\un{\Delta}_B(b'))^2=b'^2\ot b'^2$. As $\un{\va}_B(b')=1$, it follows that $b'^2$ is a grouplike element of the (ordinary) Hopf algebra $B$, 
hence $1, b', b'^2$ are linearly independent elements of $B$, provided that they are pairwise distinct. Since $B$ is $2$-dimensional it follows that 
$b'^2\in \{1, b'\}$. But we required for $B$ to have an antipode, and this occurs only in the case when $b'^2=1$; for more details we refer to 
\cite[Exercise 4.3.7]{DNRha}.        
\end{proof}

For further use, we present the quasi-Hopf algebra structures of $B\times H$ coming from $2$-dimensional braided Hopf algebras $B$. As before, we work over a 
field of characteristic different from $2$. Also, remark that, in general, $B\times H$ is a free right $H$-module of rank equals ${\rm dim}_k(B)$. 

\begin{proposition}\prlabel{qHa with a proj of rank 2}
Let $A$ be a quasi-Hopf algebra with a projection $
\xymatrix{
H \ar[r]<2pt>^i &\ar[l]<2pt>^{\pi} A
}
$, 
$\pi i=\Id_H$, such that $A$ is a free right $H$-module of rank $2$. The $A$ identifies either to $H_g$ or $H(\theta)_{\zeta, v}$, where: 

$\bullet$ $H_g$ is the unital $k$-algebra generated by $g$ and $H$ with relations $g^2=1$ and $gh=hg$, for all $h\in H$. $H$ is a quasi-Hopf 
subalgebra of $H_g$, and $\Delta(g)=g\ot g$, $\va(g)=1$ and $S(g)=g$ complete the quasi-Hopf algebra structure of $H_g$. In other words, $H_g$ is the tensor product 
of the quasi-Hopf algebras $k[C_2]$ and $H$, the former being seen as the group Hopf algebra with trivial reassociator. 

$\bullet$ Let $(\alp, v)$ be a couple  consisting of an algebra morphism $\alp: H\ra k$ and an element $v\in H$ subject to relations
\begin{equation}\eqlabel{2dim bialgebra cond}
\alp(v)=-1,~\va(v)=1,~\Delta(v)=\alp(y^1x^3X^2)x^1X^1vy^2\ot x^2vX^3y^3~\mbox{and}
\end{equation}
$\alp(h_2)h_1v=\alp(h_1)vh_2$, for all $h\in H$. Then $H(\theta)_{\alp, v}$ is the unital $k$-algebra generated by $\theta$ and $H$ with relations 
$\theta^2=0$ and $h\theta=\alp(h_1)\theta h_2$, for all $h\in H$. The quasi-Hopf algebra structure of $H(\theta)_{\alp, v}$ is defined in such a way that 
$H$ is a quasi-Hopf subalgebra of it and 
\begin{eqnarray}
&&\Delta(\theta)=\alp(X^2x^1)vX^1x^2\ot \theta X^3x^3 + \alp(x^1)\theta x^2\ot x^3,\\
&&S(\theta)=-\alp(X^2x^1_2)S(X^1x^1_1v)\a \theta X^3x^2\b S(x^3).
\end{eqnarray}
\end{proposition}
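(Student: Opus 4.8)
The plan is to combine the characterization of quasi-Hopf algebras with a projection (from \cite{bn}) with our classification of $2$-dimensional braided Hopf algebras, \thref{2dimbraidedHA}, and then to make the biproduct construction explicit in each of the two resulting cases. First, by \cite{bn} the data $i,\pi$ with $\pi i=\Id_H$ produce a braided Hopf algebra $B$ in ${}_H^H{\cal YD}$ together with a quasi-Hopf algebra isomorphism $A\cong B\times H$. As recalled just before the statement, $B\times H$ is free of rank $\dim_k B$ as a right $H$-module, so the rank $2$ hypothesis forces $\dim_k B=2$. Hence \thref{2dimbraidedHA} applies: either $B\cong k[C_2]$ with the trivial Yetter-Drinfeld and braided Hopf structure, or $B\cong B_{\alp,v}$ for a couple $(\alp,v)$ as in \deref{Balpv}; in the second case $(\alp,v)$ satisfies exactly the relations displayed in \equref{2dim bialgebra cond}, the rewriting $\alp(x^3X^2y^1)=\alp(y^1x^3X^2)$ being legitimate since $\alp$ is an algebra map into $k$. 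It therefore only remains to identify $B\times H$ in the two cases.

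Suppose first $B=k[C_2]$, and let $g$ denote the nontrivial element of $C_2$. Because the $H$-action and $H$-coaction are trivial, the smash product multiplication and the smash product comultiplication collapse, after using the normalization of $\Phi$, to those of the tensor product algebra and tensor product coalgebra $k[C_2]\ot H$; in particular $g^2=1$, $gh=hg$ for all $h\in H$, $\Delta(g)=g\ot g$ and $\va(g)=1$. The reassociator $1_B\ot X^1\ot 1_B\ot X^2\ot 1_B\ot X^3$ of $B\times H$ is then precisely the reassociator of the quasi-Hopf algebra $k[C_2]\ot H$ with $k[C_2]$ carrying the trivial reassociator. Finally, evaluating \equref{antipbipr} at $g\times 1$ (here $\mf{b}_{[-1]}=1$ and $\un{S}_B(g)=g$) and cancelling the $\Phi$-factors by the counit axioms and \equref{q6} yields ${\cal S}(g\times h)=g\times S(h)$. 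Thus $A$ is isomorphic to $H_g$.

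Suppose now $B=B_{\alp,v}$, put $\theta:=n\times 1$ and identify $h\equiv 1_B\times h$. From the multiplication of $B\#H$, together with $n^2=0$, $h\cdot n=\alp(h)n$, $h\cdot 1_B=\va(h)1_B$ and the normalization of $\Phi$, one reads off $\theta h=n\times h$, $h\theta=\alp(h_1)\theta h_2$ and $\theta^2=0$; hence $H(\theta)_{\alp,v}$ is the algebra generated by $\theta$ and $H$ with these relations, free of rank $2$ over $H$ with basis $\{1_B\times 1,\ \theta\}$. The comultiplication is computed by specializing \equref{comultsmashprodcoalg} to $\mf{b}=n$, $h=1$, with $\un{\Delta}(n)=n\ot 1+1\ot n$ and $\r(n)=v\ot n$, $\r(1)=1\ot 1$: the summand coming from $n_{\un{1}}=n$, $n_{\un{2}}=1$ reduces, via the counit axioms and the normalization, to $\alp(x^1)\theta x^2\ot x^3$, while the summand coming from $n_{\un{1}}=1$, $n_{\un{2}}=n$ reduces, after collapsing the resulting product of several copies of $\Phi^{\pm1}$ by the pentagon \equref{q3}, the normalizations and the defining relations of $(\alp,v)$, to $\alp(X^2x^1)vX^1x^2\ot\theta X^3x^3$. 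The antipode comes from \equref{antipbipr} evaluated at $n\times 1$ (with $\un{S}_B(n)=-n$ and $\r(n)=v\ot n$): its right-hand side is, using $\theta h=n\times h$, a scalar multiple of $\theta$ times elements of $H$, and reassembling it through $h\theta=\alp(h_1)\theta h_2$ gives $S(\theta)=-\alp(X^2x^1_2)S(X^1x^1_1v)\a\,\theta\,X^3x^2\b S(x^3)$, as claimed.

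The genuinely delicate point is the middle of the comultiplication computation: the raw output of \equref{comultsmashprodcoalg} involves a product of four copies of $\Phi^{\pm1}$, and only the $n_{\un{1}}=1$, $n_{\un{2}}=n$ summand requires the $3$-cocycle identity \equref{q3} to be brought into the compact two-copy form in the statement; every other verification above is a direct rewriting using only the counit axioms, the normalization of $\Phi$, and the multiplicativity of $\alp$.
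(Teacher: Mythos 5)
Your proposal is correct and follows essentially the same route as the paper: invoke the projection theorem of \cite{bn} to write $A\cong B\times H$, use the rank-$2$ hypothesis and the freeness remark to force $\dim_k B=2$, apply \thref{2dimbraidedHA} to split into the cases $B=k[C_2]$ and $B=B_{\alp,v}$, and then read off the algebra relations, $\Delta(\theta)$ and $S(\theta)$ from the smash product multiplication, \equref{comultsmashprodcoalg} and \equref{antipbipr}. The paper compresses the final verifications into one sentence, so your explicit unwinding of the two summands of the comultiplication and of the antipode formula is simply a more detailed rendering of the same argument.
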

\begin{proof}
By \cite{bn}, $A$ identifies to $B\times H$ as a quasi-Hopf algebra, for some $2$-dimensional braided Hopf algebra $B$. If $B=k[C_2]$, $A$ identifies to 
$H_g$. When $B=B_{\alp, v}$, $A$ identifies to $H(\theta)_{\alp, v}$. Note that $\theta\equiv n\times 1$, where $\{1, n\}$ is the basis of $B$ as in 
the Case 1 of the proof of \thref{2dimbraidedHA}; the formulas for $\Delta(\theta)$ and $S(\theta)$ follow from \equref{comultsmashprodcoalg} 
and \equref{antipbipr}, respectively. 
\end{proof}

For a quasi-Hopf algebra $H$, a left integral in $H$ is an element $t\in H$ satisfying $ht=\va(h)t$ for all $h\in H$. Owing to \cite[Theorem 2.5]{bc1}, 
the space of left integrals in $H$ is non-zero if and only if $H$ is finite dimensional, provided that the antipode of $H$ is bijective. If this is the case then 
the space of left integrals in $H$ has dimension one. As far as we are concerned, the importance of integrals in $H$ resides on the fact that they characterize the 
semisimplicity of $H$: $H$ is semisimple as an algebra if and only if there exists a left integral $t$ in $H$ such that $\va(t)=1$; see \cite{pan}.  

\begin{corollary}\colabel{2classifgen}
Let $A, H$ be quasi-Hopf algebras over a field $k$ of characteristic different from $2$ and suppose that 
there are quasi-Hopf algebra morphisms 
$
\xymatrix{
H \ar[r]<2pt>^i &\ar[l]<2pt>^{\pi} A
}
$
such that $\pi i=\Id_H$. If $A$ is a free right $H$-module of rank $2$ and $H$ is finite dimensional then the following assertions hold. 

$(i)$ If $A$ is semisimple then so is $H$ and $A\simeq H_g$ as a quasi-Hopf algebra. 

$(ii)$ If $A$ is not semisimple and $H$ is semisimple then  $A\simeq H(\theta)_{\alp, v}$ as a quasi-Hopf algebra, for some $\alp, v, \theta$ as in the above;

$(iii)$ If both $A, H$ are not semisimple then $A\simeq H_g$ or $A\simeq H(\theta)_{\alp, v}$ as a quasi-Hopf algebra. 
\end{corollary}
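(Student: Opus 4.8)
The plan is to read everything off \prref{qHa with a proj of rank 2}, which already asserts that $A$ is isomorphic, as a quasi-Hopf algebra, either to $H_g$ or to $H(\theta)_{\alp, v}$ for a suitable couple $(\alp, v)$; the only work left is to decide, according to the semisimplicity of $A$ and of $H$, which of the two alternatives can occur. Two elementary observations will do the job, and neither needs any genuinely quasi-Hopf input.

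First I would show that $H(\theta)_{\alp, v}$ is never semisimple. The defining relation $h\theta=\alp(h_1)\theta h_2$ shows that $H\theta\subseteq\theta H$, hence $\theta H$ is a two-sided ideal of $A=H+\theta H$; it is non-zero since $\theta\neq 0$ (recall that $\theta\equiv n\times 1$, with $\{1,n\}$ a basis of the $2$-dimensional braided Hopf algebra $B$ for which $A\simeq B\times H$), and it is nilpotent because $\theta H\theta\subseteq\theta(\theta H)=\theta^2H=0$, whence $(\theta H)^2=\theta(H\theta)H\subseteq\theta^2H=0$. A non-zero nilpotent ideal precludes semisimplicity; equivalently, $H(\theta)_{\alp, v}$ carries no left integral $t$ with $\va(t)=1$, so it is non-semisimple by \cite{pan}.

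Second I would record the link between the semisimplicity of $A$ and of $H$ in the case $A\simeq H_g$. Since $\pi i=\Id_H$, the algebra map $\pi\colon A\ra H$ is surjective, so $H$ is a quotient algebra of $A$; as quotients of semisimple algebras are semisimple, $A$ semisimple forces $H$ semisimple. For the converse, recall from \prref{qHa with a proj of rank 2} that $H_g\simeq k[C_2]\ot H$ as quasi-Hopf algebras, with $g^2=1$ and $\va(g)=1$. If $H$ is semisimple, pick a left integral $t$ in $H$ with $\va(t)=1$ (it exists by \cite[Theorem 2.5]{bc1} and \cite{pan}); since $k$ is of characteristic different from $2$, the element $\tfrac12(1+g)t$ is then a left integral in $H_g$ with $\va\bigl(\tfrac12(1+g)t\bigr)=\tfrac12(1+\va(g))\va(t)=1$, so $H_g$ is semisimple whenever $H$ is.

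With these two facts the three assertions follow at once. For $(i)$: if $A$ is semisimple then $A\not\simeq H(\theta)_{\alp, v}$ by the first observation, so $A\simeq H_g$, and $H$ is semisimple by the second. For $(ii)$: if $A$ is not semisimple but $H$ is, then $H_g$ is semisimple by the second observation, hence $A\not\simeq H_g$ and necessarily $A\simeq H(\theta)_{\alp, v}$. For $(iii)$: the dichotomy $A\simeq H_g$ or $A\simeq H(\theta)_{\alp, v}$ is exactly what \prref{qHa with a proj of rank 2} provides, and this cannot be sharpened, since $H(\theta)_{\alp, v}$ is always non-semisimple whereas $H_g$ can also be non-semisimple (precisely when its quotient $H$ is). I anticipate no real obstacle: the one step needing care is the bookkeeping in the first observation --- verifying that $\theta H$ is a two-sided nilpotent ideal of $A$ --- which I would carry out before everything else.
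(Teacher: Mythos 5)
Your proof is correct, and it reaches the conclusion by a partly different route than the paper. Both arguments start from the dichotomy of \prref{qHa with a proj of rank 2} ($A\simeq H_g$ or $A\simeq H(\theta)_{\alp,v}$) and then decide which branch is compatible with the semisimplicity hypotheses; the difference lies in how each model algebra is tested. The paper follows Radford's analysis of integrals in a biproduct: a left integral in $B\times H$ has the form $t_B\times t_H$, so $B\times H$ is semisimple iff $H$ is and $\un{\va}_B(t_B)=1$, which fails for $B_{\alp,v}$ and holds for $k[C_2]$ with $t_B=\tfrac12(1+g)$; for part $(i)$ it invokes \cite[Corollary 7.82]{bcpvo} to pass semisimplicity from $A$ to its quasi-Hopf subalgebra $H$. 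You replace the first ingredient by the direct observation that $\theta H$ is a non-zero nilpotent two-sided ideal of $H(\theta)_{\alp,v}$ (your verification via $H\theta\subseteq\theta H$ and $\theta^2=0$ is sound, and $\theta\neq 0$ because $A$ is free of rank $2$), and the second by noting that $\pi$ exhibits $H$ as a quotient of the semisimple algebra $A$; your treatment of $H$ semisimple $\Rightarrow H_g$ semisimple via the integral $\tfrac12(1+g)t$ coincides with the paper's. Your version is more elementary and self-contained --- it needs neither the structure of integrals in biproducts nor the cited subalgebra-semisimplicity result --- while the paper's integral computation is more uniform across the two cases and records the Maschke-type characterization of semisimplicity of $B\times H$, which has independent interest.
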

\begin{proof}
We have $A\simeq B\times H$, for some $2$-dimensional braided Hopf algebra $B$; so \thref{2dimbraidedHA} applies. 
 
As in \cite[\S 2 Proposition 3]{rad}, one can show that a left integral $t$ in $B\times H$ is of the form $t_B\times t_H$ with $t_B\in B$ and $t_H\in H$ such that 
$\mf{b}t_B=\un{\va}_B(\mf{b})t_B$, for all $\mf{b}\in B$, and $h\cdot t_B=\va(h)t_B$, $ht_H=\va(h)t_H$, for all $h\in H$. Consequently, $B\times H$ is semisimple 
if and only if $H$ is semisimple and $t_B$ satisfies, moreover, $\un{\va}_B(t_B)=1_k$. Note that, for $B=B_{\alp, v}$ such an element $t_B$ does not exist. On the other hand, for $B=k[C_2]$ one can take $t_B=\frac{1}{2}(1+g)$, where $g$ is the generator of $C_2$.

(i). Since $A$ is finite dimensional and semisimple and $H$ can be identified to a quasi-Hopf subalgebra of $A$, by \cite[Corollary 7.82]{bcpvo} we obtain that 
$H$ is semisimple. By the above comments it follows that $A\simeq H_g\equiv k[C_2]\ot H$ as a quasi-Hopf algebra.

(ii) and (iii). If $A$ is not semisimple then either $H$ is not semisimple and $B$ is arbitrary or $H$ is semisimple and $B=B_{\alp, v}$.   
\end{proof}

\section{Quasi-Hopf algebras of dimension $4$}\selabel{class4dim}
\setcounter{equation}{0}
Throughout this section $k$ is an algebraic closed field of characteristic $0$. We need this assumption on $k$ in order to make use of the classification of 
Hopf algebras $H$ in dimension $4$: if $H$ is not semisimple then $H$ is isomorphic to $H_4$; if $H$ is semisimple, $H$ is isomorphic to a group Hopf algebra. 
Our first aim was to provide a similar result for quasi-Hopf algebras, and especially to produce a quasi-Hopf analogue for $H_4$; as we will see, a result of 
Etingof and Gelaki \cite{eg3} says that the latter is not possible. Nevertheless, we will use the mentioned result to show that, in dimension 4, there are infinitely many non-isomorphic quasi-Hopf algebras. 
\subsection{The nonsemisimple case}
The first example of a non-commutative and non-cocommutative Hopf algebra is the Sweedler's $4$-dimensional Hopf algebra $H_4$. As an algebra, it is unital, 
generated by $g, x$ with relations $g^2=1$, $x^2=0$, $xg=-gx$. Then the Hopf algebra structure of $H_4$ is given by 
\[
\Delta(g)=g\ot g,~\Delta(x)=g\ot x + x\ot 1,~\va(g)=1,~\va(x)=0,~S(g)=g~\mbox{and}~S(x)=-gx.
\]

It was shown in \cite[Corollary 3.6]{eg3} that a $4$-dimensional nonsemisimple quasi-Hopf algebra $A$ is twist equivalent to $H_4$. The proof in \cite{eg3} 
was done in two steps. Namely, the authors show that (1) ${\rm gr}(A)$ is a Hopf algebra isomorphic to $H_4$, and therefore a selfdual Hopf algebra, and that $(2)$ 
the third Hochschild cohomology of $H_4$ with coefficients in $k$, $H^3(H_4, k)$, is trivial; then, the classification result follows from \cite[Proposition 2.3]{eg4} which states that a finite dimensional quasi-Hopf algebra $A$ with ${\rm gr}(A)$ a Hopf algebra is twist equivalent to a Hopf algebra, 
provided that $H^3({\rm gr}(A)^*, k)=0$ (${\rm gr}(A)$ is the graded algebra associated to $A$, more details will be given below).   

Next, we present an alternative proof for (1). Towards this end, for a $4$-dimensional non-semisimple quasi-Hopf algebra $A$ we show that ${\rm gr}(A)\simeq A\simeq H_4$ 
as an algebra, and then we use the representative $3$-cocycles of $k[C_2]$ to see that ${\rm gr}(A)\simeq H_4$ as a Hopf algebra.  

Let $A$ be a finite dimensional nonsemisimple quasi-Hopf algebra and denote by $J$ the Jacobson radical of $A$. Then $J\not=0$ is a nilpotent ideal of $A$ and the quotient algebra $\frac{A}{J}$ is semisimple. For $m$ a positive integer, let $A_m:=\frac{J^m}{J^{m+1}}$; by convention, $J^0=A$. 
Denote by ${\rm gr}(A)=\bigoplus_{m\geq 0}A_m$ the graded algebra associated to the filtration of $A$ given by the powers of $J$. When $A$ is a basic algebra, this means that all the irreducible representations of $A$ are $1$-dimensional or, equivalently, $\frac{A}{J}\simeq k\times\cdots\times k$ as an algebra, 
$J$ is a quasi-Hopf ideal of $A$ (i.e. $J\subseteq A$ is an ideal satisfying $\Delta(J)\subseteq J\ot A + A\ot J$, $\va(J)=0$, $S(J)\subseteq J$; see 
\cite[Lemma 1.1]{GrSo}, the proof in the quasi-Hopf case is identical). Therefore, $A_0=\frac{A}{J}$ admits a unique quasi-Hopf algebra structure such that 
the canonical projection $\pi: A\ra A_0$ turns into a quasi-Hopf algebra morphism. Since $A_0\simeq k\times\cdots\times k$ as an algebra, it follows 
that $A_0\simeq k^G\simeq k[G]$ for some finite abelian group $G$, and therefore $A_0$ is a usual group Hopf algebra viewed as a quasi-Hopf algebra 
via a normalized $3$-cocycle $\phi$ of $G$. In other words, 
$A_0=k_\phi[G]$ and the reassociator $\Phi$ of $A$ projects through $\pi^{\ot 3}$ onto $\phi$. 

Furthermore, ${\rm gr}(A)$ admits a quasi-Hopf algebra structure such that the canonical linear maps 
$
\xymatrix{
A_0 \ar[r]<2pt>^i &\ar[l]<2pt>^{p} {\rm gr}(A)
}
$ 
are quasi-Hopf algebra morphisms obeying $pi=\Id_{A_0}$. If $p_m: J^m\ra A_m$ stands for the canonical $k$-linear projection, so that $p_0=p$, 
the quasi-coalgebra structure of ${\rm gr}(A)$ is defined by $\Delta_m: A_m\ra \sum_{s+t=m}A_s\ot A_t$, $\Delta_m(p_m(x))=\sum_{s+t=m}(p_s\ot p_t)\Delta(x)$, 
for all $x\in J^m$, $m\geq 0$; note that $\Delta_m$ is well defined since, inductively one can show that 
$\Delta(J^m)\subseteq \sum_{s+t=m}J^s\ot J^t$, and $J^{m+1}\subseteq {\rm Ker}(\sum_{s+t=m}(p_s\ot p_t)\Delta)$. Likewise, the antipode of ${\rm gr}(A)$ 
is defined by $S_m:A_m\ra A_m$ given by $S_m(p_m(x))=p_m(S(x))$, for all $x\in J^m$, $m\geq 0$. The other structures of ${\rm gr}(A)$ are induced by the 
quasi-Hopf algebra structure of $A_0$; in fact, we know that there is a braided Hopf algebra $B$ in ${}_{A_0}^{A_0}{\cal YD}$ such that 
${\rm gr}(A)\simeq B\times A_0$, as quasi-Hopf algebras, thus ${\rm dim}_kA_0$ divides ${\rm dim}_k{\rm gr}(A)$. 

Another notable thing about ${\rm gr}(A)$ is the fact that each $A_m$ can be seen as a two-sided two-cosided quasi-Hopf module over $A_0$, with natural 
$A_0$-actions and coactions; we refer to \cite{db} for the detailed definition of the category ${}_{A_0}^{A_0}{\cal M}_{A_0}^{A_0}$. By the structure theorems 
proved in {\it loc. cit.}, it follows that $A_m$ is a free left and right $A_0$-module, for all $m\geq 0$.

When ${\rm dim}_kA=4$, $\frac{A}{J}$ is a semisimple algebra of dimension at most $3$, thus $A$ is basic and the results mentioned above apply. In particular, 
they allow us to show that ${\rm dim}_kJ=2$. To this end, we need another classical result, proved in \cite[Theorem 1.4.9]{abe}. Namely, if $\mathbb{A}$ 
is a $k$-algebra with Jacabson radical $J$ and $\frac{\mathbb{A}}{J}$ is a separable $k$-algebra then there exists a semisimple $k$-subalgebra $\mathbb{S}$ of 
$\mathbb{A}$ such that $\mathbb{A}$ can be written as the direct sum of $k$-spaces $\mathbb{A}=J\oplus \mathbb{S}$; obviously, 
$\frac{\mathbb{A}}{J}\simeq \mathbb{S}$ as $k$-spaces.   
 
\begin{lemma}
Let $A$ be a nonsemisimple quasi-Hopf algebra of dimension $4$ with Jacobson radical $J$. Then ${\rm dim}_kJ=2$.
\end{lemma}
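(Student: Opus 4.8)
The plan is to pin down the dimension of $A_0:=A/J$ and then conclude via $\dim_k J=\dim_k A-\dim_k A_0$. First I would note that, $A$ being nonsemisimple, $J\neq 0$, so $A_0$ is a semisimple $k$-algebra of dimension at most $3$; since $k$ is algebraically closed, $A_0$ is a product of matrix algebras $M_{n_i}(k)$ with $\sum_i n_i^2\leq 3$, which forces all $n_i=1$, i.e. $A$ is basic. Hence all the structural facts recalled above apply: $A_0=k_\phi[G]$ for a finite abelian group $G$, and ${\rm gr}(A)\cong B\times A_0$ as quasi-Hopf algebras for some braided Hopf algebra $B$ in ${}_{A_0}^{A_0}{\cal YD}$, so that $\dim_k A_0$ divides $\dim_k{\rm gr}(A)=\dim_k A=4$ (the last equality holding because $J$ is nilpotent). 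As $J\neq 0$ forces $\dim_k A_0<4$, this already restricts us to $\dim_k A_0\in\{1,2\}$, i.e. $\dim_k J\in\{2,3\}$.

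The crux is then to exclude $\dim_k J=3$, that is, the case $A_0=k$. If this held, ${}_{A_0}^{A_0}{\cal YD}$ would be the category of $k$-vector spaces and ${\rm gr}(A)\cong B\times k=B$ would be an ordinary $4$-dimensional Hopf algebra. On the other hand ${\rm gr}(A)=\bigoplus_{m\geq 0}A_m$ is a connected graded algebra whose degree-zero part is $A_0=k$ and whose augmentation ideal $\bigoplus_{m\geq 1}A_m$ is nilpotent, so ${\rm gr}(A)$ is local with residue field $k$. Invoking the classification of $4$-dimensional Hopf algebras recalled at the beginning of this section, $B$ is either a group Hopf algebra $k[G]$ with $|G|=4$ — semisimple in characteristic $0$, hence with zero Jacobson radical — or $B\cong H_4$, whose Jacobson radical is the $2$-dimensional ideal generated by $x$; in both cases the quotient of $B$ by its Jacobson radical has dimension $\geq 2$, so no $4$-dimensional Hopf algebra is local. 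This contradiction leaves $\dim_k J=2$.

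I expect the only delicate points to be bookkeeping: checking that $A$ is indeed basic before using ${\rm gr}(A)\cong B\times A_0$ and the divisibility $\dim_k A_0\mid\dim_k{\rm gr}(A)$, and verifying that ${\rm gr}(A)$ is genuinely local when $A_0=k$ so that the Hopf-algebra classification can be applied. One could avoid that classification in the last step by instead using the standard fact that a finite-dimensional local Hopf algebra over a field of characteristic $0$ is one-dimensional (a nonzero primitive element would generate a polynomial subalgebra), which again contradicts $\dim_k B=4$.
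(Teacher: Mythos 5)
Your proof is correct and follows essentially the same route as the paper: the freeness of ${\rm gr}(A)$ over $A_0$ (so that $\dim_k A_0$ divides $\dim_k{\rm gr}(A)=4$) rules out $\dim_k J=1$, and the classification of $4$-dimensional Hopf algebras applied to ${\rm gr}(A)\cong B$ rules out $\dim_k J=3$; your direct observation that ${\rm gr}(A)$ is local with $3$-dimensional radical cleanly replaces the paper's case analysis on $\dim_k J^2$, which reaches the same contradiction in each sub-case. One small caveat on your optional alternative ending: the justification that a finite-dimensional local Hopf algebra in characteristic $0$ is trivial should pass through the dual (which is connected, hence contains a nonzero primitive generating a polynomial subalgebra if it exceeds $k$), since the local Hopf algebra itself need not contain a nonzero primitive element.
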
  
\begin{proof}
The dimension of $J$ is at most $3$, and we show that it cannot be equal to $1$ or $3$. 

If ${\rm dim}_kJ=1$, as $J^2$ is properly included in $J$, it follows that $J^2=0$. Hence ${\rm gr}(A)=\frac{A}{J}\oplus J\simeq A$ as 
$k$-vector spaces. We get form here that $3={\rm dim}_k(A_0)$ is a divisor of $4={\rm dim}_k{\rm gr}(A)$, a contradiction.

If ${\rm dim}_kJ=3$, $A_0$ is a $1$-dimensional quasi-Hopf algebra. This means that $A_0=k$, the trivial Hopf algebra, and 
therefore ${\rm gr}(A)\simeq B\times A_0\simeq B$ is an ordinary Hopf algebra. If $J^2=0$, as above we get that ${\rm gr}(A)=\frac{A}{J}\oplus J\simeq A$ 
as $k$-spaces, hence ${\rm gr}(A)$ is a $4$-dimensional Hopf algebra whose Jacobson radical is of dimension $3$. We land to a contradiction, since the classification of $4$-dimensional Hopf algebras says that the Jacobson radical must be zero or of dimension $2$. Thus ${\rm dim}_kJ^2\in \{1, 2\}$.   

Assume that ${\rm dim}_kJ^2=1$; this forces $J^3=0$, and therefore ${\rm gr}(A)=A_0\oplus \frac{J}{J^2}\oplus J^2$ is a $4$-dimensional Hopf algebra whose 
Jacobson radical $\frac{J}{J^2}\oplus J^2$ has dimension 3; we saw that this is false. So it remains that ${\rm dim}_kJ^2=2$, in which case either 
${\rm gr}(A)=A_0\oplus A_1\oplus J^2$ (provided that $J^3=0$) or ${\rm gr}(A)=A_0\oplus A_1\oplus A_2\oplus J^3$ (provided that $J^3\not=0$, and so 
${\rm dim}_kJ^3=1$); in both cases we get that ${\rm gr}(A)$ is a $4$-dimensional Hopf algebra whose Jacobson radical is of dimension $3$, a 
contradiction we met for several times during this proof.  
\end{proof} 

We now have all the necessary ingredients to show that ${\rm gr}(A)$ is a Hopf algebra. 
 
\begin{proposition}\prlabel{graded A}
Let $A$ be a nonsemisimple quasi-Hopf algebra of dimension $4$. Then $A$, ${\rm gr}(A)$ are isomorphic as algebras and ${\rm gr}(A)$ is a Hopf algebra isomorphic 
to $H_4$.
\end{proposition}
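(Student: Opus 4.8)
The plan is to determine ${\rm gr}(A)$ through the isomorphism ${\rm gr}(A)\simeq B\times A_0$, where $A_0=A/J$ and $B$ is a $2$-dimensional braided Hopf algebra in ${}_{A_0}^{A_0}{\cal YD}$, and then to transfer the resulting algebra structure back to $A$ along the radical filtration.

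First I would note that, by the preceding lemma, ${\rm dim}_kJ=2$, hence ${\rm dim}_kA_0=2$; since $A$ is basic, $A_0\simeq k^{C_2}\simeq k[C_2]$ as an algebra, so $A_0=k_\phi[C_2]$ for a normalized $3$-cocycle $\phi$ on $C_2$, and $A_0$ is semisimple because ${\rm char}(k)\neq 2$. As $\bigoplus_{m\geq 1}A_m$ is a nonzero nilpotent ideal of ${\rm gr}(A)$ (of dimension ${\rm dim}_kJ=2$) lying over the semisimple algebra $A_0$, the algebra ${\rm gr}(A)$ is not semisimple; by \thref{2dimbraidedHA} and \coref{2classifgen}(ii) this forces $B\simeq B_{\alp,v}$ for some couple $(\alp,v)$ as in \deref{Balpv} (the option $B=k[C_2]$ would make ${\rm gr}(A)\simeq k[C_2]\ot A_0$ semisimple).

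The key point is to show that $\phi$ is trivial. Writing $C_2=\langle g\rangle$ and $v=v_ee+v_gg$, the group comultiplication gives $\Delta(v)=v_ee\ot e+v_gg\ot g$, so the normalization $\va(v)=1$ together with $\alp(v)=-1$ forces $\alp(g)=-1$ and $v=g$. The reassociator $\Phi$ of $A_0=k_\phi[C_2]$, expressed in the basis of primitive idempotents $f_\pm=\frac{1}{2}(e\pm g)$, is (by the pentagon relation \equref{q3} and normalization) one of the two representative $3$-cocycles of $k[C_2]$, namely $\Phi=1\ot 1\ot 1$ or $\Phi=1\ot 1\ot 1-2f_-\ot f_-\ot f_-$ (the latter being its own inverse). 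Substituting $v=g$ and the second $\Phi$ into the second identity of \equref{coasscoactonB} and simplifying with $f_-g=-f_-$, $f_-^2=f_-$, $\alp(f_-)=1$, the right-hand side collapses to $g\ot g-2f_-\ot f_-$, which differs from $\Delta(g)=g\ot g$ since $f_-\neq 0$; so the second cocycle is incompatible with the existence of $(\alp,v)$. Hence $\phi$ is the trivial cocycle, $A_0\simeq k[C_2]$ with trivial reassociator, and ${\rm gr}(A)=B_{\alp,v}\times k[C_2]$ is an ordinary Hopf algebra. I expect this exclusion of the nontrivial $3$-cocycle to be the main obstacle; the rest is bookkeeping.

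Finally, with trivial reassociator \prref{qHa with a proj of rank 2} presents ${\rm gr}(A)$ as generated by $g$ and $\theta=n\times 1$ subject to $g^2=1$, $\theta^2=0$, $g\theta=-\theta g$, $\Delta(g)=g\ot g$, $\Delta(\theta)=g\ot\theta+\theta\ot 1$, $\va(\theta)=0$ and $S(\theta)=-g\theta$, i.e. ${\rm gr}(A)\simeq H_4$ as a Hopf algebra (alternatively, ${\rm gr}(A)$ is a nonsemisimple $4$-dimensional Hopf algebra, hence $\simeq H_4$ by the classification). In particular the radical of ${\rm gr}(A)$ squares to zero, so $A_2=J^2/J^3=0$; since the multiplication $A_1\ot A_1\to A_2$ is surjective, $A_1A_1=0$, and therefore $J^2=0$ by nilpotency. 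Then the Wedderburn--Malcev theorem \cite[Theorem 1.4.9]{abe} gives a semisimple subalgebra $\mathbb{S}\simeq A_0$ with $A=\mathbb{S}\oplus J$ (as $k$-spaces) and $J\cdot J=0$, so $A$ is the trivial extension of $A_0$ by the $(A_0,A_0)$-bimodule $J$; but ${\rm gr}(A)=A_0\oplus A_1$ with $A_1=J/J^2\cong J$ carries exactly the same bimodule data, whence $A\simeq {\rm gr}(A)$ as algebras, which completes the proof.
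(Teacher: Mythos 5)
Your proof is correct, and its skeleton — the presentation ${\rm gr}(A)\simeq B\times A_0$ with $A_0=k_\phi[C_2]$, the use of \coref{2classifgen}(ii) to force $B\simeq B_{\alp,v}$ with $\alp(g)=-1$ and $v=g$, the exclusion of the nontrivial $3$-cocycle on $C_2$, and Wedderburn--Malcev to carry the algebra structure back to $A$ — is the same as the paper's. Where you genuinely diverge is in the exclusion step: you test the condition $\Delta(v)=\alp(y^1x^3X^2)x^1X^1vy^2\ot x^2vX^3y^3$ from \deref{Balpv} directly on $v=g$ inside $A_0$ and obtain $g\ot g\neq g\ot g-2f_-\ot f_-$ (in effect you make the computation that the paper performs in \seref{noncommnoncocommexp} for $H(2)$ do double duty), whereas the paper's proof of this proposition works with a radical generator instead, writing $\Delta(x)=a\ot x+b\ot gx+x\ot c+gx\ot d$, pinning down $p(a),\dots,p(d)$ from $\widetilde{\Delta}(x)^2=0$, and extracting the contradiction $p_+x=p_-x$ from quasi-coassociativity against $\phi=1-2p_-\ot p_-\ot p_-$. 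Your route is shorter and works entirely inside the $2$-dimensional quotient $A_0$; the paper's route is more self-contained for this section but requires the extra twist computation at the end (which your observation that $C_2$ carries exactly two normalized $3$-cocycles as elements of $A_0^{\ot 3}$ renders unnecessary). A second, smaller difference is the order of operations: the paper proves $J^2=0$ up front (via freeness of $A_1$ over $A_0$) and gets $A\simeq{\rm gr}(A)\simeq H_4$ as algebras before touching the cocycle, while you obtain ${\rm gr}(A)\simeq H_4$ first and read off $J^2=0$ a posteriori from the square-zero radical; both are valid. One line you could trim: the identification $v=g$ needs only $\va(v)=1$, $\alp(v)=-1$ and $\alp\neq\va$, so the appeal to the group comultiplication of $v$ at that point is superfluous.
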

\begin{proof}
As before, $J$ stands for the Jacobson radical of $A$. Then ${\rm dim}_kJ=2$ and $J^2=0$; we cannot have ${\rm dim}_kJ^2=1$, 
otherwise $A_1=\frac{J}{J^2}$ is a one dimensional vector space and at the same time a free module over $A_0$, a $2$-dimensional vector space. 

Thus ${\rm gr}(A)=\frac{A}{J}\oplus J\simeq A$ is a $4$-dimensional quasi-Hopf algebra, and we will next see that it is isomorphic to $A$ as an algebra. 
Towards this end, for all $a\in A$, consider $a=\mathbb{S}(a)+J(a)$ the decomposition of $a$ in $A=\mathbb{S}\oplus J$, where $\mathbb{S}$ is the semisimple subalgebra of $A$ given by \cite[Theorem 1.4.9]{abe}. Since $J^2=0$, for all $a, a'\in A$, 
$aa'=\mathbb{S}(a)\mathbb{S}(a') + \mathbb{S}(a)J(a') + J(a)\mathbb{S}(a')$, 
so $\mathbb{S}(aa')=\mathbb{S}(a)\mathbb{S}(a')$ and $J(aa')=\mathbb{S}(a)J(a')+J(a)\mathbb{S}(a')$, for all $a, a'\in A$. It follows that $\mathbb{S}$ 
induces an algebra morphism from $A$ to $\mathbb{S}$, which we denote by $\pi_{\mathbb{S}}$, such that $\pi_\mathbb{S}\iota=\Id_\mathbb{S}$ 
($\iota: \mathbb{S}\ra A$ is the inclusion, an algebra morphism, too). In addition, the canonical $k$-isomorphism $\mathbb{S}\ni s\mapsto p(s)\in A_0$ turns into a 
$k$-algebra isomorphism, its inverse being determined by $A_0\ni p(a)\mapsto \mathbb{S}(a)\in \mathbb{S}$. We deduce from here that 
$\zeta: A=\mathbb{S}\oplus J\ra A_0\oplus J={\rm gr}(A)$ given by $\zeta(a)=p(\mathbb{S}(a))+ J(a)=p(a) + J(a)$, for all $a\in A$, is a $k$-isomorphism. It is a $k$-algebra morphism as well, since the graded algebra structure of ${\rm gr}(A)$ is defined by $p_m(x)p_n(y)=p_{m+n}(xy)$, for all 
positive integers $m$ and $n$, $x\in J^m$ and $y\in J^n$, and therefore in our case $p(a)x=p_1(ax)=ax$ and 
$xp(a)=p_1(xa)=xa$, for all $a\in A$, $x\in J$; then we compute that, for all $a, a'\in A$,  
\begin{eqnarray*}
\zeta(a)\zeta(a')&=&p(aa')+p(a)J(a')+J(a)p(a')\\
&=&p(aa') + \mathbb{S}(a)J(a')+ J(a)\mathbb{S}(a')=p(aa') + J(aa')=\zeta(aa'),
\end{eqnarray*}      
as needed. Clearly, $\zeta(1_A)=p(1_A)$ is equal to the unit of ${\rm gr}(A)$, hence $A\simeq {\rm gr}(A)$ as $k$-algebras. 

Consequently, $A\simeq B\times A_0$ as an algebra, for a certain braided Hopf algebra $B\in {}_{A_0}^{A_0}{\cal YD}$. Since $A$ is not semisimple and $A_0$ is, 
\coref{2classifgen} implies that $A\simeq A_0(\theta)_{\alp, v}$, for an algebra morphism $\alp: A_0\ra k$ and $v\in A_0$ such that $\va(v)=1$, $\alp(v)=-1$. 
But, as an algebra, $A_0=k[C_2]=k[\le g\ri]$, so $\alp$ is determined by $\alp(1)=1_k$, $\alp(g)=-1$ (if $\alp(g)=1$, $\alp=\va$, fact that implies 
$1=\va(v)=\alp(v)=-1$, a contradiction); this forces $v=g$. It is clear now that, up to isomorphism, $A=H_4$ as an algebra. 

On the other hand, there exists a quasi-Hopf algebra morphism $p: A\ra A_0$ and an algebra morphism $\upsilon: A_0\simeq \mathbb{S}\hookrightarrow A$ 
given by $\upsilon(p(a))=\mathbb{S}(a)$, for all $a\in A$, such that $p\upsilon=\Id_{A_0}$. We proved that $A=H_4$ as an algebra, so 
$\mathbb{S}=k[\le g\ri]$ and $J$ is the subspace of $A$ generated by $x$ and $gx$, where $g, x$ are the algebra generators of $H_4$. By 
identifying $A_0$ to $k_\phi[\le g\ri]$ as a quasi-Hopf algebra (for a certain $3$-cocycle $\phi$ of $\le g\ri$), 
we deduce that there exists a surjective quasi-Hopf algebra morphism 
$p: A\ra k_\phi[\le g\ri]$ that acts as identity on the subalgebra $\mathbb{S}=k[\le g\ri]$ of $A$. In particular, if the quasi-coalgebra structure of $A$ is 
defined by $(\Delta, \va)$ then $\Delta(g)=g\ot g$ and $\va(g)=1$. 

Denote by $(\widetilde{\Delta}, \widetilde{\va})$ the quasi-coalgebra structure of ${\rm gr}(A)$ induced by the quasi-coalgebra structure $(\Delta, \va)$ 
of $A$. Since $x\in J$ and $J\subseteq A$ is a quasi-Hopf ideal, $\Delta(x)=a\ot x + b\ot gx + x\ot c + gx\ot d$, for some elements 
$a, b, c, d$ of $A$ such that $\va(a)=\va(c)=1$ and $\va(b)=\va(d)=0$. As $\widetilde{\Delta}(x)^2=0$ and 
\[
\widetilde{\Delta}(x)=(p\ot \Id + \Id\ot p)\Delta(x)=p(a)\ot x + p(b)\ot gx + x\ot p(c) + gx\ot p(d),
\]
a direct computation yields $p(a)=p_+ + \l p_-$, $p(b)=\mu p_-$, $p(c)=p_+ - \l p_-$ and $p(d)=-\mu p_-$, for some $\l, \mu \in k$ such that $\l^2-\mu^2=1$.    
   
It is well known that, for $k$ a field of characteristic different from $2$, the third cohomology group $H^3(\le g\ri, k^*)$ is isomorphic to $C_2$. 
The representative $3$-cocycles of $\le g\ri$ are given by the trivial one and $\phi=1-2p_-\ot p_-\ot p_-$. Since $\phi$ is as well the reassociator of 
${\rm gr}(A)$, in the former case we get that the "$p_-\ot p_-\ot -$" component of $\phi (\widetilde{\Delta}\ot \Id)(\widetilde{\Delta}(x))$ 
has $-$ equal to $x-2p_-x$ while the "$p_-\ot p_-\ot -$" component of $(\Id\ot \widetilde{\Delta})(\widetilde{\Delta}(x))\phi$ has $-$ equal to 
$\l^2x +\l\mu gx -\l\mu gx - \mu^2x - 2p_+x=x-2p_+x$. The quasi-coassociativity of $\widetilde{\Delta}$ says that 
$\phi (\widetilde{\Delta}\ot \Id)(\widetilde{\Delta}(x))=(\Id\ot \widetilde{\Delta})(\widetilde{\Delta}(x))\phi$, so $p_+x=p_-x$, a contradiction. 

It remains that $\phi$ is trivial, hence $\phi=(1\ot F)(\Id\ot \Delta)(F)(\Delta\ot \Id)(F^{-1})(F^{-1}\ot 1)$, for some twist 
$F$ of $k[C_2]$. It can be seen easily that $F=1-\kappa p_-\ot p_-$, for some $\kappa\in k$ different from $1_k$, with inverse $F^{-1}=
1-\frac{\kappa}{\kappa-1}p_-\ot p_-$. A straightforward computation leads us to $\phi=1\ot 1\ot 1$. Therefore,  
as ${\rm gr}(A)\simeq B\times k[C_2]$ is a $4$-dimensional nonsemisimple Hopf algebra 
(once more, ${\rm gr}(A)\simeq A$ as an algebra), from the classification of $4$-dimensional Hopf algebras we conclude that ${\rm gr}(A)$ and 
$H_4$ are isomorphic Hopf algebras. 
\end{proof}

As we have already mentioned, owing to Etingof and Gelaki, a nonsemisimple $4$-dimensional quasi-Hopf algebra is twist equivalent to $H_4$. In what follows we want 
to determine when two twist-deformations of $H_4$ are isomorphic as quasi-Hopf algebras. A situation when this fact occurs (in general for a quasi-Hopf algebra $H$, 
so not only for $H_4$) is when the twists $\mathbb{F}$, $\mathbb{G}$ of $H$ are related through a special invertible element $\sigma$. Namely, if there exists 
an invertible $\sigma\in H$ such that $\va(\sigma)=1$, $\sigma^{-1}=S(\sigma)$ and $\mathbb{G}=(\sigma^{-1}\ot \sigma^{-1})\mathbb{F}\Delta(\sigma)$, then 
$\varphi: H_{\mathbb{F}}\ra H_{\mathbb{G}}$ defined by $\varphi(h)=\sigma^{-1}h\sigma$, for all $h\in H$, is a quasi-Hopf algebra isomorphism. 
We leave the verification of this fact to the reader, as it can be seen as the quasi-Hopf analogue of \cite[Propositions 2.3.3 \& 2.3.5]{maj}.   

For $H=H_4$, an invertible element $\sigma\in H_4$ satisfying $\va(\sigma)=1$ has the form $\sigma=p_+ + \l p_- + \r p_+x + \varrho p_-x$, with 
$0\not=\l, \rho, \varrho\in k$; then $\sigma^{-1}=p_+ +\ov{\l}p_- + \ov{\rho} p_+x + \ov{\varrho} p_-x$, where $\ov{\l}=\l^{-1}$, 
$\ov{\rho}=-\rho \l^{-1}$ and $\ov{\varrho}=-\rho \l^{-1}$. If follows that $\sigma^{-1}=S(\sigma)$ if and only if $\sigma=p_+ + \l p_-$ and $\l^2=1$, 
thus $\sigma\in \{1, g\}$.   

The existence of $\sigma$ as in the above is only a sufficient condition for $H_\mathbb{F}$ and $H_\mathbb{G}$ to be isomorphic quasi-Hopf algebras. An example in 
this sense is given by $H_4$; for this, we need the lemma below. 

\begin{lemma}\lelabel{esslemmH4twist}
Let ${\cal F}$ be a twist on $H_4$ such that $H_4$ and $(H_4)_{\cal F}$ are isomorphic as quasi-Hopf algebras; $H_4$ is seen as a quasi-Hopf algebra 
in a canonical way. Then ${\cal F}=1-\tau gx\ot x$, for some $\tau\in k$, and the isomorphism $\varphi$ between $H_4$ and $(H_4)_{\cal F}$ is determined by a non-zero 
scalar $\omega$, in the sense that $\varphi(g)=g$ and $\varphi(x)=\omega x$.
\end{lemma}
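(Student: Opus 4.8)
The plan is to pin down $\varphi$ first, from the algebra structure together with the antipode, and then to read off ${\cal F}$ from the constraints that the existence of $\varphi$ forces on $\Delta_{\cal F}$ and $\Phi_{\cal F}$. Throughout, the useful structural fact is that $H_4=k[C_2]\oplus J$ as a $k[C_2]$-bimodule, where $k[C_2]=k1+kg$, $J=kx+kgx$ is the Jacobson radical and $J^2=0$; moreover $k[C_2]$ and $J$ are the $+1$- and $-1$-eigenspaces of the conjugation $\mathrm{Ad}(g)$, and $S(g)=g$, $S(x)=-gx$, $S(gx)=x$.

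First I would observe that $\varphi$ is in particular an algebra automorphism of $H_4$ with $\va\circ\varphi=\va$; any such map preserves $J$ and induces the identity on $H_4/J\simeq k[C_2]$ (the other algebra automorphism $\bar g\mapsto-\bar g$ is ruled out by $\va$), so $\varphi(g)=g+ax+bgx$ and $\varphi(x)=cx+dgx$ for scalars with $(c,d)\neq(0,0)$. Being a quasi-Hopf morphism $\varphi$ commutes with the antipodes, and since twisting does not change the antipode, $\varphi\circ S=S\circ\varphi$. Evaluating on $g$ gives $g+ax+bgx=S(\varphi(g))=g+bx-agx$, hence $a=b=-a$, so $a=b=0$ (using $\mathrm{char}\,k\neq2$); evaluating on $x$ gives $-cgx-dx=-cgx+dx$, so $d=0$. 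Thus $\varphi(g)=g$ and $\varphi(x)=\omega x$ with $\omega:=c\neq0$, as claimed.

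Next I would substitute this into the quasi-bialgebra axioms for $\varphi$. From $(\varphi\ot\varphi)\circ\Delta=\Delta_{\cal F}\circ\varphi$ evaluated on $g$ and on $x$ (cancelling $\omega$) one gets $\Delta_{\cal F}(g)=g\ot g$ and $\Delta_{\cal F}(x)=\Delta(x)$, hence $\Delta_{\cal F}=\Delta$; equivalently ${\cal F}$ commutes with $\Delta(h)$ for every $h\in H_4$. From $(\varphi\ot\varphi\ot\varphi)(\Phi_{H_4})=\Phi_{\cal F}$ with $\Phi_{H_4}=1^{\ot3}$ one gets $\Phi_{\cal F}=1^{\ot3}$, i.e. ${\cal F}$ is a normalized $2$-cocycle on $H_4$. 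Since the $+1$-eigenspace of $\mathrm{Ad}(g\ot g)$ is $(k[C_2]\ot k[C_2])\oplus(J\ot J)$, commutativity with $\Delta(g)=g\ot g$ places ${\cal F}$ in this subspace; the counit conditions $(\va\ot\Id){\cal F}=(\Id\ot\va){\cal F}=1$ kill the $J\ot J$ part on evaluation and force the $k[C_2]\ot k[C_2]$ part to be $1\ot1+\xi\,p_-\ot p_-$, $\xi\in k$. Hence ${\cal F}=1\ot1+\xi\,p_-\ot p_-+N$ with $N\in J\ot J$. Commutativity with $\Delta(x)=g\ot x+x\ot1$ then gives $\xi=0$: since $J^2=0$, every product of an element of $J\ot J$ with either tensor factor of $\Delta(x)$ vanishes, so $N$ commutes with $\Delta(x)$ automatically, whereas $[p_-\ot p_-,\Delta(x)]=p_-\ot gx-gx\ot p_-\neq0$. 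So ${\cal F}=1\ot1+N$, $N\in J\ot J$.

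Finally, with ${\cal F}=1\ot1+N$, $N\in J\ot J$, one has $N^2=0$ and ${\cal F}^{-1}=1\ot1-N$; by the same $J^2=0$ mechanism the two quadratic terms in the $2$-cocycle identity vanish identically, so the cocycle condition reduces to the linear equation $(\Id\ot\Delta)N+1\ot N=(\Delta\ot\Id)N+N\ot1$. Writing $N=c_1x\ot x+c_2x\ot gx+c_3gx\ot x+c_4gx\ot gx$ and comparing coefficients in $H_4^{\ot3}$ shows this holds exactly when $c_1=c_2=c_4=0$, with $c_3$ free; setting $\tau:=-c_3$ gives ${\cal F}=1-\tau gx\ot x$. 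The only laborious steps are the bookkeeping ones — classifying the algebra automorphisms, the $J^2=0$ vanishing checks, and the final coefficient comparison — and none is hard; the $J^2=0$ reductions are precisely what makes the cocycle analysis short. As a sanity check one may note that for each such ${\cal F}$ in fact $(H_4)_{\cal F}=H_4$ on the nose ($\Delta_{\cal F}=\Delta$ and $\Phi_{\cal F}=1^{\ot3}$ were just shown, and $S_{\cal F}=S$, $\alpha_{\cal F}=\beta_{\cal F}=1$ follow at once from $N\in J\ot J$), in keeping with $g\mapsto g$, $x\mapsto\omega x$ being a Hopf algebra automorphism of $H_4$.
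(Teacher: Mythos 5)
Your proof is correct and arrives at the same answer as the paper, but by a genuinely different route, so it is worth recording the comparison. The paper works "twist first": it writes the most general gauge transformation ${\cal F}$ (nine free parameters in the basis $\{p_\pm , p_\pm x\}$), uses the conditions $\b_{\cal F}={\cal F}^1S({\cal F}^2)=1$ and $\a_{\cal F}=S({\cal G}^1){\cal G}^2=1$ (forced by $\varphi(\a)=\varphi(\b)=1$) to kill five parameters, brute-forces the $2$-cocycle identity $\Phi_{\cal F}=1^{\ot 3}$ to kill three more, and only at the very end observes $\Delta_{\cal F}=\Delta$ and classifies $\varphi$ as a Hopf algebra automorphism of $H_4$. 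You work "isomorphism first": the antipode intertwining $\varphi\circ S=S_{\cal F}\circ\varphi=S\circ\varphi$ pins down $\varphi(g)=g$, $\varphi(x)=\omega x$ at the outset, and then the comultiplication intertwining --- a necessary condition the paper never uses as an input --- places ${\cal F}$ in the centralizer of $\Delta(H_4)$; the ${\rm Ad}(g\ot g)$-eigenspace decomposition $H_4\ot H_4=(k[C_2]\ot k[C_2]\oplus J\ot J)\oplus(\cdots)$ together with $J^2=0$ then reduces both the centralizer condition and the cocycle identity to short linear computations (in particular it explains structurally why the quadratic terms in the cocycle identity vanish, which in the paper is an outcome of explicit expansion). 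Both arguments rely only on conditions contained in the definition of a quasi-Hopf algebra isomorphism, and your closing verification that $\a_{\cal F}=\b_{\cal F}=1$ and $S_{\cal F}=S$ for ${\cal F}=1-\tau gx\ot x$ confirms consistency with the paper's constraints. I checked your final coefficient comparison: with $N=c_1x\ot x+c_2x\ot gx+c_3gx\ot x+c_4gx\ot gx$ the linearized cocycle equation does force $c_1=c_2=c_4=0$ with $c_3$ free, as you claim. The only blemish is minor: bijectivity of $\varphi$ restricted to $J$ requires $c^2\neq d^2$ rather than $(c,d)\neq(0,0)$, but this is harmless since you subsequently derive $d=0$ and then $c\neq 0$.
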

\begin{proof}
Define $p_{\pm}:=\frac{1}{2}(1\pm g)$; they are orthogonal idempotent elements of $H_4$ such that $1=p_+ + p_-$ and $p_+ - p_-=g$. Then $\{p_\pm, p_\pm x\}$ is a basis 
for $H_4$ and, with respect to this basis, any element ${\cal F}\in H_4\ot H_4$ obeying $\va({\cal F}^1){\cal F}^2=1=\va({\cal F}^2){\cal F}^1$ is of the form 
\begin{eqnarray}
&&\hspace{-1cm}
{\cal F}=p_+\ot 1 + p_-\ot p_+ + ap_-\ot p_+ + bp_-\ot p_-x + cp_-\ot p_+x + \mu p_-\ot p_-
\nonumber\\
&&\hspace{1cm}
+\nu p_-x\ot p_-x + \tau p_-x\ot p_+x + u p_+x\ot p_- + v p_+x\ot p_-x + wp_+x\ot p_+x,\eqlabel{genformtwistH4}
\end{eqnarray}
for some scalars $a, b, c, \mu, \nu, \tau, u, v, w\in k$; in what follows, we will refer to them as being the scalars defining ${\cal F}$. 
It is easy to see that ${\cal F}$ is a twist (that is, ${\cal F}$ is, moreover, invertible in $H_4\ot H_4$) if and only if $a\not=0$, in which case 
the scalars defining ${\cal F}^{-1}$, denoted by $\ov{a},\cdots, \ov{w}$, are given by 
\[
\ov{a}=\frac{1}{a},~\ov{b}=-\frac{b}{a},~\ov{c}=-\frac{c}{a},~\ov{\mu}=-\frac{\mu}{a},~\ov{\nu}=-\frac{\nu}{a},~\ov{\tau}=\frac{c\mu}{a}-\tau,~
\ov{u}=-\frac{u}{a},~\ov{v}=\frac{ub}{a}- v,~\ov{w}=-\frac{w}{a}.
\]
Now, let $\varphi: H_4\ra (H_4)_{\cal F}$ be a quasi-Hopf algebra isomorphism. It follows from the definition of a quasi-Hopf isomorphism 
that $\a_{\cal F}=\beta_{\cal F}=1$ and $\Phi_{\cal F}=1\ot 1\ot 1$. Otherwise stated, ${\cal F}^1S({\cal F}^2)=1=S({\cal G}^1){\cal G}^2$ and 
$(1\ot {\cal F})(\Id\ot \Delta)({\cal F})=({\cal F}\ot 1)(\Delta\ot \Id)({\cal F})$, where ${\cal F}^{-1}:={\cal G}^1\ot {\cal G}^2$.  

We have $S(p_\pm)=p_\pm$, $S(p_-x)=-p_+x$ and $S(p_+x)=p_-x$, as $p_\pm x=xp_\mp$. Therefore, ${\cal F}^1S({\cal F}^2)=1$ holds if and only if 
$a=1$ and $u=c=0$. Likewise, $S({\cal G}^1){\cal G}^2=1$, which is equivalent to $S({\cal F}^1){\cal F}^2=1$, holds if and only if $a=1$ and $b=\mu=0$. Thus 
\[
{\cal F}=1\ot 1 + \nu p_-x\ot p_-x + \tau p_-x\ot p_+x + v p_+x\ot p_-x + wp_+x\ot p_+x.
\]
With this simplified formula of ${\cal F}$ in mind, since 
\begin{eqnarray*}
&&\Delta(p_-)=p_+\ot p_- + p_-\ot p_+,~\Delta(p_-x)=p_+\ot p_-x + p_+x\ot p_- - p_-\ot p_+x + p_-x\ot p_+,\\
&&\Delta(p_+)=p_+\ot p_+ + p_-\ot p_-,~\Delta(p_+x)=p_+\ot p_+x + p_+x\ot p_+ - p_-\ot p_-x + p_-x\ot p_-,
\end{eqnarray*}
we see that $(1\ot {\cal F})(\Id\ot \Delta)({\cal F})$ equals to 
\begin{eqnarray*}
&&\hspace{-1cm}
1\ot 1\ot 1 + (\nu p_-x\ot v p_+x)\ot (p_+\ot p_-x + p_+x\ot p_- - p_-\ot p_+x + p_-x\ot p_+)\\
&&\hspace{2mm}
+ (\tau p_-x + wp_+x)\ot (p_+\ot p_+x + p_+x\ot p_+ - p_-\ot p_-x + p_-x\ot p_-)\\
&&\hspace{2mm} 
+ 1\ot (\nu p_-x\ot p_-x + \tau p_-x\ot p_+x +v p_+x\ot p_-x +w p_+x\ot p_+x),
\end{eqnarray*}
and that $({\cal F}\ot 1)(\Delta\ot \Id)({\cal F})$ equals to  
\begin{eqnarray*}
&&\hspace{-1cm}
1\ot 1\ot 1 + (p_+\ot p_-x + p_+x\ot p_- - p_-\ot p_+x + p_-x\ot p_+)\ot (\nu p_-x\ot \tau p_+x)\\
&&\hspace{2mm}
+ (p_+\ot p_+x + p_+x\ot p_+ - p_-\ot p_-x + p_-x\ot p_-)\ot (vp_-x\ot wp_+x)\\
&&\hspace{2mm} 
+ (\nu p_-x\ot p_-x +\tau p_-x\ot p_+x + vp_+x\ot p_-x + wp_+x\ot p_+x)\ot 1.
\end{eqnarray*}
Hence, we have $(1\ot {\cal F})(\Id\ot \Delta)({\cal F})=({\cal F}\ot 1)(\Delta\ot \Id)({\cal F})$ if and only if 
\begin{eqnarray*}
&&\hspace{-5mm}
\nu (p_+\ot p_-x + p_+x\ot p_- - p_-\ot p_+x + p_-x\ot p_+) + \tau (p_+\ot p_+x + p_+x\ot p_+ - p_-\ot p_-x \\
&&\hspace{2mm}
+ p_-x\ot p_-)=\nu p_+\ot p_-x + \tau p_+\ot p_+x + vp_-\ot p_-x +wp_-\ot p_+x + \nu p_-x\ot \tau p_+x\ot 1;\\
&&\hspace{-5mm}
v(p_+\ot p_-x + p_+x\ot p_- - p_-\ot p_+x + p_-x\ot p_+) + w(p_+\ot p_+x + p_+x\ot p_+ - p_-\ot p_-x \\
&&\hspace{2mm}
+ p_-x\ot p_-)=\nu p_-\ot p_-x + \tau p_-\ot p_+x + vp_+\ot p_-x + wp_+\ot p_+x + vp_-x\ot 1 + wp_+x\ot 1;\\
&&\hspace{-5mm}
1\ot (\nu p_-x\ot p_-x + \tau p_-x\ot p_+x + vp_+x\ot p_-x + wp_+x\ot p_+x)=p_+\ot \big(p_-x\ot 
(\nu p_-x + \tau p_+x)\\
&&\hspace{2mm}
+ p_+x\ot (vp_-x + wp_+x)\big) - p_-\ot\big(p_+x\ot (\nu p_-x +\tau p_+x) + p_-x\ot (vp_-x + wp_+x)\big).
\end{eqnarray*}
A simple inspection ensures us that the three equalities above take place if and only if $w=v=-\tau=-\nu$, in which case
\[
{\cal F}=1+\tau p_-x\ot (p_-x + p_+x)-\tau p_+x\ot (p_-x + p_+x)=1-\tau gx\ot x, 
\]
as stated. Now, for ${\cal F}=1-\tau gx\ot x$, it can be easily checked that $\Delta_{\cal F}=\Delta$; thus $(H_4)_{\cal F}$ is a Hopf algebra equal to $H_4$. 
Consequently, given a quasi-Hopf algebra isomorphism between $H_4$ and $(H_4)_{\cal F}$ is equivalent to giving a Hopf algebra automorphism of $H_4$. Perhaps 
it is folklore that any Hopf algebra automorphism $\varphi$ of $H_4$ is defined by a non-zero scalar as in the statement of the lemma. If not, as an algebra 
automorphism of $H_4$, $\v$ is determined by $\varphi(g)=g + \l x + \r gx$ and $\varphi(x)=\omega x + \delta gx$, for some $\l, \r, \omega, \delta\in k$ with 
$(\omega, \delta)\not=0$, since $g^2=1$, $x^2=0$, $\va(g)=1$ and $\va(x)=0$. From $(\varphi\ot \varphi)(g)=\varphi(g)\ot \varphi(g)$ we get $\l=\r=0$, and 
from $\Delta(\varphi(x))=(\varphi\ot \varphi)(g\ot x+ x\ot 1)$ we deduce that $\delta=0$. Summing up, $\varphi(g)=g$ and $\varphi(x)=\omega x$ with $0\not=\omega\in k$.
\end{proof}
  
\begin{proposition}
Let $\mathbb{F}$, $\mathbb{G}$ be twists on $H_4$ determined by $a, \cdots, w\in k$, and respectively by $a',\cdots,w'\in k$. 
Then $(H_4)_\mathbb{F}$ and $(H_4)_\mathbb{G}$ are isomorphic as quasi-Hopf algebras 
if and only if $a=a'$ and there exist $0\not=\omega, \kappa\in k$ such that $(b', c', \mu', u')=\omega (b, c, \mu, u)$, $\nu'=\omega^2\nu +\kappa a$, $\tau'=\omega^2\tau + \kappa$,  
$v'=\omega^2v-\kappa$ and $w'=\omega^2w -\kappa$.  
\end{proposition}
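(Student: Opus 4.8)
The strategy is to reduce the statement to \leref{esslemmH4twist} by a gauge‑transformation transport argument, and then to read off the relations among the scalars of \equref{genformtwistH4}. I would first record some elementary facts about twisting that get used repeatedly: twisting leaves the underlying algebra unchanged and $(H_F)_G=H_{GF}$; a Hopf algebra automorphism $\psi$ of $H_4$ induces, for every twist $\mathbb{D}$, a quasi‑Hopf algebra isomorphism $\psi\colon(H_4)_{\mathbb{D}}\to(H_4)_{(\psi\otimes\psi)(\mathbb{D})}$; more generally, if $\varphi\colon A\to B$ is a quasi‑Hopf algebra isomorphism and $\mathbb{H}$ a twist of $B$, then $\varphi\colon A_{(\varphi^{-1}\otimes\varphi^{-1})(\mathbb{H})}\to B_{\mathbb{H}}$ is again a quasi‑Hopf algebra isomorphism (a direct check that $\varphi$ intertwines $\Delta$, $\Phi$, $\alpha$, $\beta$ and $S$ of the two twisted algebras); and, as is observed inside the proof of \leref{esslemmH4twist}, for each $\kappa\in k$ the element $\mathcal{C}_\kappa:=1-\kappa\,gx\otimes x$ is a twist of $H_4$ with $(H_4)_{\mathcal{C}_\kappa}=H_4$ as a quasi‑Hopf algebra, so that $\mathcal{C}_\kappa$ centralizes $\Delta(H_4)$ and $\big((H_4)_{\mathcal{C}_\kappa}\big)_{\mathbb{D}}=(H_4)_{\mathbb{D}}$ for any twist $\mathbb{D}$.

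For the necessity part I would start from an isomorphism $\varphi\colon(H_4)_{\mathbb{F}}\to(H_4)_{\mathbb{G}}$ of quasi‑Hopf algebras, which is in particular an algebra automorphism of $H_4$. Transporting the gauge transformation $\mathbb{G}^{-1}$ along $\varphi$ and using $\big((H_4)_{\mathbb{G}}\big)_{\mathbb{G}^{-1}}=H_4$, I obtain a quasi‑Hopf algebra isomorphism $\varphi\colon(H_4)_{\mathcal{F}'}\to H_4$ with $\mathcal{F}'=(\varphi^{-1}\otimes\varphi^{-1})(\mathbb{G}^{-1})\,\mathbb{F}$. Applying \leref{esslemmH4twist} to the inverse isomorphism $\varphi^{-1}\colon H_4\to(H_4)_{\mathcal{F}'}$ then pins down both the twist and the map: $\mathcal{F}'=\mathcal{C}_\tau$ for some $\tau\in k$, and $\varphi(g)=g$, $\varphi(x)=\omega x$ for some $0\neq\omega\in k$. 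In particular $\varphi$ is a Hopf algebra automorphism of $H_4$, so $\varphi\otimes\varphi$ fixes $gx\otimes x$ up to the factor $\omega^2$; solving $\mathcal{F}'=\mathcal{C}_\tau$ for $\mathbb{G}$ yields $\mathbb{G}=(\varphi\otimes\varphi)(\mathbb{F})\,\mathcal{C}_\kappa$ with $\kappa:=-\tau\omega^2$.

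It remains to translate $\mathbb{G}=(\varphi\otimes\varphi)(\mathbb{F})\,\mathcal{C}_\kappa$ into the scalars. Writing $\mathbb{D}:=(\varphi\otimes\varphi)(\mathbb{F})$, and using that $\varphi$ fixes $p_\pm$ and scales $p_\pm x$ by $\omega$, the map $\varphi\otimes\varphi$ multiplies the coefficient of a basis tensor appearing in \equref{genformtwistH4} by $\omega$ raised to the number of its legs of the form $p_\cdot x$; hence the scalars of $\mathbb{D}$ are $a$, $\omega(b,c,\mu,u)$ and $\omega^2(\nu,\tau,v,w)$. A short computation in the basis $\{p_\pm,p_\pm x\}$ (using $p_\epsilon g=\epsilon p_\epsilon$, $xp_\epsilon=p_{-\epsilon}x$, $x^2=0$) shows that only the $x$‑free part of $\mathbb{D}$ survives in the product $\mathbb{D}(gx\otimes x)$, which equals $p_+x\otimes p_+x+p_+x\otimes p_-x-p_-x\otimes p_+x-a\,p_-x\otimes p_-x$; comparing the \equref{genformtwistH4}‑coefficients of $\mathbb{G}=\mathbb{D}-\kappa\,\mathbb{D}(gx\otimes x)$ with those of $\mathbb{G}$ gives exactly $a'=a$, $(b',c',\mu',u')=\omega(b,c,\mu,u)$, $\nu'=\omega^2\nu+\kappa a$, $\tau'=\omega^2\tau+\kappa$, $v'=\omega^2v-\kappa$, $w'=\omega^2w-\kappa$. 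For the sufficiency part I would run this in reverse: given $a=a'$ and $0\neq\omega,\kappa\in k$ satisfying the relations, let $\varphi$ be the Hopf algebra automorphism $g\mapsto g$, $x\mapsto\omega x$ of $H_4$; the same computation shows $\mathbb{G}=(\varphi\otimes\varphi)(\mathbb{F})\,\mathcal{C}_\kappa$, whence $\varphi\colon(H_4)_{\mathbb{F}}\to(H_4)_{(\varphi\otimes\varphi)(\mathbb{F})}=\big((H_4)_{\mathcal{C}_\kappa}\big)_{(\varphi\otimes\varphi)(\mathbb{F})}=(H_4)_{\mathbb{G}}$ is a quasi‑Hopf algebra isomorphism.

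I expect the main obstacle to be bookkeeping rather than conceptual: making the transport steps fully rigorous requires verifying that a quasi‑Hopf isomorphism carries a gauge transformation on the target, together with $\Delta$, $\Phi$, $\alpha$, $\beta$ and $S$, to the corresponding data of the transported twist on the source, and then the computation of $\mathbb{D}(gx\otimes x)$ in \equref{genformtwistH4} must be carried out with the correct signs coming from the relations of $H_4$; everything else is formal, and the antipode side imposes no extra condition because twisting does not change $S$ and $\varphi$ intertwines it automatically.
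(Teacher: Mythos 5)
Your proof is correct and follows essentially the same route as the paper: both reduce the statement to \leref{esslemmH4twist} by a gauge-transport argument that yields $\mathbb{G}=(\varphi\otimes\varphi)(\mathbb{F})(1-\kappa\,gx\otimes x)$ with $\varphi(g)=g$, $\varphi(x)=\omega x$, and then read off the scalar relations, your explicit computation $\mathbb{D}(gx\otimes x)=p_+x\otimes p_+x+p_+x\otimes p_-x-p_-x\otimes p_+x-a\,p_-x\otimes p_-x$ matching exactly the coefficients the paper records. The only (immaterial) difference is that you transport $\mathbb{G}^{-1}$ backward and apply the lemma to $\varphi^{-1}$, whereas the paper transports $\mathbb{F}$ forward and applies it directly to $\varphi\colon H_4\to(H_4)_{\mathbb{F}_\varphi^{-1}\mathbb{G}}$.
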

\begin{proof}
Note that $\varphi: (H_4)_{\mathbb{F}}\ra (H_4)_{\mathbb{G}}$ is a quasi-Hopf algebra isomorphism if and only if so is 
$\varphi: H_4\ra (H_4)_{\mathbb{F}^{-1}_\varphi\mathbb{G}}$, provided that $\mathbb{F}_\varphi:=(\varphi\ot \varphi)(\mathbb{F})$ with inverse $\mathbb{F}^{-1}_\varphi$. 
Denote ${\cal F}=\mathbb{F}^{-1}_\varphi\mathbb{G}$; by \leref{esslemmH4twist}, there exist $0\not=\omega, \kappa\in k$ such that 
$\mathbb{G}=\mathbb{F}_\varphi(1\ot 1 -\kappa gx\ot x)$, where $\varphi$ is defined by $\varphi(g)=g$, $\varphi(x)=\omega x$. Equivalently, 
$\mathbb{G}=\mathbb{F}_\varphi -\kappa \mathbb{F}_\varphi (gx\ot x)$, hence $\mathbb{G}$ equals to
\begin{eqnarray*}
&&\hspace{-7mm}
p_+\ot 1 + p_-\ot p_+ + ap_-\ot p_-+ \omega bp_-\ot p_-x + \omega c p_-\ot p_+x + \omega\mu p_-x\ot p_- + (\omega^2\nu + \kappa a)p_-x\ot p_-x\\
&&+(\omega^2\tau + \kappa)p_-x\ot p_+x + \omega up_+x\ot p_- + (\omega^2v-\kappa)p_+x\ot p_- + (\omega^2w-\kappa)p_+x\ot p_+x,
\end{eqnarray*}
and we are done.
\end{proof}
\begin{remarks}
1). As a continuation to the comments made before \leref{esslemmH4twist}, note that the case when $\mathbb{F}, \mathbb{G}$ are related through a special element 
$\sigma\in H_4$ (which we saw that is either $1$ or $g$) occurs if and only if $\kappa=0$ and $\omega^2=1$. 

2). For $a\not=a'$, the quasi-Hopf algebras $(H_4)_{\mathbb{F}}$ and $(H_4)_{\mathbb{G}}$ are not isomorphic. We conclude that there are infinitely many non-isomorphic 
nonsemismple quasi-Hopf algebras of dimension $4$, and all of them are twist equivalent to $H_4$.  
\end{remarks}
\subsection{The semisimple case}
We end this section by pointing out the classes of non twist equivalent semisimple quasi-Hopf algebras $H$ of dimension $4$. Since any irreducible representation of $H$ is one 
dimensional, it follows that, as an algebra, $H$ is the group algebra of $k$ and a group with $4$ elements. Consequently, $H$ is a commutative algebra, and therefore a usual 
Hopf algebra. Otherwise stated, $H$ is a the group Hopf algebra of $k$ and a group $G$ with $4$ elements, seen as a quasi-Hopf algebra via a normalized $3$-cocycle on $G$.  

The Hopf algebra of functions on a finite abelian group $G$, $k^G$, is isomorphic as a Hopf algebra to group Hopf algebra $k[G]$, so representative $3$-cocycles for 
$k[G]$ (in the quasi-Hopf sense) correspond to representative $3$-cocycles $\omega$ on the group $G$. Recall that a normalized $3$-cocycle on a group $G$ is a map 
$\omega: G\times G\times G\ra k^*$ obeying $\omega(x, y, z)=1$ whenever $x$, $y$ or $z$ is equal to $e$ (the neutral element of $G$) and 
\begin{equation}\eqlabel{3cocycledef}
\omega(y, z, t)\omega(x, yz, t)\omega(x, y, z)=\omega(x, y, zt)\omega(xy, z, t),~\forall~x, y,z, t\in G.
\end{equation} 
If $\{P_g\}_{g\in G}$ is the basis of $k^G$ dual to the basis 
$\{g\}_{g\in G}$ of $k[G]$ then $k^G$, endowed with the 
(co)multipli-cation and (co)unit coming from the (co)algebra structure of $k^G$, can also be seen as a quasi-bialgebra with reassociator $\Phi_\omega$ given by 
\begin{equation}\eqlabel{reassC6}
            \Phi_\omega=\sum_{g_1,g_2,g_3\in G} \omega(g_1,g_2,g_3)~ P_{g_1}\ot P_{g_2}\ot P_{g_3}.
  \end{equation}     
We denote this quasi-bialgebra structure on $k^G$ by $k^G_\omega$. $k^G_\omega$ is, moreover, a quasi-Hopf algebra 
with antipode determined by the triple $(S, \a, \b)$ consisting of $S(P_g)=P_{g^{-1}}$, for all $g\in G$, $\a$ equals the counit $\va$ of $k[G]$ and $\b$ given 
by $\b(g)=\omega(g, g^{-1}, g)^{-1}$, for all $g\in G$. 

The quasi-Hopf algebra structure of $k^G_\omega$ produces a monoidal structure on the category ${\rm Rep}(k^G_\omega)$ of representations of $k^G_\omega$. 
Owing to \cite[Remark 2.6.2]{egno}, the equivalent classes corresponding to these monoidal categories ${\rm Rep}(k^G_\omega)$, 
$\omega\in Z^3(G, k^*)$, are in a one to one correspondence to the orbits of the group action  
\begin{equation}\eqlabel{outact}
{\rm Out}(G)\times H^3(G, k^*)\ni (\widehat{f}, \overline{\omega})\mapsto \overline{f^*\omega}\in H^3(G, k^*).
\end{equation}
Here ${\rm Out}(G)$ is the group of outer automorphisms of $G$, $H^3(G, k^*)$ stands for the third cohomology group of $G$ 
with coefficients in $k^*$ and $f^*\omega\in Z^3(G, k^*)$ is defined by 
$f^*\omega(g_1,g_2,g_3)=\omega(f(g_1),f(g_2),f(g_3))$, for all $g_1, g_2, g_3\in G$. Therefore, for a finite group $G$, the number of 
quasi-Hopf algebras of the form $k^G_\omega$, $\omega\in Z^3(G, k^*)$, which are not pairwise twist equivalent equals the number of 
orbits ${\cal O}_{\ov{\omega}}$ of the group action defined by \equref{outact}. 	

For $G=C_n$, a cyclic group with $n$ elements, it is well known that $3$-cocycle representatives for $C_n$ are given by $\{\phi_{\xi^a}\mid 0\leq a\leq n-1\}$, where 
$\xi$ is a primitive root of unity of degree $n$ in $k$ and     
		\begin{eqnarray}\eqlabel{cn3cc}
        && \phi_{\xi^a}(\sigma^i,\sigma^j,\sigma^l)= \xi^{ai\lfloor\frac{j+l}{n} \rfloor },~\forall~0\leq i, j, l\leq n-1. 
    \end{eqnarray} 
Then, by \cite[Lemma 3.5]{bm}, the action in \equref{outact} can be restated as follows ($0\leq a\leq n-1$):
    \begin{equation}\eqlabel{actionw}
        (f, \overline{\phi_{\xi^a}})\mapsto \ov{\phi_{\xi{s^2a}}},
    \end{equation}
	provided that $f\in {\rm Aut}(C_n)\simeq U(\mathbb{Z}_n)$ is defined by $1\leq s\leq n-1$ such that $(s, n)=1$ 
	(that is, $f$ is determined by $f(\sigma)=\sigma^s$). As $s^2\equiv 1~(mod~4)$, for all $s\in \{1, 3\}$, it follows that we have $4$ quasi-Hopf algebra 
	structures on $k[C_4]$ which are not pairwise twist equivalent.
	
	Likewise, $3$-cocycle representatives for $G=C_2\times C_2$ are described explicitly in \cite[Theorem 3.5]{bct}. For short, if $C_2\times C_2=\{e, \sigma, \tau, \rho\}$ 
	with $\tau\sigma=\sigma\tau=\rho$ then $3$-cocycle representatives for $G=C_2\times C_2$ are in a one correspondence with the subsets $X\subseteq \{\sigma, \tau, \rho\}$, 
	provided that $k$ is an algebraically closed field of characteristic zero. Furthermore, if $\phi_X$ is the $3$-cocycle corresponding to $X$, then $\phi_X$ is invariant under any 
	permutation of $S_3$, the symmetric group on $3$ letters; see the comments made after the proof of \cite[Theorem 3.5]{bct}. Since ${\rm Aut}(C_2\times C_2)\simeq S_3$, we 
	conclude that for $G=C_2\times C_2$ the group action in \equref{outact} is trivial, and therefore we have $8$ quasi-Hopf algebra 
	structures on $k[C_2\times C_2]$ which are not pairwise twist equivalent. 
	
	$k[C_4]$ and $k[C_2\times C_2]$ are isomorphic as algebras (both are isomorphic to $k^4$) 
	but not as Hopf algebras. Indeed, if there exists a Hopf algebra isomorphism 
	$\varphi:k[C_2\times C_2]\rightarrow k[C_4]$ then $\varphi$ behaves well with respect to the antipodes $S_{k[C_4]}$ and 
	$S_{k[C_2\times C_2]}$ of $k[C_4]$ and $k[C_2\times C_2]$, respectively. Since $S_{k[C_2\times C_2]}$ is the identity morphism, 
	we get that $S_{k[C_4]}\circ \phi=\phi$ or, equivalently, that $S_{k[C_4]}=\Id_{k[C_4]}$; this is false. 
	We conclude that there are precisely $12$ semisimple quasi-Hopf algebras in dimension $4$ which are not pairwise twist equivalent. 
\section{The algebra of functions on a finite  group}\selabel{funalgexp}
\setcounter{equation}{0}
Let $G$ be a finite group, $\omega$ a normalized $3$-cocycle on $G$ and $k^G_\omega$ the Hopf algebra of functions on $G$, seen as a quasi-Hopf algebra 
with reassociator $\Phi_\omega$ as in \equref{reassC6}. In this section, with the help of \thref{2dimbraidedHA} we characterize $2$-dimensional braided 
Hopf algebras $B$ in ${}_{k_{\omega}^G}^{k_{\omega}^G}{\cal YD}$. We will see that the structure of $B$ is determined by a pair $(\rho, \mfg)$ 
consisting of a map $\rho: G\ra k^*$ and an element $\mfg\in G$ satisfying certain conditions. To this end, recall that a  $2$-cocycle on $G$ is a map 
$\sigma: G\times  G\rightarrow k^*$ such that $\sigma(e, x)=\sigma(x, e)=1$ and 
\[
\sigma(x, y)\sigma(xy, z) = \sigma(x, yz)\sigma(y, z),~\forall~x, y, z\in G. 
\]
$\sigma$ is a coboundary $2$-cocycle if there exists a map $\rho: G\rightarrow k^*$ such that $\rho(e)=1$ and 
$\sigma=\partial\rho$, where $\partial\rho: G\times G\ra k^*$ is defined by $\partial\rho(x,y):=\rho(xy)^{-1}\rho(x)\rho(y)$, for all $x, y\in G$. 

The next result can be seen as the non-commutative version of the result stated at the beginning of subsection 2.2 in \cite{hlyy}.    

  \begin{lemma}
      Let $\omega$ be a normalized $3$-cocycle on $G$ and $\mfg\in G$. Define $\flat_{\mfg}\omega: G\times G\ra k^*$ by  
      \[
        \flat_\mfg\omega(x,y)=\frac{\omega(\mfg,x,y)\omega(x,y,\mfg)}{\omega(x,\mfg,y)},~\forall~x, y\in G.
      \]
		Then $\flat_\mfg$ is a normalized $2$-cocycle on $G$ if and only if 
		\begin{equation}\eqlabel{flat g omega}
        \omega(\mfg x,y,z)\omega(x,\mfg y,z)\omega(x,y,\mfg z)=\omega(x\mfg,y,z)\omega(x,y\mfg,z)\omega(x,y,z\mfg),~\forall~x, y, z\in G.
      \end{equation}
Consequently, $\flat_\mfg\omega$ is a normalized $2$-cocycle on $G$, provided that $\mfg$ is in the center of $G$.  
  \end{lemma}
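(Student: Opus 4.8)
The statement is a direct computation, so the plan is to write out the $2$-cocycle axiom for $\flat_\mfg\omega$ and reduce it to \equref{flat g omega} by repeated use of the $3$-cocycle relation \equref{3cocycledef} for $\omega$. First I would observe that $\flat_\mfg\omega$ is automatically normalized: setting $x=e$ or $y=e$ in its defining formula and using that $\omega$ is normalized gives $\flat_\mfg\omega(e,y)=\flat_\mfg\omega(x,e)=1$. So everything comes down to deciding when the identity $\flat_\mfg\omega(x,y)\flat_\mfg\omega(xy,z)=\flat_\mfg\omega(x,yz)\flat_\mfg\omega(y,z)$ holds for all $x,y,z\in G$.

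Substituting the definition of $\flat_\mfg\omega$ and clearing denominators, this identity becomes an equality between two products of six values of $\omega$; rearranging it so that one side equals $1$, it reads $P_1P_2P_3=1$, where $P_1$ collects the four factors in which $\mfg$ occupies the first slot (namely $\omega(\mfg,x,y)\omega(\mfg,xy,z)$ in the numerator and $\omega(\mfg,x,yz)\omega(\mfg,y,z)$ in the denominator), $P_2$ collects the four with $\mfg$ in the last slot, and $P_3$ collects the four with $\mfg$ in the middle slot. The instances of \equref{3cocycledef} I would use are exactly the four obtained by inserting $\mfg$ into each of the four slots of the string $x,y,z$, i.e.\ the instances with ordered arguments $(\mfg,x,y,z)$, $(x,\mfg,y,z)$, $(x,y,\mfg,z)$ and $(x,y,z,\mfg)$. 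The instance $(\mfg,x,y,z)$ collapses $P_1$ to $\omega(\mfg x,y,z)\,\omega(x,y,z)^{-1}\omega(\mfg,y,z)^{-1}$; the instance $(x,y,z,\mfg)$ collapses $P_2$ to $\omega(x,y,\mfg)\,\omega(x,y,z)\,\omega(x,y,z\mfg)^{-1}$; and the two middle instances together collapse $P_3$, the instance $(x,\mfg,y,z)$ handling the quotient $\omega(x,\mfg,yz)/\omega(x,\mfg,y)$ and the instance $(x,y,\mfg,z)$ the quotient $\omega(y,\mfg,z)/\omega(xy,\mfg,z)$, so that $P_3=\omega(\mfg,y,z)\,\omega(x,\mfg y,z)\,\omega(x\mfg,y,z)^{-1}\,\omega(x,y,\mfg z)\,\omega(x,y\mfg,z)^{-1}\,\omega(x,y,\mfg)^{-1}$.

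Multiplying the three collapsed expressions, the auxiliary factors $\omega(x,y,z)$, $\omega(\mfg,y,z)$ and $\omega(x,y,\mfg)$ cancel against their inverses, leaving
\[
P_1P_2P_3=\frac{\omega(\mfg x,y,z)\,\omega(x,\mfg y,z)\,\omega(x,y,\mfg z)}{\omega(x\mfg,y,z)\,\omega(x,y\mfg,z)\,\omega(x,y,z\mfg)}.
\]
Hence the $2$-cocycle axiom for $\flat_\mfg\omega$ holds for all $x,y,z\in G$ if and only if this ratio equals $1$ for all $x,y,z$, i.e.\ if and only if \equref{flat g omega} holds, which is the claimed equivalence. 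For the last assertion, if $\mfg$ lies in the center of $G$ then $\mfg x=x\mfg$, $\mfg y=y\mfg$ and $\mfg z=z\mfg$, so the numerator and denominator of the displayed ratio coincide factor by factor; thus \equref{flat g omega} holds trivially and $\flat_\mfg\omega$ is a normalized $2$-cocycle.

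The one real difficulty is the bookkeeping: splitting the six $\omega$-values correctly among $P_1,P_2,P_3$ and choosing precisely the four specializations of \equref{3cocycledef} listed above, so that after multiplication nothing survives except the six $\omega$-values occurring in \equref{flat g omega}. There is no conceptual obstacle beyond this careful tracking of arguments.
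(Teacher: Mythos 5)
Your proof is correct and follows essentially the same route as the paper: both arguments apply the four specializations of \equref{q3} obtained by inserting $\mfg$ into each slot of $(x,y,z)$ and reduce the $2$-cocycle identity for $\flat_\mfg\omega$ to the ratio appearing in \equref{flat g omega}. The only difference is presentational — you organize the twelve $\omega$-factors into the product $P_1P_2P_3$ by the position of $\mfg$, whereas the paper carries out the same cancellations as a chain of equalities starting from $\flat_\mfg\omega(x,y)\flat_\mfg\omega(xy,z)$.
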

  \begin{proof}
 $\flat_\mfg\omega$ is normalized since $\omega$ is so. By applying the $3$-cocycle condition \equref{3cocycledef} for several times we compute that      
      \begin{eqnarray*}
\flat_\mfg\omega(x, y)\flat_\mfg\omega(xy, z)&=&\frac{\omega(\mfg, x,y)\omega(x,y,\mfg )\omega(\mfg,xy,z)\omega(xy, z, \mfg)}{ \omega(x,\mfg, y)\omega(xy,\mfg,z )}\\
&=&\frac{\omega(\mfg x, y, z)\omega(\mfg, x, yz)\omega(x, y, \mfg)\omega(xy, z, \mfg)}{\omega(x, \mfg, y)\omega(xy, \mfg, z)\omega(x, y, z)}\\
&=&\frac{\omega(\mfg x, y, z)\omega(\mfg, x, yz)\omega(\mfg, x, yz)\omega(x, y, \mfg z)\omega(xy, z, \mfg)}{\omega(x, \mfg, y)\omega(y, \mfg, z)
\omega(x, y\mfg, z)\omega(x, y, z)}\\          
&=&\frac{\omega(\mfg x, y, z)\omega(\mfg, x, yz)\omega(x, y, \mfg z)\omega(y, z, \mfg )\omega(x, yz, \mfg)}{\omega(x, \mfg, y)\omega(y, \mfg, z)
\omega(x, y\mfg, z)\omega(x, y, z\mfg)}\\
&=&\flat_\mfg\omega(x, yz)\flat_\mfg(y, z) 
           \frac{ \omega( \mfg x, y, z)\omega(x, \mfg y, z)\omega(x, y, \mfg z)}
           {\omega(x\mfg, y, z)\omega(x, y\mfg, z)\omega(x, y, z\mfg)},
      \end{eqnarray*}
   for all $x, y, z\in G$, and our assertions follow.
  \end{proof}

  \begin{lemma}\lelabel{1d fun yd}
      Let $G$ be a finite group and $\omega$ a normalized $3$-cocycle on $G$. Giving a pair $(\sigma, v)$ for $k^G_\omega$ satisfying the conditions in 
			\deref{Balpv} is equivalent to giving a pair $(\mfg, \rho)$ consisting of an element $\mfg$ in the center of $G$ and a map $\rho: G\ra k^*$ obeying 
			$\rho(e)=1$, $\rho(\mfg)=-1$ and $\flat_\mfg\omega=\partial\rho$.
  \end{lemma}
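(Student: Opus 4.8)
The plan is to make the conditions of \deref{Balpv} completely explicit in the basis $\{P_g\}_{g\in G}$ of $k^G_\omega$ and simply read off what they say about $(\alp,v)$. First I would observe that a unital algebra map $\alp\colon k^G_\omega\ra k$ is the same as evaluation at a point: since $k^G_\omega\cong k^{|G|}$ as an algebra, $\ker\alp$ is one of its maximal ideals, so there is a unique $\mfg\in G$ with $\alp(P_g)=\delta_{g,\mfg}$ for all $g$. Dually, every $v\in k^G_\omega$ has a unique expansion $v=\sum_{g\in G}\rho(g)P_g$ with $\rho\colon G\ra k$, and here $\va(v)=\rho(e)$ and $\alp(v)=\rho(\mfg)$; thus the normalisations $\va(v)=1$ and $\alp(v)=-1$ become exactly $\rho(e)=1$ and $\rho(\mfg)=-1$. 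So it remains to translate the second formulas in \equref{coasscoactonB} and \equref{YDmodcondB}, and to note at the end that $\mfg$ (resp.\ $\rho$) is recovered uniquely from $\alp$ (resp.\ from $v$), which makes the resulting correspondence a bijection.

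For \equref{coasscoactonB} I would substitute $\Phi_\omega=\sum\omega(g_1,g_2,g_3)P_{g_1}\ot P_{g_2}\ot P_{g_3}$ and $\Phi_\omega^{-1}=\sum\omega(g_1,g_2,g_3)^{-1}P_{g_1}\ot P_{g_2}\ot P_{g_3}$ into the right-hand side $\alp(x^3X^2y^1)x^1X^1vy^2\ot x^2vX^3y^3$. Since the $P_g$ are orthogonal idempotents, $P_aP_bP_c=0$ unless $a=b=c$, so the factor $\alp(x^3X^2y^1)$ forces the arguments fed to $x^3$, $X^2$, $y^1$ all to equal $\mfg$, and the two legs $x^1X^1vy^2$, $x^2vX^3y^3$ collapse, via $P_gv=\rho(g)P_g$, to scalar multiples of single idempotents $P_a$, $P_b$. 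Collecting the surviving $\omega$-weights, the weight attached to the $P_a\ot P_b$-term is $\omega(a,\mfg,b)\,\omega(a,b,\mfg)^{-1}\omega(\mfg,a,b)^{-1}=\flat_\mfg\omega(a,b)^{-1}$, whence the right-hand side of \equref{coasscoactonB} equals $\sum_{a,b\in G}\flat_\mfg\omega(a,b)^{-1}\rho(a)\rho(b)\,P_a\ot P_b$, while $\Delta(v)=\sum_{a,b\in G}\rho(ab)\,P_a\ot P_b$. So \equref{coasscoactonB} holds if and only if $\rho(ab)=\flat_\mfg\omega(a,b)^{-1}\rho(a)\rho(b)$ for all $a,b$, i.e.\ $\flat_\mfg\omega=\partial\rho$. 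Putting $a=g$, $b=g^{-1}$ here and using $\rho(e)=1$ forces $\rho(g)\rho(g^{-1})=\flat_\mfg\omega(g,g^{-1})\in k^*$, so $\rho$ is automatically $k^*$-valued; and by the preceding lemma $\flat_\mfg\omega$ is then a genuine $2$-cocycle, consistently with its being a coboundary.

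Finally, for the second formula in \equref{YDmodcondB} it suffices to test $h=P_g$, and a short computation of the two sides (again using $P_gv=\rho(g)P_g$ and $P_gP_h=\delta_{g,h}P_g$) gives $\alp(h_2)h_1v=\rho(g\mfg^{-1})P_{g\mfg^{-1}}$ and $\alp(h_1)vh_2=\rho(\mfg^{-1}g)P_{\mfg^{-1}g}$. Since $\rho$ never vanishes, equality for every $g$ is equivalent to $g\mfg^{-1}=\mfg^{-1}g$ for all $g$, that is, to $\mfg$ lying in the center of $G$. The converse direction is the same computations run backwards: given $(\mfg,\rho)$ with $\mfg$ central, $\rho(e)=1$, $\rho(\mfg)=-1$, $\flat_\mfg\omega=\partial\rho$, one checks directly that $\alp:=\mathrm{ev}_\mfg$ and $v:=\sum_g\rho(g)P_g$ satisfy all the conditions of \deref{Balpv}. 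Reading the three translations together yields the asserted equivalence. I expect the only delicate point to be the monomial bookkeeping in the middle step that extracts the combination $\flat_\mfg\omega(a,b)^{-1}$ from the three copies of $\Phi_\omega^{\pm1}$ appearing in \equref{coasscoactonB}; everything else is an immediate consequence of the idempotent structure of $k^G_\omega$.
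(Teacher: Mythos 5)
Your proposal is correct and follows essentially the same route as the paper: identify $\alp$ with evaluation at an element $\mfg$, expand $v=\sum_{g}\rho(g)P_g$, and translate the two structural identities of \deref{Balpv} into centrality of $\mfg$ and $\flat_\mfg\omega=\partial\rho$, including the same extraction of the combination $\flat_\mfg\omega(a,b)^{-1}$ from the three reassociators. The only (harmless) difference is how non-vanishing of $\rho$ is obtained: you use $\rho(g)\rho(g^{-1})=\flat_\mfg\omega(g,g^{-1})\in k^*$, while the paper notes that $\rho(x)=0$ would force $\rho\equiv 0$, contradicting $\rho(e)=1$.
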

  \begin{proof}
  Keeping in mind the quasi-Hopf algebra structure of $k^G$, an algebra map $\sigma: k^G\ra k$ is completely determined by an element $\mfg\in k[G]$ 
	satisfying $\Delta(\mfg)=\mfg\ot \mfg$ and $\va(\mfg)=1$. Thus, $\mfg\in G$ and $\sigma(\varphi)=\varphi(\mfg)$, for all $\varphi\in k^G$. 
			
			An element $v\in k^G$ is completely determined by a map $\rho: G\ra k$, in the sense that $v=\sum\limits_{x\in G}\rho(x)P_x$, where, as before, 
			$\{P_g\}_{g\in G}$ is the basis of $k^G$ dual to the basis $\{g\}_{g\in G}$ of $k[G]$. We have $\va(v)=1$ and $\sigma(v)=-1$ if and only if 
			$\rho(e)=1$ and $\rho(\mfg)=-1$, respectively.
			
			We look now at the condition $\sigma(\varphi_2)\varphi_1v=\sigma(\varphi_1)v\varphi_2$, which must be valid for all $\varphi\in k^G_\omega$ or, equivalently, 
			for any $P_h$, $h\in G$. Since $\Delta(P_h)=\sum_{xy=h}P_x\ot P_y$, we get that $\sum_{xy=h}\sigma(P_y)P_xv=\sum_{xy=h}\sigma(P_x)vP_y$, for all $h\in H$. 
			Thus $P_{h\mfg^{-1}}v=vP_{\mfg^{-1}h}$, for all $h\in H$, and this fact holds if and only if $\mfg h=h\mfg$, for all $h\in G$; otherwise stated, if and 
			only if $\mfg$ is an element in the center of $G$. 
			
Finally, we claim that $\Delta(v)=\alp(y^1x^3X^2)x^1X^1vy^2\ot x^2vX^3y^3$ is equivalent to $\flat_\mfg\omega=\partial\rho$. Indeed, since 
$\Phi_\omega^{-1}=\sum_{g_1, g_2, g_3\in G}\omega(g_1, g_2, g_3)^{-1}P_{g_1}\ot P_{g_2}\ot P_{g_3}$, we compute that 
\[
\sigma(X^2)X^1v\ot vX^3=\sum\limits_{g_1, g_3\in G}\omega(g_1, \mfg, g_3)P_{g_1}v\ot vP_{g_3}
=\sum\limits_{x, y\in G}\omega(x, \mfg, y)\rho(x)\rho(y)P_x\ot P_y
\]
and, likewise, $\alp(y^1X^2)X^1vy^2\ot vX^3y^3=\sum\limits_{x, y\in G}\frac{\omega(x, \mfg, y)}{\omega(\mfg, x, y)}\rho(x)\rho(y)P_x\ot P_y$. 
We then have  
\[
\alp(y^1x^3X^2)x^1X^1vy^2\ot x^2vX^3y^3=\sum\limits_{x, y\in G}\frac{\omega(x, \mfg, y)}{\omega(\mfg, x, y)\omega(x, y, \mfg)}
\rho(x)\rho(y)P_x\ot P_y=\sum\limits_{x, y\in G}\frac{\rho(x)\rho(y)}{\flat_\mfg\omega(x, y)}P_x\ot P_y. 
\]
On the other hand, $\Delta(v)=\sum\limits_{g, y\in G}\rho(g)P_{gy^{-1}}\ot P_y$, and so 
$\Delta(v)=\alp(y^1x^3X^2)x^1X^1vy^2\ot x^2vX^3y^3$ if and only if $\sum_{g\in G}\rho(g)P_{gy^{-1}}=
\sum_{x\in G}\frac{\rho(x)\rho(y)}{\flat_\mfg\omega(x, y)}P_x$, for all $y\in G$. Clearly, the latest assertion is equivalent to $\rho(xy)\flat_\mfg\omega(x, y)=
\rho(x)\rho(y)$, for all $x, y\in G$. If there is $x\in G$ such that $\rho(x)=0$ then $\rho(xy)=0$, for all $y\in G$, meaning that $\rho(g)=0$, for all $g\in G$, 
which is false. So we can consider $\rho: G\ra k^*$, and we conclude that $\flat_\mfg\omega=\partial\rho$.
  \end{proof}

The result below is an immediate consequence of \leref{1d fun yd} and \thref{2dimbraidedHA}.

  \begin{proposition}\prlabel{fun chara}
Non-trivial $2$-dimensional braided Hopf algebras in ${}_{k^G_\omega}^{k^G_\omega}{\cal YD}$ are in a one to one correspondence with couples $(\mf{g}, \rho)$ 
consisting of an element $\mfg$ in the center of $G$ and a map $\rho: G\ra k^*$ obeying $\rho(e)=1$, $\rho(\mfg)=-1$ and $\flat_\mfg\omega=\partial\rho$. 
  \end{proposition}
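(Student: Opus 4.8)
The plan is to assemble the stated correspondence from two bijections that are already at hand. First I would invoke \thref{2dimbraidedHA}: up to isomorphism, every braided Hopf algebra in ${}_{k^G_\omega}^{k^G_\omega}{\cal YD}$ is either the group Hopf algebra $k[C_2]$ with its trivial structure, or a braided Hopf algebra of type $B_{\alp, v}$ as in \deref{Balpv}. These two families are genuinely disjoint — $B_{\alp, v}$ is local as an algebra (it is $k[n]/(n^2)$), while $k[C_2]$ is semisimple, since ${\rm char}(k)\neq 2$ — so, reading "non-trivial" as "not isomorphic to the trivial $k[C_2]$", the non-trivial $2$-dimensional braided Hopf algebras are, up to isomorphism, precisely the $B_{\alp, v}$, one for each pair $(\alp, v)$ as in \deref{Balpv} (an algebra map $\alp: k^G_\omega\ra k$ together with $v\in k^G_\omega$ satisfying $\alp(v)=-1$, $\va(v)=1$ and the second formulas in \equref{coasscoactonB} and \equref{YDmodcondB}). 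That $B_{\alp, v}$ really is a braided Hopf algebra for every such pair is already part of \thref{2dimbraidedHA}.

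The step that still needs checking, and which I would carry out next, is that the pair $(\alp, v)$ is an isomorphism invariant of $B_{\alp, v}$, so that the assignment $(\alp, v)\mapsto B_{\alp, v}$ is injective on isomorphism classes. For this I would note that the space of primitive elements of $B_{\alp, v}$ (the $\mf{b}$ with $\un{\Delta}(\mf{b})=\mf{b}\ot 1 + 1\ot\mf{b}$ and $\un{\va}(\mf{b})=0$) is exactly $kn$; hence any braided Hopf algebra isomorphism $\phi: B_{\alp, v}\ra B_{\alp', v'}$ satisfies $\phi(1)=1$ and $\phi(n)=\mu n$ for some $0\neq\mu\in k$. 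Applying $\phi$ to $h\cdot n=\alp(h)n$ then forces $\alp=\alp'$, and applying the $H$-colinearity of $\phi$ to $n\mapsto v\ot n$ forces $v=v'$. Thus isomorphism classes of non-trivial $2$-dimensional braided Hopf algebras in ${}_{k^G_\omega}^{k^G_\omega}{\cal YD}$ are in bijection with the admissible pairs $(\alp, v)$ for $k^G_\omega$.

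Finally I would apply \leref{1d fun yd} with $\sigma=\alp$: it establishes a bijection between the admissible pairs $(\alp, v)$ for $k^G_\omega$ and the pairs $(\mfg, \rho)$ consisting of an element $\mfg$ in the center of $G$ and a map $\rho: G\ra k^*$ with $\rho(e)=1$, $\rho(\mfg)=-1$ and $\flat_\mfg\omega=\partial\rho$. Composing this with the bijection from the previous paragraph gives the stated one-to-one correspondence. I do not expect any real obstacle here: essentially all of the substance is buried in \thref{2dimbraidedHA} and \leref{1d fun yd}, and the only mildly delicate point is the rigidity argument above, which in turn reduces to the observation that $n$ is, up to a scalar, the unique nontrivial primitive element of $B_{\alp, v}$.
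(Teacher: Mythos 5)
Your proposal is correct and follows exactly the route the paper takes: the paper's entire proof is the one-line remark that the proposition is an immediate consequence of \leref{1d fun yd} and \thref{2dimbraidedHA}. Your additional rigidity check (that $kn$ is precisely the space of primitives, so the pair $(\alp,v)$ is an isomorphism invariant of $B_{\alp,v}$) is a detail the paper leaves implicit, and it is verified correctly.
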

	
We apply \prref{fun chara} to various finite groups $G$.	
 
\subsection{Finite abelian groups}
Let $G=C_{1}\times \ldots\times C_{n}$ be a finite abelian group; $C_j$ stands for the cyclic group of order $m_j$, generated by $g_j$, $1\leq j\leq n$. 
In general, for $m$ a non-zero positive integer, denote by $\xi_{m}$ a primitive $m$th root of unity in $k$. 
Then $1_{j_1,\ldots,j_n}$ ($0\leq j_l\leq m_l-1$, for all $1\leq l\leq n$) given by 
\begin{equation}\eqlabel{canisomgralg}
1_{j_1,\ldots,j_n}:=\frac{1}{m_1\ldots m_n}\prod_{l=1}^{n}\left(\sum_{t=0}^{m_l-1}\xi_{m_l}^{-tj_l}g_l^t\right) 
\end{equation}
define a family of orthogonal idempotents of $k[G]$. 
It is well known that $k^G$ and $k[G]$ identify (as Hopf algebras) through the correspondence 
$P_{g_1^{j_1}\ldots g_n^{j_n}}\mapsto 1_{j_1,\ldots , j_n}$. The inverse of this correspondence maps $g_1^{j_1}\cdots g_n^{j_n}$ to 
$\prod_{l=1}^n\left(\sum_{t=0}^{m_l-1}\xi_{m_l}^{tj_l}\widetilde{P}_{g_l^t}\right)$, where $\widetilde{P}_{g_l^t}\in k^G$ is determined by 
$\widetilde{P}_{g_l^t}(g_1^{a_1}\cdots g_n^{a_n})=\delta_{t, a_l}$, for all $g_1^{a_1}\cdots g_n^{a_n}\in G$.

A set of representative $3$-cocycles on $G$ was given in \cite{hlyy}. Namely, denote by $\cal A$ the set of sequences $\underline a$ with integer elements, 
having the form 
\begin{equation}\eqlabel{c datum}
(c_1,\ldots,c_l,\ldots,  c_n,c_{12},\ldots,c_{ij},\ldots, c_{n-1,n},c_{123},\ldots,c_{rst},\ldots, c_{n-2,n-1,n})
\end{equation}
with $0\leq c_l\leq m_l-1$, $0\leq c_{ij}\leq (m_i,m_j)$ and $0\leq c_{rst}\leq (m_r,m_s,m_t)$, for all $1\leq l\leq n$, $1\leq i < j \leq n$ and 
$1\leq r<s<t\leq n$; the $c_{ij}$'s and the $c_{rst}$'s are ordered by the lexicographical order and $(u, v)$ stands for the greatest common divisor of the 
integer $u, v$, and similar for $(u, v, w)$. 
 
For $\underline a$ in  $\cal A$, define $\omega_{\underline a}:G\times G\times G\rightarrow k^*$ by 
\begin{equation}\eqlabel{3cocyfabgr}
     \omega_{\underline a}(g_1^{i_1}\ldots  g_n^{i_n},g_1^{j_1}\ldots g_n^{j_n},g_1^{k_1}\ldots g_n^{k_n})
		:=\prod_{l=1}^n \xi_{m_l}^{c_li_l\lfloor   \frac{j_l+k_l}{m_l}\rfloor}\hspace{-4mm}  
        \prod_{1\leq s<t\leq n} \xi_{m_t}^{c_{st}i_t\lfloor   \frac{j_s+k_s}{m_s}\rfloor}\hspace{-4mm}
        \prod_{1\leq r<s<t\leq n}\xi_{(m_r,m_s,m_t)}^{-c_{rst}k_rj_si_t}.
\end{equation}
Owing to \cite[Proposition 3.8]{hlyy2}, $\{\omega_{\underline a}\}_{\underline a\in\cal A}$ is a set of representative normalized $3$-cocycles for $G$. 
Furthermore, $\omega_{\underline a}$ is an abelian $3$-cocycle 
if and only if $c_{rst}=0$ for all $1\leq r<s<t\leq n$ (for the definition of an abelian $3$-cocycle we refer to \cite{hlyy2}). 

In computations, sometimes we need a common root of unity, one from which we can derive all the others roots of unity. This is why, we set 
$M:=LCM\{(m_r, m_s, m_t)|1\leq r<s<t\leq n\}$ and fix $\xi_M$ a primitive $M$th root of unity in $k$; then, without loss of generality, we assume 
that $\xi_{(m_r, m_s, m_t)}=\xi_M^{M_{rst}}$, where, for all $1\leq r<s<t\leq n$, $M_{rst}:=\frac{M}{(m_r, m_s, m_t)}$. In addition, we define 
$M^c_{rst}=M_{rst}c_{rst}$, for all $1\leq r<s<t\leq n$. 

\begin{proposition}\prlabel{flat coboundary}
    For $\mf g=g_1^{f_1}\ldots  g_n^{f_n}
    \in C_{1}\times \ldots\times C_{n}$ and $\underline a=(c_1,\ldots,c_{n-2,n-1,n})$ as in \equref{c datum}, $\flat_{\mf g}{\omega_{\underline a}}$ is a 
		coboundary $2$-cocycle if and only if $n=2$ or $n\geq 3$ and for any $1\leq r<s\leq n$ the following divisibility hold:
		\begin{equation}\eqlabel{divis}
		M|\sum\limits_{u=1}^{r-1}M^c_{urs}f_u - \sum\limits_{v=r+1}^{s-1}M^c_{rvs}f_v +\sum\limits_{w=s+1}^nM^c_{rsw}f_w; 	
		\end{equation}
		we make the convention that a sum for which its summands do not exist is equal to zero.
		
    If this is the case then $\flat_{\mf g}{\omega_{\underline a}} =\flat_{\mf g}{\omega_{\bar{a}}}$, where $\bar a= (\bar c_1,\ldots,\bar c_{n-2,n-1,n})$ defined by 
		$\bar c_i = c_i$, $\bar c_{ij}=c_{ij}$ and $\bar c_{rst}=0$ induces the abelian $3$-cocycle $\omega_{\bar{a}}$ on $G$ associated to $\underline{a}$. 
\end{proposition}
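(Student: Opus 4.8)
The plan is to compute $\flat_{\mf g}\omega_{\underline a}(x,y)$ explicitly from the formula \equref{3cocyfabgr}, writing $\mf g=g_1^{f_1}\cdots g_n^{f_n}$, and then decide when the resulting $2$-cocycle is a coboundary. First I would organize the computation of $\flat_{\mf g}\omega_{\underline a}$ according to the three factors in \equref{3cocyfabgr}. The ``$c_l$'' part contributes, up to an exact factor coming from the $\lfloor\cdot\rfloor$ terms, a coboundary (these diagonal $\xi_{m_l}^{c_l i_l\lfloor(j_l+k_l)/m_l\rfloor}$ pieces already produce the abelian piece $\omega_{\bar a}$ and $\flat_{\mf g}\omega_{\bar a}=\partial\rho$ for a suitable $\rho$ of the shape $\rho(g_1^{j_1}\cdots g_n^{j_n})=\prod\xi_{m_l}^{\lambda_l j_l}\cdot(\text{something with }f_l)$; this mirrors exactly the abelian-group computation of \cite{hlyy} that the lemma generalizes). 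The ``$c_{st}$'' part is of the same diagonal type (it only shifts indices between coordinates) and likewise contributes a coboundary. So the genuinely new contribution is the ``$c_{rst}$'' part, $\prod_{r<s<t}\xi_{(m_r,m_s,m_t)}^{-c_{rst}k_rj_si_t}$: inserting $\mf g$ into the three slots of $\omega_{\underline a}$ and forming the alternating combination $\omega(\mf g x,y,z)\omega(x,\mf g y,z)\omega(x,y,\mf g z)/[\omega(x\mf g,y,z)\omega(x,y\mf g,z)\omega(x,y,z\mf g)]$ — equivalently the left side of \equref{flat g omega} as a function — the trilinear piece produces a bilinear expression in $x,y$ whose exponent, after collecting terms, is of the form $\sum_{r<s}\big(\sum_{u<r}M^c_{urs}f_u-\sum_{r<v<s}M^c_{rvs}f_v+\sum_{s<w}M^c_{rsw}f_w\big)\,(\text{linear in }j_r,k_s)\pmod M$.

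Next, I would invoke the preceding lemma (the one just before \leref{1d fun yd}, giving the $2$-cocycle property of $\flat_{\mf g}\omega$ under \equref{flat g omega}) together with the standard fact that a normalized $2$-cocycle on a \emph{finite abelian} group is a coboundary if and only if it is symmetric, i.e.\ $\flat_{\mf g}\omega_{\underline a}(x,y)=\flat_{\mf g}\omega_{\underline a}(y,x)$ for all $x,y$. (For $n=2$ there is no $c_{rst}$ at all, so $\flat_{\mf g}\omega_{\underline a}$ is automatically cohomologous to $\flat_{\mf g}\omega_{\bar a}$ with $\omega_{\bar a}$ abelian, hence a coboundary — this is the ``$n=2$'' case of the statement, and also explains why the conic-section phenomenon mentioned in the introduction shows up precisely for products of two cyclic groups.) Symmetry of the diagonal pieces is automatic; symmetry of the trilinear piece's contribution forces exactly the vanishing $\pmod M$ of the coefficients obtained above, which is precisely condition \equref{divis}. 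Conversely, if \equref{divis} holds, the trilinear contribution to $\flat_{\mf g}\omega_{\underline a}$ collapses to $1$, so $\flat_{\mf g}\omega_{\underline a}=\flat_{\mf g}\omega_{\bar a}$ with $\bar a$ obtained from $\underline a$ by zeroing all $c_{rst}$; since $\omega_{\bar a}$ is abelian, $\flat_{\mf g}\omega_{\bar a}$ is a coboundary by the abelian case, and one can even write down $\rho$ explicitly from the diagonal exponents. This simultaneously gives the ``if this is the case'' clause.

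The main obstacle I expect is bookkeeping: carrying out the substitution $x\mapsto \mf g x$ etc.\ in \equref{3cocyfabgr} requires handling the $\lfloor(j_l+k_l)/m_l\rfloor$ carry terms and the index shifts in the $c_{st}$-factor carefully, and then correctly identifying which part of the resulting exponent is a genuine coboundary (absorbed into $\omega_{\bar a}$) versus which part is an obstruction. In particular one must check that no cross-terms between the $c_l$-, $c_{st}$-, and $c_{rst}$-contributions conspire to change the conclusion — they do not, because the first two are symmetric bicharacters up to coboundary while only the third breaks symmetry. A second, more delicate point is making sure the divisibility \equref{divis} is stated for the right range of indices and with the correct signs; this is pinned down by demanding symmetry of the specific monomial $\xi_{(m_r,m_s,m_t)}^{-c_{rst}(\cdots)}$ under swapping the two arguments of $\flat_{\mf g}\omega_{\underline a}$, so I would double-check the sign pattern $+,-,+$ against a small explicit example, e.g.\ $G=C_m\times C_m\times C_m$ where \equref{divis} reduces to $M\mid M^c_{123}f_\bullet$ in the three relevant instances.
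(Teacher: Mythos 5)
Your overall strategy is sound and arrives at the correct condition, but it takes a genuinely different route through the key intermediate step. Both you and the paper expand $\flat_{\mf g}\omega_{\underline a}$ explicitly, split off the $c_l$- and $c_{st}$-contributions as the explicit coboundary $\partial f_{\mf g}$ (this is the factor $\flat_{\mf g}\omega_{\bar a}$), and reduce the problem to deciding when the residual trilinear factor $\flat_{\mf g}\omega_{\ov{\ov{a}}}$, whose exponent is $S=-\sum_{r<s<t}M^c_{rst}(k_rj_sf_t+f_rk_sj_t-k_rf_sj_t)$, is a coboundary. Where you diverge is in how that is decided: you test symmetry under $j\leftrightarrow k$, using that a coboundary on an abelian group is automatically symmetric, and since $S$ is supported only on monomials $k_aj_b$ with $a<b$, symmetry forces each coefficient $C_{ab}$ to vanish modulo $M$, i.e.\ the factor is in fact trivial. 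The paper instead shows directly that any $\rho$ with $\partial\rho=\flat_{\mf g}\omega_{\ov{\ov{a}}}$ must be multiplicative (by evaluating the cocycle on elements of the cyclic factors, where it equals $1$), whence $\partial\rho=1$. Your version is shorter, and---as structured---it only ever uses the easy implication ``coboundary $\Rightarrow$ symmetric''; the converse is supplied by the explicit trivialization of $\flat_{\mf g}\omega_{\bar a}$, so you need no divisibility hypothesis on $k^*$ beyond the roots of unity the paper already assumes. The final bookkeeping converting the vanishing of the $C_{ab}$ into \equref{divis} is the same in both arguments, as is the treatment of $n=2$.

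Two points to repair in the write-up. First, the ``alternating combination'' you display, $\omega(\mf g x,y,z)\omega(x,\mf g y,z)\omega(x,y,\mf g z)/\bigl(\omega(x\mf g,y,z)\omega(x,y\mf g,z)\omega(x,y,z\mf g)\bigr)$, is not $\flat_{\mf g}\omega$: for central $\mf g$ it is identically $1$ (it is exactly the condition \equref{flat g omega}). The object to expand is the two-variable function $\flat_{\mf g}\omega(x,y)=\omega(\mf g,x,y)\omega(x,y,\mf g)/\omega(x,\mf g,y)$; your description of the resulting exponent shows this is what you intend, but the formula as written would compute the constant $1$. Second, state explicitly that the trilinear exponent contains only monomials $k_aj_b$ with $a<b$ (no diagonal terms and none with $a>b$): this is precisely what makes ``symmetric'' equivalent to ``trivial'' for that factor and closes your chain of implications; for a general bilinear form, symmetry would only kill its antisymmetric part.
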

\begin{proof}
     We start by noting that, for $\underline a\in {\cal A}$ and $\mf g=g_1^{f_1}\ldots  g_n^{f_n}\in G$, 
		$\flat_{\mf g}\omega_{\underline a}( g_1^{ j_1}\ldots  g_n^{ j_n},g_1^{ k_1}\ldots  g_n^{ k_n}  )$ equals to
    \begin{equation*}
        \prod_{l=1}^n \xi_{m_l}^{c_lf_l\lfloor   \frac{j_l+k_l}{m_l}\rfloor}  
        \prod_{1\leq s<t\leq n} \xi_{m_t}^{c_{st}f_t\lfloor   \frac{j_s+k_s}{m_s}\rfloor}
        \prod_{1\leq r<s<t\leq n}\xi_{(m_r,m_s,m_t)}^{-c_{rst}(k_rj_sf_t + f_rk_sj_t - k_r f_s j_t)}.
    \end{equation*}
    With ${\bar a}$ as in the statement of the proposition, we show that $\flat_{\mf g}\omega_{\bar a}$ is a coboundary $2$-cocycle. Indeed, 
		by specializing the above formula for $\ov{a}$, we have that 
    \begin{eqnarray*}
		&&\hspace{-5mm}\flat_{\mf g}\omega_{\bar a}( g_1^{ j_1}\ldots  g_n^{ j_n},g_1^{ k_1}\ldots  g_n^{ k_n}  )\\
        &&=\prod_{l=1}^n \xi_{m_l}^{c_lf_l\lfloor   \frac{j_l+k_l}{m_l}\rfloor}  
        \prod_{1\leq s<t\leq n} \xi_{m_t}^{c_{st}f_t\lfloor   \frac{j_s+k_s}{m_s}\rfloor}
        =\prod_{l=1}^n 
        \xi_{{m_l}}^{c_lf_l \frac{j_l+k_l - (j_l+k_l)_{m_l}'}{m_l}}
        \prod_{1\leq s<t\leq n} \xi_{m_t}^{c_{st}f_t   \frac{j_s+k_s- (j_s+k_s)_{m_s}'}{m_s}}\\
        &&= \prod_{l=1}^n \frac{\xi_{m_l^2}^{c_lf_lj_l}\xi_{m_l^2}^{c_lf_lk_l}}{\xi_{m_l^2}^{c_lf_l(j_l+ k_l)_{m_l}'}}
        \prod_{1\leq s<t\leq n}
        \frac{\xi_{m_t m_s}^{c_{st}f_t  j_s } \xi_{m_t m_s}^{c_{st}f_t  k_s }}{\xi_{m_tm_s}^{c_{st}f_t (j_s + k_s)_{m_s}'}}
				=\frac{f_{\mf g}(g_1^{ j_1}\ldots  g_n^{ j_n})~f_{\mf g}(g_1^{ k_1}\ldots  g_n^{ k_n} )  }
				{f_{\mf g}(g_1^{ (j_1+k_1)_{m_1}'}\ldots  g_n^{  (j_1+k_1)_{m_n}'} ) },
    \end{eqnarray*}
    provided that $f_{\mf g}:G\rightarrow k^*$ is defined by $f_{\mf g}( g_1^{ j_1}\ldots  g_n^{ j_n}):=  \prod_{l=1}^n   \xi_{m_l^2}^{c_lf_lj_l} 
        \prod_{1\leq s<t\leq n}
         {\xi_{m_t m_s}^{c_{st}f_t  j_s }  } $. As the reader expects, for $1\leq l\leq n$, $(j_l+ k_l)'_{m_l}$ is the remainder of the division of 
				$j_l+k_l$ by $m_l$ and $\xi_{m_l^2}\in k$ is such that $\xi_{m_l^2}^{m_l}=\xi_{m_l}$; similarly, $\xi_{m_tm_s}\in k$ such that 
				$\xi_{m_tm_s}^{m_s}=\xi_{m_t}$. 

    Next, define $\flat_{\mfg}\omega_{\ov{\ov{a}}}: G\times G\ra k^*$ by 
    	\[\flat_{\mfg}\omega_{\ov{\ov{a}}}( g_1^{ j_1}\ldots  g_n^{ j_n},g_1^{ k_1}\ldots  g_n^{ k_n}  )=
    		\prod_{1\leq r<s<t\leq n}\xi_{(m_r,m_s,m_t)}^{-c_{rst}(k_rj_sf_t+ f_rk_sj_t    - k_r f_s j_t  )}. 
    		\]
    		We claim that $\flat_{\mfg}\omega_{\ov{\ov{a}}}$ is a coboundary 2-cocycle if and only if it is trivial, this means that   
		\begin{equation}\eqlabel{condodcob2cocyclenonab}
		\prod_{1\leq r<s<t\leq n}\xi_{(m_r,m_s,m_t)}^{-c_{rst}(k_rj_sf_t + f_rk_sj_t - k_r f_s j_t)}=1,~\forall~0\leq j_l, k_l\leq m_l-1;~1\leq l\leq n.
		\end{equation}

				Indeed, assume that there exists a map $\rho :G\rightarrow k^*$ such that 
    \begin{equation*}
        \rho( g_1^{ j_1}\ldots  g_n^{ j_n}  )
        \rho(  g_1^{ k_1}\ldots  g_n^{ k_n}  )
        =\flat_\mfg\omega_{\overline{\overline{a}}}( g_1^{ j_1}\ldots  g_n^{ j_n},
        g_1^{ k_1}\ldots  g_n^{ k_n}  )\rho( g_1^{ (j_1+ k_1)_{m_1}'}\ldots  g_n^{ (j_n+k_n)_{m_n}'} ),
    \end{equation*}
    for all $0\leq j_l, k_l\leq m_l-1$, $1\leq l\leq n$.  For $1\leq i\leq n-1$, we then have that 
		\[
		\flat_\mfg\omega_{\overline{\overline{a}}}(g_i^{a_i}, 
        g_{i+1}^{a_{i+1}}\ldots  g_n^{a_n} )
				=\xi_M^{-\sum\limits_{1\leq r<s<t\leq n}M_{rst}k_rj_sf_t -\sum\limits_{1\leq r<s<t\leq n}M_{rst}j_t(f_rk_s-k_rf_s)}=1, 
		\]		
		since both sums that appear in the exponent of $\xi_M$ are equal to zero (for the first sum the only possible non-zero summand is obtained when $s=i$ 
		but then $r<i$ implies $k_r=0$, while for the second sum the only possible non-zero summand is obtained when $t=i$ in which case $r<s<i$ imply $k_r=k_s=0$).  		
		Thus, for any $1\leq i\leq n-1$, we have that   
      \[
      \rho(g_i^{a_i})\rho(g_{i+1}^{a_{i+1}}\ldots g_{n}^{a_{n}})
      =\flat_{\mfg}\omega_{\overline{\overline{a}}}(g_i^{a_i}, g_{i+1}^{a_{i+1}}\ldots g_{n}^{a_{n}})
      	\rho(g_i^{a_i}\ldots g_{n }^{a_{n }})=\rho(g_i^{a_i}\ldots g_{n }^{a_{n }}),
      	\]
      	for all $0\leq a_l\leq m_l-1$, $i\leq l\leq n$.  
      	 By mathematical induction, it follows that 
      	 $\rho(g_1^{a_1}\ldots g_{n }^{a_{n }})=\rho(g_1^{a_1})\ldots \rho(g_{n }^{a_{n }})$, for all $g_l^{a_l}\in C_l$, $1\leq l\leq n$. 
				
				Likewise, for $1\leq l\leq n$, $j_s=\delta_{s, l}a$ and $k_t=\delta_{t, l}b$ for all $1\leq s, t\leq n$ ($0\leq a, b\leq m_l-1$ are arbitrary fixed and 
				$\delta_{s, l}$ is the Kronecker's delta of $s$ and $l$, etc.), 
				we have 
				\begin{eqnarray*}
				\flat_{\mfg}\omega_{\overline{\overline{a}}}(g_l^{a},g_l^{b})&=&\prod_{1\leq r<s<t\leq n}\xi_{(m_r,m_s,m_t)}^{-c_{rst}k_rj_sf_t}
				\prod_{1\leq r<s<t\leq n}\xi_{(m_r,m_s,m_t)}^{-c_{rst}f_rk_sj_t}
				\prod_{1\leq r<s<t\leq n}\xi_{(m_r,m_s,m_t)}^{c_{rst}k_r f_s j_t}\\
				&=&
				\prod_{1\leq r<l<t\leq n}\xi_{(m_r,m_l,m_t)}^{-c_{rlt}ak_rf_t}
				\prod_{1\leq r<s<l\leq n}\xi_{(m_r,m_s,m_l)}^{-c_{rsl}af_rk_s}
				\prod_{1\leq r<s<l\leq n}\xi_{(m_r,m_s,m_l)}^{c_{rsl}ak_r f_s}=1,
				\end{eqnarray*}
				and therefore $\rho(g_l^{a})\rho(g_l^{b})=\rho(g_l^{(a+b)_{m_l}'})$. It is clear now that  
      \[
		\rho( g_1^{ (j_1+ k_1)'_{m_1}}\ldots  g_n^{ (j_n+k_n)'_{m_n}} )=\rho( g_1^{ j_1})\rho(g_1^{ k_1})\ldots \rho( g_n^{ j_n}  ) \rho( g_n^{ k_n}  )=
			\rho( g_1^{ j_1}\ldots  g_n^{ j_n}  )\rho(  g_1^{ k_1}\ldots  g_n^{ k_n}  ), 
			\]
 and we conclude that $\flat_{\mf g}\omega_{\ov{\ov{a}}}(g_1^{ j_1}\ldots  g_n^{ j_n},g_1^{ k_1}\ldots  g_n^{ k_n}  )=1$, for all 
		$g_1^{j_1}\cdots g_n^{j_n}, g_1^{k_1}\cdots g_n^{k_n}\in G$, as stated.
		
		It remains to show that \equref{condodcob2cocyclenonab} is equivalent to the divisibility in \equref{divis}. For this, we assume that 
		$n\geq 3$ (for $n=2$ there is nothing to prove).
		
		Clearly, \equref{condodcob2cocyclenonab} is equivalent to 
		\begin{equation}\eqlabel{maindivis}
		M|S:=-\sum\limits_{1\leq r<s<t\leq n}M^c_{rst}(k_rj_sf_t+f_rk_sj_t-k_rf_sj_t),~\forall~ 
		0\leq k_l, j_l\leq m_l-1,~1\leq l\leq n.
		\end{equation}
		We will rewrite $S$ so that (formally speaking) we can get as common factor the scalar $k_rj_s$, where $r, s$ are given; 
		$1\leq r<s\leq n$. Towards this end, we split 
		$S$ into a sum of three sums, $S_1:=-\sum_{1\leq r<s<t\leq n}M^c_{rst}k_rj_sf_t$, $S_2:=-\sum_{1\leq r<s<t\leq n}M^c_{rst}f_rk_sj_t$ and 
		$S_3:=\sum_{1\leq r<s<t\leq n}M^c_{rst}k_rf_sj_t$. For a given pair $(r, s)$ with $1\leq r<s\leq n$, the scalar $k_rj_s$ appears as a summand of 
		$S_1$ defined by $r<s<w\leq n$, as a summand of $S_2$ determined by $1\leq u<r<s$ and as a summand of $S_3$ given by $r<v<s$. 
		Hence, \equref{maindivis} is equivalent to the fact that, for all $0\leq k_l, j_l\leq m_l-1$, $1\leq l\leq n$, 
		\[
		M|\sum\limits_{1\leq r<s\leq n}k_rj_s\left(-\sum\limits_{u=1}^{r-1}M^c_{urs}f_u + 
		\sum\limits_{v=r+1}^{s-1}M^c_{rvs}f_v -\sum\limits_{w=s+1}^nM^c_{rsw}f_w \right).
		\]
		It is clear now that this fact happens if and only if the divisibility in \equref{divis} hold.  
\end{proof}

For $n\geq 3$, the divisibility in \equref{divis} reduce to a system 
of $\left(\begin{array}{c}n\\2\end{array}\right)$ linear congruences ${\rm mod}~M$, in $n$ unknowns 
$f_1,\cdots, f_n$. Equivalently, \equref{divis} reduce to a homogeneous linear system in $n+\left(\begin{array}{c}n\\2\end{array}\right)$ unknowns 
$f_1,\cdots, f_n, x_{rs}$, $1\leq r<s\leq n$; namely, 
\begin{equation}\eqlabel{linsyst}
XA=0,~\mbox{where}~X=(f_1,\cdots,f_n, x_{12},\cdots,x_{1n}, x_{23}, \cdots, x_{2n}, \cdots, x_{n-1n})~\mbox{and}
\end{equation} 
\[\footnotesize{
A=\left(
\begin{array}{ccccc}
A_{n-1}&A_{n-2}&\cdots&A_2&A_1\\
MI_{n-1}&0&\cdots&0&0\\
0&MI_{n-2}&\cdots&0&0\\
\vdots&\vdots&\vdots&\vdots&\vdots\\
0&0&\cdots&0&MI_1
\end{array} 
\right)~;~
A_{n-j}=\left(
\begin{array}{cccc}
M^c_{1jj+1}&M^c_{1jj+2}&\cdots&M^c_{1jn}\\
\vdots&\vdots&\vdots&\vdots\\
M^c_{j-1jj+1}&M^c_{j-1jj+2}&\cdots&M^c_{j-1jn}\\
0&0&\cdots&0\\
0&-M^c_{jj+1j+2}&\cdots&-M^c_{jj+1n}\\
M^c_{jj+1j+2}&0&\cdots&-M^c_{jj+2n}\\
M^c_{jj+1j+3}&M^c_{jj+2j+3}&\cdots&-M^c{jj+3n}\\
\vdots&\vdots&\vdots&\vdots\\
M^c_{jj+1n}&M^c_{jj+2n}&\cdots&0
\end{array}
\right)
}
\]
is an $n$ by $n-j$ matrix, for all $1\leq j\leq n-1$. The system \equref{linsyst} can be solved as follows. 

$A$ is an $n+{\mf n}$ by ${\mf n}$ matrix with integer coefficients, where, for simplicity, we have denoted $\left(\begin{array}{c}n\\2\end{array}\right)$ 
by ${\mf n}$. As the ring of 
integers $\mathbb{Z}$ is a $PID$, there exist unimodular matrices $U$, $V$ such that $UAV={\rm diag}(d_1,\cdots,d_r,0,\cdots,0)$; here $U$ is an 
$n+{\mf n}$-square matrix, $V$ is an ${\mf n}$-square matrix and $d_1,\cdots,d_r$ 
are non-zero integers such that $d_1|d_2|\cdots|d_r$, $1\leq r\leq {\mf n}$. Thus \equref{linsyst} 
is equivalent to $(XU^{-1})D=0$, where $D={\rm diag}(d_1,\cdots,d_r,0,\cdots,0)$. This says that $XU^{-1}=(0,\cdots,0,y_{r+1},\cdots,y_{{\mf n}})$ for some  
integers $y_{r+1},\cdots, y_{\mf n}$. 
Then $X=U(0,\cdots,0,y_{r+1},\cdots,y_{\mf n})$ is the general solution 
of \equref{linsyst}, solution determined by an arbitrary family of integers $y_{r+1},\cdots, y_{\mf n}$. Furthermore, we show that 
$r={\mf n}$ and that $U$ can be somehow obtained from the Schmidt normal form $D'=U'A'V'$ of $A':=(A_{n-1},\cdots, A_1)$, 
an $n$ by ${\mf n}$ matrix. 
 
Indeed, if, in general, $O_{m, p}$ stands for the null $m$ by $p$ matrix and $O_m=O_{m, m}$, we see that
\[
{\footnotesize
\left(\begin{array}{cc}
U'&O_{n, {\mf n}}\\
O_{{\mf n}, n}&V'^{-1}
\end{array}\right)
\left(\begin{array}{c}
A'\\
MI_{\mf n}
\end{array}\right)V'=\left(\begin{array}{c}
U'A'\\
MV'^{-1} \\
\end{array}\right)V'=\left(\begin{array}{c}
D'\\
MI_{\mf n}
\end{array}\right)\equal{{\rm not}}B.
}
\]  
Assume now that $D'={\rm diag}(d'_1,\cdots,d_{r'},0,\cdots, 0)$, for non-zero integers $d'_1,\cdots, d'_{r'}$ which satisfy $d'_1|\cdots|d'_{r'}$; $r'\leq n$ 
is the rank of $A'$. For each $1\leq j\leq r'$ consider integers $\a_j, \b_j$ such that $\a_j d'_j + \b_j M=(d'_j, M)$, and define $U''$ as being the matrix  
\[
\footnotesize{\left(\hspace{-2mm}\begin{array}{cccccccccccc}
\a_1&\hspace{-4mm}0&\hspace{-4mm}\cdots&\hspace{-4mm}0&\hspace{-4mm}0&\hspace{-4mm} &\hspace{-4mm}\b_1&\hspace{-4mm}0&\hspace{-4mm}\cdots&\hspace{-4mm}0
&\hspace{-4mm}0&\hspace{-4mm}\\
0&\hspace{-4mm}\a_2&\hspace{-4mm}\cdots&\hspace{-4mm}0&\hspace{-4mm}0&\hspace{-4mm}  &\hspace{-4mm}0&\hspace{-4mm}\b_2&\hspace{-4mm}\cdots&\hspace{-4mm}0
&\hspace{-4mm}0&\hspace{-4mm}\\
\vdots&\hspace{-4mm}\vdots&\hspace{-4mm}\cdots&\hspace{-4mm}\cdots&\hspace{-4mm}\vdots&\hspace{-4mm}O_{r', n-r'} 
&\hspace{-4mm}\vdots&\hspace{-4mm}\vdots&\hspace{-4mm}\cdots&\hspace{-4mm}\vdots&\hspace{-4mm}\vdots&\hspace{-4mm}O_{r', {\mf n}-r'}\\
0&\hspace{-4mm}0&\hspace{-4mm}\cdots&\hspace{-4mm}\a_{r'-1}&\hspace{-4mm}0&\hspace{-4mm}                          
&\hspace{-4mm}0&\hspace{-4mm}0&\hspace{-4mm}\cdots&\hspace{-4mm}\b_{r'-1}&\hspace{-4mm}0&\hspace{-4mm} \\
0&\hspace{-4mm}0&\hspace{-4mm}\cdots&\hspace{-4mm}0&\hspace{-4mm}\a_{r'}&\hspace{-4mm} &\hspace{-4mm}0&\hspace{-4mm}0&\hspace{-4mm}\cdots&\hspace{-4mm}0 
&\hspace{-4mm}\b_{r'}&\hspace{-4mm}\\
 &\hspace{-4mm} &\hspace{-4mm}O_{{\mf n}-r', r'}&\hspace{-4mm} &\hspace{-4mm} &\hspace{-4mm}O_{{\mf n}-r', n-r'}&\hspace{-4mm} 
&\hspace{-4mm} &\hspace{-4mm} O_{{\mf n}-r', r'}&\hspace{-4mm} &\hspace{-4mm} &\hspace{-4mm}I_{{\mf n}-r'}\\
-\frac{M}{(d'_1, M)}&\hspace{-4mm}0&\hspace{-4mm}\cdots&\hspace{-4mm}0&\hspace{-4mm}0&\hspace{-4mm} &\hspace{-4mm}\frac{d'_1}{(d'_1, M)}
&\hspace{-4mm}0&\hspace{-4mm}\cdots&\hspace{-4mm}0&\hspace{-4mm}0&\hspace{-4mm} \\
0&\hspace{-4mm}-\frac{M}{(d'_2, M)}&\hspace{-4mm}\cdots&\hspace{-4mm}0&\hspace{-4mm}0&\hspace{-4mm} &\hspace{-4mm}0
&\hspace{-4mm}\frac{d'_2}{(d'_2, M)}&\hspace{-4mm}\cdots&\hspace{-4mm}0&\hspace{-4mm}0&\hspace{-4mm} \\
\vdots&\hspace{-4mm}\vdots&\hspace{-4mm}\cdots&\hspace{-4mm}\vdots&\hspace{-4mm}\vdots&\hspace{-4mm} O_{r', n-r'}&\hspace{-4mm}\vdots
&\hspace{-4mm}\vdots&\hspace{-4mm}\cdots&\hspace{-4mm}\vdots&\hspace{-4mm}\vdots&\hspace{-4mm}O_{r', {\mf n}-r'}\\
0&\hspace{-4mm}0&\hspace{-4mm}\cdots&\hspace{-4mm}-\frac{M}{(d'_{r'-1}, M)}&\hspace{-4mm}0&\hspace{-4mm} &\hspace{-4mm}0&\hspace{-4mm}0
&\hspace{-4mm}\cdots &\hspace{-4mm}\frac{d'_{r'-1}}{(d'_{r'-1}, M)}&\hspace{-4mm}0&\hspace{-4mm} \\
0&\hspace{-4mm}0&\hspace{-4mm}\cdots&\hspace{-4mm}0&\hspace{-4mm}-\frac{M}{(d'_{r'}, M)}&\hspace{-4mm} &\hspace{-4mm}0&\hspace{-4mm}0&\hspace{-4mm}\cdots
&\hspace{-4mm}0&\hspace{-4mm}\frac{d'_{r'}}{(d'_{r'}, M)}&\hspace{-4mm} \\
&\hspace{-4mm} &\hspace{-4mm} O_{n-r', r'}&\hspace{-4mm} &\hspace{-4mm} &\hspace{-4mm} I_{n-r'}&\hspace{-4mm} &\hspace{-4mm} &\hspace{-4mm} O_{n-r', r'}
&\hspace{-4mm} &\hspace{-4mm} &\hspace{-4mm} O_{n-r', {\mf n}-r'}
\end{array}\hspace{-2mm}\right).
}
\]
The Laplace's cofactor expansion along the first two rows of $U''$ together with a mathematical induction on $r'$ ensures us that $U''$ is of determinant 
equal to $(-1)^{r'}$. Also, an elementary computation leads us to the equality 
\[
U''B={\rm diag}((d'_1, M),\cdots,(d'_{r'},M),M,\cdots,M)=D,
\]
which says that the invariant factors of $B$, and therefore of $A$, are $(d'_1, M)|\cdots|(d'_{r'},M)|M\cdots|M$, thus in number of ${\mf n}$. Finally, we can take 
$U=U''{\footnotesize
\left(\begin{array}{cc}
U'&O_{n, {\mf n}}\\
O_{{\mf n}, n}&V'^{-1}
\end{array}\right)}$ and $V=V'$ as unimodular matrices obeying $UAV=D$, from where we deduce that \equref{linsyst} has the solution 
$X=(0,\cdots, 0, y_1,\cdots, y_{n})U$ (we have renumbered the $y$'s). It follows now that 
\begin{equation}\eqlabel{sollinsyst}
(f_1,\cdots, f_n)=
X\left(\begin{array}{cc}
I_n&O_{n,\mf n}\\
O_{{\mf n},   n}& O_{{\mf n},{\mf n}}
\end{array} \right)
=
\left(-\frac{y_1M}{(d'_1,M)},\cdots, -\frac{y_{r'}M}{(d'_{r'},M)}, y_{r'+1},\cdots, y_{n}\right)U'
\end{equation} 
is the solution of \equref{divis}; $y_1,\cdots,y_{n}$ are arbitrary integers. 
 
The method presented above allows us to concretely determine the solution of \equref{linsyst} in the case where $n=3$. 
In general, for an $m$ by $p$ matrix $A$ with integer entries and $1\leq j\leq {\rm min}(m, p)$, $\Delta_j(A)$ is our notation for the GCD of all $j$ by $j$ minors of $A$; we make the convention that if 
all $j$ by $j$ minors of $A$ are equal to zero, then $\Delta_j(A)=0$. It is well known that, modulo a sign, the invariants factors $d_1,\cdots, d_r$ of $A$ 
are given by $d_j=\frac{\Delta_{j}(A)}{\Delta_{j-1}(A)}$, for all $1\leq j\leq r$, where $\Delta_0(A):=1$. 

\begin{example}\exlabel{caseneq3}
With notation as in \prref{flat coboundary}, specialized to the case $n=3$, $\flat_{\mf g} \omega_{\underline a}$ is coboundary if and only if 
$f_1=\mu M'$, $f_2=\nu M'$, $f_3=\l M'$, for some natural numbers $\l, \mu, \nu$; $M'=\frac{M}{(M,c_{123})}$.
\end{example} 
\begin{proof}
The matrix $A$ takes the form $\left(\begin{array}{c}A'\\MI_3\end{array}\right)$; here 
$A'=\left(\begin{array}{ccc}0&0&\upsilon\\0&-\upsilon&0\\ \upsilon&0&0\end{array}\right)$, 
where $\upsilon:=M^c_{123}=\frac{c_{123}M}{(m_1, m_2, m_3)}=c_{123}$. If ${\mf d}:=(\upsilon, M)$, one can see easily that $\Delta_1(A)={\mf d}$, 
$\Delta_2(A)=(\upsilon^2, M^2, \upsilon M)={\mf d}^2\left(\left(\frac{\upsilon}{\mf d}\right)^2, \left(\frac{M}{\mf d}\right)^2,\frac{\upsilon}{\mf d}\frac{M}{\mf d}\right)={\mf d}^2$ and, similarly, $\Delta_3(A)=(\upsilon^3, M^3, \upsilon^2M, \upsilon M^2)={\mf d}^3$. Thus, $d_1=d_2=d_3={\mf d}$. Also, 
one can see directly that for  
$u, w\in \mathbb{Z}$ such that $u\upsilon + w M={\mf d}$, the matrix 
\[
U=\left(\begin{array}{cccccc}
0&0&u&w&0&0\\
0&-u&0&0&w&0\\
u&0&0&0&0&w\\
0&0&-M'&\upsilon'&0&0\\
0&M'&0&0&\upsilon'&0\\
-M'&0&0&0&0&\upsilon'
\end{array}\right)~\mbox{satisfies}~
UA=\left(\begin{array}{ccc}
{\mf d}&0&0\\
0&{\mf d}&0\\
0&0&{\mf d}\\
0&0&0\\
0&0&0\\
0&0&0
\end{array}\right)
\]
and that $U$ is unimodular, provided that $M'=\frac{M}{\mf d}$ and $\upsilon'=\frac{\upsilon}{\mf d}$. Hence, for $n=3$, the system \equref{linsyst} has  
$\{(0, 0, 0, -\l, \nu -\mu)U|\mu, \nu, \l\in \mathbb{Z}\}=
\{(\mu M', \nu M', \l M', -\l\upsilon', \nu \upsilon', -\mu\upsilon')|\mu, \nu, \l\in \mathbb{Z}\}$ as space of solutions. We are done. 
\end{proof}

\begin{remark}
By the proof of \prref{flat coboundary}, $\flat_{\mf g}\omega_{\underline a}$ is coboundary if and only if \equref{condodcob2cocyclenonab} holds, 
in which case $\flat_{\mf g}\omega_{\underline a}=\flat_{\mf g}\omega_{\bar{a}}$. It is easy to see that \equref{condodcob2cocyclenonab} holds in the 
case when $(m_r, m_s, m_t)|c_{rst}(f_r, f_s, f_t)$, for all $1\leq r<s<t\leq n$. For $n=3$, \exref{caseneq3} tells us that 
$M=(m_1, m_2, m_3)\mid (M, c_{123})(f_1, f_2, f_3)$, which is equivalent to $(m_1, m_2, m_3)|c_{123}(f_1, f_2, f_3)$, 
is a necessary and sufficient condition for $\flat_{\mf g}\omega_{\underline a}$ to be coboundary. 

As a concrete example, take $m_1=m_2=m_3=4$. Then ${\mf g}\in G$ satisfying \equref{condodcob2cocyclenonab} is $\mf g=1_G$ if $c_{123}\in\{1,3\}$, and 
respectively $\mf g\in\{g_1^ig_2^jg_3^k\mid i,j,k\in\{0,2\}\}$ if $c_{123}=2$.
\end{remark}

\begin{remark}
The rank of $A'$ is not, in general, equal to $n$. For instance, we next see that for $n=4$ the rank of $A'$ is always less or equal to $3$.
For this, if $a_1, \cdots, a_4\in \mathbb{Z}$, denote by $a_{i_1\cdots i_k}$ the GCD of $a_{i_1},\cdots, a_{i_k}$, for all 
$1\leq k\leq 4$ and $\{i_1,\cdots,i_k\}\subseteq \{1,\cdots,4\}$.   

For $n=4$, $A=\left(\begin{array}{c}A'\\MI_6\end{array}\right)$, where $A'={\footnotesize \left(\begin{array}{cccccc}
0&0&0&-a_1&-a_2&-a_3\\
0&a_1&a_2&0&0&-a_4\\
-a_1&0&a_3&0&a_4&0\\
-a_2&-a_3&0&-a_4&0&0
\end{array}\right)}
$, provided that $a_1=-M^c_{123}$, $a_2=-M^c_{124}$, $a_3=-M^c_{134}$ and $a_4=-M^c_{234}$. We have that $\Delta_1(A')=a_{1234}$, 
\begin{eqnarray*}
\Delta_2(A')&=&(a_i^2, a_ia_j|1\leq i<j\leq 4)=(a_1a_{12}, a_1a_{34}, a_2a_{23}, a_2a_{14}, a_3a_{34}, a_3a_{12}, a_4a_{34}, a_4a_{12})\\
&=&(a_1a_{1234}, a_{2}a_{1234}, a_3a_{1234}, a_4a_{1234})=a_{1234}^2,\\
\Delta_3(A')&=&(a_ia_ja_l|1\leq i, j, l\leq 4)=a_{1234}(a_t^2, a_ia_j|1\leq t\leq 4, 1\leq i<j\leq 4)=a_{1234}^3,
\end{eqnarray*}
and $\Delta_4(A')=0$. Thus, $r'\leq 3$ and $r'=3$ if and only if at least one of the $a_j$'s is non-zero. 

If $U', V'$ are unimodular matrices obeying $U'A'V'={\footnotesize \left(\begin{array}{cc}
a_{1234}I_3&O_3\\
O_{1,3}&O_{1,3}
\end{array}\right)}$, it follows that the solution of \equref{linsyst} is parametrized by $y_1, y_2, y_3, y_4\in \mathbb{Z}$, in the sense that 
\[ 
(f_1, f_2, f_3, f_4)=\left(-\frac{y_1M}{(a_{1234}, M)}, -\frac{y_2M}{(a_{1234, M})}, -\frac{y_3M}{(a_{1234}, M)}, y_4\right)U'.
\]
\end{remark}

Let $n\geq 3$. For $G=C_{m_1}\times \ldots \times C_{m_n}$ a finite abelian group and $\omega_{\underline a}$ a $3$-cocycle on $G$ determined by 
$\underline a\in \cal A$ as in \equref{3cocyfabgr}, 
denote by $Y_{\un{y}}$ the $1$ by $n$ matrix defined by $\un{y}=(y_1,\cdots,y_n)\in \mathbb{Z}^n$ as follows:
\[
Y_{\un{y}}=\left(\frac{-y_1M}{(d'_1,M)},\cdots,
\frac{-y_{r'}M}{(d'_{r'},M)}, y_{r'+1},\cdots,y_n\right). 
\]
As before, $M=LCM\{(m_r, m_s, m_t)|1\leq r<s<t\leq n\}$ and $d'_1|\cdots|d'_{r'}$ are the invariant factors of the matrix $A'$ that is part of the definition 
of the matrix $A$ associated to the system \equref{linsyst}. So, if $U',V'$ are unimodular and such that $U'A'V'={\rm diag}(d'1,\cdots,d'_{r'},0,\cdots,0)$, 
then $Y_{\un{y}}U'$ is solution for \equref{linsyst} and any solution of \equref{linsyst} is of this type. 

Furthermore, denote by $C$ the symmetric matrix which has the main diagonal given by the elements $\frac{c_1}{m^2_1},\cdots,\frac{c_n}{m^2_n}$ 
and on the position $(s, t)$ the element $\frac{c_{st}}{2m_sm_t}$, for all $1\leq s<t\leq n$. 
Also, by $C'$ denote the matrix $U'C{}^tU'$; in general, 
${}^tZ$ stands for the transpose matrix of $Z$. Clearly, $C'$ is a symmetric matrix with rational entries.    

\begin{proposition}\prlabel{chara abel}
    Let $G=C_{1}\times \ldots \times C_{n}$ be a finite abelian group as in the above, and $\omega_{\underline a}$ a $3$-cocycle on $G$ defined 
		by $\underline a\in \cal A$. 
		Let $N={\rm LCM}\{m_j^2, m_sm_t|1\leq j\leq n, 1\leq s<t\leq n\}$. 
		
		(i) If there exists a non-trivial $2$-dimensional braided Hopf algebra in 
		${}_{k^G_{\omega_{\underline a}}}^{k^G_{\omega_{\underline a}}}{\cal YD}$ then $N$ is even; consequently, at least one of $m_1,\cdots,m_n$ is even.  
		
		(ii) Assuming $N$ even, there is a one to one correspondence between non-trivial $2$-dimensional braided Hopf algebras in 
		${}_{k^G_{\omega_{\underline a}}}^{k^G_{\omega_{\underline a}}}{\cal YD}$ and	
		$\un{y}=(y_1,\cdots,y_n)\in\mathbb{Z}^n$ for which there exists 
		$(\l_1,\cdots,\l_n)\in\mathbb{Z}^n$ such that, for $\Lambda':=\left(\frac{\l_1}{m_1},\cdots,\frac{\l_n}{m_n}\right)U'$, 
		\begin{equation}\eqlabel{toughcondy}
		\Lambda'{}~^tY_{\un{y}} + Y_{\un{y}}~C'~{}^tY_{\un{y}}-\frac{1}{2}\in\mathbb{Z}.
		\end{equation}			
\end{proposition}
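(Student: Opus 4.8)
The plan is to combine \prref{fun chara} with \exref{caseneq3} and its generalisation in order to translate ``$B$ is a non-trivial $2$-dimensional braided Hopf algebra in ${}_{k^G_{\omega_{\underline a}}}^{k^G_{\omega_{\underline a}}}{\cal YD}$'' into a statement about the pair $(\mfg,\rho)$, and then to compute, for a fixed $\mfg=g_1^{f_1}\cdots g_n^{f_n}$ with $\flat_\mfg\omega_{\underline a}$ a coboundary, exactly which maps $\rho\colon G\ra k^*$ satisfy $\rho(e)=1$, $\rho(\mfg)=-1$ and $\flat_\mfg\omega_{\underline a}=\partial\rho$. By \prref{flat coboundary} we already know that $\flat_\mfg\omega_{\underline a}$ is a coboundary iff the divisibilities \equref{divis} hold, that in that case $\flat_\mfg\omega_{\underline a}=\flat_\mfg\omega_{\bar a}$, and that the admissible vectors $(f_1,\dots,f_n)$ are exactly those of the form $Y_{\un y}U'$, $\un y\in\mathbb Z^n$; this gives the one-to-one correspondence between admissible central elements $\mfg$ and vectors $\un y\in\mathbb Z^n$. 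So the real content of the Proposition is: for such a $\mfg$, when does a $\rho$ with the two normalisations $\rho(e)=1$, $\rho(\mfg)=-1$ exist, and the answer \equref{toughcondy} should drop out of writing $\rho$ down explicitly.

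First I would make the cocycle $\flat_\mfg\omega_{\bar a}$ completely explicit. From the computation in the proof of \prref{flat coboundary}, $\flat_\mfg\omega_{\bar a}=\partial f_\mfg$ with
\[
f_\mfg(g_1^{j_1}\cdots g_n^{j_n})=\prod_{l=1}^n\xi_{m_l^2}^{c_lf_lj_l}\prod_{1\le s<t\le n}\xi_{m_tm_s}^{c_{st}f_tj_s},
\]
so $\partial\rho=\partial f_\mfg$ forces $\rho f_\mfg^{-1}$ to be a group homomorphism $G\ra k^*$, i.e.\ a character $\chi$ of $G$. Hence $\rho=\chi\,f_\mfg$ and the entire problem is to choose $\chi\in\widehat G$ with $\chi(\mfg)f_\mfg(\mfg)=-1$ (the condition $\rho(e)=1$ is automatic since $f_\mfg(e)=1$). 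Writing $\chi$ via exponents $(\l_1,\dots,\l_n)$ modulo the $m_j$ — $\chi(g_1^{a_1}\cdots g_n^{a_n})=\prod_l\xi_{m_l}^{\l_la_l}$ — and evaluating both $\chi$ and $f_\mfg$ at $\mfg=g_1^{f_1}\cdots g_n^{f_n}$, the equation $\rho(\mfg)=-1$ becomes a single congruence relating $\sum_l\l_lf_l/m_l$, the ``diagonal'' part $\sum_l c_lf_l^2/m_l^2$ and the ``off-diagonal'' part $\sum_{s<t}c_{st}f_sf_t/(m_sm_t)$ to $\tfrac12$ modulo $1$. The diagonal and off-diagonal parts together are precisely the quadratic form $f\mapsto f\,C\,{}^t\!f$ with the matrix $C$ introduced just before the statement, and the linear part is $\Lambda\,{}^t\!f$ with $\Lambda=(\l_1/m_1,\dots,\l_n/m_n)$; so the condition reads $\Lambda\,{}^t\!f+f\,C\,{}^t\!f-\tfrac12\in\mathbb Z$.

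Next I would substitute $f=(f_1,\dots,f_n)=Y_{\un y}U'$, using that by \prref{flat coboundary} every admissible $\mfg$ arises this way and the correspondence $\mfg\leftrightarrow\un y$ is a bijection. Then $f\,C\,{}^t\!f=Y_{\un y}(U'C\,{}^tU')\,{}^tY_{\un y}=Y_{\un y}C'\,{}^tY_{\un y}$ with $C'=U'C\,{}^tU'$, and $\Lambda\,{}^t\!f=\Lambda\,{}^tU'\,{}^tY_{\un y}=\Lambda'\,{}^tY_{\un y}$ with $\Lambda'=\Lambda\,{}^tU'=(\l_1/m_1,\dots,\l_n/m_n)U'$ (using $U'$ unimodular so that $\l_j$ ranging over $\mathbb Z$ is the same as ranging over all characters). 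This turns the congruence exactly into \equref{toughcondy}, giving part (ii): a non-trivial $2$-dimensional braided Hopf algebra in the category corresponds to a $\un y\in\mathbb Z^n$ for which some integer vector $(\l_1,\dots,\l_n)$ makes \equref{toughcondy} hold. For part (i), note that for such a $\rho$ to exist at all we need $\tfrac12$ to be reachable, i.e.\ the rational number $\Lambda'\,{}^tY_{\un y}+Y_{\un y}C'\,{}^tY_{\un y}$ must have half-integer value for some choice of data; since every entry of $C$ has denominator dividing $N={\rm LCM}\{m_j^2,m_sm_t\}$ and the $\l_j/m_j$ have denominators dividing the $m_j$ (hence dividing $N$), the quantity on the left lies in $\tfrac1N\mathbb Z$, so $\tfrac12\in\tfrac1N\mathbb Z$, forcing $N$ even and in particular at least one $m_j$ even. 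The main obstacle I anticipate is the bookkeeping in the second step: carefully checking that the exponent identities coming from $\partial\rho=\flat_\mfg\omega_{\bar a}$ reduce, after collecting the $\xi_{m_l^2}$ and $\xi_{m_tm_s}$ terms, precisely to the quadratic-plus-linear form encoded by $C$ and $\Lambda$ — in particular getting the factor $\tfrac1{2m_sm_t}$ (rather than $\tfrac1{m_sm_t}$) on the off-diagonal of $C$ right, which is exactly what produces the $-\tfrac12$ in \equref{toughcondy} and the parity obstruction in (i) — and verifying that the symmetrisation implicit in writing $f\,C\,{}^t\!f$ is compatible with the asymmetric product $\prod_{s<t}c_{st}f_sf_t$ appearing in $f_\mfg$.
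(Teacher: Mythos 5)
Your proposal is correct and follows essentially the same route as the paper: reduce via \prref{fun chara} to pairs $(\mathfrak{g},\rho)$, parametrize admissible $\mathfrak{g}=g_1^{f_1}\cdots g_n^{f_n}$ by $Y_{\un y}U'$ through \prref{flat coboundary}, write $\rho=\vartheta f_{\mathfrak g}$ with $\vartheta$ a character encoded by $(\l_1,\dots,\l_n)$, and extract \equref{toughcondy} (and the parity of $N$ for (i)) from $\rho(\mathfrak g)=-1$. The bookkeeping you flag at the end, including the factor $\tfrac{c_{st}}{2m_sm_t}$ symmetrizing the off-diagonal product, works out exactly as you anticipate.
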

\begin{proof}
Owing to \prref{fun chara}, the non-trivial $2$-dimensional Hopf algebras within 
${}_{k^G_{\omega_{\underline a}}}^{k^G_{\omega_{\underline a}}}{\cal YD}$ are characterized by pairs $({\mf g}, \rho)$ 
consisting of an element ${\mf g}=g_1^{f_1}\cdots g_n^{f_n}\in G$ and a map $\rho: G\ra k^*$ fulfilling 
$\rho(e)=1$, $\rho({\mf g})=-1$ and $\flat_{\mf g}{\omega_{\underline a}}=\partial\rho$. By keeping the notation established in the 
proof of \prref{flat coboundary}, such a pair $({\mf g}, \rho)$ is determined by a solution $(f_1,\cdots,f_n)$ of \equref{linsyst} and a group morphism 
$\vartheta: G\ra k^*$ obeying $\vartheta({\mf g})f_{\mf g}({\mf g})=-1$ (in general, if $\sigma$ is a coboundary $2$-cocycle and 
$\sigma=\partial \rho_1=\partial \rho_2$, for some maps $\rho_1, \rho_2: G\ra k^*$, 
then $\vartheta: G\ra k^*$ given by $\vartheta(g)=\rho_1(g)\rho_2(g)^{-1}$, for all $g\in G$, is a 
group morphism). But $\vartheta$ is given by an $n$-tuple $(\l_1,\cdots,\l_n)$ with $0\leq \l_j\leq m_j-1$, for all $1\leq j\leq n$, 
in the sense that $\vartheta(g_1^{a_1}\cdots g_n^{a_n})=\xi_{m_1}^{\l_1a_1}\cdots\xi_{m_n}^{\l_na_n}$, for all $0\leq a_j\leq m_j-1$; 
$1\leq j\leq n$.   

Take $\xi_N$ be a primitive root of unity of order $N$ in $k$ and assume, without loss of generality, that $\xi_{m_j}=\xi_N^{\frac{N}{m_j}}$, 
$\xi_{m_j^2}=\xi_N^{\frac{N}{m_j^2}}$ and $\xi_{m_sm_t}=\xi_N^{\frac{N}{m_sm_t}}$, for all $1\leq j\leq n$ and 
$1\leq s<t\leq n$. Then, 
\begin{equation}\eqlabel{varfg}
\vartheta(g_1^{a_1}\cdots g_n^{a_n})=\xi_{N}^{\sum\limits_{j=1}^n\frac{\l_ja_j}{m_j}N}~\mbox{and}~
f_{\mf g}(g_1^{a_1}\cdots g_n^{a_n})=\xi_N^{\sum\limits_{j=1}^n\frac{c_ja_jf_j}{m^2_j}N + \sum\limits_{1\leq s<t\leq n}\frac{c_{st}f_ta_s}{m_tm_s}N},
\end{equation} 
for all $0\leq a_j\leq m_j-1$, $1\leq j\leq n$. 
So, giving a non-trivial $2$-dimensional Hopf algebra within 
${}_{k^G_{\omega_{\underline a}}}^{k^G_{\omega_{\underline a}}}{\cal YD}$ is equivalent to giving a solution 
$(f_1,\cdots,f_n)$ of \equref{linsyst} such that, for some integers $\l_1,\cdots,\l_n$,  
\begin{equation}\eqlabel{toughcond}
\zeta_N^{\sum\limits_{j=1}^n\frac{\l_jf_j}{m_j}N + 
\sum\limits_{j=1}^n\frac{c_jf^2_j}{m^2_j}N + \sum\limits_{1\leq s<t\leq n}\frac{c_{st}f_sf_t}{m_sm_t}N}=-1.
\end{equation} 
Clearly, \equref{toughcond} holds if and only if $N$ is even and 
$\sum\limits_{j=1}^n\frac{\l_jf_j}{m_j} + 
\sum\limits_{j=1}^n\frac{c_jf^2_j}{m^2_j} + \sum\limits_{1\leq s<t\leq n}\frac{c_{st}f_sf_t}{m_sm_t}-\frac{1}{2}$ is an integer. 
As any solution $(f_1,\cdots,f_n)$ of \equref{linsyst} is of the form $Y_{\un{y}}U'$, for some $\un{y}\in \mathbb{Z}^n$, our proof follows. 
\end{proof}

We use the isomorphism provided by \equref{canisomgralg} in order to return to our initial context: $G$ is a finite abelian group 
and $k_{\Phi_{\un{a}}}[G]$ is the Hopf group algebra of $k$ and $G$ considered as quasi-Hopf algebra with reassociator 
\[
\Phi_{\un{a}}=\sum\limits_{{\mf i}, {\mf j}, {\mf l}\in \{\mf m\}}
\omega_{\underline a}(g_1^{i_1}\ldots  g_n^{i_n},g_1^{j_1}\ldots g_n^{j_n}, g_1^{l_1}\ldots g_n^{l_n})
1_{i_1,\cdots ,i_n}\ot 1_{j_1,\cdots ,j_n}\ot 1_{l_1,\cdots , l_n}.
\]  
Here, assuming $C=C_1\times\cdots\times C_n$ as in the above, we have denoted by 
${\mf i}$ an $n$-tuple $(i_1,\cdots, i_n)\in \mathbb{Z}^n$ with the property that $0\leq i_l\leq m_l-1$, for all $1\leq l\leq n$, and 
by $\{\mf m\}$ the set of all these $n$-tuples ${\mf i}$. 

\begin{corollary}
With notation as in \prref{chara abel}, if there exists a non-trivial $2$-dimensional Hopf algebra within 
${}_{k_{\Phi_{\un{a}}}[G]}^{k_{\Phi_{\un{a}}}[G]}{\cal YD}$ then $N$ is even. Furthermore, assuming $N$ even, a non-trivial 
Hopf algebra of dimension $2$ in ${}_{k_{\Phi_{\un{a}}}[G]}^{k_{\Phi_{\un{a}}}[G]}{\cal YD}$ is of the form $B_{\alp, v}$, 
with $\alp: k[G]\ra k$ an algebra 
morphism and $v\in k[G]$, both determined by a solution ${\mf f}=(f_1,\cdots,f_n)\in \mathbb{Z}^n$ of \equref{linsyst} and 
$(\l_1,\cdots,\l_n)\in \mathbb{Z}^n$ obeying \equref{toughcond} as follows:
\[\alp(g_1^{j_1}\cdots g_n^{j_n})=\prod\limits_{l=1}^n\xi_{m_l}^{f_lj_l},~\forall~g_1^{j_1}\cdots g_n^{j_n}\in G;~
v=\sum\limits_{{\mf l}\in \{\mf m\}}\zeta_N^{\left(\sum\limits_{j=1}^n\frac{\l_j l_j}{m_j}+\sum\limits_{j=1}^n\frac{c_jf_jl_j}{m_j^2} + 
\sum\limits_{1\leq s<t\leq n}\frac{c_{st}l_sf_t}{m_sm_t}\right)N}1_{l_1,\cdots ,l_n}.
\]
\end{corollary}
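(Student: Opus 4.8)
The plan is to transport the classification obtained in \prref{fun chara} and \prref{chara abel} from $k^G_{\omega_{\un{a}}}$ to $k_{\Phi_{\un{a}}}[G]$ along the canonical Hopf algebra isomorphism $k^G\simeq k[G]$ recorded in \equref{canisomgralg}, which sends $P_{g_1^{j_1}\ldots g_n^{j_n}}$ to $1_{j_1,\ldots,j_n}$. First I would observe that, directly from the definition of $\Phi_{\un{a}}$ (compare with $\Phi_{\omega_{\un{a}}}$ of \equref{reassC6}), this isomorphism carries $\Phi_{\omega_{\un{a}}}$ onto $\Phi_{\un{a}}$ and the triple $(S,\a,\b)$ of $k^G_{\omega_{\un{a}}}$ onto the corresponding triple of $k_{\Phi_{\un{a}}}[G]$; hence it is an isomorphism of quasi-Hopf algebras $k^G_{\omega_{\un{a}}}\simeq k_{\Phi_{\un{a}}}[G]$. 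Pulling back module and comodule structures along it induces a braided monoidal equivalence ${}_{k^G_{\omega_{\un{a}}}}^{k^G_{\omega_{\un{a}}}}{\cal YD}\simeq {}_{k_{\Phi_{\un{a}}}[G]}^{k_{\Phi_{\un{a}}}[G]}{\cal YD}$ preserving dimensions of objects and, by \thref{2dimbraidedHA}, the distinction between braided Hopf algebras of type $B_{\alp,v}$ (as in \deref{Balpv}) and the group Hopf algebra $k[C_2]$ with its trivial structure. Thus the first claim, that $N$ is even as soon as a non-trivial $2$-dimensional braided Hopf algebra exists, is precisely \prref{chara abel}$(i)$ viewed through this equivalence, and the bijective parametrization of \prref{chara abel}$(ii)$ transports likewise; it remains only to make the structure maps explicit on the group-algebra side.

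To that end I would unwind what $B_{\alp,v}$ is on the $k^G$-side. By \prref{fun chara} and \deref{Balpv}, a non-trivial $2$-dimensional braided Hopf algebra in ${}_{k^G_{\omega_{\un{a}}}}^{k^G_{\omega_{\un{a}}}}{\cal YD}$ is $B_{\sigma,v}$ with $\sigma:k^G\ra k$ the algebra map $\sigma(\varphi)=\varphi(\mfg)$ attached to $\mfg=g_1^{f_1}\ldots g_n^{f_n}$ and $v=\sum_{x\in G}\rho(x)P_x$; and, as extracted in the proof of \prref{chara abel}, $(f_1,\ldots,f_n)$ is a solution of \equref{linsyst} while $\rho=\vartheta\cdot f_{\mfg}$, where $\vartheta$ is the character determined by an $n$-tuple $(\l_1,\ldots,\l_n)$ satisfying \equref{toughcond} and $f_{\mfg}$ is the map introduced in the proof of \prref{flat coboundary}. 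Pushing $\sigma$ and $v$ through the isomorphism above produces the algebra morphism $\alp:k[G]\ra k$ and the element $v\in k[G]$ asserted in the statement, so it suffices to compute their images.

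The remaining work — and the only delicate part — is the bookkeeping with the idempotents $1_{j_1,\ldots,j_n}$ and the several roots of unity involved. Since the $1_{j_1,\ldots,j_n}$ are orthogonal idempotents of $k[G]$ summing to $1$ with $g_l\cdot 1_{j_1,\ldots,j_n}=\xi_{m_l}^{j_l}1_{j_1,\ldots,j_n}$, one has $g_1^{j_1}\ldots g_n^{j_n}=\sum_{i_1,\ldots,i_n}\prod_{l=1}^n\xi_{m_l}^{j_li_l}\,1_{i_1,\ldots,i_n}$; and since $\alp(1_{i_1,\ldots,i_n})=\sigma(P_{g_1^{i_1}\ldots g_n^{i_n}})=P_{g_1^{i_1}\ldots g_n^{i_n}}(\mfg)$ is $1$ when $(i_1,\ldots,i_n)=(f_1,\ldots,f_n)$ and $0$ otherwise, applying $\alp$ gives $\alp(g_1^{j_1}\ldots g_n^{j_n})=\prod_{l=1}^n\xi_{m_l}^{f_lj_l}$. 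In the same manner $v$ becomes $\sum_{l_1,\ldots,l_n}\rho(g_1^{l_1}\ldots g_n^{l_n})1_{l_1,\ldots,l_n}$, and substituting $\rho=\vartheta\cdot f_{\mfg}$ together with the expressions for $\vartheta$ and $f_{\mfg}$ recorded in \equref{varfg} collapses the two exponents into the single power of $\zeta_N$ displayed in the statement. Here one must respect the normalizations fixed in the proof of \prref{chara abel} — $\xi_{m_j}=\xi_N^{\frac{N}{m_j}}$, $\xi_{m_j^2}=\xi_N^{\frac{N}{m_j^2}}$, $\xi_{m_sm_t}=\xi_N^{\frac{N}{m_sm_t}}$ with $\xi_N=\zeta_N$ a fixed primitive $N$th root of unity — so that the a priori incompatible roots occurring in $f_{\mfg}$ and in $\vartheta$ merge consistently; once this is arranged the displayed formulas for $\alp$ and $v$ follow, which finishes the proof.
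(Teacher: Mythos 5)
Your proposal is correct and follows essentially the same route as the paper: transport the data of \prref{chara abel} through the identification $P_{g_1^{j_1}\ldots g_n^{j_n}}\mapsto 1_{j_1,\ldots,j_n}$, then compute $\alp$ and $v$ explicitly using $\alp(1_{i_1,\ldots,i_n})=\delta_{{\mf i},{\mf f}}$, the expansion of $g_1^{j_1}\cdots g_n^{j_n}$ in the idempotent basis, and the formulas for $\vartheta$ and $f_{\mf g}$ from \equref{varfg}. The only difference is that you spell out the quasi-Hopf isomorphism and the induced equivalence of Yetter--Drinfeld categories, which the paper takes as understood.
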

\begin{proof}
It follows from \prref{chara abel} and the quasi-Hopf algebra identification $k^G\cong k[G]$ produced by the correspondence 
$P_{g_1^{j_1}\ldots g_n^{j_n}}\mapsto 1_{j_1,\ldots , j_n}$. Keeping in mind the proof of \leref{1d fun yd}, for ${\mf f}$ and 
$(\l_1,\cdots, \l_n)$ as in the statement, we deduce that ($\vartheta$ and $f_{\mf g}$ are as in \equref{varfg})  
\begin{eqnarray*}
&&\alp(g_1^{j_1}\cdots g_n^{j_n})=\prod\limits_{l=1}^n\left(\sum\limits_{t=0}^{m_l-1}\xi_{m_l}^{tj_l}
\widetilde{P}_{g_l^t}(g_1^{f_1}\cdots g_n^{f_n})\right)=\prod\limits_{l=1}^n\xi_{m_l}^{f_lj_l},~\mbox{and}\\
&&v=\sum\limits_{{\mf l}\in \{\mf m\}}(\vartheta f_{\mf g})(g_1^{l_1}\cdots g_n^{l_n})1_{l_1,\cdots, l_n}=
\sum\limits_{{\mf l}\in \{\mf m\}}\zeta_N^{\left(\sum\limits_{j=1}^n\frac{\l_j l_j}{m_j}+\sum\limits_{j=1}^n\frac{c_jf_jl_j}{m_j^2} + 
\sum\limits_{1\leq s<t\leq n}\frac{c_{st}l_sf_t}{m_sm_t}\right)N}1_{l_1,\cdots ,l_n},
\end{eqnarray*}	
as stated. Note that $\sigma(v)=-1$ is equivalent to \equref{toughcond}, since $\alp(1_{j_1,\cdots,j_n})=\delta_{{\mf j}, {\mf f}}$. 
\end{proof}
 
\begin{examples}
Take $n=3$, $G=C_{m_1}\times C_{m_2}\times C_{m_3}$ and $\omega_{\un{a}}$ a $3$-cocycle on $G$. Recall that 
$M=(m_1, m_2, m_3)$, $M'=\frac{M}{(M, c_{123})}$ and that any solution of \equref{linsyst} is of the form 
$(f_1, f_2, f_3)=(\mu_1M', \mu_2M', \mu_3M')$, with $\mu_1, \mu_2,\mu_3\in \mathbb{Z}$. It is immediate that $M'|M|m_j$, for all 
$1\leq j\leq 3$. Set $m'_j:=\frac{m_j}{M'}$, for all $1\leq j\leq 3$. 
    
In this particular case, \equref{toughcond} becomes $\sum_{j=1}^3\frac{\l_j\mu_j}{m'_j} +\sum_{j=1}^3\frac{c_j\mu_j^2}{m'^{2}_j} + 
\sum_{1\leq r<s\leq 3}\frac{c_{rs}\mu_r\mu_s}{m'_rm'_s}-\frac{1}{2}\in \mathbb{Z}$. For this to happen, at least one of 
the integers $m'_1$, $m'_2$, $m'_3$ must be even. This is why, we assume that $J_{e}:=\{1\leq j\leq 3| m'_j\in 2\mathbb{Z}\}$ is 
a non-empty set. Furthermore, we set $\mu_j=0$, for all $j\in \{1, 2, 3\}\backslash J_e$, and $\mu_j=\frac{m'_j}{2}$, for all 
$j\in J_e$. Then, if ${\mf c}:=\sum_{j\in J_e}c_j + \sum_{r, s\in J_e; r<s}c_{rs}$, \equref{toughcond} becomes 
$\sum_{j\in J_e}\frac{\l_j}{2} +\frac{\mf c}{4}-\frac{1}{2}\in \mathbb{Z}$. The latest condition is fulfilled if and only if 
${\mf c}$ is even and $\sum_{j\in J_e}{\l_j} +\frac{\mf c}{2}$ is an odd number.  
Thus, if $J_e$ is non-empty and ${\mf c}$ is even, there exists at least one non-trivial $2$-dimensional Hopf algebra within 
${}_{k_{\Phi_{\un{a}}}[G]}^{k_{\Phi_{\un{a}}}[G]}{\cal YD}$. 

The condition ${\mf c}$ even is just sufficient. For instance, take $m_1=12$, $m_2=18$, $m_3=30$ and $c_{123}=4$, so that $M=6$, 
$M'=3$, $m'_1=4$, $m'_2=6$, $m'_3=10$ and $J_e=\{1, 2, 3\}$. Then, for $c_1=3$, $c_2=7$, $c_3=14$, $c_{12}=1$, $c_{13}=3$ and 
$c_{23}=5$ we compute ${\mf c}=33$, an odd number. Furthermore, for $(f_1, f_2, f_3)=(3\mu_1, 3\mu_2, 3\mu_3)$ with $0\leq \mu_1\leq 3$, 
$0\leq \mu_2\leq 5$ and $0\leq \mu_3\leq 9$, a solution of \equref{linsyst} in this particular case, the condition 
\equref{toughcond} becomes 
\[
\frac{3}{4}\l_1\mu_1 + \frac{1}{2}\l_2\mu_2 + \frac{3}{10}\l_3\mu_3 + \frac{27}{16}\mu_1^2 + \frac{7}{4}\mu_2^2 + \frac{63}{50}\mu_3^2 
+ \frac{3}{8}\mu_1\mu_2+\frac{27}{40}\mu_1\mu_3 +\frac{3}{4}\mu_2\mu_3 -\frac{1}{2}\in \mathbb{Z}.
\]
For $\mu_2=2$ and $\mu_3=5$, it reduces to 
$\frac{3}{4}\l_1\mu_1 + \frac{3}{2}\l_3 + \frac{27}{16}\mu_1^2+ \frac{33}{8}\mu_1 -\frac{1}{2}\in \mathbb{Z}$, 
and is satisfied in each of the cases: (1) $\mu_1=0$, $0\leq \l_1\leq 11$, $0\leq \l_2\leq 17$ and $\l_3\in \{1, 3,\cdots, 29\}$; 
(2) $\mu_1=2$, $0\leq \l_2\leq 17$, and $0\leq \l_1\leq 11$ and $0\leq \l_3\leq 29$ such that $\l_1+\l_3$ is odd. We conclude that 
this case provides plenty of non-trivial $2$-dimensional braided Hopf algebras, although ${\mf c}$ is odd. 

We end the list of examples by pointing out that it might happen to have no non-trivial $2$-dimensional braided Hopf algebras; for instance, 
when $n=3$ and $m'_j$ is odd, for all $1\leq j\leq 3$. 
\end{examples}

Next, we deal with the case $n=2$. We expected things to be simpler here but, as we will see, this is not the case. So, assume 
$G=C_1\times C_2$ with $C_j$ generated by $g_j$, an element of order $m_j$; $1\leq j\leq 2$. As before, set $M=(m_1, m_2)$, the ${\rm GCD}$ 
of $m_1$, $m_2$, and let $\omega_{\un{a}}$ be the $3$-cocycle on $G$ defined by $\un{a}=(c_1,c_2,c_{12})$, $0\leq c_j\leq m_j-1$ and 
$0\leq c_{12}\leq M$, as follows: 
\[
\omega_{\underline a}(g_1^{i_1}g_2^{i_2}, g_1^{j_1}g_2^{j_2}, g_1^{k_1}g_2^{k_2}) 
		=\xi_{m_2}^{c_{12}i_2\lfloor   \frac{j_1+k_1}{m_1}\rfloor} \prod_{l=1}^2 \xi_{m_l}^{c_li_l\lfloor   \frac{j_l+k_l}{m_l}\rfloor}.
\]
As we have seen in the proof of \prref{flat coboundary}, $\flat_{\mf g}{\omega_{\underline a}}=\partial f_{\mf g}$ is a coboundary $2$-cocycle, for all 
${\mf g}=g_1^{f_1}g_2^{f_2}\in G$; here $f_{\mf g}: G\ra k^*$ is given by 
$f_{\mf g}( g_1^{ j_1}g_2^{ j_2}):= {\xi_{m_1m_2}^{c_{12}f_2j_1}} \prod_{l=1}^2\xi_{m_l^2}^{c_lf_lj_l}$, for all $g_1^{j_1}g_2^{j_2}\in G$. 
 
If $k_{\Phi_{\un{a}}}[G]$ is the  quasi-Hopf algebra with reassociator 
\[
\Phi_{\un{a}}=\sum_{{\mf i}, {\mf j}, {\mf l}\in \{\mf m\}}
\omega_{\underline a}(g_1^{i_1}g_2^{i_2}, g_1^{j_1}g_2^{j_2}, g_1^{k_1}g_2^{k_2})1_{i_1,i_2}\ot 1_{j_1, j_2}\ot 1_{l_1, l_2},
\] 
built on the Hopf group algebra $k[G]$, by using the same arguments as in the proof of \prref{chara abel} we get the following result.

\begin{corollary}\colabel{nontrvialneq2}
Let $N={\rm LCM}(m_1^2, m_2^2, m_1m_2)$. If there exists a non-trivial $2$-dimensional Hopf algebra within 
${}_{k_{\Phi_{\un{a}}}[G]}^{k_{\Phi_{\un{a}}}[G]}{\cal YD}$ then $N$ is even. 

Assuming $N$ even, a non-trivial Hopf algebra of dimension $2$ in ${}_{k_{\Phi_{\un{a}}}[G]}^{k_{\Phi_{\un{a}}}[G]}{\cal YD}$ is of the form $B_{\alp, v}$, 
with $\alp: k[G]\ra k$ an algebra morphism and $v\in k[G]$, determined by ${\mf f}=(f_1, f_2), {\mf \l}=(\l_1, \l_2)\in \{\mf m\}$ satisfying  
\begin{equation}\eqlabel{tough2}
\frac{\l_1f_1}{m_1} + \frac{\l_2f_2}{m_2} + c_1\frac{f_1^2}{m_1^2} + c_2\frac{f_2^2}{m_2^2} + c_{12}\frac{f_1f_2}{m_1m_2}-\frac{1}{2}\in \mathbb{Z},
\end{equation}
as follows: $\alp(g_1^{j_1}g_2^{j_2})=\xi_N^{\left(\frac{f_1j_1}{m_1} + \frac{f_2j_2}{m_2}\right)N}$, for all $g_1^{j_1}g_2^{j_2}\in G$, 
and 
\[
v=\sum\limits_{{\mf l}\in \{\mf m\}}\zeta_N^{\left(\sum\limits_{j=1}^2\frac{\l_j l_j}{m_j}+\sum\limits_{j=1}^n\frac{c_jf_jl_j}{m_j^2} + 
\frac{c_{12}l_1f_2}{m_1m_2}\right)N}1_{l_1, l_2}.
\]
\end{corollary}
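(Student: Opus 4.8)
The plan is to specialize, step for step, the proof of \prref{chara abel}; the simplification in the case $n=2$ is that the divisibility obstruction \equref{divis} to $\flat_{\mf g}\omega_{\underline a}$ being a coboundary simply never arises. First I would invoke \prref{fun chara} to identify a non-trivial $2$-dimensional braided Hopf algebra in ${}_{k^G_{\omega_{\underline a}}}^{k^G_{\omega_{\underline a}}}{\cal YD}$ with a couple $({\mf g},\rho)$, where ${\mf g}=g_1^{f_1}g_2^{f_2}$ (automatically central), $\rho\colon G\ra k^*$, $\rho(e)=1$, $\rho({\mf g})=-1$ and $\flat_{\mf g}\omega_{\underline a}=\partial\rho$. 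Next I would recall, as observed just before the statement, that the proof of \prref{flat coboundary} exhibits the explicit primitive $f_{\mf g}(g_1^{j_1}g_2^{j_2})=\xi_{m_1m_2}^{c_{12}f_2j_1}\prod_{l=1}^2\xi_{m_l^2}^{c_lf_lj_l}$ with $\flat_{\mf g}\omega_{\underline a}=\partial f_{\mf g}$ for \emph{every} ${\mf g}\in G$. Hence $\rho$ is forced to be of the form $\rho=\vartheta f_{\mf g}$ for a unique group morphism $\vartheta\colon G\ra k^*$ (the same ``$\partial\rho_1=\partial\rho_2$ yields a character'' remark used inside the proof of \prref{chara abel}), and $\rho(e)=1$ becomes automatic; writing $\vartheta(g_1^{a_1}g_2^{a_2})=\xi_{m_1}^{\l_1a_1}\xi_{m_2}^{\l_2a_2}$ with $(\l_1,\l_2)\in\{\mf m\}$, the data of a couple $({\mf g},\rho)$ is precisely a pair ${\mf f}=(f_1,f_2)$, ${\mf\l}=(\l_1,\l_2)$ in $\{\mf m\}$ together with the single equation $\vartheta({\mf g})f_{\mf g}({\mf g})=-1$.

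The second step is to unwind that equation. I would fix a primitive $N$th root of unity $\xi_N$ and normalize $\xi_{m_j}=\xi_N^{N/m_j}$, $\xi_{m_j^2}=\xi_N^{N/m_j^2}$, $\xi_{m_1m_2}=\xi_N^{N/(m_1m_2)}$, which is legitimate since $m_j^2\mid N$ and $m_1m_2\mid N$ by the very definition of $N$. Substituting the formula for $f_{\mf g}$ rewrites $\vartheta({\mf g})f_{\mf g}({\mf g})$ as $\xi_N$ raised to the \emph{integer} exponent $\bigl(\frac{\l_1f_1}{m_1}+\frac{\l_2f_2}{m_2}+c_1\frac{f_1^2}{m_1^2}+c_2\frac{f_2^2}{m_2^2}+c_{12}\frac{f_1f_2}{m_1m_2}\bigr)N$. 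Since $\xi_N^e=-1$ holds exactly when $\tfrac{e}{N}-\tfrac12\in\mathbb{Z}$ (which in particular forces $N$ even), this at once gives the necessity of ``$N$ even'' and identifies the admissible couples with the pairs satisfying \equref{tough2}.

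The last step is a translation along the Hopf algebra isomorphism $k^G\cong k[G]$, $P_{g_1^{j_1}g_2^{j_2}}\mapsto 1_{j_1,j_2}$, carried out exactly as in the proof of the corollary immediately preceding this one. Using \leref{1d fun yd} (the algebra map attached to ${\mf g}$ is $\sigma(\varphi)=\varphi({\mf g})$) together with the explicit inverse isomorphism, $g_1^{j_1}g_2^{j_2}\mapsto\prod_{l=1}^2\bigl(\sum_t\xi_{m_l}^{tj_l}\widetilde{P}_{g_l^t}\bigr)$ and $\widetilde{P}_{g_l^t}({\mf g})=\delta_{t,f_l}$, I obtain $\alp(g_1^{j_1}g_2^{j_2})=\xi_N^{(\frac{f_1j_1}{m_1}+\frac{f_2j_2}{m_2})N}$, and from $v=\sum_{{\mf l}}\rho(g_1^{l_1}g_2^{l_2})\,1_{l_1,l_2}=\sum_{{\mf l}}(\vartheta f_{\mf g})(g_1^{l_1}g_2^{l_2})\,1_{l_1,l_2}$ the stated closed form for $v$. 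That the resulting object is the braided Hopf algebra $B_{\alp,v}$ of \deref{Balpv} follows from \thref{2dimbraidedHA}, while the consistency $\sigma(v)=-1$ $\Leftrightarrow$ \equref{tough2} is immediate from $\alp(1_{j_1,j_2})=\delta_{{\mf j},{\mf f}}$.

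There is essentially no conceptual obstacle: the entire content is a specialization of \prref{chara abel}. The one place demanding care is the bookkeeping of the four families of roots of unity $\xi_{m_j},\xi_{m_j^2},\xi_{m_1m_2},\xi_N$ — I would double-check, signs and the factors $N/m_j$, $N/m_j^2$, $N/(m_1m_2)$ included, that after the substitutions the exponent occurring in $\vartheta({\mf g})f_{\mf g}({\mf g})=-1$ is exactly the left-hand side of \equref{tough2}, and likewise that the exponent in the formula for $v$ comes out as stated.
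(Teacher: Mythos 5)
Your proposal is correct and follows essentially the same route as the paper, which itself proves this corollary by the remark ``by using the same arguments as in the proof of \prref{chara abel}'' specialized to $n=2$ (where \equref{divis} is vacuous), combined with the translation along $P_{g_1^{j_1}g_2^{j_2}}\mapsto 1_{j_1,j_2}$ used in the corollary immediately preceding. The decomposition $\rho=\vartheta f_{\mf g}$, the normalization of the roots of unity via $N$, and the reduction of $\vartheta({\mf g})f_{\mf g}({\mf g})=-1$ to \equref{tough2} are exactly the paper's steps.
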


The classification provided by \coref{nontrvialneq2} entails to condition \equref{tough2}. We will see in \seref{Appendix} that \equref{tough2} reduces to 
the finding of the rational points of a conic section, an arithmetic problem with a rather laborious solution.  
However, there are situations when \equref{tough2} can be solved on the spot, by using elementary methods. Such a situation occurs for instance 
when $m_1=m_2=2$, so in the case when $G$ is the Klein four group. As before, denote by $g_1, g_2$ the generators of $G$ and by 
$\sigma_{f_1f_2}: G\ra k$ the algebra morphism determined by $\sigma_{f_1f_2}(g_1^{j_1}g_2^{j_2})=(-1)^{f_1j_1+f_2j_2}$, for all $g_1^{j_1}g_2^{j_2}\in G$; 
here $(f_1, f_2)$ is a fixed pair of integers $f_1, f_2\in\{0, 1\}$. We also denote by ${\rm i}$ a primitive root of unity of order $4$ in $k$.
 
\begin{example}\exlabel{Klein}
For $G$ the Klein four group, the $2$-dimensional non-trivial Hopf algebras in ${}_{k_{\Phi_{\un{a}}}[G]}^{k_{\Phi_{\un{a}}}[G]}{\cal YD}$ 
are of the form $B_{\alp, v}$, with $\un{a}\in {\cal A}$, $\alp: k[G]\ra k$ and $v\in k[G]$ given by the table below,
\[
\begin{array}{|c||c|c|c||c|c|c|}
\hline
\un{a}=(c_1, c_2, c_3)&\alp_{f_1f_2}&(\l_1,\l_2)&v&\alp_{f_1f_2}&(\l_1, \l_2)&v\\
\hline
       &\alp_{01}&(0,1)&g_2&\alp_{10}&(1,0)&g_1\\
\cline{3-4}\cline{6-7}	
(0,0,0)&         &(1,1)&g_1g_2&      &(1,1)&g_1g_2\\
\cline{2-7}
       &\alp_{11}&(0,1)&g_2&\alp_{11}&(1,0)&g_1\\
\hline
(0,0,1)&\alp_{01}&(0,1)&{\mf g}^+:=\frac{1+{\mf i}}{2}g_2+\frac{1-{\mf i}}{2}g_1g_2&\alp_{10}&(1,0)&g_1\\
\cline{3-4}\cline{6-7}
       &         &(1,1)&{\mf g}^-:=\frac{1-{\mf i}}{2}g_2+\frac{1+{\mf i}}{2}g_1g_2&         &(1,1)&g_1g_2\\
\hline
(0, 1, 1)&\alp_{11}&(0,0)&{\mf h}^+:=\frac{1}{2}(g_1+g_2)+\frac{{\mf i}}{2}(1-g_1g_2)&\alp_{10}&(1,0)&g_1\\
\cline{3-4}\cline{6-7}
       &           &(1,1)&{\mf h}^-:=\frac{1}{2}(g_1+g_2)-\frac{{\mf i}}{2}(1-g_1g_2)& &(1, 1)&g_1g_2\\
\hline
(1,0,0)&\alp_{01}&(0,1)&g_2&\alp_{01}&(1,1)&g_1g_2\\
\hline
(1,0,1)&\alp_{01}&(0,1)&{\mf g}^+&\alp_{11}&(0,0)&g_1\\
\cline{3-4}\cline{6-7}
       &         &(1,1)&{\mf g}^-&         &(1,1)&g_2\\
\hline
(0,1,0)&\alp_{10}&(1,0)&g_1&\alp_{10}&(1,1)&g_1g_2\\
\hline
(1,1,0)&\alp_{11}&(0,0)&{\mf h}^+&\alp_{11}&(1,1)&{\mf h}^-\\
\hline
\end{array}~.
\]
\end{example}
\begin{proof}
The $2$-dimensional non-trivial Hopf algebras in ${}_{k_{\Phi_{\un{a}}}[G]}^{k_{\Phi_{\un{a}}}[G]}{\cal YD}$ 
are parametrized by $f_1, f_2, \l_1, \l_2\in \{0, 1\}$ such that 
$
4|2(\l_1f_1+\l_2f_2)+c_1f_1^2+c_2f_2^2+c_{12}f_1f_2-2
$. Equivalently, $2|c_1f_1^2+c_2f_2^2+c_{12}f_1f_2$ and $\l_1f_1+\l_2f_2 + \frac{c_1f_1^2+c_2f_2^2+c_{12}f_1f_2}{2}$ is an odd integer. 
But $c_1f_1^2, c_2f_2^2, c_{12}f_1f_2\in \{0, 1\}$, so either all are equal to zero or two of them are equal to one and the third one is equal to zero; 
in the former case $\l_1f_1+\l_2f_2$ is odd (this means that $\l_1f_1=1$ and $\l_2f_2=0$ or vice verse), while in the latter case 
$\l_1f_1+\l_2f_2$ is even (and so $\l_1f_1=\l_2f_2\in\{0, 1\}$).  

When $c_1f_1^2=c_2f_2^2=c_{12}f_1f_2=0$ and $\l_1f_1+\l_2f_2$ is odd, we cannot have $f_1, f_2$ (resp. $\l_1, \l_2$) simultaneously equal to zero. 
By considering the cases $(f_1, f_2)$ equal to $(0, 1)$, $(1, 0)$ and respectively $(1, 1)$, we obtain the solutions exposed in the table for  
$\un{a}\in \{(0, 0, 0), (0, 0, 1), (1, 0, 0), (0, 1, 0)\}$, as well as those mentioned for $\un{a}=(1, 0, 1)$ and $\un{a}=(0, 1, 1)$ in 
the case when $\alp_{f_1f_2}$ is $\alp_{01}$ and $\alp_{10}$, respectively. In a similar manner we treat the cases: 
(i) $c_1f_1^2=0$, $c_2f_2^2=c_{12}f_1f_2=1$ and $\l_1f_1+\l_2f_2$ is even; (ii) $c_1f_1^2=c_2f_2^2=1$, $c_{12}f_1f_2=0$ and $\l_1f_1+\l_2f_2$ is even; 
(iii) $c_1f_1^2=c_{12}f_1f_2=1$, $c_2f_2^2=0$ and $\l_1f_1+\l_2f_2$ is even. They produce the remaining six parameterizations in the table, we leave to the 
reader to verify this fact. Note only that $v$ can be computed in each case by using the formulas
\[
1_{0,0}=\frac{1}{4}(1+g_1)(1+g_2);~1_{1,0}=\frac{1}{4}(1-g_1)(1+g_2);~1_{0,1}=\frac{1}{4}(1+g_1)(1-g_2);~1_{1,1}=\frac{1}{4}(1-g_1)(1-g_2).
\]  
So our proof ends.
\end{proof}

It remains to study the case when $n=1$, so that $G$ is a cyclic group of order $m\geq 2$; by $g$ we denote the generator of $G$. 
Towards this end, we fix a primitive root of unity ${\mf q}$ of order $m^2$ in $k$ and take $q={\mf q}^m$, a primitive root of unity of order $m$ in $k$. Then, 
for $0\leq c\leq m-1$, $k_{\Phi_c}[G]$ is the Hopf group algebra $k[G]$, considered as a quasi-Hopf algebra via the reassociator  
\begin{equation}
    \Phi_c=\sum_{i,j,l=0}^{m-1}  q^{ci\lfloor \frac{j+l}{m}\rfloor}   1_{i} \ot 1_{j}\ot 1_{l}. 
\end{equation} 
As before, $1_j=\frac{1}{m}\sum_{s=1}^{m-1}q^{-js}g^s$, for all $0\leq j\leq m-1$. 

Again, by using the same arguments as in the proof of \prref{chara abel} we get the following result.

\begin{proposition}
If there exists a non-trivial $2$-dimensional Hopf algebra within 
${}_{k_{\Phi_{c}}[G]}^{k_{\Phi_{c}[G]}}{\cal YD}$ then the ${\rm GCD}$ of $c$ and $m$ is even. 

Assuming $(c, m)$ even, a non-trivial Hopf algebra of dimension $2$ in ${}_{k_{\Phi_{c}}[G]}^{k_{\Phi_{c}}[G]}{\cal YD}$ is of the form $B_{\alp, v}$, 
with $\alp: k[G]\ra k$ an algebra morphism and $v\in k[G]$, determined by $f, \l\in \{0, \cdots, m-1\}$ satisfying  
\begin{equation}\eqlabel{toughcyclic}
\frac{\l f}{m} + \frac{cf^2}{m^2} - \frac{1}{2}\in \mathbb{Z},
\end{equation}
as follows: $\alp(g^j)=q^{fj}$, for all $g^j\in G$, and 
$
v=\sum_{j=1}^m{\mf q}^{m\l j + cfj}1_{j}
$. 
\end{proposition}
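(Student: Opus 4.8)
The plan is to run, now in the degenerate case $n=1$, exactly the argument behind \prref{chara abel}; the only simplification is that \prref{flat coboundary} becomes vacuous, since there are no triples $r<s<t$, so for ${\mf g}=g^f$ the $2$-cochain $\flat_{\mf g}\omega_c$ is automatically the coboundary $\partial f_{\mf g}$ with $f_{\mf g}(g^j)={\mf q}^{cfj}$ (the $n=1$ specialisation of the formula for $f_{\mf g}$ in the proof of \prref{flat coboundary}, using $\xi_{m^2}={\mf q}$; here $\omega_c$ denotes the $3$-cocycle on $G$ with $\Phi_{\omega_c}=\Phi_c$). First I would invoke \prref{fun chara}, transported through the quasi-Hopf identification $k^G\cong k[G]$, $P_{g^j}\mapsto 1_j$: a non-trivial $2$-dimensional braided Hopf algebra in ${}_{k_{\Phi_c}[G]}^{k_{\Phi_c}[G]}{\cal YD}$ is exactly a pair $({\mf g},\rho)$ with ${\mf g}=g^f\in G$ (all elements of the cyclic group $G$ are central) and $\rho:G\ra k^*$ satisfying $\rho(e)=1$, $\rho({\mf g})=-1$, $\partial\rho=\partial f_{\mf g}$. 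Since $\rho f_{\mf g}^{-1}$ is then a group morphism $G\ra k^*$, it is $g^a\mapsto q^{\l a}$ for a unique $0\leq\l\leq m-1$, whence $\rho(g^a)={\mf q}^{m\l a+cfa}$ and the only surviving constraint is $\rho({\mf g})={\mf q}^{m\l f+cf^2}=-1$; since ${\mf q}$ has order $m^2$ this is the congruence $m\l f+cf^2\equiv\frac{m^2}{2}\pmod{m^2}$, which is precisely \equref{toughcyclic}.

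For the ``only if'' part, solvability of this congruence forces $\frac{m^2}{2}\in\mathbb{Z}$, i.e.\ $m$ even; the point is to upgrade this to ``$c$ even'' (equivalently, $(c,m)$ even). I would do this by $2$-adic valuations: writing the congruence as $2(m\l f+cf^2)=m^2(1+2s)$ with $s\in\mathbb{Z}$ gives $\nu_2(m\l f+cf^2)=2\nu_2(m)-1=:2a-1$, an odd number. Assume $c$ odd. Then $f\ne0$ (else the left side vanishes) and $\l\ne0$ (else the valuation is $\nu_2(cf^2)=2\nu_2(f)$, which is even). With $f,\l\ne0$ I compare $\nu_2(m\l f)=a+\nu_2(\l)+\nu_2(f)$ with $\nu_2(cf^2)=2\nu_2(f)$: if the first is larger, the sum has even valuation $2\nu_2(f)$, impossible; if they are equal, then $\nu_2(f)=a+\nu_2(\l)\geq a$, so the sum has valuation $\geq2\nu_2(f)\geq2a>2a-1$, impossible; and if the first is smaller, then $\nu_2(m\l f+cf^2)=a+\nu_2(\l)+\nu_2(f)=2a-1$ forces $\nu_2(\l)+\nu_2(f)=a-1$, while $a+\nu_2(\l)+\nu_2(f)<2\nu_2(f)$ forces $\nu_2(f)>a$, contradicting $\nu_2(f)\leq a-1$. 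Hence $c$ is even, so $(c,m)$ is even. This valuation bookkeeping is the only genuinely new ingredient, so it is the step I expect to need the most care --- though it is short once phrased this way.

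For the ``if'' part, assuming $(c,m)$ --- equivalently $c$ and $m$ --- even, I would just read off the data. By \thref{2dimbraidedHA} and \leref{1d fun yd}, the pair $({\mf g},\rho)$ produces the braided Hopf algebra $B_{\alp,v}$ of \deref{Balpv}, where $\alp:k[G]\ra k$ is the algebra morphism dual to ${\mf g}=g^f$ and $v=\sum_{x\in G}\rho(x)P_x$; translating through $P_{g^j}\mapsto 1_j$ and evaluating $\alp$ by the inverse of \equref{canisomgralg} yields $\alp(g^j)=q^{fj}$ and $v=\sum_{j=1}^{m}{\mf q}^{m\l j+cfj}1_j$. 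The two requirements $\va(v)=1$, $\alp(v)=-1$ of \deref{Balpv} are $\rho(e)=1$ (automatic) and $\rho({\mf g})=-1$, the latter being again \equref{toughcyclic}; and $(c,m)$ even makes \equref{toughcyclic} solvable --- for instance $f=m/2$ with $\l$ of suitable parity, since then its left side equals $\frac{2\l+c-2}{4}$ --- so non-trivial examples exist. Via \prref{fun chara} this gives the asserted one-to-one correspondence between such braided Hopf algebras and pairs $(f,\l)$ obeying \equref{toughcyclic}, completing the proof.
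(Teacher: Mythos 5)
Your argument is correct, and for the structural half it follows the same route as the paper: invoke \prref{fun chara} through the identification $k^G\cong k[G]$, note that for $n=1$ the coboundary condition of \prref{flat coboundary} is vacuous so $\flat_{\mf g}\omega_c=\partial f_{\mf g}$ for every ${\mf g}=g^f$, write $\rho=\vartheta f_{\mf g}$ with $\vartheta$ a character $g^a\mapsto q^{\l a}$, and translate $\rho({\mf g})=-1$ into the congruence $m\l f+cf^2\equiv \tfrac{m^2}{2}\ ({\rm mod}\ m^2)$, i.e.\ \equref{toughcyclic}; the paper compresses all of this into ``by the same arguments as in the proof of \prref{chara abel}.'' Where you genuinely diverge is the proof that $c$ must be even. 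The paper completes the square: from $2cf^2+2m\l f=(2k+1)m^2$ it derives $(2cf+m\l)^2=m^2\bigl(2c(2k+1)+\l^2\bigr)$, so $2c(2k+1)+\l^2=\omega^2$ for some integer $\omega$; then $\omega\equiv\l\ ({\rm mod}\ 2)$ forces $4\mid \omega^2-\l^2=2c(2k+1)$, hence $c$ even. You instead read off $\nu_2(m\l f+cf^2)=2\nu_2(m)-1$ and exclude $c$ odd by comparing $\nu_2(m\l f)$ with $\nu_2(cf^2)$ in three cases; I checked each case and they are sound (in particular, in the equal-valuation case you correctly use only that the valuation of a sum of two terms of equal valuation is at least that common value). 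The paper's trick is shorter; your $2$-adic bookkeeping is more mechanical and arguably more robust, but both are complete. Your closing remark on solvability of \equref{toughcyclic} for $f=m/2$ is not needed for the proposition itself (it is the content of the following corollary, where the paper arrives at the same choice $f=m/2$ by a longer route), but it is correct.
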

\begin{proof}
If there is a non-trivial Hopf algebra of dimension $2$ in 
${}_{k_{\Phi_{c}}[G]}^{k_{\Phi_{c}}[G]}{\cal YD}$ then there exist $f, \l\in \{0,\cdots, m-1\}$ and $k\in \mathbb{Z}$ 
such that $2cf^2+2m\l f -(2k+1)m^2=0$. It follows that $m$ is even, so we only need to prove that $c$ is even, too. For $c=0$ there is nothing 
to prove, while for $c\not=0$ the equality $(2cf+m\l)^2=2c(2k+1)m^2+\l^2m^2$ says that $2c(2k+1)+\l^2$ is a square, let us say it is equal to 
$\omega^2$. Then $\l$ and $\omega$ have the same parity, and this fact tells us that $c$ must be even. 
\end{proof}

\begin{corollary}
Keeping the above notation, if $(c, m)$ is even there is always a non-trivial Hopf algebra of dimension $2$ in 
${}_{k_{\Phi_{c}}[G]}^{k_{\Phi_{c}}[G]}{\cal YD}$.
\end{corollary}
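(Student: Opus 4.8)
The plan is to show directly that, when $(c,m)$ is even, the numerical condition \equref{toughcyclic} of the preceding proposition is solvable with $f,\l\in\{0,\dots,m-1\}$; the proposition then delivers the required braided Hopf algebra $B_{\alp,v}$, which is non-trivial by construction.

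First I would note that $(c,m)$ even forces \emph{both} $c$ and $m$ to be even, since $\gcd(c,m)$ divides each of them and is itself divisible by $2$. In particular $m\geq 2$ is even, so $f:=m/2$ is a legitimate choice in $\{1,\dots,m-1\}$, and for this $f$ the left-hand side of \equref{toughcyclic} collapses to
\[
\frac{\l f}{m}+\frac{cf^{2}}{m^{2}}-\frac12=\frac{\l}{2}+\frac{c}{4}-\frac12 .
\]
Writing $c=2c_{1}$ with $c_{1}\in\mathbb{Z}$, this quantity lies in $\mathbb{Z}$ precisely when $2\l+c-2\equiv 0\pmod 4$, equivalently when $\l\equiv c_{1}+1\pmod 2$. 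Hence it suffices to pick $\l\in\{0,1\}$ of parity opposite to that of $c/2$, i.e.\ $\l=1$ if $c\equiv 0\pmod 4$ and $\l=0$ if $c\equiv 2\pmod 4$; both values lie in $\{0,\dots,m-1\}$ because $m\geq 2$. With $(f,\l)$ chosen in this way \equref{toughcyclic} holds, and the preceding proposition produces a non-trivial $2$-dimensional Hopf algebra $B_{\alp,v}$ in ${}_{k_{\Phi_{c}}[G]}^{k_{\Phi_{c}}[G]}{\cal YD}$, with $\alp$ and $v$ read off from the explicit formulas stated there.

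There is no real obstacle: the one point that makes the argument go through is precisely that $(c,m)$ even guarantees $c$ itself is even, so that the expression $\frac{\l}{2}+\frac{c}{4}-\frac12$ can be made an integer at all — if $c$ were odd the term $c/4$ would obstruct integrality, in agreement with the necessity direction already established in the proposition. One could alternatively argue via the identity $(2cf+m\l)^{2}=2c(2k+1)m^{2}+\l^{2}m^{2}$ used in that proof, but the substitution $f=m/2$ is the quickest route.
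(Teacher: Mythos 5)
Your argument is correct, and it reaches the paper's conclusion by a more direct route. The paper does not substitute a candidate into \equref{toughcyclic} directly; instead it works with the equivalent square identity $(2cf+m\lambda)^2=2c(2k+1)m^2+\lambda^2m^2$, deduces from it that any solution must have $m\mid 2cf+m\lambda$, hence $cf=mv$ and (writing $d=(c,m)$, $c=dc'$, $m=dm'$) $f=sm'$ for some integer $s$, and finally reduces the problem to solving $2s(c's+\lambda)=(2k+1)d$, which it does by taking $s=d/2$ and $\lambda=2k+1-c/2$ with $k$ chosen so that $0\le\lambda<m$. Note that $s=d/2$ gives $f=sm'=m/2$ and the parity constraint $\lambda\equiv 1-c/2\pmod 2$, i.e.\ exactly the witness you exhibit; so the two proofs produce the same pair $(f,\lambda)$, but your verification is a two-line computation while the paper's is a derivation that in passing yields structural information about \emph{all} solutions (namely that $f$ must be a multiple of $m/(c,m)$), information that your direct substitution does not provide but that the corollary does not need. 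Your preliminary observation that $(c,m)$ even forces both $c$ and $m$ even is the correct reading of the hypothesis (including the case $c=0$, where $(0,m)=m$), and your check that $f=m/2\in\{1,\dots,m-1\}$ and $\lambda\in\{0,1\}\subseteq\{0,\dots,m-1\}$ lie in the required ranges, together with the remark that any solution of \equref{toughcyclic} is automatically non-trivial because $f=0$ would give $-\tfrac12\in\mathbb{Z}$, closes all the gaps. In short: same witness, cleaner verification.
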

\begin{proof}
Let $d=(c, m)$, a non-zero even number, and consider $c', m'$ coprime integers such that $c=dc'$ and $m=dm'$. 
We saw that \equref{toughcyclic} is equivalent to 
the existence of $k\in \mathbb{Z}$ such that $(2cf+m\l)^2=2c(2k+1)m^2+\l^2m^2$. It follows that $m|2cf+m\l$, so $\frac{m}{2}|cf$; if we write 
$2cf=mt$, for some $t\in \mathbb{Z}$, then $(t+\l)^2=2c(2k+1)+\l^2$, hence $t^2+2\l t=2c(2k+1)$. Therefore, $t$ is even and we can write 
$cf=mv$, for some integer $v$. Furthermore, as $c'f=m'v$, we find $s\in \mathbb{Z}$ such that $f=sm'$ and $v=sc'$. 
The proof ends if we show that we can always find $s, \l, k$ such that $2s(c's+\l)=(2k+1)d$. Indeed, 
one can take $s=\frac{d}{2}$ and $\l, k$ such that $\l=2k+1-\frac{c}{2}$; note that, there is at least one $k\in \mathbb{Z}$ such that 
$\frac{c}{2}\leq 2k+1<m+\frac{c}{2}$, because $m\geq 2$. 
\end{proof}

\begin{remark}
In the Hopf algebra case, this means when $c=0$, non-trivial Hopf algebras of dimension $2$ in 
${}_{k[G]}^{k[G]}{\cal YD}$ are determined by pairs $(f, \l)$ with $\l f=(2k+1)\frac{m}{2}$, for some $k\in \mathbb{Z}$.
\end{remark}

\subsection{A non-abelian example} 
We want to apply \leref{1d fun yd} to a finite non-abelian group $G$. There are two obstacles we must avoid; namely, we need the center of $G$ to not be trivial 
and, on the other hand, to be able to find explicitly the representatives for the $3$-cocycles of $G$. In this direction, a first candidate 
is the dihedral group $D_n$ with $2n$ elements, $n$ an even natural number (for $n$ odd, $D_n$ is centerless); but, although we have a full description 
for $H^3(D_n, k^*)$ (see \cite{ham, han}) we don't have the concrete formulas for the representatives of the $3$-cocycles of $D_n$. Nevertheless, we can overcome this issue by working with the so called double dihedral group $\bar D_n$ instead of $D_n$. We take $n\geq 2$, for otherwise $\bar D_4\simeq C_4$, an abelian group.     

$\bar D_n$ is a group of order $4n$, generated by $R$ and $X$  with relations $R^{2n}=e$, $X^2=R^n$ and $XR=R^{-1}X$, where $e$ is the neutral element. The elements 
of $\bar D_n$ are identified with pairs $(A, a)$, in the sense that $(A, a)\equiv X^AR^a$; here $A\in \{0, 1\}$ and $a\in \{-n+1,\cdots, n\}$. Thus, $a'_{2n}$, 
the reminder of the division of $a$ by $2n$ is replaced by $\le a\ri_{2n}$, a new reminder modulo $2n$ which lies in the range $-n+1, \cdots, n$. 
Note that $\le a\ri_{2n}$ is uniquely determined by the property that $R^a=R^{\le a\ri_{2n}}$ and $\le a\ri_{2n}\in \{-n+1, \cdots, n\}$, for all $a\in \mathbb{Z}$. 
To find a connection between these two reminders, observe that 
$R^a=R^{a-2\lfloor \frac{a}{2n}\rfloor n}$ with $a-2\lfloor \frac{a}{2n}\rfloor n\in \{0, \cdots, 2n-1\}$, provided that $a\in \{-n+1, \cdots, n\}$. 
Likewise, $R^a=R^{a+2\lfloor \frac{n-a}{2n}\rfloor n}$ with $a+2\lfloor \frac{n-a}{2n}\rfloor n\in \{-n+1, \cdots, n\}$, provided that 
$a\in \{0, \cdots, 2n-1\}$. All these facts say that, for all $a\in \mathbb{Z}$,  
\begin{equation}\eqlabel{connrems}
\le a\ri_{2n}:=a'_{2n}+2\lfloor \frac{n-a'_{2n}}{2n}\rfloor n=a+2\lfloor \frac{n-a}{2n}\rfloor n.
\end{equation}
Indeed, $R^{2n}=e$ implies $R^a=R^{\le a\ri_{2n}}$, and since $\le a\ri_{2n}$ can also be written in the form $\le a\ri_{2n}=n-(n-a)'_{2n}$ 
it follows that $\le a\ri_{2n}\in \{-n+1, \cdots, n\}$.
 
\begin{lemma}\lelabel{centerbarD}
    For any $n\in \mathbb N$, the center $Z(\bar D_n)$ of the double dihedral group $\bar D_n$ equals $\{{e, R^n}\}$.
\end{lemma}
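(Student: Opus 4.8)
The plan is to compute the center directly from the presentation. Recall that every element of $\bar D_n$ is uniquely written as $X^AR^a$ with $A\in\{0,1\}$ and $a\in\{-n+1,\dots,n\}$, and that the defining relations are $R^{2n}=e$, $X^2=R^n$, $XR=R^{-1}X$. An element lies in $Z(\bar D_n)$ if and only if it commutes with the two generators $R$ and $X$, so it suffices to impose these two conditions on a general element $X^AR^a$ and see which pairs $(A,a)$ survive.

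First I would test commutation with $R$. For $A=0$ the element $R^a$ obviously commutes with $R$, so this gives no constraint on $a$ in the purely rotational part. For $A=1$, using $XR=R^{-1}X$ repeatedly (equivalently $R^aX=XR^{-a}$) one gets $(XR^a)R = XR^{a+1}$ while $R(XR^a)=XR^{-1}R^a=XR^{a-1}$; so commutation with $R$ forces $R^{a+1}=R^{a-1}$, i.e. $R^2=e$, which is impossible for $n\geq 2$ since $R$ has order $2n\geq 4$. Hence no element with $A=1$ can be central, and the center is contained in the cyclic subgroup $\langle R\rangle=\{R^a : a\in\{-n+1,\dots,n\}\}$.

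Next I would test commutation of $R^a$ with $X$. We have $R^aX = XR^{-a}$ by the relation $XR=R^{-1}X$ (applied $a$ times, with the usual care that this also holds for negative $a$), while $XR^a$ is already in the reduced form. So $R^a\in Z(\bar D_n)$ iff $XR^{-a}=XR^{a}$, i.e. $R^{2a}=e$, i.e. $2a\equiv 0\pmod{2n}$, i.e. $a\equiv 0\pmod n$. Within the range $a\in\{-n+1,\dots,n\}$ the only solutions are $a=0$ and $a=n$, giving the elements $e$ and $R^n$. Finally I would note both of these are genuinely central: $e$ trivially, and $R^n=X^2$ commutes with $X$, while $R^n$ commutes with $R$; so $Z(\bar D_n)=\{e,R^n\}$, as claimed. (For completeness one checks $R^n\neq e$, which holds since $R$ has order exactly $2n$.)

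This argument is entirely elementary; there is no real obstacle beyond bookkeeping with the relation $XR=R^{-1}X$ and keeping track of exponents modulo $2n$ in the prescribed range $\{-n+1,\dots,n\}$ via the normal form $\langle a\rangle_{2n}$ introduced in \equref{connrems}. The one point to be slightly careful about is that $n\geq 2$ is genuinely used: it is exactly what makes $R^2\neq e$ (ruling out the reflection-type elements) and what makes $R^n\neq e$ (so the center is not just trivial), consistent with the remark that $\bar D_1\simeq C_4$ is abelian.
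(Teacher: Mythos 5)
Your proof is correct and follows essentially the same route as the paper's: the paper likewise rules out elements $XR^a$ by computing $RXR^a=XR^{a-1}$ versus $XR^aR=XR^{a+1}$ (forcing $R^2=e$, impossible for $n\geq 2$), and then observes that $R^a$ is central iff $XR^a=R^{-a}X$ equals $R^aX$, i.e. iff $R^{2a}=e$, i.e. $a\in\{0,n\}$. Your added remarks on the necessity of $n\geq 2$ and the verification that $R^n$ is genuinely central are consistent with the paper's standing assumption.
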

\begin{proof}
For $a\in \{0, \cdots, 2n-1\}$ we have that $RXR^a=XR^{a-1}$ and $XR^aR=XR^{a+1}$, so $XR^a$ does not belong to $Z(\bar D_n)$, unless $n=1$. On the other hand, 
$XR^a=R^{-a}X$, so $R^a$ is in the center of $\bar D_n$ if and only if $R^{2a}=e$, if and only if $a\in \{0, n\}$.   
\end{proof}
 
Owing to \cite{wp},  $H^3(\bar D_n,k^*)\simeq\mathbb{Z}_{4n}$. But, only half of the representatives for the cohomology classes of 
$H^3(\bar D_n,k^*)$ can be defined in an explicit way. To this end, assume that $k$ contains a $2n^2$ primitive root of unity in $k$, $\xi_{2n^2}$ (so 
$k$ is of characteristic different from $2$). Denote $\xi_n=\xi^{2n}$, $\xi_{2n}=\xi^{n}$ and $\xi_{4}=\xi^{\frac{n^2}{2}}$ (all are primitive roots),  
and for $p\in \{0, \cdots, 2n-1\}$ define $\omega_p: \bar D_n\times \bar D_n\times \bar D_n\ra k^*$ by 
\begin{eqnarray}
    \omega_p(X^AR^a, X^BR^b, X^CR^c)&=&\xi_{2n^2}^{p\left((-1)^{B+C}a((-1)^Cb+c -\le (-1)^Cb+c+nBC\ri_{2n})
		-\frac{n^2}{2}ABC\right)}\nonumber\\
		&\equal{\equref{connrems}}&\xi_{2n^2}^{p\left((-1)^{B+C+1}a(2\lfloor \frac{n-(-1)^Cb-c-nBC}{2n}\rfloor n + nBC)
		-\frac{n^2}{2}ABC\right)}\nonumber\\
		&=&\xi_{2n }^{p(-1)^{B+C+1}aBC}
		\xi_{n}^{p(-1)^{B+C+1}a\lfloor  \frac{n-(-1)^Cb - c - nBC}{2n} \rfloor} \xi_{4}^{-pABC},\eqlabel{dodih trico}
\end{eqnarray} 
for any couples $(A, a), (B, b), (C, c)$ with $A, B, C\in \{0, 1\}$ and $a, b, c\in \{-n+1,\cdots, n\}$. Then $(\omega_p)_{0\leq p\leq 2n-1}$ 
are non-cohomologous normalized $3$-cocycles for $\bar D_n$. 

\begin{lemma}\lelabel{cob2cocycleddg}
The normalized $2$-cocycle $\flat_{R^n}\omega_p$ on $\bar D_n$ is determined by 
\[
\flat_{R^n}\omega_p(X^AR^a, X^BR^b)=(-1)^{pAB}\xi_n^{\frac{pa}{2}((-1)^B-1)},
\] 
for all $A, B\in \{0, 1\}$ and $a, b\in \{-n+1, \cdots, n\}$. Furthermore, if $f: \bar D_n\ra k^*$ is given by 
$f(X^AR^a)=(-1)^A\xi_{2n}^{-pa}$, for all $A\in\{0, 1\}$ and $a\in \{-n+1, \cdots, n\}$, then $\flat_{R^n}\omega_p=\partial f$; therefore, 
$\flat_{R^n}\omega_p$ is a coboundary $2$-cocycle. 
\end{lemma}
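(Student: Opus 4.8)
The plan is to verify directly, by computation from the explicit $3$-cocycle formula \equref{dodih trico}, that $\flat_{R^n}\omega_p$ has the claimed form, and then that $\partial f$ matches it. First I would specialize \equref{dodih trico} to the three arguments needed in the definition of $\flat_{\mfg}\omega_p$ with $\mfg=R^n$, namely the triples $(R^n, X^AR^a, X^BR^b)$, $(X^AR^a, X^BR^b, R^n)$ and $(X^AR^a, R^n, X^BR^b)$, and then form the quotient $\flat_{R^n}\omega_p(X^AR^a, X^BR^b)=\frac{\omega_p(R^n, X^AR^a, X^BR^b)\,\omega_p(X^AR^a, X^BR^b, R^n)}{\omega_p(X^AR^a, R^n, X^BR^b)}$ from the lemma \emph{before} this one. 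The key simplification is that $R^n=X^0R^n$, i.e.\ the exponent-$0$ slot on the $X$-component, so in each of the three evaluations several of the parameters $A,B,C$ vanish; this kills the $\xi_4^{-pABC}$ factor in two of the three terms and forces the floor-function arguments to take a particularly simple form. I expect the bookkeeping with $\le\cdot\ri_{2n}$ and the floor functions to be the only real nuisance, and the identity \equref{connrems} together with $\le n\ri_{2n}=n$ is exactly what is needed to push it through.

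Concretely, evaluating $\omega_p(R^n, X^AR^a, X^BR^b)$ means putting $(A',a',B',b',C',c')=(0,n,A,a,B,b)$ into \equref{dodih trico}: the $\xi_4$-factor is trivial since $A'=0$, the $\xi_{2n}$-factor is trivial since $A'BC=0$, and one is left with $\xi_n^{pn\lfloor\frac{n-(-1)^Bb-a-0}{2n}\rfloor\cdot(-1)^{A+B+1}}$-type data that I would rewrite using $\xi_n^n=1$ so that this term contributes trivially. The term $\omega_p(X^AR^a, R^n, R^{?})$ — wait, $\omega_p(X^AR^a, X^BR^b, R^n)$ has $(C',c')=(0,n)$, so again $A'B'C'=0$ and $(-1)^{C}=1$; the surviving contribution is the $\xi_n^{p(-1)^{A+B+1}a\lfloor\frac{n-(-1)^0 b - n - 0}{2n}\rfloor}$-part, i.e.\ $\lfloor\frac{-b}{2n}\rfloor$, which is $0$ for $b\in\{-n+1,\dots,n\}$ except at $b=-n+1,\dots,0$ where it can be $-1$; I would track this carefully. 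The middle term $\omega_p(X^AR^a, R^n, X^BR^b)$ has $(B',b')=(0,n)$, $(C',c')=(B,b)$, so $A'B'C'=0$, $(-1)^{B+C}=(-1)^B$, and the floor argument becomes $\lfloor\frac{n-(-1)^B b - n - 0}{2n}\rfloor=\lfloor\frac{-(-1)^Bb}{2n}\rfloor$. Collecting the three contributions and simplifying with \equref{connrems} should yield precisely $(-1)^{pAB}\xi_n^{\frac{pa}{2}((-1)^B-1)}$; note $\xi_4^{\pm p\,?}$ recombines into the $(-1)^{pAB}$ factor since $\xi_4^2=-1$ and $n^2/2\cdot AB$ contributes the sign, and the half-integer exponent $\frac{pa}{2}$ makes sense because $\xi_n^{1/2}=\xi_{2n}$.

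For the second half — that $\flat_{R^n}\omega_p=\partial f$ with $f(X^AR^a)=(-1)^A\xi_{2n}^{-pa}$ — I would just compute $\partial f(X^AR^a, X^BR^b)=f(X^AR^a)f(X^BR^b)f((X^AR^a)(X^BR^b))^{-1}$ using the multiplication rule $X^AR^a\cdot X^BR^b=X^{A+B}R^{(-1)^Ba+b+nAB}$ of $\bar D_n$ (the $nAB$ coming from $X^2=R^n$), hence $f((X^AR^a)(X^BR^b))=(-1)^{A+B}\xi_{2n}^{-p((-1)^Ba+b+nAB)}$ up to the $\le\cdot\ri_{2n}$-reduction, which only changes the exponent by a multiple of $2n$ and so does not affect $\xi_{2n}^{-p\cdot}$. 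Then $\partial f(X^AR^a,X^BR^b)=(-1)^{A+B-(A+B)}\xi_{2n}^{-pa-pb+p((-1)^Ba+b+nAB)}=\xi_{2n}^{pa((-1)^B-1)}\cdot\xi_{2n}^{pnAB}=(-1)^{pAB}\xi_n^{\frac{pa}{2}((-1)^B-1)}$, using $\xi_{2n}^{n}=\xi_2=-1$ and $\xi_{2n}^2=\xi_n$. This matches the formula just established, so $\flat_{R^n}\omega_p=\partial f$, and in particular $\flat_{R^n}\omega_p$ is a coboundary $2$-cocycle. The main obstacle throughout is purely the careful handling of the two reductions $a'_{2n}$ and $\le a\ri_{2n}$ and of the floor functions $\lfloor\frac{n-\cdots}{2n}\rfloor$ when one of the arguments is the central element $R^n$; once one notes that feeding $R^n$ into a slot forces the relevant products $ABC$ to vanish, the $\xi_4$-contributions drop out and only the $\xi_n$- and $\xi_{2n}$-parts need to be tracked, which is then routine.
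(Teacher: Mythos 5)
Your strategy is exactly the paper's: evaluate $\omega_p$ at the three triples $(R^n,X^AR^a,X^BR^b)$, $(X^AR^a,X^BR^b,R^n)$, $(X^AR^a,R^n,X^BR^b)$, form the quotient, and then check $\partial f$ directly. Your second half (the $\partial f$ computation) is correct and coincides with the paper's. But the first half, as written, contains substitution errors that would prevent you from arriving at the stated formula. The decisive one concerns the first term: the $\xi_{2n}$-factor in \equref{dodih trico} has exponent $p(-1)^{B+C+1}aBC$, where $a$ is the \emph{rotation} exponent of the first argument, not its $X$-exponent. Feeding in $R^n=X^0R^n$ therefore gives $a'=n$ (not $0$), so this factor is $\xi_{2n}^{p(-1)^{A+B+1}nAB}=(-1)^{pAB}$ — it is precisely the source of the sign in the claimed formula, and it is \emph{not} trivial as you assert ("trivial since $A'BC=0$" confuses $A'$ with $a'$). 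Having discarded it, you then try to recover $(-1)^{pAB}$ from the $\xi_4^{-pABC}$ factors at the end; but, as you yourself observe, one of the three $X$-exponents vanishes in each of the three evaluations, so \emph{all three} $\xi_4$-factors equal $1$ and nothing can come from them. So your accounting produces the right-hand side only because you already know what it should be.

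A second error sits in the middle term: for $\omega_p(X^AR^a,R^n,X^BR^b)$ the floor argument is $\lfloor\tfrac{n-(-1)^{C'}b'-c'-nB'C'}{2n}\rfloor$ with $b'=n$, $c'=b$, $C'=B$, i.e.\ $\lfloor\tfrac{n-(-1)^Bn-b}{2n}\rfloor$, not $\lfloor\tfrac{-(-1)^Bb}{2n}\rfloor$ as you write (you swapped the roles of the second and third rotation exponents). This matters: with the correct floor, the difference $\lfloor-\tfrac{b}{2n}\rfloor-\lfloor\tfrac{n-(-1)^Bn-b}{2n}\rfloor$ equals $0$ for $B=0$ and $-1$ for $B=1$ uniformly in $b$, which is exactly $\tfrac12((-1)^B-1)$ and makes the answer $b$-independent; with your version the difference at $B=1$ would be $\lfloor-\tfrac{b}{2n}\rfloor-\lfloor\tfrac{b}{2n}\rfloor$, which is $0$ at $b=0$ and $-1$ otherwise, so the $b$-dependence would fail to cancel. (Also, $\lfloor-\tfrac{b}{2n}\rfloor$ is $-1$ for $b\in\{1,\dots,n\}$ and $0$ for $b\in\{-n+1,\dots,0\}$ — the reverse of what you state.) None of this requires a new idea, only a careful redo of the three substitutions; once the first term is credited with its $(-1)^{pAB}$ and the two floors are written correctly, the computation closes exactly as in the paper.
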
 
\begin{proof}
We compute, for all $(A, a)$ and $(B, b)$ as in the statement of the lemma, that 
\begin{eqnarray*}
\flat_{R^n}\omega_p(X^AR^a, X^BR^b)&=&\frac{\omega_p(R^n, X^AR^a, X^BR^b)\omega_p(X^AR^a, X^BR^b, R^n)}{\omega_p(X^AR^a, R^n, X^BR^b)}\\
&=&\frac{\xi_{2n }^{p(-1)^{A+B+1}nAB}\xi_{n}^{p(-1)^{A+B+1}n\lfloor  \frac{n-(-1)^Ba - b - nAB}{2n} \rfloor}
		\xi_{n}^{p(-1)^{B+1}a\lfloor  -\frac{b}{2n} \rfloor}}
		{\xi_{n}^{p(-1)^{B+1}a\lfloor  \frac{n-(-1)^Bn - b}{2n} \rfloor}}\\
		&=&(-1)^{pAB}\xi_n^{p(-1)^{B+1}a\left(\lfloor  -\frac{b}{2n} \rfloor - \lfloor  \frac{n-(-1)^Bn - b}{2n} \rfloor\right)}.
\end{eqnarray*}
Our assertion follows from the fact that 
\[
\lfloor  -\frac{b}{2n} \rfloor - \lfloor  \frac{n-(-1)^Bn - b}{2n} \rfloor=\left\{\begin{array}{c}
0,~\mbox{if $B=0$}\\
-1,~\mbox{if $B=1$}
\end{array}\right.=\frac{1}{2}((-1)^B-1).
\]
Finally, for $f:\bar D_n\ra k^*$ as in the statement of the lemma, we have that 
\begin{eqnarray*}
\partial f(X^AR^a, X^BR^b)&=&\frac{f(X^AR^a)f(X^BR^b)}{f(X^AR^aX^BR^b)}=\frac{(-1)^{A+B}\xi_{2n}^{-p(a+b)}}{f(X^{(A+B)'_2}R^{\le (-1)^Ba + b +nAB\ri_{2n}})}\\
&\equal{\equref{connrems}}&(-1)^{2\lfloor \frac{A+B}{2}\rfloor}\xi_{2n}^{-p(a-(-1)^Ba -nAB)}=(-1)^{pAB}\xi_n^{\frac{pa}{2}((-1)^B-1)}\\
&=&\flat_{R^n}\omega_p(X^AR^a, X^BR^b),
\end{eqnarray*}
for all $A, B\in \{0, 1\}$ and $a, b\in \{-n+1,\cdots, n\}$, as stated. So our proof is complete.
\end{proof}

\begin{proposition}\prlabel{double dihedral exe}
    For the quasi-Hopf algebra $(k^{\bar D_n},\Phi_{\omega_p})$, there are non-trivial $2$-dimensional braided Hopf algebras 
		in ${}_{(k^{\bar D_n},\Phi_{\omega_p})}^{(k^{\bar D_n},\Phi_{\omega_p})} {\cal YD}$ if and only if either $p$ is odd or $p, n$ have different parities.
\end{proposition}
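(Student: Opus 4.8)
The plan is to combine \leref{1d fun yd}, \thref{2dimbraidedHA} and \leref{cob2cocycleddg}. By \prref{fun chara} (equivalently, by \leref{1d fun yd} together with \thref{2dimbraidedHA}), non-trivial $2$-dimensional braided Hopf algebras in ${}_{(k^{\bar D_n},\Phi_{\omega_p})}^{(k^{\bar D_n},\Phi_{\omega_p})}{\cal YD}$ are in bijection with couples $(\mf g, \rho)$ consisting of an element $\mf g$ in the center $Z(\bar D_n)$ of $\bar D_n$ and a map $\rho: \bar D_n\ra k^*$ with $\rho(e)=1$, $\rho(\mf g)=-1$ and $\flat_{\mf g}\omega_p=\partial\rho$. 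By \leref{centerbarD}, $Z(\bar D_n)=\{e, R^n\}$; the choice $\mf g=e$ is excluded since then $\flat_e\omega_p$ is trivial and the requirement $\rho(\mf g)=\rho(e)=-1$ contradicts $\rho(e)=1$. So I would reduce everything to the case $\mf g=R^n$.

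First I would observe that, by \leref{cob2cocycleddg}, $\flat_{R^n}\omega_p=\partial f$ is already a coboundary $2$-cocycle, with $f(X^AR^a)=(-1)^A\xi_{2n}^{-pa}$. Hence the existence of a suitable $\rho$ is equivalent to the existence of a \emph{group} morphism $\vartheta:\bar D_n\ra k^*$ with $(\vartheta f)(R^n)=-1$ (this uses the general fact recalled in the proof of \prref{chara abel}: if $\partial\rho_1=\partial\rho_2$ then $\rho_1\rho_2^{-1}$ is a group morphism, and conversely $\vartheta f$ is again a "potential" $\rho$ whose coboundary is $\flat_{R^n}\omega_p$). Since $R^n$ is central of order $2$ and lies in the commutator-quotient-relevant part of $\bar D_n$, I would next compute $f(R^n)=\xi_{2n}^{-pn}=\xi_2^{-p}=(-1)^p$ (taking $A=0$, $a=n$ in the formula of \leref{cob2cocycleddg}). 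Therefore $(\vartheta f)(R^n)=(-1)^p\vartheta(R^n)$, and the condition $(\vartheta f)(R^n)=-1$ becomes $\vartheta(R^n)=(-1)^{p+1}$.

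Then the whole statement comes down to: \emph{there exists a group morphism $\vartheta:\bar D_n\ra k^*$ with $\vartheta(R^n)=(-1)^{p+1}$ if and only if $p$ is odd, or $p$ and $n$ have different parities.} For this I would describe $\mathrm{Hom}(\bar D_n,k^*)=\mathrm{Hom}(\bar D_n^{ab},k^*)$ explicitly. From the presentation $R^{2n}=e$, $X^2=R^n$, $XR=R^{-1}X$ one gets, in the abelianization, $R^2=e$ and $X^2=R^n$; so $R^{ab}$ has order dividing $2$ and $\bar D_n^{ab}\cong C_2\times C_2$ when $n$ is odd (there $R^n=R$ in $\bar D_n^{ab}$) and $\bar D_n^{ab}\cong C_4$ when $n$ is even (there $R^n=e$ in $\bar D_n^{ab}$, so $X$ has order $4$). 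A character $\vartheta$ is determined by $\vartheta(R)\in\{\pm1\}$ and $\vartheta(X)$ with $\vartheta(X)^2=\vartheta(R)^n$, and we need $\vartheta(R^n)=\vartheta(R)^n=(-1)^{p+1}$. If $p$ is odd this demands $\vartheta(R)^n=1$, which is always achievable (e.g.\ $\vartheta(R)=1$). If $p$ is even it demands $\vartheta(R)^n=-1$, forcing $\vartheta(R)=-1$ \emph{and} $n$ odd; so for $p$ even a solution exists iff $n$ is odd, i.e.\ iff $p,n$ have different parities (and when $n$ is odd one checks $\vartheta(X)$ can be chosen with $\vartheta(X)^2=\vartheta(R)^n=-1$, using that $k$ contains a primitive $4$th root of unity $\xi_4$). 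Assembling the two cases gives exactly "$p$ odd or $p,n$ of different parity", as claimed.

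The step I expect to be the main obstacle is the careful bookkeeping of the abelianization $\bar D_n^{ab}$ and of which characters are admissible — in particular keeping straight the relation $X^2=R^n$ versus $\vartheta(R^n)$, and handling the parity split $n$ even/odd correctly (including checking that when $p$ is even and $n$ odd the needed $\vartheta(X)$ with $\vartheta(X)^2=-1$ actually exists in $k$, which is where the standing hypothesis on roots of unity in $k$ enters). The rest — the reduction via \prref{fun chara}, the use of \leref{centerbarD} and \leref{cob2cocycleddg}, and the "difference of two primitives is a morphism" trick — is routine.
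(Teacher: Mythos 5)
Your proof is correct and follows essentially the same route as the paper: reduce via \prref{fun chara} and \leref{centerbarD} to the case $\mf g=R^n$, use the explicit $f$ from \leref{cob2cocycleddg} with $\partial f=\flat_{R^n}\omega_p$, twist it by group characters $\vartheta$ of $\bar D_n$, and test the value at $R^n$, obtaining $(-1)^p$ or $(-1)^{n+p}$ exactly as the paper does with its four maps $\rho_1,\dots,\rho_4$. One small slip: you have the abelianization backwards --- $\bar D_n^{\rm ab}\cong C_4$ for $n$ odd (since there $X^2=R^n=R$ generates everything) and $\cong C_2\times C_2$ for $n$ even --- but this is harmless, because the constraints you actually use, namely $\vartheta(R)\in\{\pm1\}$ and $\vartheta(X)^2=\vartheta(R)^n$, are correct in both cases and yield the stated parity condition.
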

\begin{proof}
    Owing to \leref{centerbarD} and \prref{fun chara}, giving a non-trivial $2$-dimensional braided Hopf algebra $B$  
		in ${}_{(k^{\bar D_n},\Phi_{\omega_p})}^{(k^{\bar D_n},\Phi_{\omega_p})} {\cal YD}$ is equivalent to giving a map $\rho: \bar D_n\ra k^*$ 
		such that $\rho(R^n)=-1$ and $\flat_{R^n}\omega_p=\partial\rho$. Moreover, as we have observed in the proof of \prref{chara abel}, any two 
		maps $\r_1, \r_2: \bar D_n\ra k^*$ satisfying $\partial\rho_1=\partial\rho_2$ are related through a group homomorphism 
		$\vartheta: \bar D_n\ra k^*$, in the sense that $\rho_1(g)=\rho_2(g)\vartheta(g)$, 
			for all $g\in G$. Therefore, by \leref{cob2cocycleddg}, $B$ is determined by a map $\rho: \bar D_n\ra k^*$ given by 
			$\rho(X^AR^a)=(-1)^{A}\xi_{2n}^{-pa}\vartheta(X^AR^a)$, for all $A\in\{0, 1\}$ and $a\in \{-n+1,\cdots, n\}$, where $\vartheta: \bar D_n\ra k^*$ 
			is a suitable group homomorphism.
			
			It is immediate that a group homomorphism $\vartheta: \bar D_n\ra k^*$ is parametrized by $(\upsilon, \xi_{2n}^s,)\in k^2$, $0\leq s\leq 2n-1$, 
			obeying $\upsilon^2=(-1)^s$ and $\upsilon\xi_{2n}^s=\upsilon\xi_{2n}^{-s}$. If follows that $s\in \{0, n\}$ and $\upsilon^2=(-1)^s$, 
			and therefore there are four group homomorphisms $\vartheta: \bar D_n\ra k^*$; namely, they are determined by the couples 
			$(\vartheta(X), \vartheta(R))\in\{(1, 1), (-1, 1), (\xi_4^n, -1), (-\xi_4^n, -1)\}$. 
		
		Summing up, there are exactly four maps $\rho: \bar D_n\ra k^*$ for which $\flat_{R^n}\omega_p=\partial\rho$. They are given by 
		$\rho_1: X^AR^a\mapsto (-1)^{A}\xi_{2n}^{-pa}$, $\rho_2: X^AR^a\mapsto \xi_{2n}^{-pa}$, $\rho_3: X^AR^a\mapsto (-\xi_4^n)^A(-1)^a\xi_{2n}^{-pa}$, and 
		respectively by $\rho_4: X^AR^a\mapsto \xi_4^{nA}(-1)^a\xi_{2n}^{-pa}$, for all $A\in\{0, 1\}$ and $a\in \{-n+1,\cdots, n\}$. Our assertion follows 
		now from the fact that $\rho_1(R^n)=\rho_2(R^n)=(-1)^p$ and $\rho_3(R^n)=\rho_4(R^n)=(-1)^{n+p}$. 
\end{proof}

 \begin{example}[Quaternion group]
     When $n=2$, $\bar D_2$ is the quaternion group $Q$, the group generated by $X$ and $R$ with relations $R^4=e$, $X^2=R^2$ and $XR=R^3X$ (the original presentation 
		of $Q$ can be obtained by taking $-1=R^2$, $\bar i = X$, $\bar j = R$ and $\bar k = XR$; indeed, we can see easily that 
		${\bar i} ^2 ={\bar j}^2 = {\bar k}^2 = \bar i \bar j \bar k =-1$).
          
    Owing to \prref{double dihedral exe}, there are non-trivial $2$-dimensional braided Hopf algebras 
		within ${}_{(k^{\bar D_2},\Phi_{\omega_p})}^{(k^{\bar D_2},\Phi_{\omega_p})} {\cal YD}$ if and only if $p\in \{1, 3\}$. If this is the case, we get $8$ 
		non-trivial $2$-dimensional braided Hopf algebras which give rise to $8$ quasi-Hopf algebras of dimension $16$, all of the form $H(\theta)_{v, \sigma}$; 
		recall that $v=\sum_{A, a}\rho(X^AR^a)P_{X^AR^a}$, and that $\sigma: \bar k^{D_2}\ra k$ is given by $\sigma(P_{X^AR^a})=\delta_{A, 0}\delta_{a, n}$, for all 
		$A\in\{0, 1\}$ and $a\in\{-n+1,\cdots, n\}$.
 \end{example}

\section{Some non-commutative, non-cocommutative examples}\selabel{noncommnoncocommexp}
\setcounter{equation}{0}
The classification of quasi-Hopf algebras with radical of codimension $2$ was obtained in \cite{eg3}. Namely, it was proved in \cite{eg3} 
that there are precisely four types of such quasi-Hopf algebras, denoted by $H(2)$, $H_+(8)$, $H_-(8)$ and $H(32)$. Note that $H_+(8)$, $H_-(8)$ and $H(32)$ 
contain $H(2)$ as a quasi-Hopf algebra, the latter being the Hopf group algebra of $k$ and $C_2=\le g\ri$, the cyclic group of size $2$, seen as a quasi-Hopf algebra 
via the reassociator $\Phi=1-2p_-\ot p_-\ot p_-$; as before, $p_\pm=\frac{1}{2}(1\pm g)$.

Throughout this section, $k$ is a field which contains a primitive $4$th root of unit $\mfq$. 
Our next aim is to construct 
new classes of quasi-Hopf algebras from an $H\in \{H(2), H_\pm(8), H(32)\}$ and a pair $(\sigma, v)$ of $H$ obeying \equref{2dim bialgebra cond} and 
$\sigma(h_2)h_1v=\sigma(h_1)vh_2$, for all $h\in H$; see \prref{qHa with a proj of rank 2}.   

By using the classification of the $4$-dimensional quasi-Hopf algebras, if follows that for $H(2)$ a pair $(\sigma, v)$ like the one above cannot exist. 
A direct argument for this assertion can be obtained by looking first at the elements $v=\mu 1 + \nu g\in H(2)$ for which $\va(v)=1$ and $\sigma(v)=-1$. As  
we cannot have $\sigma(g)=1$ (otherwise $\sigma=\va$, and so $1=-1$) we get $\sigma(g)=-1$; thus $\mu+\nu=1$ and $\mu-\nu=-1$, and hence $v=g$. Then 
$\sigma(p_-)=1$ and the last equality in \equref{2dim bialgebra cond} reads as $\Delta(g)=\sigma(x^3)x^1g\ot x^2g$ or, equivalently, as 
$g\ot g=g\ot g - 2p_-\ot p_-$; the latter equality is false.   

So, for $H=H(2)$, \prref{qHa with a proj of rank 2} only produces quasi-Hopf algebras of the form $k[C_2]\ot H(2)$, this means quasi-Hopf algebras build on 
the Hopf group algebra structure of $k[C_2\times C_2]$ with the help of the reassociator $\Phi$ of $H(2)$. We will see below that a similar situation also occurs 
if $H\in \{H_\pm(8), H(32)\}$. Recall from \cite{eg3} that $H(32)$ is the $k$-algebra generated by $g$, $x$ and $y$ with relations $g^2=1$, $gx=-xg$, $gy=-yg$, 
$x^4=y^4=0$ and $yx= \mfq xy$. The quasi-Hopf algebra structure of $H(32)$ is defined in such a way that $k[\le g\ri]$ is a quasi-Hopf subalgebra of $H(32)$, and by  
\begin{gather*}
    \Delta(x)=x\ot (p_+  + \mfq p_-) + 1\ot p_+ x + g\ot p_- x ,\quad 
    \Delta(y)=y\ot (p_+  - \mfq p_-) + 1\ot p_+ y + g\ot p_- y , \\
    S(x) = -x(p_+ + \mfq p_-),\quad S(y) = -y(p_+ - \mfq p_-),\quad
    \epsilon(x)=\epsilon(y)=0.
\end{gather*}

$H_\mfq(8)$ is the unital $k$-algebra generated by $g, x$ subject to relations $g^2=1$, $x^4=0$ and $gx=-xg$; the quasi-Hopf algebra structure of $H_\mfq(8)$ 
is defined similar to that of $H(32)$. In notation of \cite{eg3}, $H_+(8)$ and $H_-(8)$ correspond to the case when $\mfq=i$ and $\mfq=-i$, respectively. 

\begin{lemma}\lelabel{no tri reas}
   For $H\in \{H_\mfq(8), H(32)\}$, a $2$-dimensional braided Hopf algebra in ${}_H^H{\cal YD}$ is isomorphic to the Hopf group algebra $k[C_2]$, 
	regarded in ${}_H^H{\cal YD}$ via the trivial action and coaction. 
\end{lemma}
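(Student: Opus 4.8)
By \thref{2dimbraidedHA}, any $2$-dimensional braided Hopf algebra $B$ in ${}_H^H{\cal YD}$ is isomorphic either to $B_{\alp, v}$ for a suitable couple $(\alp, v)$, or to $k[C_2]$ with the trivial structure. So it suffices to rule out the first possibility, i.e. to show that for $H\in\{H_\mfq(8), H(32)\}$ there is no couple $(\alp, v)$ satisfying the conditions of \deref{Balpv}; equivalently, no couple satisfying \equref{2dim bialgebra cond} together with $\alp(h_2)h_1v=\alp(h_1)vh_2$ for all $h\in H$. First I would analyze the algebra map $\alp: H\ra k$. Since $x$ (and $y$, in the $H(32)$ case) is nilpotent, $\alp(x)=0$ (resp. $\alp(x)=\alp(y)=0$), and since $g^2=1$ we have $\alp(g)\in\{1,-1\}$. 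If $\alp(g)=1$ then $\alp=\va$, and then $\alp(v)=\va(v)=1\neq -1$, contradicting $\alp(v)=-1$. Hence $\alp(g)=-1$, so $\alp$ agrees with $\va$ on the subalgebra $k[\le g\ri]$ twisted by the sign character.

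Next I would pin down $v$. Write $v$ in the PBW-type basis of $H$ (powers of $x$, resp. monomials $x^iy^j$, times $p_+$ or $p_-$). The relation $\alp(h_2)h_1v=\alp(h_1)vh_2$ applied to $h=g$ gives, using $\Delta(g)=g\ot g$ and $\alp(g)=-1$, simply $gv=vg$; so $v$ lies in the centralizer of $g$, which forces all the odd-degree-in-$x$ (and in $y$) components of $v$ to vanish — that is, $v$ is a polynomial in $x^2$ (resp. in $x^2,y^2$) with coefficients in $k[\le g\ri]$. Then I would apply the same relation to $h=x$ (and $h=y$). Using the explicit coproduct $\Delta(x)=x\ot(p_++\mfq p_-)+1\ot p_+x+g\ot p_-x$ and the fact that $\alp$ kills $x$ and $p_\pm x$, the left side collapses to $\alp(p_++\mfq p_-)xv=\frac{1}{2}(1-1)xv$ — here is the crucial point: $\alp(p_+)=\frac12(\alp(1)+\alp(g))=0$ and $\alp(p_-)=\frac12(1-(-1))=1$, so $\alp(p_++\mfq p_-)=\mfq$, whence the left side is $\mfq\, xv$; meanwhile the right side reduces similarly. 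Carefully matching the two sides in the PBW basis will produce a contradiction — I expect the key constraint to come from the top-degree component (the coefficient of $x^3$ or $x^2$) of $v$, combined with the condition $\Delta(v)=\alp(y^1x^3X^2)x^1X^1vy^2\ot x^2vX^3y^3$, which for the nontrivial reassociator $\Phi=1-2p_-\ot p_-\ot p_-$ (or its $H(32)$-analogue built from $p_\pm$) introduces a $p_-\ot p_-$ correction term incompatible with $v$ being grouplike-like on the $\le g\ri$-part while $\alp(v)=-1$.

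The cleanest route, actually, is probably to mimic the argument already given at the end of the proof of \prref{graded A} for $H_4$: there one shows that the assumption "$\alp$ nontrivial, $v$ satisfying \equref{2dim bialgebra cond}" is incompatible with the reassociator $\Phi$ being nontrivial. Since each of $H_\mfq(8)$ and $H(32)$ contains $H(2)=k_\Phi[\le g\ri]$ as a quasi-Hopf subalgebra with the \emph{same} nontrivial reassociator $\Phi=1-2p_-\ot p_-\ot p_-$, and since $\alp$ restricted to $k[\le g\ri]$ and $v$ projected onto its $k[\le g\ri]$-component must themselves satisfy \equref{2dim bialgebra cond} relative to $\Phi$ (this compatibility is what I would verify by applying the counit/projection $H\ra H(2)$, which is a quasi-Hopf morphism), the computation already carried out in the $H(2)$ case — done explicitly just before \leref{no tri reas}, where it was shown that $v=g$ forces $g\ot g = g\ot g - 2p_-\ot p_-$, a falsehood — applies verbatim. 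The main obstacle, and the thing to check with care, is that reduction step: that the pair $(\alp, v)$ for $H$ really does push forward to a pair for $H(2)$ of the same type, i.e. that the last two relations in \equref{2dim bialgebra cond} are preserved under the algebra projection $\pi: H\ra H(2)$ sending $x\mapsto 0$ (and $y\mapsto 0$). Once that is established, the contradiction is immediate and $B\cong k[C_2]$ with trivial structure is the only surviving option.
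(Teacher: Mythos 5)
Your proposal is correct, and your preferred ``cleanest route'' is genuinely different from the paper's argument. The paper proceeds by direct computation inside $H$: after forcing $\alp(g)=-1$, $v=g+v_1$ with $v_1$ concentrated in even $x$- (and $y$-) degrees, it evaluates $\alp(h_2)h_1v=\alp(h_1)vh_2$ at $h=x$ to get $\mfq\, xv=vgx$ and reads off a contradiction already in the degree-one component ($-\mfq\, gx=x$), never needing the comultiplication condition on $v$. (So in your first sketch the contradiction sits in the bottom, not the top, degree of that identity.) Your reduction route instead pushes a hypothetical pair $(\alp,v)$ forward along the radical projection $\pi:H\ra H(2)$, $x\mapsto 0$ (and $y\mapsto 0$): since $\alp$ kills the nilpotents it factors as $\bar\alp\circ\pi$, the reassociator $\Phi=1-2p_-\ot p_-\ot p_-$ lies in $k[\le g\ri]^{\ot 3}$ and is fixed by $\pi$, and $(\pi\ot\pi)\Delta(x)=0$, so applying $\pi\ot\pi$ to the condition $\Delta(v)=\alp(x^3X^2y^1)x^1X^1vy^2\ot x^2vX^3y^3$ produces a pair $(\bar\alp,\pi(v))$ of the same type for $H(2)$ with $\pi(v)=g$; the explicit computation done for $H(2)$ just before the lemma ($g\ot g=g\ot g-2p_-\ot p_-$, false) then finishes it. The verification you flag as the main obstacle does go through, and what your route buys is uniformity: one short check of functoriality under $\pi$ replaces the separate $h=x$ (and $h=y$) computations in $H_\mfq(8)$ and $H(32)$, and would apply verbatim to any quasi-Hopf algebra with $H/J\cong H(2)$ as quasi-Hopf algebras; what the paper's route buys is that it stays entirely elementary and exhibits the obstruction concretely inside each algebra.
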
 
    \begin{proof} 
		An algebra morphism $\sigma: H\ra k$ is determined by $\sigma(g)\in \{\pm 1\}$ and $\sigma(x)=0$, provided that $H=H_\mfq(8)$; in the case when 
		$H=H(32)$, in addition we must have $\sigma(y)=0$.
		
		Let $v\in H$ be such that $\va(v)=1$ and $\sigma(v)=-1$. We cannot have $\sigma(g)=1$; otherwise $\sigma=\va$, and so $1=-1$, a contradiction. 
We conclude that $\sigma(g)=-1$. Set $v=v_0 + v_1$ with $v_0\in k[\le g\ri]$ and $v_1=\sum_{j=0,l=1}^{1, 3}\l_{jl}g^jx^l$, for some $\l_{jl}$ in $k$   
(provided that $H=H_\mfq(8)$; when $H=H(32)$, $v_1=\sum_{j, l, t=0; l+t\not=0}^{1, 3, 3}\l_{jlt}g^jx^ly^t$ for some $\l_{jlt}$ in $k$). 
We have $\va(v_0)=1$ and $\sigma(v_0)=-1$, relations which force $v_0=g$; thus $v=g + v_1$. 

We now ask for $v$ that, in addition, $\sigma(h_2)h_1v=\sigma(h_1)vh_2$, for all $h\in H$. By taking $h=g$, we get that $vg=gv$, hence $gv_1=v_1g$, fact which gives us 
$v_1=(\sum_{j=0}^1\l_jg^j)x^2$, for some $\l_j\in k$, in the case when $H=H_\mfq(8)$, and respectively 
$v_1=\sum_{j, l, t=0; l+t\in\{2, 4\}}^{1, 3, 3}\l_{jlt}g^jx^ly^t$ in the case when $H=H(32)$. Likewise, by taking $h=x$, as $\sigma(p_+)=0$ and $\sigma(p_-)=1$ 
we get that $\mfq xv=vp_+x - vp_-x$, relation equivalent to $\mfq xv=vgx$. For $H=H_\mfq(8)$ the latest relation is equivalent to 
$-\mfq gx + \mfq (\sum_{j=0}^1(-1)^j\l_jg^{j})x^3=x + (\sum_{j=0}^1\l_jg^{j+1})x^3$, and for $H=H(32)$ it yields to 
\[
-\mfq gx +\mfq\sum\limits_{j, l, t=0; l+t\in \{2, 4\}}^{1, 3, 3}\l_{jlt}(-1)^jg^jx^{l+1}y^t=x + 
\sum\limits_{j, l, t=0; l+t\in\{2, 4\}}^{1, 3, 3}\l_{jlt}\mfq^tg^{j+1}x^{l+1}y^t.
\] 
In both cases we land to a contradiction, so our proof is finished.  	
\end{proof}

Hence, for $H=H_\mfq(8)$, our techniques give rise to a single quasi-Hopf algebra of dimension $16$, which will be denoted in what follows by 
$H_\mfq(16)$. $H_\mfq(16)$ equals $H_\mfq(8)_{g_2}=k[C_2]\otimes H_\mfq(8)$, see \prref{qHa with a proj of rank 2}. For short, $H_\mfq(16)$ contains 
$H_\mfq(8)$ as a quasi-Hopf subalgebra (now considered as being generated as an algebra by $g_1$ and $x$) and a second group like element $g_2$ which commutes with all 
the elements of $H_\mfq(8)$ and such that $g_1, g_2$, $x$ generate $H_\mfq(16)$ as an algebra. 

\begin{proposition}
For $H_\mfq(16)$, the only pair $(\sigma, v)$ satisfying \equref{2dim bialgebra cond} and $\sigma(h_2)h_1v=\sigma(h_1)vh_2$, for all $h\in H_\mfq(16)$, 
is given by the $k$-algebra morphism $\sigma: H_\mfq(16)\ra k$ determined by $\sigma(g_1)=1$, $\sigma(g_2)=-1$ and $\sigma(x)=0$, and the element $v=g_2$. 
Furthermore, $H_\mfq(8)(\theta)_{(\sigma, v)}$ is a $32$-dimensional quasi-Hopf algebra isomorphic to the tensor product quasi-Hopf algebra of $H_4$ and  
$H_\mfq(8)$. 
\end{proposition}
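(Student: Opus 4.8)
The plan is to mimic the argument used in \leref{no tri reas}, but now allowing for the extra grouplike $g_2$, and then to identify the resulting biproduct explicitly. First I would fix an algebra morphism $\sigma: H_\mfq(16)\ra k$ and an element $v\in H_\mfq(16)$ satisfying \equref{2dim bialgebra cond} together with $\sigma(h_2)h_1v=\sigma(h_1)vh_2$ for all $h$. Since $H_\mfq(16)=k[C_2]\ot H_\mfq(8)$ with $C_2=\le g_2\ri$, an algebra map $\sigma$ is determined by $\sigma(g_1), \sigma(g_2)\in\{\pm1\}$ and $\sigma(x)=0$ (the last forced by $x^4=0$). As in \leref{no tri reas}, decompose $v=v_0+v_1$ with $v_0\in k[\le g_1, g_2\ri]$ and $v_1$ in the span of the $g_1^{i}g_2^{j}x^l$ with $1\le l\le 3$. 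The conditions $\va(v)=1$, $\sigma(v)=-1$ restrict $v_0$, while taking $h=g_1$ and $h=g_2$ in the compatibility relation forces $v$ to commute with both grouplikes, hence $v_1=(\sum_{i,j}\l_{ij}g_1^ig_2^j)x^2$.

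Next I would feed $h=x$ into $\sigma(h_2)h_1v=\sigma(h_1)vh_2$, using $\Delta(x)=x\ot(p_++\mfq p_-)+1\ot p_+x+g_1\ot p_-x$ (where $p_\pm=\frac12(1\pm g_1)$), so $\sigma(p_\pm)$ depends on $\sigma(g_1)$. Exactly as in \leref{no tri reas}, if $\sigma(g_1)=-1$ one arrives at a contradiction by comparing the $x$-degree-$1$ and degree-$3$ parts of $\mfq xv=vg_1x$; so necessarily $\sigma(g_1)=1$. Then $\sigma=\va$ on the subalgebra $H_\mfq(8)$ generated by $g_1$ and $x$, so $\sigma(g_2)=-1$ is forced (otherwise $\sigma=\va$ on all of $H_\mfq(16)$, giving $1=-1$). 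With $\sigma(g_1)=1$ we have $p_+=1, p_-=0$ in the relevant sense, and the compatibility relation for $h=x$ becomes $xv=vx$, which kills the $g_1g_2^jx^2$-type terms unless they vanish; combined with $\va(v_0)=1$, $\sigma(v_0)=-1$ this pins down $v_0=g_2$, and a further check using the third equality in \equref{2dim bialgebra cond} (the formula $\Delta(v)=\sigma(y^1x^3X^2)\cdots$, which here is simply $\Delta(v)=v\ot v$ since $\Phi$ lives on $k[\le g_1\ri]$ but $\sigma$ is trivial on it) eliminates the remaining $v_1$, leaving $v=g_2$.

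Having isolated $(\sigma,v)$ with $\sigma$ given by $\sigma(g_1)=1,\sigma(g_2)=-1,\sigma(x)=0$ and $v=g_2$, I would then invoke \prref{qHa with a proj of rank 2}: the associated braided Hopf algebra is $B_{\sigma,v}$, with generator $n$, $n^2=0$, $h\cdot n=\sigma(h_1)n$, $n\mapsto v\ot n=g_2\ot n$. Writing $\theta\equiv n\times 1$, the biproduct $H_\mfq(8)(\theta)_{(\sigma,v)}$ is the algebra generated by $H_\mfq(16)$ and $\theta$ with $\theta^2=0$ and $h\theta=\sigma(h_1)\theta h_2$; since $\sigma$ is trivial on $g_1$ and $x$ and $\sigma(g_2)=-1$, this reads $g_1\theta=\theta g_1$, $x\theta=\theta x$, $g_2\theta=-\theta g_2$. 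Using the formulas of \prref{qHa with a proj of rank 2} for $\Delta(\theta)$ and $S(\theta)$ — which, because the reassociator of $H_\mfq(16)$ only involves $p_\pm(g_1)$ on which $\sigma$ is trivial, simplify to $\Delta(\theta)=v\ot\theta+\theta\ot1=g_2\ot\theta+\theta\ot1$ and $S(\theta)=-S(v)\a\theta\b=-g_2\theta$ — I recognize that the subalgebra generated by $g_2$ and $\theta$ is precisely $H_4$ (with $g\leftrightarrow g_2$, $x\leftrightarrow\theta$), and it commutes with $H_\mfq(8)=\le g_1,x\ri$. Thus $H_\mfq(8)(\theta)_{(\sigma,v)}\cong H_4\ot H_\mfq(8)$ as algebras, and since all the coalgebra, reassociator, and antipode data factor through this tensor decomposition, the isomorphism is one of quasi-Hopf algebras.

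The main obstacle I anticipate is the bookkeeping in the step $h=x$: one must carefully track how $\sigma$ evaluates on the components of $\Delta(x)$, keep $\mfq$ straight, and compare coefficients of $x^l$ across the two sides to derive the contradiction when $\sigma(g_1)=-1$ and to eliminate $v_1$ when $\sigma(g_1)=1$ — essentially rerunning the computation in \leref{no tri reas} with the harmless extra grouplike $g_2$ carried along. Once that is done, the identification with $H_4\ot H_\mfq(8)$ is a routine matter of matching generators and relations, using that $k[\le g_2\ri]\ot H_\mfq(8)=H_\mfq(16)$ and that $\theta$ anticommutes only with $g_2$.
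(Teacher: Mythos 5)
Your Case~1 analysis ($\sigma(g_1)=1$) and the final identification with $H_4\ot H_\mfq(8)$ follow the paper's route and are essentially fine. The genuine gap is in your dismissal of the case $\sigma(g_1)=-1$. You claim that, "exactly as in Lemma~\ref{le:no tri reas}", comparing the degree-$1$ and degree-$3$ parts (in $x$) of $\mfq xv=vg_1x$ yields a contradiction. That is false here. In $H_\mfq(8)$ the contradiction arose only because $\va(v_0)=1$ and $\sigma(v_0)=-1$ forced $v_0=g$, whence the degree-$1$ comparison reads $-\mfq gx=x$. In $H_\mfq(16)$ the extra grouplike $g_2$ gives $v_0=\sum_{j,l}\mu_{jl}g_1^jg_2^l$ four free coefficients, and the degree-$1$ part of $\mfq xv=vg_1x$ only imposes $\mu_{1l}=\mfq\,\mu_{0l}$ (and the companion relation $\mu_{0l}=-\mfq\,\mu_{1l}$, which is automatically consistent since $\mfq^2=-1$); likewise for the $x^2$-coefficients. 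Together with $\va(v)=1$, $\sigma(v)=-1$ this admits a genuine candidate, namely $v=-\frac{\mfq}{2}(1+\mfq g_1)+\frac12(1+\mfq g_1)g_2+\bigl(\sum_j\tau_j(1+\mfq g_1)g_2^j\bigr)x^2$ with $\sigma(g_2)=-1$, so the compatibility relation alone does not close this case.

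To rule it out you must invoke the remaining condition in \equref{eq:2dim bialgebra cond}, the coproduct identity, which for this $v$ (using $\sigma(p_-)=1$ and $p_-v=vp_-$) becomes $\Delta(v)=v\ot v-2p_-v\ot p_-v=p_+v\ot v+p_-v\ot g_1v$; comparing the coefficients of $1\ot 1$ on both sides forces $\mfq=1$, contradicting that $\mfq$ is a primitive $4$th root of unity. Without this step your proof leaves open a second family of pairs $(\sigma,v)$ and hence does not establish uniqueness. A secondary, more cosmetic point: when $\sigma(g_1)=1$ you should still record why $\Delta(v)=v\ot v$ kills the $x^2$-terms (the paper computes $\Delta(x^2)$ explicitly for this); "a further check" is doing real work there.
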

\begin{proof}
We write $v=\sum_{j, l=0}^{1, 1}\mu_{jl}g_1^jg_2^l + \sum_{j, l=0; t=1}^{1, 1; 3}\l_{jlt}g_1^jg_2^lx^t$, for some $\mu_{jl}, \l_{jlt}\in k$. Also, we denote 
$\sigma(g_j)=\nu_j\in \{\pm 1\}$, $0\leq j\leq 1$; as we remarked, we cannot have both $\nu_1$, $\nu_2$ equal to $1$ (otherwise $\sigma=\va$, and thus $1=-1$). 
Since $\va(v)=1$, we have $\sum_{j, l=0}^{1, 1}\mu_{ij}=1$; similarly, since $\sigma(v)=-1$ we have $\sum_{j, l=0}^{1, 1}\nu_1^j\nu_2^l\mu_{jl}=-1$.   

For $h=g_1$, the equality $\sigma(h_2)h_1v=\sigma(h_1)vh_2$ reads as $vg_1=g_1v$, and is equivalent to 
\[
\sum\limits_{j, l=0}^{1, 1}\mu_{jl}g_1^{j+1}g_2^l +\sum\limits_{j, l=0; t=1}^{1, 1; 3}(-1)^t\l_{jlt}g_1^{j+1}g_2^lx^t=
\sum\limits_{j, l=0}^{1, 1}\mu_{jl}g_1^jg_2^j +\sum\limits_{j, l=0; t=1}^{1, 1; 3}\l_{jlt}g_1^{j+1}g_2^lx^t.
\]
It follows that $\l_{jlt}=0$ for any $t$ odd, and therefore $v=\sum_{j, l=0}^{1, 1}\mu_{jl}g_1^jg_2^l +\left(\sum_{j, l=0}\l_{jl}g_1^jg_2^l\right)x^2$, for some 
$\mu_{jl}, \l_{jl}\in k$. With this formula of $v$, one can easily check that $\sigma(h_2)h_1v=\sigma(h_1)vh_2$ is satisfied by $h=g_2$, too. When we check it for 
$h=x$, we land at $\omega xv=v(p_+ + \nu_1p_-)x$, where $\omega:=\left(\frac{1}{2}(1+\l_1)+\frac{\mfq}{2}(1-\nu_1)\right)$. The last equality is equivalent to
\begin{eqnarray*}
&&\hspace{-1cm}
\omega\big(\sum\limits_{j, l=0}^{1, 1}(-1)^j\mu_{jl}g_1^jg_2^l\big)x + \omega\big(\sum\limits_{j, l=0}^{1, 1}(-1)^j\l_{jl}g_1^jg_2^l\big)x^3\\
&&\hspace{1cm}=
\big(\sum\limits_{j, l=0}^{1, 1}\mu_{jl}(p_+g_2^l+(-1)^j\nu_1p_-g_2^l)\big)x + \big(\sum\limits_{j, l=0}^{1, 1}\l_{jl}(p_+g^2_l+(-1)^j\nu_1p_-g_2^l)\big)x^3,
\end{eqnarray*}
where now $p_\pm=\frac{1}{2}(1\pm g_1)$. The above relation leads to ($0\leq j\leq 1$) 
\[\left\{
\begin{array}{l}
\omega (\mu_{0j}1-\mu_{1j}g_1)=\mu_{0j}(p_+ + \nu_1p_-) + \mu_{1j}(p_+ - \nu_1p_-),\\
\omega(\l_{0j}1-\l_{1j}g_1)=\l_{0j}(p_+ + \nu_1p_-) + \l_{1j}(p_+ - \nu_1p_-)
\end{array}\right.\Leftrightarrow
\left\{
\begin{array}{l}
\omega(\mu_{0j}-\mu_{1j})=\mu_{0j}+\mu_{1j},\\
\omega(\mu_{0j}+\mu_{1j})=\nu_1(\mu_{0j}-\mu_{1j}),\\
\omega(\l_{0j}-\l_{1j})=\l_{0j}+\l_{1j},\\
\omega(\l_{0j}+\l_{1j})=\nu_1(\l_{0j}-\l_{1j}).
\end{array}\right.
\]
\un{Case 1}: $\nu_1=1$. We get $\omega=1$, $\nu_2=-1$ and $\mu_{1j}=\l_{1j}=0$, for all $0\leq j\leq 1$. As in the above, we conclude that 
$v=g_2 + (\l 1 +\mu g_2)x^2$, for some $\l, \mu\in k$. On the other hand, the last condition on $v$ required in \equref{2dim bialgebra cond} turns out as 
$\Delta(v)=v\ot v$, because $\sigma(p_-)=0$ (we recall that $p_-=\frac{1}{2}(1-g_1)$ and that $\sigma(g_1)=\nu_1=1$). A simple computation yields 
\[
\Delta(x^2)=x^2\ot g_1 + (1+\mfq)x\ot p_+x + (1-\mfq)g_1x\ot p_-x + g_1\ot x^2,
\]
and this helps to show that $\Delta(v)=v\ot v$ if and only if $\l=\mu=0$. 

Summing up, $v=g_2$ and $\sigma$ is determined by $\sigma(g_1)=1$, $\sigma(g_2)=-1$ and $\sigma(x)=0$. Furthermore, $H_\mfq(8)(\theta)_{(\sigma, v)}$ is the 
quasi-Hopf algebra generated as a $k$-algebra by $g_1, g_2, x, \theta$ subject to relations $g_1^2=g_2^2=1$, $x^2=\theta^2=0$, $g_1g_2=g_2g_1$, 
$xg_1=-g_1x$, $g_2x=xg_2$, $g_1\theta=\theta g_1$, $g_2\theta=-\theta g_2$ and $x\theta=\theta p_+x+\theta p_-x=\theta x$. The quasi-coalgebra 
structure is given such that $g_1, g_2$ are grouplike elements, $\Delta(x)=x\ot (p_+  + \mfq p_-) + 1\ot p_+ x + g_1\ot p_- x$, 
$\Delta(\theta)=g_2\ot\theta + \theta\ot 1$, $\va(x)=\va(\theta)=0$ and reassociator $\Phi=1-2p_-\ot p_-\ot p_-$; the antipode is defined in a manner similar 
to the one for the antipode of $H_\mfq(8)$. A simple inspection shows us that the $k$-algebra generated by $g_2, \theta$ is a Hopf subalgebra of 
$H_\mfq(16)(\theta)_{(\sigma, v)}$ equals to $H_4$, and that $H_\mfq(16)(\theta)_{(\sigma, v)}=H_4\ot H_\mfq(8)$ as quasi-Hopf algebras. 

\un{Case 2}: $\nu_1=-1$. It provides us $\omega=\mfq$, $\mu_{1j}=\mfq \mu_{0j}$ and $\l_{1j}=\mfq \l_{0j}$, for all $0\leq j\leq 1$. Thus 
$v=\sum_{j=0}^1\mu_{0j}(1+\mfq g_1)g_2^j + \big(\sum_{j=0}^2\l_{0j}(1+\mfq g_1)g_2^j\big)x^2$. Now, $\va(v)=1$ implies 
$\mu_{00}+\mu_{01}=\frac{1}{2}(1-\mfq)$, while $\sigma(v)=-1$ implies $\mu_{01} +\nu_2 \mu_{01}=-\frac{1}{2}(1+\mfq)$. Hence, 
$(1-\nu_2)\mu_{01}=1$, and therefore $\nu_2=-1$, $\mu_{00}=-\frac{\mfq}{2}$ and $\mu_{01}=\frac{1}{2}$. We conclude that 
\[
v=-\frac{\mfq}{2}(1+\mfq g_1) +\frac{1}{2}(1+\mfq g_1)g_2 +\big(\sum\limits_{j=0}^1\tau_j(1+\mfq g_1)g_2^j\big)x^2,
\]    
for some $\tau_1, \tau_2\in k$. Finally, we have $\sigma(p_-)=1$ and a simple computation shows that $p_-v=vp_-$. Hence, the last equality 
in \equref{2dim bialgebra cond} reduces to $\Delta(v)=v\ot v -2p_-v\ot p_-v=p_+v\ot v + p_-v\ot g_1v$. With $v$ as in the above, $1\ot 1$ appears in the formula of 
$\Delta(v)$ with coefficient $-\frac{\mfq}{2}$. On the other hand, since 
\begin{eqnarray*}
&&p_+v=\frac{1}{2}(1-\mfq)p_+ +\frac{1}{2}(1+\mfq)p_+g_2 + (1+\mfq)p_+\big(\sum\limits_{j=0}^1\tau_jg_2^j\big)x^2,\\
&&p_-v=-\frac{1}{2}(1+\mfq)p_- +\frac{1}{2}(1-\mfq)p_-g_2 + (1-\mfq)p_-\big(\sum\limits_{j=0}^1\tau_jg_2^j\big)x^2
~\mbox{and}\\
&&g_1v=\frac{1}{2}(1-\mfq g_1) + \frac{\mfq}{2}(1 -\mfq g_1)g_2 + \mfq (1 -\mfq g_1)\big(\sum\limits_{j=0}^1\tau_jg_2^j\big)x^2  
\end{eqnarray*}  
it follows that $1\ot 1$ appears in the formula of $p_+v\ot v + p_-\ot g_1v$ with coefficient 
$-\frac{\mfq}{8}(1-\mfq)-\frac{1}{8}(1+\mfq)=-\frac{1}{4}(1+\mfq)$. We conclude that $\mfq =1$, so this case cannot occur. 
\end{proof}

We end this section by pointing out that only in the case when $H$ is a twist deformation of a Hopf algebra we can find non-trivial $2$-dimensional Hopf algebras 
within ${}_H^H{\cal YD}$, provided that $H$ is with radical of codimension $2$. To this end, recall that, for $n$ a non-zero natural number, the Nichols Hopf 
algebra $H_{2^n}$ is the $k$-algebra generated by $g, x_1, \cdots, x_n$ with relations $g^2=1$, $x_i^2=0$, $gx_i=-x_i g$, for all $1\leq i\leq n-1$, 
and $x_i x_j = -x_jx_i$, for all $1\leq i\neq j\leq n-1$. The Hopf algebra structure of $H_{2^n}$ is designed in such a way that $g$ is a grouplike element 
and $x_i$ is $(g, 1)$-skew primitive, for all $1\leq i\leq n$.  

    \begin{proposition}
        Let $H$ be a finite dimensional quasi-Hopf algebra with radical of codimension $2$. There are non-trivial $2$-dimensional braided Hopf algebras 
				in ${}_H^H{\cal YD}$ if and only if $H$ is twist equivalent to 
				a Nichols Hopf algebra $H_{2^n}$, for some $n\geq 1$.  
    \end{proposition}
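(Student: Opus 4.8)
The plan is to combine the classification from \cite{eg3} of quasi-Hopf algebras with radical of codimension $2$ with the structural results already obtained in this section. Let $H$ be a finite dimensional quasi-Hopf algebra with radical $J$ of codimension $2$. Then $H/J\simeq k\times k$, so $H$ is basic and, as recalled at the beginning of \seref{class4dim}, $H_0:=H/J$ carries a quasi-Hopf algebra structure making the projection $\pi\colon H\ra H_0$ a morphism; since $H_0$ is two-dimensional and semisimple, $H_0\simeq k_\phi[C_2]$ for a normalized $3$-cocycle $\phi$ on $C_2$, and there are only two choices for $\phi$ up to coboundary: the trivial one, giving $H_0\simeq k[C_2]=H(2)$ as a Hopf algebra, and the nontrivial one, giving $H_0\simeq H(2)$ with reassociator $\Phi=1-2p_-\ot p_-\ot p_-$. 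By \cite{eg3}, the only quasi-Hopf algebras with radical of codimension $2$ are $H(2)$, $H_\pm(8)$ and $H(32)$; among these, $H(2)$, $H_+(8)$ and $H_-(8)$ are twist equivalent to Nichols Hopf algebras (namely to $H_{2}=k[C_2]$, and to $H_4=H_{2^2}$ respectively, once one produces the appropriate twist killing the nontrivial reassociator, exactly as in the proof of \prref{graded A}), while $H(32)$ is \emph{not} twist equivalent to a Hopf algebra. So the statement to prove amounts to: $H$ admits a non-trivial $2$-dimensional braided Hopf algebra in ${}_H^H{\cal YD}$ if and only if $H\ne H(32)$, i.e.\ if and only if $H$ is twist equivalent to some $H_{2^n}$.

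The forward-negation half — that $H(32)$ admits \emph{no} non-trivial $2$-dimensional braided Hopf algebra — is already established: \leref{no tri reas} shows precisely this for $H\in\{H_\mfq(8),H(32)\}$, and the discussion just after \prref{qHa with a proj of rank 2} (specialised to $H=H(2)$) together with that lemma covers all four algebras on the \cite{eg3} list. Conversely, if $H$ is twist equivalent to a Nichols Hopf algebra $H_{2^n}$ (with radical of codimension $2$, so $n\in\{0,1,2\}$ here, but the argument is general), then one exhibits a non-trivial $2$-dimensional braided Hopf algebra in ${}_{H_{2^n}}^{H_{2^n}}{\cal YD}$ directly: take $B_{\alp,v}$ from \deref{Balpv} with $\alp\colon H_{2^n}\ra k$ the algebra map sending $g\mapsto -1$, $x_i\mapsto 0$, and $v=g$. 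One checks $\alp(v)=-1$, $\va(v)=1$, and the two remaining conditions in \equref{coasscoactonB} and \equref{YDmodcondB}: since $H_{2^n}$ is an ordinary Hopf algebra its reassociator is trivial, so the first of these reduces to $\Delta(g)=\alp(?)\cdots$, i.e.\ to $\Delta(v)=v\ot v$, which holds, and the second reduces to $\alp(h_2)h_1g=\alp(h_1)gh_2$ for all $h$, which follows from $\alp(g)=-1$, $g x_i=-x_i g$, $\alp(x_i)=0$ and $\Delta(x_i)=g\ot x_i+x_i\ot 1$ by a one-line computation on the algebra generators. Finally, being twist equivalent is a symmetric relation preserving ${}_H^H{\cal YD}$ as a braided category up to equivalence (gauge transformations induce a monoidal equivalence of module categories and hence of their Yetter--Drinfeld centers), so a non-trivial $2$-dimensional braided Hopf algebra for $H_{2^n}$ transports to one for $H$; and it remains non-trivial because the braided Hopf algebra $k[C_2]$ with trivial structure is the \emph{only} commutative-and-cocommutative one of dimension $2$, whereas $B_{\alp,v}$ with $\alp\ne\va$ is not cocommutative in the trivial sense.

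The step I expect to be the main obstacle is making the transport of braided Hopf algebras along a twist precise and correct: one must verify that if $F$ is a twist on $H$ then there is an equivalence of braided monoidal categories ${}_H^H{\cal YD}\simeq{}_{H_F}^{H_F}{\cal YD}$ that is, moreover, the identity on underlying objects (so that ``$2$-dimensional'' and ``non-trivial'' are preserved), and that it carries Hopf algebra objects to Hopf algebra objects. This is essentially the statement that gauge transformation is a monoidal auto-equivalence of ${}_H{\cal M}$ fixing the forgetful functor, hence induces an equivalence on Drinfeld centers; I would cite the relevant fact (it is implicit in the twist formalism recalled in \seref{intro}, and can be found in \cite{m1} / standard references on the center construction) rather than reprove it. A cleaner route, avoiding the categorical transport altogether, is the following: since $H$ twist equivalent to $H_{2^n}$ means $H\simeq (H_{2^n})_F$ for a twist $F$, one simply checks that the pair $(\alp,v)=(\alp,g)$ above \emph{still} satisfies \equref{2dim bialgebra cond} and the centralising condition in the twisted algebra $(H_{2^n})_F$ — because $\alp$ remains an algebra map (the multiplication is unchanged under twisting) and the twisted comultiplication of $g$ is $F\Delta(g)F^{-1}=FF^{-1}=g\ot g$ by normalisation of $F$, so all the defining relations of $B_{\alp,v}$ are insensitive to the twist. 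This makes the ``if'' direction a short direct verification and isolates the only real content in the ``only if'' direction, which is already done via \leref{no tri reas} and the \cite{eg3} classification. I would therefore organise the proof as: (1) reduce to the \cite{eg3} list; (2) identify which members are twist equivalent to Nichols Hopf algebras, as in \prref{graded A}; (3) for those, exhibit $B_{\alp,g}$ and verify it directly in the twisted algebra; (4) for $H(32)$, invoke \leref{no tri reas}.
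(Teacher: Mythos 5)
Your proposal founders on the classification step. You assert that $H(2)$, $H_+(8)$ and $H_-(8)$ are twist equivalent to Nichols Hopf algebras, so that the only genuine obstruction is $H(32)$. This is false, and it makes your argument internally inconsistent. The point of the \cite{eg3} classification is precisely that $H(2)$, $H_\pm(8)$ and $H(32)$ are the quasi-Hopf algebras with radical of codimension $2$ that are \emph{not} twist equivalent to Hopf algebras: their reassociators restrict, on the span of the grouplikes, to the non-trivial $3$-cocycle $1-2p_-\ot p_-\ot p_-$ on $C_2$, whose cohomology class cannot be killed by a twist. (The proof of \prref{graded A} does not produce such a twist; it shows that the non-trivial class leads to a contradiction in the $4$-dimensional setting and hence cannot occur there, which is the opposite of what you need.) There is also a brute dimension obstruction: $H_\pm(8)$ has dimension $8$ while $H_4=H_{2^2}$ has dimension $4$, and twist equivalence preserves dimension. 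As a result you simultaneously record, correctly, that $H_\pm(8)$ admits no non-trivial $2$-dimensional braided Hopf algebra (\leref{no tri reas}), that twist equivalence transports such objects, and that $H_\pm(8)$ is twist equivalent to a Nichols Hopf algebra which, by your own converse argument, does admit one. The correct dichotomy --- and the one the paper's forward direction rests on --- is: either the reassociator of $H$ is non-trivial on $1$-dimensional modules, in which case $H$ is twist equivalent to one of $H(2)$, $H_\pm(8)$, $H(32)$ by \cite[Theorem 3.4]{eg3} and the existence of a non-trivial $2$-dimensional braided Hopf algebra is excluded by \leref{no tri reas} and the discussion of $H(2)$; or the reassociator is trivial on $1$-dimensional modules, in which case $H$ is twist equivalent to some $H_{2^n}$ by \cite[Theorem 3.1]{eg3}. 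Your list of quasi-Hopf algebras with radical of codimension $2$ also omits the entire second branch.

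A secondary problem is your ``cleaner route'' for the converse: you compute $\Delta_F(g)=F\Delta(g)F^{-1}=g\ot g$ ``by normalisation of $F$''. Normalisation only gives $(\va\ot\Id)(F)=(\Id\ot\va)(F)=1$; it does not force $F$ to commute with $g\ot g$, so $F(g\ot g)F^{-1}$ need not equal $g\ot g$, and in any case the conditions \equref{coasscoactonB} and \equref{YDmodcondB} must be checked against the twisted reassociator and comultiplication throughout. The transport of braided Hopf algebras along a twist should be done categorically (your first route), which is what the paper invokes via \cite{bn}. The explicit pair $(\alp, g)$ on $H_{2^n}$ with $\alp(g)=-1$ and $\alp(x_i)=0$ is correct and agrees with the paper; once the classification step is repaired as above, the rest of your architecture coincides with the paper's proof.
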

    \begin{proof}    
		For the direct implication, by \leref{no tri reas} and the comments made at the beginning of this section, $H$ is not twist equivalent to one 
		of the quasi-Hopf algebras $H(2)$, $H_{\mf q}(8)$ or $H(32)$. Otherwise stated, owing to \cite[Theorem 3.4]{eg3}, the reassociator of $H$ in trivial 
		on $1$-dimensional $H$-modules; by \cite[Theorem 3.1]{eg3} it follows now that  
		$H$ is twist equivalent to $H_{2^n}$, for some $n\geq 1$. 
		
		For the converse, by \cite{bn} it suffices to show that there are non-trivial $2$-dimensional Hopf algebras within ${}_{H_{2^n}}^{H_{2^n}}{\cal YD}$. 
		Indeed, \thref{2dimbraidedHA} says that $2$-dimensional braided Hopf algebras over $H_{2^n}$ are parametrized by couples $(\sigma, v)$ 
		consisting of an algebra morphism $\sigma: H_{2^n}\rightarrow k$ and a grouplike element $v\in H_{2^n}$ such that $\sigma(v)=-1$ and 
		$\sigma(h_2)h_1 v = \sigma(h_1)v h_2$, for all $h\in H_{2^n}$. Thus, $v=g$ and $\sigma$ must obey $\sigma(g)=-1$ (the second condition is a tautology, as 
		it reduces to $x_ig=-gx_i$, for all $1\leq i\leq n$). Concluding, $(\sigma, g)$ provides a non-trivial braided Hopf algebra of dimension $2$ 
		in ${}_{H_{2^n}}^{H_{2^n}}{\cal YD}$, as needed. Furthermore, the resulting Hopf algebra $H_{2^n}(\theta)_{\sigma, g}$ identifies with 
		$H_{2^{n+1}}$, so the $H_{2^n}$'s can be viewed as a tower of biproduct Hopf algebras.  
    \end{proof}
\section{Appendix}\selabel{Appendix}
\setcounter{equation}{0}
Assume that we are in the context and notation of of \coref{nontrvialneq2}. In addition, we write 
$\frac{f_1}{m_1}=\frac{x_0}{L}$ and $\frac{f_2}{m_2}=\frac{y_0}{L}$, for some integers $x_0, y_0$, where 
$L:={\rm LCM}\{m_1, m_2\}$. For ${\mf \l}=(\l_1, \l_2)\in \mathbb{Z}^2$ and $k\in \mathbb{Z}$, define 
${\cal C}_{{\mf \l}, k}: \mathbb{Z}\times \mathbb{Z}\ra \mathbb{Z}$ by 
\begin{equation}\eqlabel{conicsect}
{\cal C}_{{\mf \l}, k}(x, y)=2c_1x^2 + 2c_2y^2 + 2c_{12}xy + 2\l_1 Lx + 2\l_2 Ly - (2k+1)L^2,~\forall~(x, y)\in \mathbb{Z}^2.
\end{equation}  
It is immediate that \equref{tough2} reduces to the finding of ${\mf \l}, k$ for which the conic section ${\cal C}_{{\mf \l}, k}(x, y)=0$ has 
rational points in $\mathbb{Z}^2$; this means, to the study of the existence of integer solutions for the equation ${\cal C}_{{\mf \l}, k}(x, y)=0$, a Gauss type equation. In the following, we will indicate a way to determine the solutions of this equation.
\subsection{Rational points}\sselabel{crtpoint}
The rational points ${\cal C}_{{\mf \l}, k}(x, y)=0$ can be determined by reducing ${\cal C}_{{\mf \l}, k}$ to its standard form. More exactly, 
we must deal with the following situations.

$\un{{\rm Case~ I}}$: $c_1\not=0$. We rewrite ${\cal C}_{{\mf \l}, k}(x, y)$ as 
\[
{\cal C}_{{\mf \l}, k}(x, y)=
2c_1\left(x+\frac{c_{12}}{2c_1}y+\frac{\l_1}{2c_1}L\right)^2 + \frac{4c_1c_2-c_{12}^2}{2c_1}y^2 + \frac{2\l_2c_1 -\l_1c_{12}}{c_1}Ly - 
\left(\frac{\l_1^2}{2c_1} + 2k+1\right)L^2.
\]  
Let $\delta:=4c_1c_2-c_{12}^2$ be the discriminant of ${\cal C}_{{\mf \l}, k}$.

$\un{{\rm Case}~ I_1}$: $\delta=0$. We reduce to the solving in $\mathbb{Z}$ of the system 
\begin{equation}\eqlabel{syst1n2}
\left\{\begin{array}{l}
2c_1x+c_{12}y+\l_1L=\widetilde{x}\\
\widetilde{x}^2+ 2(2\l_2c_1-\l_1c_{12})Ly-(\l_1^2+2(2k+1)c_1)L^2=0.
\end{array}\right. 
\end{equation} 
The Diophantine equation $2c_1x+c_{12}y=\gamma$ has solutions in $\mathbb{Z}$ if and only if $(2c_1, c_{12})|\gamma$. If this is the case 
and $(x_0, y_0)$ is a particular solution of $2c_1x+c_{12}y=\gamma$ then the general solution of it is 
$(x, y)=(-c_{12}u + x_0, 2c_1u + y_0)$, $u\in\mathbb{Z}$. So, for $\widetilde{x}=\l_1L + (2c_1, c_{12})v$, we have to look for those integers 
$\l_1, \l_2, k$ for which the equation in $(u, v)\in \mathbb{Z}^2$,  
\[
(\l_1L + (2c_1, c_{12})v)^2 + 2(2\l_2c_1-\l_1c_{12})L(2c_1u+y_0)-(\l_1^2+2(2k+1)c_1)L^2=0
\]
has solutions. This equation is solved case by case, by finding first $u$ such that   
$(\l_1^2+2(2k+1)c_1)L^2-2(2\l_2c_1-\l_1c_{12})L(2c_1u+y_0)=\omega^2$ (a square), and then $v$ obeying $\l_1L + (2c_1, c_{12})v=\pm\omega$.  

$\un{{\rm Case}~I_2}$: $\delta\not=0$. We rewrite ${\cal C}_{{\mf \l}, k}(x, y)$ as 
\[
\frac{1}{2c_1}(2c_1x+c_{12}y+\l_1L)^2+\frac{1}{2c_1\d}(\d y+ (2\l_2c_1-\l_1c_{12})L)^2-\frac{1}{2c_1\d}d,
\]
where $d:=(\l_1^2\d +(2\l_2c_1-\l_1c_{12})^2 + 2(2k+1)c_1\d)L^2$. We have to solve in $\mathbb{Z}$ the equation 
\begin{equation}\eqlabel{eq2}
\widetilde{x}^2+\d\widetilde{y}^2=d,
\end{equation}
and then to find the integers $\l_1, \l_2, k$ for which the linear system
$\left\{\begin{array}{l}
\d y+ (2\l_2c_1-\l_1c_{12})L=\widetilde{x}\\
2c_1x+c_{12}y+\l_1L=\widetilde{y}
\end{array}\right.
$
has solutions $(x, y)$ in $\mathbb{Z}^2$. Actually, the values of $x, y$ are limited by $m_1, m_2$, respectively, 
so the values of $\widetilde{x}$, $\widetilde{y}$ are limited, too. We get in this way bounds for $d$, and therefore for $k$. 
Then for any possible $k$ we solve first \equref{eq2} and then the linear system.
Note also that the Pell-type equation \equref{eq2} has a finite number of solutions, provided that $\delta>0$ or 
$\delta<0$ and $-\delta$ is a square. When $\delta<0$ and square free, 
the equation \equref{eq2} has no solution or an infinite number of solutions (in the latter case there is an algorithm to find them, see \cite{kmd}).  

$\un{{\rm Case}~ II}$: $c_1=0$. We distinguish four possibilities.

$\un{{\rm Case}~II_1}$: $c_2\not=0$ and $c_{12}=0$. We write 
\[
{\cal C}_{{\mf \l}, k}(x, y)=\frac{1}{2c_2}(2c_2y +\l_2L)^2+2\l_1Lx-\frac{1}{2c_2}(\l_2^2 + 2(2k+1)c_2)L^2.
\]
Our problem reduces to the finding of those integers $\l_1, \l_2, k$ for which there exists $x, y, \omega\in \mathbb{Z}$ such that 
$(\l_2^2 + 2(2k+1)c_2)L^2-4\l_1c_2Lx=\omega^2$ and $2c_2y +\l_2L=\pm \omega$.

$\un{{\rm Case}~II_2}$: $c_2\not=0$ and $c_{12}\not=0$. We have ${\cal C}_{{\mf \l}, k}(x, y)$ equals    
\[
\frac{2}{c_{12}^2}(c_{12}y+\l_1L)(c_{12}^2x + c_2c_{12}y - (\l_1c_2-\l_2c_{12})L) + \left(\frac{2\l_1(\l_1c_2-\l_2c_{12})}{c_{12}^2}-2k-1\right)L^2.
\]
So, we have to find integers $\l_1,\l_2, k$ such that the elementary equation 
\[
2(c_{12}y+\l_1L)(c_{12}^2x + c_2c_{12}y - (\l_1c_2-\l_2c_{12})L)=((2k+1)c_{12}^2-2\l_1(\l_1c_2-\l_2c_{12}))L^2
\]
has solutions in $\mathbb{Z}$.

$\un{{\rm Case}~II_3}$: $c_2=0$ and $c_{12}\not=0$. We have that 
\[
{\cal C}_{{\mf \l}, k}(x, y)=\frac{2}{c_{12}}(c_{12}x + \l_2L)(c_{12}y + \l_1L)-\frac{1}{c_{12}}\left((2k+1)c_{12} + 2\l_1\l_2\right)L^2,
\] 
and so we must find integers $\l_1, \l_2, k$ for which the elementary equation 
\[
2(c_{12}x + \l_2L)(c_{12}y + \l_1L)=(2k+1)c_{12} + 2\l_1\l_2
\]
has solutions in $\mathbb{Z}$. Observe that there is no solution in this case, provided that $c_{12}$ is odd. 

$\un{{\rm Case}~II_4}$: $c_2=0$ and $c_{12}=0$. It is easy to see that in this case we reduce our problem to the solving of the 
Diophantine equation $2\l_1x +2\l_2y=(2k+1)L$, which has solutions in $\mathbb{Z}$ if and only if we choose $\l_1, \l_2, k$ such that 
$(\l_1, \l_2)|(2k+1)\frac{L}{2}$ (we have observed that $L$ must be even).  
\subsection{An example}\selabel{example}
We exemplify the method of finding rational points for a conic section as in the above, for the case when $m_1=2$ and $m_2=6$. 
Thus, $M=2$, $L=6$, $N=36$, $\l_1, c_1, c_{12}\in \{0, 1\}$, $\l_2, c_2\in \{0,\cdots,5\}$ and $\frac{f_1}{2}=\frac{x}{6}$, $f_2=y$, so that 
$x\in \{0, 3\}$ and $y\in \{0, \cdots, 5\}$.  
The desired braided Hopf algebra structures are then parametrized by $(x, y)\in \mathbb{Z}^2$ for which ${\cal C}_{{\mf \l}, k}(x, y)=0$ for some 
${\mf \l}$ and $k$, where  
\[
{\cal C}_{{\mf \l}, k}(x, y)=2c_1x^2+2c_2y^2 +2c_{12}xy+ 12\l_1x+12\l_2y-36(2k+1).
\] 
Note that $\delta=4c_1c_2-c_{12}^2$ is equal to zero if and only if $c_1c_2=c_{12}=0$. Parallel to the cases considered in 
\sseref{crtpoint}, we have the following possibilities. The notation below is the one we used in the subsection that we just mentioned. In addition, 
by $(x, y; \l_1, \l_2, k)$ we understand that $(x, y)$ is a rational point for ${\cal C}_{{\mf \l}, k}$, determined by ${\mf \l}=(\l_1, \l_2)$ and $k$. 

${\rm Case}~I_1$: $c_1=1$ and $\delta=0$, that is, $c_1=1$ and $c_2=c_{12}=0$. The Diophantine equation reduces to $2x+6\l_1=\widetilde{x}$, and forces 
$36(\l_1^2+4k+2)-24\l_2y$ to be a square. Thus, $3\mid \l_2y$ and $\l_1^2 + 4k+2-2\frac{\l_2y}{3}$ is a square. Observe that we cannot have $\l_2=0$, for 
otherwise $4k+2$ or $4k+3$ is a square, a contradiction (recall that $\l_1\in \{0, 1\}$). 

$\bullet$ If $\l_2\in \{1, 2, 4, 5\}$, $y=3$ and we must have $\l_1^2 -2\l_2 +4k+2$ a square. A square is congruent to $0$ or $1$ modulo $4$.  
If we consider our square congruent to $0$ modulo $4$ then  
$(\l_1, \l_2, k)\in \{(0, 1, t^2), (0, 5, t^2+2)\}$, $t\in \mathbb{Z}$. When $(\l_1, \l_2, k)=(0, 1, t^2)$ we get $x=\pm 6t$, $y=3$, $\l_1=0$, $\l_2=1$ 
and $k=t^2$, and we are forced to take $t=0$. Hence, $(0, 3; 0, 1, 0)$ is a rational point. Likewise, when 
$(\l_1, \l_2, k)=(0, 5, t^2+2)$ we deduce that $x=\pm 6t$, $y=3$; we get that $(0, 3; 0, 5, 2)$ is a rational point, too. Analogously, if the above mentioned 
square is congruent to $1$ modulo $4$ we get that $(\l_1, \l_2, k)\in \{(1, 1, t^2+t), (1, 5, t^2+t+1)\}$, $t\in \mathbb{Z}$; only the first triple yields a 
rational point, namely $(0, 3; 1, 1,0)$.

$\bullet$ If $\l_2=3$, $\l_1^2-2y+4k+2$ is a square; we proceed as in the previous case. We deduce that $(\l_1, y, k)\in \{(1, 1, t^2+t), (1, 3, t^2+t+1), 
(1, 5, t^2+t+2), (0, 1, t^2), (0, 3, t^2+1), (0, 5, t^2+2)|t\in \mathbb{Z}\}$. 
Since $\widetilde{x}^2=36(\l_1^2+4k+2)-72y$ and $\widetilde{x}=2x+6\l_1\in \{6, 12\}$ for $\l_1=1$ and $\widetilde{x}\in \{0, 6\}$ for 
$\l_1=0$, we conclude that the rational points are:  
$(0, 1; 1, 3, 0)$, $(0, 3; 1, 3, 1)$, $(0, 5; 1, 3, 2)$, $(0, 1; 0, 3, 0)$, $(0, 3; 0, 3, 1)$ and $(0, 5; 0, 3, 2)$.   

${\rm Case~I_2}$: $c_1=1$ and $\delta\not=0$; equivalently, $c_1=c_{12}=1$ or $c_1=1$, $c_{12}=0$ and $c_2\in\{1, \cdots, 5\}$. 

${\rm I}_{2_1}$: $c_1=c_{12}=1$. We have $\d<0$ if and only if $c_2=0$, and this is why we treat this case separately.

$\bullet$ For $c_2=0$, the Pell's type equation reads $\widetilde{x}^2-\widetilde{y}^2=d$, with $\widetilde{x}=-y+6(2\l_2-\l_1)$, $\widetilde{y}=2x+y+6\l_1$ 
and $d=36(4\l_2^2-4\l_1\l_2-4k-2)$. Having this particular shape, it can be solved directly by considering its equivalent form  
\[
(x+6\l_2)(x+y-6(\l_2-\l_1))=18(-2\l_2^2+2\l_1\l_2 +2k+1).
\]  
As we observed, $x\in \{0, 3\}$. If $x=0$, we reduce to $\l_2y=6k+3$. So $(\l_2, y)\in \{(3, 1), (1, 3)\}$, 
and therefore $(0, 3; \l_1, 1, 0)$ and $(0, 1; \l_1, 3, 0)$, $\l_1\in\{0, 1\}$, are rational points. Likewise, for $x=3$ we reduce to 
$(2\l_2+1)y+6\l_1=3(4k+1)$, which ensures us that $3|(1+2\l_2)y$. If $\l_2=0$, $y=3$ and $\l_1=2k$. If $\l_2=1$ then $y+2\l_1=4k+1$, thus 
$(y, \l_1, k)\in\{(1, 0, 0), (3, 1, 1), (5, 0, 1)\}$. If $\l_2\in\{2, 3\}$, $y=3$ and $\l_1+\l_2=2k$. If $\l_2=4$ then $3y+2\l_1=4k+1$, 
hence $(y, \l_1, k)=(1, 1, 1)$. Finally, if $\l_2=5$ then $y=3$ 
and $\l_1=2k-5$. We deduce that the rational points are 
$(3, 3; 0, 0, 0)$, $(3, 1; 0, 1, 0)$, $(3, 3; 1, 1, 1)$, $(3, 5; 0, 1, 1)$, $(3, 3; 0, 2, 1)$, $(3, 3; 1, 3, 2)$, $(3, 1; 1, 4, 1)$, $(3, 3; 1, 5, 3)$.

$\bullet$ Consider $c_2\in \{1,\cdots, 5\}$, so that $\d=4c_2-1\in \{3, 7, 11, 15, 19\}$. As $\l_1\in \{0, 1\}$ and $x\in\{0, 3\}$, this case can be 
treated easily by considering the four cases that we might have. More precisely, we notice first that, in terms of $x$ and $y$, our equation 
$\widetilde{x}^2+\d\widetilde{y}^2=d$ reads as 
\[
(\d+1)y^2 + 4(x+6\l_2)y + 4x(x+6\l_1) -72(2k+1)=0. 
\]
- If $\l_1=0$, for $x=0$ we get $c_2y^2 +6\l_2y - 18(2k+1)=0$ which provides solutions only when $c_2\in \{2, 4\}$: 
$(0, 3; 0, 0, 0)$, $(0, 3; 0, 2, 1)$, $(0, 3; 0, 4, 2)$ and, respectively, $(0, 3; 0, 1, 1)$, $(0, 3; 0, 3, 2)$, $(0, 3, 0, 5, 3)$. 
For $x=3$ the equation becomes $c_2y^2+3(2\l_2+1)y-9(4k+1)=0$, and admits solutions only when $c_2\in\{2, 4\}$; namely:  
$(3, 3; 0, 1, 1)$, $(3, 3; 0, 3, 2)$, $(3, 3; 0, 5, 3)$ and, respectively,  $(3, 3; 0, 0, 1)$, $(3, 3; 0, 2, 2)$, $(3, 3; 0, 4, 3)$.

- If $\l_1=1$, for $x=0$ we have $c_2y^2 + 6\l_2 y -18(2k+1)=0$, the same equation as in the case when $\l_1=x=0$; so, it gives the 
rational points $(0, 3; 1, 0, 0)$, $(0, 3; 1, 2, 1)$, $(0, 3; 1, 4, 2)$ when $c_2=2$, and  $(0, 3; 1, 1, 1)$, $(0, 3; 1, 3, 2)$, $(0, 3; 1, 5, 3)$ when $c_2=4$. 
Likewise, $x=3$ yields $c_2y^2 + 3(2\l_2 + 1)y - 9(4k-1)=0$, and we have solutions only when $c_2\in \{2, 4\}$: 
$(3, 3; 1, 0, 1)$, $(3, 3; 1, 2, 2)$, $(3, 3; 1, 4, 3)$ and, respectively,  $(3, 3; 1, 1, 2)$, $(3, 3; 1, 3, 3)$, $(3, 3; 1, 5, 4)$.

${\rm I}_{2_2}$: $c_1=1$, $c_{12}=0$ and $c_2\in\{1, \cdots, 5\}$. It is more convenient to turn $\widetilde{x}^2+\d \widetilde{y}^2=d$ in terms of $x, y$; we have that $c_2y^2 +6\l_2y + x^2 +6\l_1x-18(2k+1)=0$. For $x=0$, we obtain that $6|c_2y^2$ and $y\not=0$; we have $y=3$, otherwise $y$ is even, and so 
$y^2+ 2\l_2y$ is a multiple of $4$ and equal to $6(2k+1)$ which is false. Thus, for $x=0$, we must have $c_2\in\{2, 4\}$ and $y=3$, and this entails to 
$c_2+2(\l_2-2k-1)=0$; so, for $c_2=2$ we get the rational points $(0, 3; \l_1, 2t, t)$, $\l_1\in\{0, 1\}$ and $t\in\{0, 1, 2\}$, while for 
$c_2=4$ we obtain the rational points $(0, 3; \l_1, 2t+1, t+1)$, $\l_1\in\{0, 1\}$ and $t\in\{0, 1, 2\}$. 

Similarly, for $x=3$, we have $c_2y^2 + 6\l_2y + 18\l_1 - 9(4k+1)=0$, so $y\not=0$ and $3|c_2y^2$. It follows that either $c_2=3$ or 
$y=3$. One can see that in the first case, when $c_2=3$, 
we have the rational points $(3, 1; 0, 1, 0)$, $(3, 3; 0, 2t-1, t)$ with $t\in\{1, 2, 3\}$ and $(3, 5; 0, 5, 6)$, provided that $\l_1=0$, 
and respectively $(3, 1; 1, 4, 1)$, $(3, 3; 1, 2(t-1), t)$ with 
$t\in\{1, 2, 3\}$ and $(3, 5; 1, 2, 4)$, provided that $\l_1=1$. The second case, when $y=3$, leads to the following rational points: 
$(3, 3; \l_1, \l_2, k)$ with $\l_1+\l_2=2k+\frac{1-c_2}{2}$ (there are $6$ solutions for each possible value of $c_2$,  hence $18$ in total), more exactly:  
for $c_2=1$ they are $(3, 3; 0, 2t, t)$ and $(3, 3; 1, 2t+1, t+1)$ with $t\in\{0, 1, 2\}$; for $c_2=3$ they are given by  
$(3, 3; 0, 2t+1, t+1)$ and $(3, 3; 1, 2t, t)$ with $t\in\{0, 1, 2\}$; for $c_2=5$ the rational points are $(3, 3; 0, 2t, t+1)$ and $(3, 3; 1, 2t+1, t+2)$ 
with $t\in \{0, 1, 2\}$.       

${\rm Case~II}_1$: $c_2\in\{1, \cdots, 5\}$, $c_1=c_{12}=0$. We check when $9(\l_2^2 + 2(2k+1)c_2)-6\l_1c_2x$ is a square.

$\bullet$ When $\l_1x=0$, if $\l_2^2-\omega^2=-2(2k+1)c_2$, for some integer $\omega$, then $\l_2$ and $\omega$ have the same parity; this forces 
$c_2$ even, so $c_2\in \{2, 4\}$. Note that $\pm 3\omega=c_2y+3\l_2$. 
 
$-$ For $c_2=2$, since any square is $0$, $1$ or $4$ modulo $8$, $\l_2^2+4(2k+1)$ a square implies that $\l_2^2$ is $0$ or $4$ 
modulo $8$, so $\l_2\in\{0, 2, 4\}$. When $\l_2=0$, $2k+1$ is a square, hence $k=2t(t+1)$, $t\in \mathbb{Z}$; we get that $\omega=\pm 2(2t+1)$ and 
$y=\pm 3(2t+1)$, forcing $y=3$ and $t=0$. Concluding, we have $(x, 3; \l_1, 0, 0)$ as rational points, provided that $\l_1x=0$ (they are in number of $3$). 
Likewise, when $\l_2=4$, we have $2k+2=\left(\frac{\omega}{12}\right)^2$ and $2y+6=\pm\frac{\omega}{2}$, for some even integer $\omega$, so  
$k=2t^2-1$ and $y=\pm 6t -3$, for some $t\in \mathbb{Z}$, forcing $y=3$ and $k=1$; we get in this case the rational points $(x, 3; \l_1, 2, 1)$ with 
$\l_1x=0$ (again in number of $3$). Finally, for $\l_2=4$, 
$2k+5=\left(\frac{\omega}{12}\right)^2$ is a square and $y+6=\pm\frac{\omega}{4}$, and therefore 
$k=2(t^2+t-1)$ and $y+6=\pm 3(2t+1)$, for some $t\in \mathbb{Z}$; these facts entail to $y=3$ and $k=2$, and so we get 
another $3$ rational points: $(x, 3; \l_1, 4, 2)$, whenever $\l_1x=0$. 

$-$ For $c_2=4$, $\l_2^2+8(2k+1)$ a square implies that $\l_2\in\{1, 3, 5\}$, since a square equals $0$, $1$, $4$ or $9$ modulo $16$. 
For $\l_2=1$, $16k+9=\left(\frac{\omega}{6}\right)^2$ and $4y+3=\pm\frac{\omega}{2}$, for some even integer $\omega$, hence $k=4t^2+3t$ or 
$k=4t^2+5t+1$, for some $t\in \mathbb{Z}$; we get that $4y+3=\pm 3(8t+3)$ or $4y+3=\pm 3(8t+5)$, $t\in \mathbb{Z}$, thus $y=3$ and $k=1$, 
leading to the rational points $(x, 3; \l_1, 1, 1)$, where $\l_1x=0$ (there are $3$ in total). 
In the case when $\l_2=3$, $16(k+1)+1=\left(\frac{\omega}{6}\right)^2$ is a square and $4y+9=\pm\frac{\omega}{2}$, 
and so $k=4t^2+t-1$ or $k=4t^2+7t+2$, $t\in \mathbb{Z}$; we obtain that 
$4y+9=\pm 3(8t+1)$ or $4y+9=\pm 3(8t+7)$, for some $t\in \mathbb{Z}$, so $y=3$ and $k=2$, and we have the rational points $(x, 3; \l_1, 3, 2)$ whenever 
$\l_1x=0$. Similarly, when $\l_1=5$, $16(k+2)+1=\left(\frac{\omega}{6}\right)^2$ is a square 
and $4y+15=\pm\frac{\omega}{2}$, and we must have $k=4t^2+t-2$ or $k=4t^2+7t+1$, for some $t\in \mathbb{Z}$; we deduce that 
$4y+15=\pm 3(8t+1)$ or $4y+15=\pm 3(8t+7)$, hence $y=3$ and $k=3$, and we have the rational points 
$(x, 3; \l_1, 5, 3)$, whenever $\l_1x=0$.      

$\bullet$ Assume that $\l_1x\not=0$ or, equivalently, that $\l_1=1$ and $x=3$. We have $\l_2^2 + 4kc_2=\left(\frac{\omega}{6}\right)^2$ 
and $c_2y+3\l_2=\pm\frac{\omega}{2}$, for some even integer $\omega$.  
Clearly, $3|c_2y$ and we distinguish two possibilities. 

$-$ If $c_2=3$, $\l_2^2+12k=(y+\l_2)^2$, and therefore $y^2+2\l_2y -12k=0$. We get $y\in\{0, 2, 4\}$, and the rational points 
$(3, 0; 1, \l_2, 0)$, $0\leq \l_2\leq 5$, and $\{(3, 2; 1, 3t-1, t), (3, 4; 1, 3t-2, 2t)|t\in \{1, 2\}\}$.

$-$ If $c_2\not=3$, we have $y=3$ and $c_2+2\l_2=4k$. This entails to the following rational points: $(3, 3; 1, 2t-1, t)$ with $t\in\{1, 2, 3\}$, 
provided that $c_2=2$, and $(3, 3; 1, 2t-2, t)$ with $t\in \{1, 2, 3\}$, provided that $c_2=4$.

${\rm Case~II}_2$: $c_1=0$, $c_2\not=0$ and $c_{12}\not=0$. We must solve in $\mathbb{Z}$ the equation 
\[
(y+6\l_1)(x+c_2y-6(\l_1c_2-\l_2))=18(2k+1-2\l_1(\l_1c_2-\l_2)).
\]

$\bullet$ If $\l_1=0$, we reduce to $y(x+c_2y+6\l_2)=18(2k+1)$, which imposes the following discussion.

$-$ $y=1$: $x+c_2 + 6\l_2=18(2k+1)$, with the unique solution $x=c_2=3$, $\l_2=2$ and $k=0$; the associated rational point is $(3, 1; 0, 2, 0)$.

$-$ $y=2$: $x+2c_2+6\l_2=9(2k+1)$, which imposes $x=3$; thus $c_2+3\l_2=3(3k+1)$, leading to $c_2=3$ and $\l_2=3k$, and so to the 
rational points $(3, 2; 0, 3t, t)$, $t\in \{0, 1\}$.

$-$ $y=3$: $x+3c_2+6\l_2=6(2k+1)$, which for $x=0$ gives $c_2\in\{2, 4\}$ such that $c_2+2\l_2=2(2k+1)$, while for 
$x=3$ gives $c_2\in\{1, 3, 5\}$ such that $c_2+2\l_2=4k+1$; so we get the following rational points:  
$(0, 3; 0, 2k, k)$ with $k\in \{0, 1, 2\}$, provided that $c_2=2$;  
$(0, 3; 0, 2k-1, k)$ with $k\in\{1, 2, 3\}$, provided that $c_2=4$; 
$(3, 3; 0, 2k, k)$ with $k\in \{0, 1, 2\}$, provided that $c_2=1$; 
$(3, 3; 0, 2k-1, k)$ with $k\in \{1, 2, 3\}$, provided that $c_2=3$; $(3, 3; 0, 2k-2, k)$ with $k\in\{1, 2, 3\}$, provided that $c_2=5$.
 
$\bullet$ If $\l_1=1$, we proceed as in the previous case. The equation becomes $(y+6)(x+c_2y-6c_2+6\l_2)=18(2k+1-2c_2+2\l_2)$ and we cannot have $y=2$.

$-$ $y=0$: $x=3(2k+1)$; thus $x=3$, $0\not=c_2, \l_2\in\{0,\cdots,5\}$ and $k=0$, so we have 30 rational points of the form $(3,0; 1, \l_2, 0)$, no matter how we choose $c_2\in \{1,\cdots, 5\}$.

$-$ $y=1$: $c_2+7x+6\l_2=18(2k+1)$, hence $6|c_2+x$; we get $c_2=x=3$ and $\l_2=6k-1$, so the rational points are $(3,1; 1, 5, 1)$, provided that $c_2=3$.

$-$ $y=3$: $x+c_2+2\l_2=2(2k+1)$, which furnishes the rational points $(0, 3; 1, 2k, k)$ with $k\in\{0, 1, 2\}$, provided that $c_2=2$; 
$(0, 3; 1, 2k-1, k)$ with $k\in\{1, 2, 3\}$, provided that $c_2=4$; $(3, 3; 1, 2k-1, k)$ with $k\in\{1, 2, 3\}$, provided that $c_2=1$;
$(3, 3; 1, 2k-2, k)$ with $k\in\{1, 2, 3\}$, provided that $c_2=3$; $(3, 3; 1, 2k-3, k)$ with $k\in\{2, 3, 4\}$, provided that $c_2=5$;

$-$ $y=4$: $5(x-2c_2+6\l_2)=9(2k+1-2c_2+2\l_2)$, with general solution $x-2c_2+6\l_2=9t$ and $2k+1-2c_2+2\l_2=5t$, $t\in \mathbb{Z}$. It follows that 
$x-(2k+1)+4\l_2=4t$, so $x=3$, and since $x-3(2k+1)+4c_2=-6t$ it follows that $c_2=3$, too. We reduce to $t=k-2$ and $2\l_2=3k-5$, and so we get the following rational points: $(3, 4; 1, 3s-1, 2s+1)$ with $s\in\{1, 2\}$, provided that $c_2=3$.

$-$ $y=5$: $11(x-c_2+6\l_2)=18(2k+1-2c_2+2\l_2)$, and therefore $x-c_2+6\l_2=18t$ and $k+1-2c_2+2\l_2=11t$, $t\in \mathbb{Z}$. 
As $6|x-c_2$, we have $x=c_2=3$, which leads to $\l_2=3t$ and $2k-5=5t$. We get the rational point $(3, 5; 1, 3, 5)$, provided that $c_2=3$.

${\rm Case~II}_3$: $c_1=c_2=0$ and $c_{12}\not=0$. There is no rational point in this case since $c_{12}=1$ is odd.

${\rm Case~II}_4$: $c_1=c_2=c_{12}=0$. We must solve the Diophantine equation $\l_1x+\l_2y=3(2k+1)$. 

$-$ If $\l_1x=0$, $\l_2y=3(2k+1)$ forces $k\in \{0, 1, 2, 3\}$. We obtain the following $15$ rational points $(x, 1; \l_1, 3, 0)$, $(x, 3; \l_1, 1, 0)$, 
$(x, 3; \l_1, 3, 1)$, $(x, 3; \l_1, 5, 2)$ and $(x, 5; \l_1, 3, 2)$, whenever $\l_1x=0$.

$-$ If $\l_1x\not=0$, we have $\l_1=1$, $x=3$ and $\l_2y=6k$ with $k\in \{0, 1, 2\}$. When $k=0$, either $y=0$ or $\l_2=0$, giving $12$ rational points: 
$(3, 0; 1, \l_2, 0)$, $0\leq \l_2\leq 5$, and $(3, y; 1, 0, 0)$ with $0\leq y\leq 5$. For $k=1$, either $\l_2=3$ or $y=3$, 
and we get the rational points $(3, 2; 1, 3, 1)$ and $(3, 3; 1, 2, 1)$. For $k=2$, $(y, \l_2)\in \{(3, 4), (4, 3)\}$, and we get the rational 
points $(3, 3; 1, 4, 2)$ and $(3, 4; 1, 3, 2)$.

We summarize our classification result in the table below. The notation we use is the same as in \exref{Klein}.  
\[
\begin{array}{|c||c|c||c|c|c|}
\hline
\un{a}=(c_1, c_2, c_{12})&\alp_{f_1f_2}&(\l_1,\l_2)&\un{a}=(c_1, c_2, c_{12})&\alp_{f_1f_2}&(\l_1, \l_2)\\
\hline
       &\alp_{01}&(0, 3), (1, 3)& &\alp_{03}&(0, 1), (1, 1), (0, 3)\\
\cline{2-3}
       &\alp_{12}&(1, 0), (1, 3)& &         &(1, 3), (0, 5), (1, 5)\\ 
\cline{2-3}\cline{5-6}
(0, 0, 0) &\alp_{05}&(0, 3), (1, 3)&(0, 0, 0) &\alp_{10}&(1, 0), (1, 1), (1, 2)\\
\cline{2-3}
       &\alp_{11}&(0, 3), (1, 0)   & & &(1, 3), (1, 4), (1, 5)\\
\cline{2-3}\cline{5-6}			
			&\alp_{15}& (0, 3), (1, 0)&		&			\alp_{13}&(0, 1), (0, 3), (0, 5)\\	    
\cline{2-3}
			&\alp_{14}& (1, 0), (1, 3)& & & (1, 0), (1, 2), (1, 4)\\
\hline
(1, 0, 0)&\alp_{01}&(0, 3), (1, 3)&(1, 0, 0)&\alp_{03}&(0, 1), (0, 3), (0, 5)\\
\cline{2-3}
         &\alp_{05}&(0, 3), (1, 3)&     &     &(1, 1), (1, 3)\\
\hline
(0, 1, 1)&\alp_{10}&(1, 0), (1, 1), (1, 2)&(0, 1, 1)&\alp_{13}&(0, 0), (0, 2), (0, 4)\\
				 &         &(1, 3), (1, 4), (1, 5)&         &         &(1, 1), (1, 3), (1, 5)\\
\hline
      &\alp_{01}&(0, 3), (1, 3)& &\alp_{03}&(0, 1), (1, 1)\\
\cline{2-3}\cline{5-6}
(1, 0, 1)&\alp_{11}&(0, 1), (0, 2), (1, 4)&(1, 0, 1)&\alp_{13}&(0, 0), (0, 2)\\
\cline{2-3}
         &\alp_{15}&(0, 1)&    &   &(1, 1), (1, 3), (1, 5)\\
\hline
(1, 1, 0)&\alp_{13}&(0, 0), (0, 2), (0, 4)&(1, 2, 0)&\alp_{03}&(0, 0), (0, 2), (0, 4)\\
         &         &(1, 1), (1, 3), (1, 5)&         &         &(1, 0), (1, 2), (1, 4)\\
\hline
(1, 3, 0)&\alp_{13}&(0, 1), (0, 3), (0, 5)&(1, 3, 0)&\alp_{11}&(0, 1), (1, 4)\\
\cline{5-6}
         &         &(1, 0), (1, 2), (1, 4)&         &\alp_{15}&(0, 5), (1, 2)\\
\hline
(1, 4, 0)&\alp_{03}&(0, 1), (0, 3), (0, 5)&(1, 5, 0)&\alp_{13}&(0, 0), (0, 2), (0, 4)\\
         &         &(1, 1), (1, 3), (1, 5)&         &         &(1, 1), (1, 3), (1, 5)\\
\hline
         &\alp_{03}&(0, 0), (0, 2), (0, 4)&         &\alp_{03}&(0, 1), (0, 3), (0, 5)\\
(1, 2, 1)&         &(1, 0), (1, 2), (1, 4)&(1, 4, 1)&         &(1, 1), (1, 3), (1, 5)\\
\cline{2-3}\cline{5-6}
         &\alp_{13}&(0, 1), (0, 3), (0, 5)&         &\alp_{13}&(0, 0), (0, 2), (0, 4)\\
				 &         &(1, 0), (1, 2), (1, 4)&         &         &(1, 1), (1, 3), (1, 5)\\
\hline
         &\alp_{03}&(0, 0), (0, 2), (0, 4)&         &\alp_{10}&(1, 0), (1, 1), (1, 2)\\
(0, 2, 0)&         &(1, 0), (1, 2), (1, 4)&(0, 3, 0)&         &(1, 3), (1, 4), (1, 5)\\
\cline{2-3}\cline{5-6}
         &\alp_{13}&(0, 0), (0, 2), (0, 4)&         &\alp_{12}&(1, 2), (1, 5)\\
\cline{5-6}
         &         &(1, 1), (1, 3), (1, 5)&         &\alp_{14}&(1, 1), (1, 4)\\
\hline								
         &\alp_{03}&(0, 1), (0, 3), (0, 5)&         &\alp_{03}&(0, 0), (0, 2), (0, 4)\\
(0, 4, 0)&         &(1, 1), (1, 3), (1, 5)&(0, 2, 1)&         &(1, 0), (1, 2), (1, 4)\\
\cline{2-3}\cline{5-6}
         &\alp_{13}&(0, 1), (0, 3), (0, 5)&         &\alp_{10}&(1, 0), (1, 1), (1, 2)\\				
         &         &(1, 0), (1, 2), (1, 4)&         &         &(1, 3), (1, 4), (1, 5)\\
\hline
         &\alp_{10}&(1, 0), (1, 1), (1, 2)&         &\alp_{03}&(0, 1), (0, 3), (0, 5)\\
         &         &(1, 3), (1, 4), (1, 5)&(0, 4, 1)&         &(1, 1), (1, 3), (1, 5)\\
\cline{2-3}\cline{5-6}
         &\alp_{11}&(0, 2), (1, 5)        &         &\alp_{10}&(1, 0), (1, 1), (1, 2)\\
\cline{2-3}
         &\alp_{12}&(0, 0), (0, 3)        &         &         &(1, 3), (1, 4), (1, 5)\\
\cline{2-3}\cline{4-6}				
(0, 3, 1)&\alp_{13}&(0, 1), (0, 3), (0, 5)&         &\alp_{13}&(0, 0), (0, 2), (0, 4)\\
         &         &(1, 0), (1, 2), (1, 4)&(0, 5, 1)&         &(1, 1), (1, 3), (1, 5)\\
\cline{2-3}\cline{5-6}
				 &\alp_{14}&(1, 2), (1, 5)        &         &\alp_{10}&(1, 0), (1, 1), (1, 2)\\
\cline{2-3}				
         &\alp_{15}&(1, 3)                &         &         &(1, 3), (1, 4), (1, 5)\\
\hline
\end{array}~.
\]


\end{document}